\newtheorem{theorem}{Theorem}[section]
\newtheorem{lemma}{Lemma}[section]
\newtheorem{proposition}{Proposition}[section]
\numberwithin{equation}{section}
\newtheorem{definition}{Definition}[section]
\newtheorem{remark}{Remark}[section]
\newtheorem{assumption}[theorem]{Assumption}
\numberwithin{equation}{section}
\renewcommand{\d}{\partial}
\newcommand{\dist}{\text{dist}}
\title{Numerical Analysis of Unsupervised Learning Approaches for Parameter Identification in PDEs}
\author{Siyu Cen\thanks{Department of Applied Mathematics, The Hong Kong Polytechnic University, Kowloon, Hong Kong, P.R. China. (\texttt{siyu2021.cen@connect.polyu.hk; zhizhou@polyu.edu.hk})}
\and Bangti Jin\thanks{Department of Mathematics, The Chinese University of Hong Kong, Shatin, New Territories, Hong Kong, P.R. China (\texttt{bangti.jin@gmail.com, b.jin@cuhk.edu.hk}).}
\and Qimeng Quan\thanks{School of Mathematics and Statistics, Wuhan University, Wuhan 430072, P. R. China (\texttt{quanqm@whu.edu.cn})}
\and Zhi Zhou\footnotemark[2]}
\date{}
\begin{document}

\maketitle

\begin{abstract}
Identifying parameters in partial differential equations (PDEs) represents a very broad class of applied inverse problems. In recent years, several unsupervised learning approaches using (deep) neural networks have been developed to solve PDE parameter identifications. These approaches employ neural networks as ansatz functions to approximate the parameters and / or the states, and have demonstrated impressive empirical performance. In this paper, we provide a comprehensive survey on these unsupervised learning techniques on one model problem, diffusion coefficient identification, from the classical numerical analysis perspective, and outline a general framework for deriving rigorous error bounds on the discrete approximations obtained using the Galerkin finite element method, hybrid method and deep neural networks. Throughout we highlight the crucial role of conditional stability estimates in the error analysis. \\
\textbf{Key words}: deep neural network, unsupervised learning, parameter identification, error estimate,
conditional stability
\end{abstract}

\section{Introduction}

Estimating various physical parameters in partial differential equations (PDEs) represents a very large class of applied inverse problems and is of great importance
across many applied fields, e.g., medical imaging (including electrical impedance tomography,
diffuse optical tomography and photoacoustic tomography etc.), geophysical prospection, nano-optics, and non-destructive testing. Mathematically, these problems are frequently ill-posed in the sense that the solutions can be highly susceptible to the presence of the perturbation of the data, and in practice, the  measurement error is inevitable due to the limited precision of data acquisition techniques.

Numerically, specialized solution techniques known as regularization are needed for their stable and accurate resolution. This can be achieved via either variational regularization or iterative regularization. In variational regularization, one formulates the reconstruction task as solving a suitable optimization problem that involves a data-fitting term and a regularization term. The former measures the discrepancy between the model output with the
measured data, whereas the latter combats the inherent ill-posedness. Common penalty terms include Sobolev smoothness, sparsity promoting penalty (e.g., $\ell^1$ and elastic net) and total variation. The two terms are then properly balanced via a scalar known as the regularization parameter. See the monographs \cite{TikhonovArsenin:1977,EnglHankeNeubauer:1996,ScherzerGrasmair:2009,SchusterKaltenbacher:2012,ItoJin:2015} for in-depth mathematical treatments. Numerically, variational regularization leads to an infinite-dimensional PDE constrained optimization problem.  In practical computation, one has to discretize the governing equation and the objective functional. This can be achieved using the finite difference method, finite element method and more recently deep neural networks. The resulting finite-dimensional optimization problem can then be solved using standard optimization methods, yielding regularized reconstructions. This procedure has been routinely employed in practice with great success, and has been established as standard inversion techniques.

From the perspective of numerical analysis, it is also desirable to incorporate discretization parameters in the analysis. Indeed, a fine mesh leads to a more  accurate approximation of the forward map, but at the expense of increased computational expense, whereas a coarse mesh may compromise the accuracy of the reconstruction (due to the large discretization error). Therefore it is important to derive quantitative bounds on the discrete approximations to guide the choice of various algorithmic parameters. Unfortunately, the rigorous numerical analysis of discrete approximations lags far behind. This is due to the nonlinearity of the forward map for many PDE parameter identifications as well as the inherent ill-posedness of inverse problems. Consequently, so far in the literature, there were only a few studies on the error analysis of PDE parameter identifications prior to 2010s, prominently \cite{Falk:1983,Richter:1981numer,Kohn:1988}.

Meanwhile, for many PDE inverse problems, there are conditional stability estimates, derived using deep mathematical tools, e.g., Carleman estimates \cite{KlibanovTimonov:2004,Yamamoto:2009ip}. The estimates are conditional in the sense that they are valid only on a suitable subset of admissible parameters. It is a very natural idea to combine stability estimates with numerical procedures in the hope of developing numerical schemes with error analysis. This idea was first explored in the pioneering work of Cheng and Yamamoto \cite{ChengYamamoto:2000}, who analyzed Tikhonov regularization using conditional stability, and moreover proposed a new rule for choosing the regularization parameter based on the stability estimate; see Section \ref{sec:cond} for details. Since then the approach has been further studied in several works for both variational regularization \cite{BotHofmann:2016,EggerHofmann:2018,WernerHofmann:2020} and iterative regularization \cite{deHoopQiu:2012,QiuScherzer:2015}.

In view of these observations, it is of great interest to push the frontier further into the numerical analysis of discrete reconstruction schemes for PDE parameter identifications. Following the folklore theorem of Peter Lax in classical numerical analysis, error analysis usually combines (conditional) stability estimates with consistency bounds. This paradigm underpins existing works on the numerical analysis of PDE parameter identifications and uncertainty quantification of statistical inverse problems etc.

Over the last few years,  deep learning has achieved astounding successes in many diverse areas, e.g., computer vision, natural language processing and autonomous driving, and has completely transformed the landscape of these areas. It has also permeated into scientific computing, forming the burgeoning area of scientific machine learning. Unsurprisingly, the idea of using deep learning  to solve PDE inverse problems has also received a lot of recent attention (see, e.g., \cite{BarSochen:2021,Jin:2022CDII,jin:2017deep,Raissi:2019,Mitusch:2021,Cen:2024,Jin:2024conductivity}). Roughly, these approaches can be classified into two groups, one is operator learning in which one approximates the map from the observational data to the unknown parameters directly, and the other is function approximator, in which one uses DNNs as the ansatz space to approximate the unknown parameter and / or the state. Both approaches have progressed significantly and have demonstrated impressive empirical performance in a myriad of important applied inverse problems in terms of either the reconstruction quality or computational efficiency or both. One can find an excellent overview of diverse practical aspects of these approaches in the recent survey \cite{Tanyu:2023}. In particular, unsupervised schemes are very attractive due to their potential in addressing high-dimensional or highly noisy data, for which more traditional methods often face significant computational challenges, and appear very highly promising in several challenging scenarios.

The theoretical underpinnings of these approaches are largely still in their infancy and still await systematic investigations. In particular, the mechanism behind the empirical successes remains mysterious.  This has greatly hindered the wider applications of these approaches in several areas of critical safety. Therefore, it is of great importance to develop mathematical guarantees for relevant DNN-based methods. In this review, we systematically survey recent developments for a class of unsupervised learning approaches to solve PDE parameter identifications from the perspective of numerical analysis. The main strategy is to integrate conditional stability into the design and analysis of unsupervised numerical schemes to provide relevant mathematical guarantees.

The rest of the paper is organized as follows. In Section \ref{sec:cond}, we recall the concept of conditional stability in theoretical inverse problems, which is the main technical tool that will be used in the error analysis. In Section \ref{sec:FEM}, we discuss existing works using the Galerkin finite element method \cite{Brenner:2002,Cialet:2002,Thomee:2006}. Then in Section \ref{sec:Hybrid}, we discuss hybrid approaches that combine classical approaches with DNNs. In Section \ref{sec:NN}, we survey relevant approaches that are purely based on DNNs. We conclude the discussion in Section \ref{sec:concl}, including several potential research questions. Throughout, we use the standard notation for Sobolev spaces \cite{AdamsFournier:2003}; see also Appendix \ref{sec:app}. For a given set $D$, the notation $(\cdot,\cdot)_{L^2(D)}$ denotes the $L^2(D)$ inner product, and it is abbreviated to $(\cdot,\cdot)$ for $D=\Omega$. The notation $c$ denotes a generic constant which may change at each occurrence, but it is always independent of the discr
etization parameter, noise level and regularization parameter.

\section{Conditional stability}\label{sec:cond}

Establishing stability estimates represents one central theme in the area of theoretical inverse problems. These stability estimates are generally conditional in the sense that they are valid only on a suitable subset of the admissible parameters. The estimates quantify the degree of ill-posedness of concrete inverse problems, and their derivation often relies on deep mathematical tools. Stability estimates are also indispensable in the numerical analysis of practical reconstruction schemes, following the folklore theorem of Lax. Below we describe the abstract  framework.

\subsection{Conditional stability in abstract spaces}\label{subsec:cond_abst}

Let  $\mathcal{X}$ and $\mathcal{Y}$ be two Banach spaces equipped with the norms $\|\cdot\|_{\mathcal{X}}$ and $\|\cdot\|_{\mathcal{Y}}$, and $K: \mathcal{X} \to \mathcal{Y}$ be a densely defined, bounded and injective mapping. Consider the following operator equation:
\begin{equation}\label{eqn:abs-gov}
    Kx=y,
\end{equation}
with $x\in \mathcal{X}$ and $y\in \mathcal{Y}$.
Parameter identifications for PDEs are concerned with recovering the unknown coefficient $x^\dag\in \mathcal{X}$ from the noisy measurement data $y^\delta$ on the exact state $y^\dag = Kx^\dag$, with the accuracy measured by the noise level
\begin{equation*}
    \delta = \|y^\delta-y^\dag\|_{\mathcal{Y}}.
\end{equation*}
Throughout, we do not assume the linearity of the operator $K$, commonly referred to as the forward map. The operator $K$ is often implicitly defined by the underlying PDE. This problem is typically unstable in the sense of Hadamard: a small data perturbation of the data $y^\delta$ can lead to significant deviation of the corresponding solution $x$.  The relevant stability estimates for parameter identifications are typically conditional. Cheng and Yamamoto \cite{ChengYamamoto:2000} formulated the following definition on conditional stability.
\begin{definition}\label{def:cond_stab}
Let $\mathcal{Z} \subset \mathcal{X}$  be a Banach space $($equipped with the norm $\|\cdot\|_{\mathcal{Z}})$ with the continuous embedding relation $\mathcal{Z} \hookrightarrow \mathcal{X}$. Fix some $M>0$, and define the admissible set $\mathcal{A}_M$ by
\begin{equation*}
    \mathcal{A}_M=\left\{x \in \mathcal{Z}:\|x\|_{\mathcal{Z}} \leq M\right\},
\end{equation*}
and choose $\mathcal{Q} \subset \mathcal{Z}$ suitably.
Let the function $\omega\equiv\omega(\eta):[0,\infty)\mapsto[0,\infty)$ be monotonically increasing, satisfying $\lim_{\eta \to 0^+} \omega(\eta)=0 $.
    The conditional stability estimate is said to hold for the operator equation $K x=y$, if for a given $M>0$, there exists $c=c(M)>0$ such that
\begin{equation}\label{eqn:cond-stab-def}
  \left\|x_1-x_2\right\|_{\mathcal{X}} \leq c(M) \omega\left(\left\|Kx_1-Kx_2\right\|_{\mathcal{Y}}\right),\quad \forall x_1, x_2 \in \mathcal{A}_M \cap \mathcal{Q}.
\end{equation}
The function $\omega$ is called the modulus of the conditional stability.
\end{definition}

Note that the stability estimate \eqref{eqn:cond-stab-def} holds with respect to the $\|\cdot\|_{\mathcal{X}}$-norm only on the set $\mathcal{A}_M\cap \mathcal{Q}$, and hence the term \textit{conditional} stability. The gap between $\mathcal{Z}$ and $\mathcal{X}$ indicates the degree of ill-posedness of the inverse problem. Due to the ill-posedness nature of the inverse problem, one has to employ the idea of regularization for stable recovery. The most prominent method is Tikhonov regularization. It amounts to minimizing the following functional
\begin{equation}\label{eqn:abs-obj}
    J_\gamma(x) :=\|Kx-y\|_{\mathcal{Y}}^2 + \gamma\|x\|_{\mathcal{Z}}^2.
\end{equation}
The regularizing parameter
$\gamma>0$ balances the data discrepancy and regularization term. Note that the penalty $\|\cdot\|_{\mathcal{Z}}^2$ is chosen according to the stability estimate \eqref{eqn:cond-stab-def}, and conditional stability is directly incorporated into the construction of the numerical scheme. In practice, one can only access noisy data $y^\dag\approx y^\delta$, for which the regularized model \eqref{eqn:abs-obj} admits a minimizer  $x_\gamma^\delta\equiv x_\gamma(y^\delta)$ under minor conditions, also known as the regularized solution. The \textit{a priori} strategy to select the scalar $\gamma$ is crucial in obtaining the convergence of the regularized solutions as the noise level $\delta$ tends to zero. Generally one requires the condition $\lim _{\delta \rightarrow 0^{+}} \gamma(\delta)^{-1} \delta^2=\lim _{\delta \rightarrow 0^{+}} \gamma(\delta)=0$, and to derive  convergence rates, one has to additionally assume suitable source conditions. In a Hilbert space setting (i.e., both $\mathcal{Y}$ and $\mathcal{Z}$ are Hilbert spaces), it often takes the form of the range condition that the exact solution $x^\dag$ belongs to the range of fractional powers $(K^*K)^\nu$, $\nu>0$. However, characterizing the range condition is challenging for most parameter identification problems. Condition stability provides an \textit{a priori} parameter choice \cite[Theorem 2.1]{ChengYamamoto:2000}.

\begin{theorem}\label{thm:cond-stab in abstract form}
Let the conditional stability in Definition \ref{def:cond_stab} hold, and
\begin{equation*}
  Kx^\dag=y^\dag, \quad\mbox{with } x^\dag\in \mathcal{Q} \cap \mathcal{Z}.
\end{equation*}
Let $x_\gamma^\delta \in \mathcal{Q} \cap \mathcal{Z}$ satisfy
\begin{equation*}
 J_\gamma(x_\gamma^\delta) \leqslant \inf _{x \in \mathcal{Q} \cap \mathcal{Z}} J_\gamma(x) + c_0\delta^2.
\end{equation*}
Then for the choice $\gamma = O(\delta^2)$, there holds
\begin{equation*}
 \left\|x^\dag - x_\gamma^\delta\right\|_{\mathcal{X}}=\mathrm{O}(\omega(\delta)).
\end{equation*}
\end{theorem}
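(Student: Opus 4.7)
The proof is a three-step argument following the Cheng--Yamamoto paradigm: (i) exploit the near-optimality of $x_\gamma^\delta$ by comparing $J_\gamma(x_\gamma^\delta)$ against $J_\gamma(x^\dag)$ to control simultaneously the residual $\|Kx_\gamma^\delta-y^\delta\|_{\mathcal{Y}}$ and the penalty $\|x_\gamma^\delta\|_{\mathcal{Z}}$; (ii) convert the penalty bound into an a priori admissibility statement $x_\gamma^\delta\in\mathcal{A}_M\cap\mathcal{Q}$ for an $M$ depending only on $\|x^\dag\|_{\mathcal{Z}}$ and $c_0$, so that the conditional stability of Definition \ref{def:cond_stab} is applicable to the pair $(x^\dag,x_\gamma^\delta)$; and (iii) feed the residual bound into the stability estimate through the triangle inequality to obtain the asserted rate.

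For step (i), since $x^\dag\in\mathcal{Q}\cap\mathcal{Z}$ is admissible in the infimum, the near-optimality gives
\begin{equation*}
    J_\gamma(x_\gamma^\delta)\le J_\gamma(x^\dag)+c_0\delta^2 = \|Kx^\dag-y^\delta\|_{\mathcal{Y}}^2+\gamma\|x^\dag\|_{\mathcal{Z}}^2+c_0\delta^2 \le (1+c_0)\delta^2+\gamma\|x^\dag\|_{\mathcal{Z}}^2,
\end{equation*}
using the definition of the noise level $\delta=\|y^\delta-y^\dag\|_{\mathcal{Y}}$ and $Kx^\dag=y^\dag$. Splitting the two nonnegative summands of $J_\gamma(x_\gamma^\delta)$ yields separately the residual bound $\|Kx_\gamma^\delta-y^\delta\|_{\mathcal{Y}}^2\le (1+c_0)\delta^2+\gamma\|x^\dag\|_{\mathcal{Z}}^2$ and the penalty bound $\gamma\|x_\gamma^\delta\|_{\mathcal{Z}}^2\le (1+c_0)\delta^2+\gamma\|x^\dag\|_{\mathcal{Z}}^2$.

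For step (ii), reading the parameter choice $\gamma=O(\delta^2)$ as $\gamma\asymp \delta^2$, the penalty bound implies $\|x_\gamma^\delta\|_{\mathcal{Z}}\le M$ with $M$ depending only on $\|x^\dag\|_{\mathcal{Z}}$ and $c_0$. Hence $x^\dag, x_\gamma^\delta\in\mathcal{A}_M\cap\mathcal{Q}$, and the estimate \eqref{eqn:cond-stab-def} is available for this pair. For step (iii), the residual bound combined with $\gamma=O(\delta^2)$ gives $\|Kx_\gamma^\delta-y^\delta\|_{\mathcal{Y}}\le C\delta$, whence the triangle inequality produces
\begin{equation*}
    \|Kx_\gamma^\delta-Kx^\dag\|_{\mathcal{Y}}\le \|Kx_\gamma^\delta-y^\delta\|_{\mathcal{Y}}+\|y^\delta-y^\dag\|_{\mathcal{Y}}\le (C+1)\delta.
\end{equation*}
Invoking \eqref{eqn:cond-stab-def} and the monotonicity of $\omega$ yields $\|x^\dag-x_\gamma^\delta\|_{\mathcal{X}}\le c(M)\,\omega((C+1)\delta)=O(\omega(\delta))$.

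The main obstacle is not analytical but a careful bookkeeping of constants and of the implicit two-sided reading of $\gamma=O(\delta^2)$: without the matching lower bound $\gamma\gtrsim\delta^2$ the penalty estimate in step (ii) degenerates and $x_\gamma^\delta$ cannot be certified to lie in a fixed admissible set, so the stability estimate would be inapplicable. The final identification $\omega((C+1)\delta)=O(\omega(\delta))$ further presumes that $\omega$ scales reasonably with constants near the origin, which holds for the Hölder and logarithmic moduli encountered in the concrete inverse problems of Sections \ref{sec:FEM}--\ref{sec:NN} and is harmless in the abstract formulation.
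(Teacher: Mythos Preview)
Your proof is correct and follows precisely the standard Cheng--Yamamoto argument. The paper does not supply its own proof of this theorem but merely cites \cite[Theorem 2.1]{ChengYamamoto:2000}; your three-step plan (comparison with $J_\gamma(x^\dag)$, uniform $\mathcal{Z}$-bound under $\gamma\asymp\delta^2$, triangle inequality plus conditional stability) is exactly the argument in that reference. Your caveats about the two-sided reading of $\gamma=O(\delta^2)$ and the mild scaling assumption $\omega(C\delta)=O(\omega(\delta))$ are well placed and accurately identify the implicit hypotheses needed for the statement as written.
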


\begin{remark}
Theorem \ref{thm:cond-stab in abstract form} gives an a priori choice of the regularization parameter $\gamma$, i.e., $\gamma=O(\delta^2)$.  This choice differs markedly from the conventional requirement $\lim_{\delta\to 0^+}\gamma(\delta)^{-1}\delta^2 =\lim_{\delta\to0^+}\gamma(\delta)= 0$ in the standard theory of Tikhonov regularization.
\end{remark}

The modulus of continuity  $\omega$ in Theorem \ref{thm:cond-stab in abstract form} determines the convergence rate of the continuous regularized solution $x_\gamma^\delta$. It is emphasizing that
the overall approach in \eqref{eqn:abs-obj} is specifically adapted to the conditional stability. However, for many PDE inverse problems, the stability estimates often impose very strong regularity assumptions on the concerned class of parameters or the data, due to the proof strategy via Carleman estimates. Consequently, many existing stability estimates are not directly amenable with numerical treatments, e.g., $\|x\|_\mathcal{Z}=\|x\|_{C^2(\overline{\Omega})}$. There is a need to develop stability estimates that are more amenable with numerical treatment.

Below we illustrate conditional stability on diffusion coefficient identification. These results will serve as the benchmark for numerical analysis. Further results on conditional stability can be found in Section \ref{sec:NN}.

\subsection{Diffusion coefficient identification}\label{subsec:cond_IDP}

The stability of  diffusion coefficient identification has been extensively studied in the literature \cite{Richter:1981,Richter:1981numer,Falk:1983,Alessandrini:1986,Alessandrini:2017,Bonito:2017}. One of the earliest works, due to  Falk \cite{Falk:1983}, studied the second-order elliptic equation with a Neumann boundary condition over a smooth domain $\Omega\subset \mathbb{R}^2$:
\begin{equation}\label{eqn:gov_Falk}
    \left\{\begin{aligned}
      -\nabla\cdot\big(a\nabla u\big) &= f,\quad \mbox{in}\ \Omega,\\
      a\partial_{\nu} u &= g, \quad \mbox{on}\ \partial\Omega,
      \end{aligned}\right.
\end{equation}
where $\nu$ is the unit outward normal to the boundary $\partial\Omega$. The given functions $f$ and $g$ are the known source and boundary flux, respectively, satisfying the compatibility condition $ (f,1) + ( g,1)_{L^2(\partial\Omega)} = 0$. The solution $u\equiv u(a)$ satisfies the normalization condition $(u,1) = 0$. The diffusion coefficient $a$ belongs to the following admissible set
\begin{equation}\label{eqn:box_ad}
    \mathcal{A}:=\{a\in H^1(\Omega): 0<\underline{c}_a \leq a \leq \overline{c}_a<\infty\ \mbox{a.e. in}\ \Omega\}.
\end{equation}
Falk \cite{Falk:1983} derived the following stability estimate of \eqref{eqn:gov_Falk} using an energy argument under a structural assumption.
\begin{theorem}\label{thm:Falk_stab}
Let $a,a^\dag\in \mathcal{A}$ satisfy
    \begin{equation*}
       \max(\|a^\dagger\|_{H^1(\Omega)}),\|a\|_{H^1(\Omega)})\le M.
    \end{equation*}
Let $u\equiv u(a)$ and $u^\dag\equiv u(a^\dag)$ be the solutions to problem \eqref{eqn:gov_Falk} corresponding to $a$ and $a^\dagger$, respectively. Suppose that $u\in H^1(\Omega)$, $u^\dagger\in H^1(\Omega)\cap W^{2,\infty}(\Omega)$ and the following non-vanishing gradient condition holds    \begin{equation}\label{eqn:Falk_cond}
        \begin{aligned}
            &\exists \bm\nu\in\mathbb{S}^1 \mbox{ and }c_{\bm\nu}>0 \text{ s.t. }\nabla u^\dag(x)\cdot\bm\nu>c_{\bm\nu},\quad \forall  x\in\Omega.
        \end{aligned}
    \end{equation}
    Then there exists $c$ depending on $\overline{c}_a,M,c_{\bm\nu},\bm\nu,\Omega$ and $\|u^\dagger\|_{W^{2,\infty}(\Omega)}$ such that
\begin{equation}\label{eqn:Falk_stab}
        \|a-a^\dagger\|_{L^2(\Omega)}\le c\|\nabla (u-u^\dagger) \|_{L^2(\Omega)}^{\frac{1}{2}},
    \end{equation}
\end{theorem}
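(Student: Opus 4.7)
The plan is to convert the problem of estimating $a - a^\dagger$ into the inversion of the first-order (transport-like) operator that appears when the two governing PDEs are subtracted, and to exploit the non-vanishing gradient direction $\bm\nu$ to carry out this inversion in weak form. Writing $w = u - u^\dagger$ and $e = a - a^\dagger$ and subtracting the two Neumann problems, I first derive
\begin{equation*}
-\nabla \cdot (a \nabla w + e \nabla u^\dagger) = 0 \text{ in } \Omega, \qquad (a \nabla w + e \nabla u^\dagger) \cdot \nu = 0 \text{ on } \partial\Omega,
\end{equation*}
with weak form $\int_\Omega a \nabla w \cdot \nabla \psi \,dx + \int_\Omega e \nabla u^\dagger \cdot \nabla \psi \,dx = 0$ for every $\psi \in H^1(\Omega)$. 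Testing with $\psi = w$ only yields $\|\nabla w\|_{L^2} \le c \|e\|_{L^2}$, which is the opposite of what is needed, so the core task is to pick a more delicate test function.

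The key step is to choose $\psi$ so that the second integral essentially reproduces $\|e\|_{L^2}^2$. After a rotation placing $\bm\nu$ along the $x_1$-axis, the hypothesis \eqref{eqn:Falk_cond} becomes $\partial_1 u^\dagger \ge c_{\bm\nu} > 0$ throughout $\overline{\Omega}$, so the first-order operator $\nabla u^\dagger \cdot \nabla (\cdot)$ is nondegenerate in the $x_1$-direction and admits an approximate inverse via integration along characteristics. Taking, for instance,
\begin{equation*}
\psi(x_1, x_2) = \int_{x_1^{\rm in}(x_2)}^{x_1} \frac{e(r, x_2)}{\partial_1 u^\dagger(r, x_2)} \,dr,
\end{equation*}
where $x_1^{\rm in}(x_2)$ is the inflow boundary point at height $x_2$, yields $\nabla u^\dagger \cdot \nabla \psi = e + \partial_2 u^\dagger \, \partial_2 \psi$. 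Substituting into the weak identity produces
\begin{equation*}
\|e\|_{L^2(\Omega)}^2 = -\int_\Omega a \nabla w \cdot \nabla \psi \,dx - \int_\Omega e \, \partial_2 u^\dagger \, \partial_2 \psi \,dx.
\end{equation*}

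The first term on the right is bounded by $\overline{c}_a \|\nabla w\|_{L^2} \|\nabla \psi\|_{L^2}$, while differentiating the definition of $\psi$ and using $\partial_1 u^\dagger \ge c_{\bm\nu}$, $u^\dagger \in W^{2,\infty}(\Omega)$ and the trace theorem delivers $\|\nabla \psi\|_{L^2(\Omega)} \le c \|e\|_{H^1(\Omega)}$. After the transverse cross term is handled analogously (by integrating by parts in $x_2$ to transfer the derivative off $\psi$ and then invoking the $W^{2,\infty}$ regularity of $u^\dagger$ to control the resulting second-order terms), one arrives at
\begin{equation*}
\|e\|_{L^2(\Omega)}^2 \le c \, \|\nabla w\|_{L^2(\Omega)} \, \|e\|_{H^1(\Omega)}.
\end{equation*}
Combining this with the admissibility bound $\|e\|_{H^1(\Omega)} \le \|a\|_{H^1(\Omega)} + \|a^\dagger\|_{H^1(\Omega)} \le 2M$ immediately yields the $1/2$-Hölder estimate \eqref{eqn:Falk_stab}.

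The hard part is the transverse cross term $\int_\Omega e \, \partial_2 u^\dagger \, \partial_2 \psi \,dx$: since $\nabla u^\dagger$ is not exactly parallel to $\bm\nu$, a naive Cauchy--Schwarz reproduces $\|e\|_{L^2}$ on the right and cannot be absorbed into the left for small $\|\nabla w\|_{L^2}$. Disposing of it cleanly requires one or two integrations by parts that convert $\partial_2 \psi$ into integrals of derivatives of $e/\partial_1 u^\dagger$, which are $L^2$-bounded by $\|e\|_{H^1(\Omega)}$; the hypothesis $u^\dagger \in W^{2,\infty}(\Omega)$ is exactly what is needed to keep the resulting second-derivative terms under control. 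This mechanism is also the source of the exponent $1/2$, in contrast with the Lipschitz stability available in one spatial dimension.
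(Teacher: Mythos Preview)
Your characteristics-based approach is not the route the paper takes, and the step you flag as ``the hard part'' is precisely where the argument fails. The cross term $\int_\Omega e\,\partial_2 u^\dagger\,\partial_2\psi\,dx$ contains no dependence on $w$ whatsoever, so no integration by parts can convert it into something bounded by $\|\nabla w\|_{L^2(\Omega)}\|e\|_{H^1(\Omega)}$. Moving $\partial_2$ off $\psi$ produces $\int_\Omega \psi\,(\partial_2 e)\,\partial_2 u^\dagger$ and $\int_\Omega \psi\,e\,\partial_{22}u^\dagger$; since $\|\psi\|_{L^2}\le c\|e\|_{L^2}$ from its integral definition, these are bounded by $c\|e\|_{L^2}\|e\|_{H^1}$ and $c\|e\|_{L^2}^2$ --- still an $\|e\|_{L^2}$ factor on the right, which cannot be absorbed into the left-hand $\|e\|_{L^2}^2$ without a smallness assumption you do not have. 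The claimed inequality $\|e\|_{L^2}^2\le c\|\nabla w\|_{L^2}\|e\|_{H^1}$ therefore does not follow from your construction.

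The paper (following Falk) avoids this via a Carleman-type weight. One tests the identity $((a^\dag-a)\nabla u^\dag,\nabla\varphi)=(a\nabla(u-u^\dag),\nabla\varphi)$ with $\varphi = (a^\dag-a)\,e^{-2k\bm x\cdot\bm\nu}$, where $k>\|\Delta u^\dag\|_{L^\infty(\Omega)}/(2c_{\bm\nu})$. Writing $(a^\dag-a)\nabla u^\dag\cdot\nabla(a^\dag-a)=\tfrac12\nabla u^\dag\cdot\nabla[(a^\dag-a)^2]$ and integrating by parts once, the left side collapses to
\[
\int_\Omega (a^\dag-a)^2\Big(k\,\nabla u^\dag\!\cdot\!\bm\nu + \tfrac12\Delta u^\dag\Big)e^{-2k\bm x\cdot\bm\nu}\,dx,
\]
and the choice of $k$ makes the bracket uniformly positive. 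The right side is bounded by $\overline{c}_a\|\nabla(u-u^\dag)\|_{L^2}\|\nabla\varphi\|_{L^2}\le cM\|\nabla(u-u^\dag)\|_{L^2}$, giving \eqref{eqn:Falk_stab}. The exponential weight supplies a free large parameter: differentiating it yields the dominant term $-2k(\nabla u^\dag\cdot\bm\nu)(a^\dag-a)^2$, which absorbs all lower-order contributions. Your transport inversion has no analogous mechanism. If you want to salvage the characteristics idea, integrate along the flow lines of $\nabla u^\dag$ rather than the $\bm\nu$-axis, so that $\nabla u^\dag\cdot\nabla\psi=e$ exactly and the cross term never appears; this is workable under $u^\dag\in W^{2,\infty}$ but considerably more delicate than Falk's one-line test function.
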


The proof relies on the weak formulations of $u^\dagger$ and $u(a)$ with the test function $\varphi= e^{-2k \bm x \cdot \bm\nu }(a^\dag-a)$, with the constant $k>\|\Delta u^\dagger\|_{L^{\infty}(\Omega)}/(2c_{\bm\nu})$. Indeed, direct computation leads to the following weighted stability
\begin{align*}
   \int_\Omega(a^\dag - a)^2\big( k\nabla u^\dag\!\cdot\!\bm\nu + \tfrac12\Delta u^\dag \big)e^{-2k\bm x\cdot\bm \nu}\ {\rm d}x \leq  c\|\nabla(u - u^\dag)\|_{L^2(\Omega)},
\end{align*}
where $c$ depends on $\overline{c}_a,k,M,\bm\nu$ and $\Omega$.  By the choice of the constant $k$, we can remove the weight function and get the desired $L^2(\Omega)$ estimate.
\begin{remark}\label{rmk:IDP_abst}
Theorem \ref{thm:Falk_stab} is an example of conditional stability: $\mathcal{X}=L^2(\Omega)$, $\mathcal{Y}=H^1(\Omega)$ and $K:\mathcal{X}\to \mathcal{Y}$ with $K(a)=u$ denotes the coefficient to solution map. The problem data lies in $\mathcal{Z}=H^1(\Omega)$, and $\mathcal{U}_M=\left\{a \in H^1(\Omega):\|a\|_{H^1(\Omega)} \leq M\right\}$. The constant $c$ in the estimate \eqref{eqn:Falk_stab}
depends on both the upper bound $M$ and the quantity $\|u^\dagger\|_{W^{2,\infty}(\Omega)}$. Indeed, by the elliptic regularity theory, further regularity conditions on $a^\dagger$ should be imposed in order to ensure $u^\dagger\in W^{2,\infty}(\Omega)$.
\end{remark}
%\begin{proof}
%    By the weak formulations of $u^\dag$ and $u$, there holds
%    \begin{equation*}
%        ((a^\dag-a)\nabla u^\dag,\nabla \varphi) = (a\nabla(u -u^\dag ),\nabla \varphi),\quad \forall \varphi\in H^1(\Omega).
%    \end{equation*}
%    Let $\varphi =  e^{-2k \bm x \cdot \bm\nu }(a^\dag-a) $, with constant $k>\|\Delta u^\dagger\|_{L^{\infty}(\Omega)}/(2c_{\bm\nu})$.
%    Then direct computation leads to
%    \begin{align*}
%        \|\nabla \varphi\|_{L^2(\Omega)} \leq& 2k\| e^{-2k \bm x \cdot \bm\nu } \|_{L^\infty(\Omega)} \| a^\dagger-a  \|_{L^\infty(\Omega)} \\
%        &+  \| e^{-2k \bm x \cdot \bm\nu } \|_{L^\infty(\Omega)} \|\nabla (a^\dagger-a) \|_{L^2(\Omega)} \leq c,
%    \end{align*}
%    with $c$ depends on $\overline{c}_a,k,\bm\nu,\Omega,\|\nabla a^\dagger\|_{L^2(\Omega)}$ and $\|\nabla a\|_{L^2(\Omega)}$.
%    Hence $\varphi\in H^1(\Omega)$. By H\"{o}lder's inequality, we arrive at
%    \begin{equation*}
%        ((a^\dag-a)\nabla u^\dag,\nabla \varphi) \leq c\|\nabla( u^\dag - u)\|_{L^2(\Omega)}.
%    \end{equation*}
%    It remains to estimate the term $((a^\dag-a)\nabla u^\dag,\nabla \varphi)$. Indeed,
%\end{proof}

The stability result in Theorem \ref{thm:Falk_stab} relies on the condition \eqref{eqn:Falk_cond}, i.e.,  $\nabla u^\dagger$ does not vanish along a given direction $\nu$. This type of non-vanishing gradient assumption was investigated in \cite{Alessandrini:1986} with a zero source and a nonzero Dirichlet boundary condition in the two-dimensional case:
\begin{equation}\label{eqn:gov_Alessandrini}
    \left\{\begin{aligned}-\nabla\!\cdot\!\big(a\nabla u\big) &= 0,\quad \mbox{in}\ \Omega\subset \mathbb{R}^2,\\
    u &= g, \quad \mbox{on}\ \partial\Omega,
    \end{aligned}\right.
\end{equation}
and the following weighted stability result holds \cite[Lemma 2.1]{Alessandrini:1986}.
\begin{lemma}\label{lem:Alessandrini_weight_stab}
    Let the domain $\Omega\subset \mathbb{R}^2$ be bounded, $C^2$-smooth and simply connected.  Let $g\in C^2(\partial\Omega)$ and  $a^\dagger, a\in W^{1,\infty}(\Omega)\cap\mathcal{A}$ satisfy
    \begin{equation*}
       \max(\|a^\dagger\|_{W^{1,\infty}(\Omega)},\|a\|_{W^{1,\infty}(\Omega)})\le M.
    \end{equation*}
    Suppose $a=a^\dagger$ on $\partial\Omega$. Then with $u^\dag\equiv u(a^\dag)$ and $u\equiv u(a)$, there holds
    \begin{equation}\label{eqn:Alessandrini_weight_stab}
        \int_\Omega|a^\dagger-a||\nabla u^\dagger|^2\d x\le c\|u^\dagger-u\|_{L^2(\Omega)}^\frac14,
    \end{equation}
    where $c$ only depends on $\underline{c}_a$, $\overline{c}_a$, $M$, $\Omega$, $\theta$ and $\|g\|_{C^2(\partial\Omega)}$.
\end{lemma}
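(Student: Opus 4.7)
The plan is to follow the two-dimensional complex-analytic approach introduced by Alessandrini. Since $\Omega\subset\mathbb{R}^2$ is simply connected and both $a\nabla u$ and $a^\dagger\nabla u^\dagger$ are divergence-free, I would introduce stream functions $v,v^\dagger$ satisfying $\nabla^\perp v=a\nabla u$ and $\nabla^\perp v^\dagger=a^\dagger\nabla u^\dagger$, unique up to additive constants that I fix so that $v=v^\dagger$ on $\partial\Omega$ (legitimate because $a=a^\dagger$ and $u=u^\dagger=g$ on the boundary force the tangential components of the two fluxes to agree there). The complex maps $w=u+iv$ and $w^\dagger=u^\dagger+iv^\dagger$ are then $K$-quasi-regular in $\Omega$ with $K$ depending only on $\underline{c}_a$ and $\overline{c}_a$, agree on $\partial\Omega$, and obey the Jacobian identity $|\mathrm{Jac}(w^\dagger)|=a^\dagger|\nabla u^\dagger|^2$.

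First I would derive the key signed identity. Writing $(a-a^\dagger)\nabla u^\dagger = \nabla^\perp(v-v^\dagger) - a\nabla(u-u^\dagger)$ and dotting with $\nabla u^\dagger$, the first term produces the Jacobian $J(u^\dagger,v-v^\dagger)=\nabla u^\dagger\cdot\nabla^\perp(v-v^\dagger)$, which can be written as the divergence of a quantity linear in $u^\dagger$ and $v-v^\dagger$; its integral over $\Omega$ vanishes because $v-v^\dagger=0$ on $\partial\Omega$. Integrating by parts the remaining term (using $u-u^\dagger=0$ on $\partial\Omega$ together with elliptic regularity for $u^\dagger$) then yields the signed bound
\begin{equation*}
\Bigl|\int_\Omega(a-a^\dagger)|\nabla u^\dagger|^2\,dx\Bigr|\le C\|u-u^\dagger\|_{L^2(\Omega)},
\end{equation*}
with $C$ depending on $\underline{c}_a,\overline{c}_a,M,\Omega$ and $\|g\|_{C^2(\partial\Omega)}$. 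This signed bound is actually stronger than the lemma, but its left-hand side allows for cancellation and does not control the absolute-value integral on its own.

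To convert the signed estimate into the absolute-value bound of the lemma, I would change variables via the quasi-regular map $w^\dagger$. Using $|\mathrm{Jac}(w^\dagger)|=a^\dagger|\nabla u^\dagger|^2$ to rewrite $\int_\Omega|a-a^\dagger||\nabla u^\dagger|^2$ as an integral over $w^\dagger(\Omega)$ against $(|a-a^\dagger|/a^\dagger)\circ(w^\dagger)^{-1}$, one reduces the problem to an $L^1$ bound on the image. Away from the finite critical set of $u^\dagger$ (the discrete structure of critical points being a classical feature of 2D divergence-form solutions), the inverse $(w^\dagger)^{-1}$ is H\"older continuous with exponent depending only on $K$, and the difference $w-w^\dagger$ admits interior Cauchy-type estimates controlling $\|w-w^\dagger\|_{L^\infty}$ on compact subsets by a positive power of $\|w-w^\dagger\|_{L^2(\Omega)}$; the latter is in turn controlled by $\|u-u^\dagger\|_{L^2(\Omega)}$ via the relation $\nabla^\perp(v-v^\dagger)=(a-a^\dagger)\nabla u^\dagger+a\nabla(u-u^\dagger)$ together with Poincar\'e and standard interpolation.

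The main technical obstacle will be producing the precise exponent $\tfrac14$. It should arise from an interpolation: on a shrinking neighbourhood of the critical set of $u^\dagger$ together with a thin boundary layer, the trivial a priori bound $\int|a-a^\dagger||\nabla u^\dagger|^2\le 2M\|\nabla u^\dagger\|_{L^2(\Omega)}^2\le C$ suffices; on the complement, the refined estimate coming from the change of variables applies. Optimizing the size of these removed regions converts an $L^\infty$-type control of $w-w^\dagger$ by a power of $\|u-u^\dagger\|_{L^2(\Omega)}$ into the $\tfrac14$-exponent in the statement. Keeping all constants uniform in $a\in\mathcal{A}\cap W^{1,\infty}$ with $\|a\|_{W^{1,\infty}}\le M$, and ensuring that the quasi-regularity constant, the H\"older exponent of $(w^\dagger)^{-1}$, and the critical-set structure only depend on the permitted quantities $\underline{c}_a,\overline{c}_a,M,\Omega,\theta,\|g\|_{C^2(\partial\Omega)}$, is the delicate part of the argument. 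Both the two-dimensionality and the simple-connectedness of $\Omega$ are essential: the former gives the isolated structure of critical points of $u^\dagger$, and the latter guarantees global existence of the stream functions.
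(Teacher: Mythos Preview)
Your proposal takes a genuinely different route from the paper's, and parts of it are underspecified enough that I would call them gaps rather than a viable alternative.

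The paper's argument is a direct energy method with a truncated test function and avoids the complex-analytic machinery entirely. One inserts $\varphi=\lambda^{-1}\min\{(a^\dagger-a)_+,\lambda\}\,u$ (and symmetrically with $(a^\dagger-a)_-$) into the identity
\[
((a^\dagger-a)\nabla u^\dagger,\nabla\varphi)=(a\nabla(u-u^\dagger),\nabla\varphi).
\]
This choice automatically separates the domain according to the level $\lambda$: on $\{|a^\dagger-a|\le\lambda\}$ one bounds $\int|a^\dagger-a||\nabla u^\dagger|^2$ trivially by $c\lambda$, while on $\{|a^\dagger-a|>\lambda\}$ the test function equals $u$ and the contribution is bounded by $c\lambda^{-1}\|\nabla(u^\dagger-u)\|_{L^2(\Omega)}$. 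An interpolation between $L^2$ and $H^2$ turns the gradient norm into $\|u^\dagger-u\|_{L^2(\Omega)}^{1/2}$, and optimizing $\lambda=\|u^\dagger-u\|_{L^2(\Omega)}^{1/4}$ yields the stated exponent cleanly. No stream functions, no quasiregularity, no critical-point analysis are needed for this lemma; those tools enter only \emph{afterwards}, in removing the weight $|\nabla u^\dagger|^2$ to obtain Theorem~\ref{thm:Alessandrini_stab}.

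The weak point of your plan is the passage from the signed integral $\int(a-a^\dagger)|\nabla u^\dagger|^2$ to the absolute-value integral. Rewriting $\int_\Omega|a-a^\dagger||\nabla u^\dagger|^2$ via the change of variables $w^\dagger$ simply transfers the problem to the image domain without producing any new cancellation or smallness; you would still need an independent mechanism to control the absolute value, and the sketch you give (interior Cauchy-type estimates on $w-w^\dagger$, then optimizing over a removed neighbourhood of the critical set) does not obviously deliver a bound on $\int|a-a^\dagger||\nabla u^\dagger|^2$ at all---it would rather bound $|a-a^\dagger|$ pointwise away from critical points, which is a different (and harder) statement. Your own admission that ``the main technical obstacle will be producing the precise exponent $\tfrac14$'' reflects this: in the paper's argument the exponent falls out of a one-line optimization, whereas in your scheme its origin is unclear.
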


The proof of Lemma \ref{lem:Alessandrini_weight_stab} is based on an energy argument using test functions $\pm \varphi$ with $\varphi=\lambda^{-1}\min\{ (a^\dagger-a)_+,\lambda\} u$, with $\lambda>0$ to be chosen and $(a^\dagger-a)_+ $ being the positive part. Then $\varphi=0$ if $a^\dagger-a\le 0 $, $\varphi= \lambda^{-1}  ( a^\dagger-a ) u $ if $0< a^\dagger-a\le \lambda $ and $\varphi=   u $ if $ a^\dagger-a > \lambda $. Using the weak formulation of \eqref{eqn:gov_Alessandrini}, we have
\begin{equation*}
    ((a^\dagger-a)\nabla u^\dag, \nabla\varphi )= (a\nabla (u- u^\dagger), \nabla\varphi).
\end{equation*}
The choice of the test function $\varphi$ enables controlling $\int_{|a^\dagger-a|\le \lambda}|a^\dagger-a||\nabla u^\dagger|^2\d x$ by $\lambda$ and controlling  $\int_{|a^\dagger-a|> \lambda}|a^\dagger-a||\nabla u^\dagger|^2\d x$ by $\|\nabla(u^\dagger-u)\|_{L^2(\Omega)}$ and $\lambda^{-1}$. By the interpolation inequality and taking $\lambda=\|u^\dagger-u\|_{L^2(\Omega)}^{\frac14}$, we obtain the desired result.

To remove the weighted function $|\nabla u^\dagger|^2$, Alesandrini \cite{Alessandrini:1986} proposed the following structural condition on the boundary data $g$ (when $f\equiv0$):
\begin{equation}\label{eqn:Alessandrini_cond}
    \text{$g$ has $N$ relative maxima and minima on $\partial\Omega$,}
\end{equation}
with $N$ being the total number of maxima and minima of the Dirichlet boundary condition $g$ over the boundary $\partial\Omega$.
Under condition \eqref{eqn:Alessandrini_cond}, $|\nabla u^\dagger|$ vanishes only at a finite number of interior points and only with a finite multiplicity. Moreover, the number of critical points and their multiplicities can be controlled in terms of $N$, which yields a lower bound on the weight $|\nabla u^\dag|$. The following stability result is direct from Lemma \ref{lem:Alessandrini_weight_stab} and the estimate of the lower bound on $|\nabla u^\dagger|$.
\begin{theorem}\label{thm:Alessandrini_stab}
    Let the hypothesis of Lemma \ref{lem:Alessandrini_weight_stab} and condition \eqref{eqn:Alessandrini_cond} hold. Then for any $\rho>0$, there holds
    \begin{equation}\label{eqn:Alessandrini_stab}
        \|a^\dagger-a\|_{L^\infty(\Omega_{\rho})}\le c\|u^\dagger-u\|_{L^2(\Omega)}^{\frac{1}{4(2N+1)}},
    \end{equation}
    where $\Omega_{\rho}=\{x\in \Omega: \mathrm{dist}(x,\partial\Omega)\ge \rho\}$ and $c$ depends only on $\underline{c}_a$, $ \overline{c}_a$, $M$, $\Omega$, $\rho$, $\|g\|_{C^2(\partial\Omega)}$ and $\max_{\partial\Omega} g-\min_{\partial\Omega} g$.
\end{theorem}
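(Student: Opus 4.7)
The plan is to upgrade the weighted $L^1$-type stability of Lemma \ref{lem:Alessandrini_weight_stab} to an interior $L^\infty$ bound by combining it with quantitative information on the zero set of the weight $|\nabla u^\dag|^2$ supplied by the topological condition \eqref{eqn:Alessandrini_cond}. The first, and key, ingredient I would invoke is the classical Hartman--Wintner / Alessandrini structure theorem for critical points of $H^1$ solutions to planar divergence-form elliptic equations: $u^\dag$ has only finitely many interior critical points $x_1,\dots,x_K\in\OOO$, each of a finite integer geometric multiplicity $m_j\ge 1$; condition \eqref{eqn:Alessandrini_cond} forces the total multiplicity to satisfy the topological bound $\sum_{j=1}^K m_j \le N-1$; and there is a quantitative product-type lower bound
\begin{equation*}
|\nabla u^\dag(x)| \ge c_0\prod_{j=1}^{K}|x-x_j|^{m_j},\qquad x\in\ooo{\OOO_{\rho/2}},
\end{equation*}
with $c_0>0$ and the critical-point positions $\{x_j\}$ depending only on $\underline c_a,\overline c_a,M,\OOO,\rho,\|g\|_{C^2(\ppp\OOO)}$ and $\max_{\ppp\OOO}g-\min_{\ppp\OOO}g$.

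With this lower bound in hand, I would argue pointwise. Fix $x_0\in\OOO_\rho$ and a scale $r\in(0,r_0)$ with $r_0=r_0(\rho,N)$ small, to be chosen by optimization. Since there are at most $N-1$ critical points, a simple area-pigeonhole argument inside the ball $B_{c_1 r}(x_0)$, where $c_1=c_1(N)\ge 1$, produces a point $y_0$ with $|y_0-x_0|\le c_1 r$ and $|y_0-x_j|\ge 2r$ for every $j$; the choice of $r_0$ ensures $B_r(y_0)\subset\OOO$. On $B_r(y_0)$ the lower bound yields $|\nabla u^\dag|^2\ge c\,r^{2(N-1)}$, so using \eqref{eqn:Alessandrini_weight_stab},
\begin{equation*}
\int_{B_r(y_0)}|a^\dag-a|\rmd x \le c\,r^{-2(N-1)}\int_{\OOO}|a^\dag-a||\nabla u^\dag|^2\rmd x \le c\,r^{-2(N-1)}\|u^\dag-u\|_{L^2(\OOO)}^{1/4}.
\end{equation*}
The uniform bound $\|a^\dag-a\|_{W^{1,\infty}(\OOO)}\le 2M$ implies that the mean value of $a^\dag-a$ over $B_r(y_0)$ differs from $(a^\dag-a)(y_0)$ by $O(r)$, and $(a^\dag-a)(x_0)$ differs from $(a^\dag-a)(y_0)$ by at most $2Mc_1 r$. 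Combining the three contributions gives
\begin{equation*}
|(a^\dag-a)(x_0)| \le c\,r^{-2N}\|u^\dag-u\|_{L^2(\OOO)}^{1/4} + c\, r,
\end{equation*}
and balancing via $r^{2N+1}=\|u^\dag-u\|_{L^2(\OOO)}^{1/4}$ produces exactly the claimed H\"older exponent $\frac{1}{4(2N+1)}$, uniformly in $x_0\in\OOO_\rho$.

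The principal obstacle is the first ingredient: the topological bound $\sum_j m_j\le N-1$ on interior critical multiplicities together with the quantitative product lower bound on $|\nabla u^\dag|$. This step is genuinely two-dimensional and rests on the complex-analytic structure of gradients of divergence-form elliptic solutions in the plane (in suitable isothermal coordinates, $\ppp_1 u^\dag - i\ppp_2 u^\dag$ satisfies a Beltrami system and locally behaves like a holomorphic function with a zero of order $m_j$ at each $x_j$), combined with a level-set and maximum-principle argument relating the set of interior critical values of $u^\dag$ to the oscillation pattern of $g$ on $\ppp\OOO$; the Lipschitz regularity of $a^\dag$ on $\OOO$ needed to apply Hartman--Wintner also has to be verified from the assumption $a^\dag\in W^{1,\infty}(\OOO)\cap\mathcal{A}$. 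Once this ingredient is granted, the remainder is a routine area-pigeonhole plus one-scale optimization, and the dependence of $c$ on $\rho$ enters only through the requirement that all balls used remain inside $\OOO$.
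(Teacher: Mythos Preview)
Your proposal is correct and follows essentially the same approach as the paper: the paper states only that the result ``is direct from Lemma \ref{lem:Alessandrini_weight_stab} and the estimate of the lower bound on $|\nabla u^\dagger|$,'' the latter coming from the Hartman--Wintner/Alessandrini critical-point structure under condition \eqref{eqn:Alessandrini_cond}. You have spelled out precisely these two ingredients and carried through the interior optimization that yields the H\"older exponent $\tfrac{1}{4(2N+1)}$.
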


\begin{remark}\label{rmk:Alessandrini}
The stability estimate \eqref{eqn:Alessandrini_stab} is valid on the interior domain $\Omega_{\rho}$, since the lower bound estimate for $|\nabla u^\dagger|$ holds only within $\Omega_{\rho}$. Alessandrini et al \cite{Alessandrini:2017} extended the result to higher dimensions $(d\ge 3)$, and derived the  stability estimate on the interior domain $\Omega_{\rho}$ without condition \eqref{eqn:Alessandrini_cond}. They also proposed a structural condition similar to \eqref{eqn:Alessandrini_cond} to derive  the stability on the whole domain $\Omega$.
The argument can also be applied to other elliptic inverse problems, e.g., recovering Lam\'{e} parameters in linear elasticity \cite{Fazio:2017} and Winkler plate \cite{alessandrini:2024stable}.
\end{remark}

More recently Bonito et al \cite{Bonito:2017} established a novel stability estimate under mild assumptions on the problem data (with a zero Dirichlet boundary):
\begin{equation}\label{eqn:inv_cond_gov}
    \left\{\begin{aligned}-\nabla\cdot \big(a\nabla u\big) &= f,\quad \mbox{in}\ \Omega,\\
    u &= 0, \quad \mbox{on}\ \partial\Omega.
    \end{aligned}\right.
\end{equation}
Based on an energy argument and a new test function $\varphi = \frac{a^\dag-a}{a^\dag}u^\dag\in H_0^1(\Omega)$, Bonito et al \cite{Bonito:2017} established the following H{\"o}lder stability estimate.
\begin{theorem}\label{thm:Bonito_stab}
Let $a,a^\dag \in H^1(\Omega)\cap\mathcal{A}$ satisfy $\max(\|a^\dag\|_{H^1(\Omega)}, \|a\|_{H^1(\Omega)})\!\leq\! M$, and $f\in L^\infty(\Omega)$.
Then with $u^\dag\equiv u(a^\dag)$ and $u\equiv u(a)$, there holds
\begin{equation*}    \int_{\Omega}\Big|\frac{a^\dag - a}{a^\dag}\Big|^2\Big(fu^\dag + a^\dag|\nabla u^\dag|^2\Big)\ {\rm d}x \leq c\|\nabla( u^\dag - u)\|_{L^2(\Omega)},
\end{equation*}
where $c$ depends on $\underline{c}_a$,  $\overline{c}_a$, $M$, $d$, $\Omega$ and $\|f\|_{L^\infty(\Omega)}$. Moreover, if in addition, there exists $\beta\ge 0$ such that the following positivity condition holds \begin{equation}\label{eqn:Bonito_cond}
  \mbox{\rm PC}(\beta):\quad fu^\dag + a^\dag|\nabla u^\dag|^2 \geq c_0\dist(x,\partial\Omega)^\beta,
\end{equation}
then there holds
\begin{equation}\label{eqn:Bonito_stab}
        \|a^\dagger-a\|_{L^2(\Omega)} \leq
         c\|\nabla( u^\dagger - u)\|_{L^2(\Omega)}^{\frac{1}{2(1+\beta)}},
    \end{equation}
where $c$ depends on $c_0$, $\underline{c}_a$, $\overline{c}_a$, $M$, $d$, $\Omega$ and $\|f\|_{L^\infty(\Omega)}$.
\end{theorem}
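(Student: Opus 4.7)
The plan is to combine the weak formulations of the two problems and exploit the suggested test function $\varphi = \frac{a^\dag-a}{a^\dag}u^\dag$. First I would subtract the weak forms of the PDEs for $u^\dag = u(a^\dag)$ and $u = u(a)$ (which share source $f$ and zero Dirichlet data) to obtain the master identity
\begin{equation*}
((a^\dag-a)\nabla u^\dag,\nabla v) = (a\nabla(u-u^\dag),\nabla v),\qquad \forall v\in H^1_0(\Omega).
\end{equation*}
Then I would verify that $\varphi\in H^1_0(\Omega)$: the trace vanishes from $u^\dag\in H^1_0(\Omega)$, and the multiplier $w := (a^\dag-a)/a^\dag$ sits in $H^1(\Omega)\cap L^\infty(\Omega)$ thanks to $a^\dag\geq\underline{c}_a>0$ and the $H^1$ bounds on $a,a^\dag$. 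A standard Stampacchia / De Giorgi--Nash--Moser argument using $f\in L^\infty(\Omega)$ and the uniform ellipticity of $a^\dag$ also supplies a uniform $L^\infty$ bound on $u^\dag$, which will be needed below.

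The crux is computing the left-hand side after inserting $v=\varphi$. Writing $\nabla\varphi = u^\dag\nabla w + w\nabla u^\dag$ and using $a^\dag-a = a^\dag w$, the contribution from $w\nabla u^\dag$ immediately produces $\int_\Omega w^2\,a^\dag|\nabla u^\dag|^2\ {\rm d}x$. The cross term can be rewritten via $w\nabla w = \tfrac12\nabla(w^2)$ as $\tfrac12\int_\Omega a^\dag u^\dag\nabla u^\dag\cdot\nabla(w^2)\ {\rm d}x$, and since $u^\dag w^2\in H^1_0(\Omega)$ (using the $L^\infty$ bound on $u^\dag$), inserting it as test function in the PDE for $u^\dag$ yields
\begin{equation*}
\int_\Omega a^\dag u^\dag\nabla u^\dag\cdot\nabla(w^2)\ {\rm d}x = \int_\Omega w^2\bigl(fu^\dag - a^\dag|\nabla u^\dag|^2\bigr)\ {\rm d}x.
\end{equation*}
Combining the two contributions collapses the left-hand side precisely to $\tfrac12\int_\Omega w^2(fu^\dag + a^\dag|\nabla u^\dag|^2)\ {\rm d}x$. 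I would bound the right-hand side $(a\nabla(u-u^\dag),\nabla\varphi)$ by Cauchy--Schwarz, which requires a uniform estimate of $\|\nabla\varphi\|_{L^2(\Omega)}$; this in turn reduces to $\|u^\dag\|_{L^\infty(\Omega)}$, the standard energy bound on $\|\nabla u^\dag\|_{L^2(\Omega)}$, $\|w\|_{L^\infty(\Omega)}\leq 2\overline{c}_a/\underline{c}_a$, and $\|\nabla w\|_{L^2(\Omega)}\leq c(\|\nabla a\|_{L^2(\Omega)}+\|\nabla a^\dag\|_{L^2(\Omega)})\leq cM$. This delivers the first, weighted estimate.

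For the H\"older estimate I would apply PC$(\beta)$ and $a^\dag\leq\overline{c}_a$ to recast the weighted bound as $\int_\Omega|a^\dag-a|^2\dist(x,\partial\Omega)^\beta\ {\rm d}x \leq c\|\nabla(u^\dag-u)\|_{L^2(\Omega)}$, and then split $\Omega = \Omega_\rho\cup(\Omega\setminus\Omega_\rho)$ with $\Omega_\rho = \{x\in\Omega:\dist(x,\partial\Omega)\geq\rho\}$. On $\Omega_\rho$ the weight is bounded below by $\rho^\beta$, giving $\|a^\dag-a\|_{L^2(\Omega_\rho)}^2 \leq c\rho^{-\beta}\|\nabla(u^\dag-u)\|_{L^2(\Omega)}$; on the thin strip $\Omega\setminus\Omega_\rho$ I would use $|a^\dag-a|\leq 2\overline{c}_a$ together with the Lipschitz geometry $|\Omega\setminus\Omega_\rho|\leq c\rho$. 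Summing the two pieces and optimizing in $\rho$, which balances at $\rho\sim\|\nabla(u^\dag-u)\|_{L^2(\Omega)}^{1/(\beta+1)}$, produces the stated exponent $1/(2(\beta+1))$.

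The main obstacle is the algebraic identity producing the quadratic weighted functional: the nonobvious insight is that testing the master identity with $wu^\dag$ and testing the PDE for $u^\dag$ with $w^2u^\dag$ cooperate so that the $fu^\dag$ contribution appears for free from the PDE and upgrades what would otherwise be a merely linear functional of $w$ into a genuinely quadratic one. The remaining technical point is justifying that $u^\dag w^2\in H^1_0(\Omega)$ is an admissible test function, which hinges on the $L^\infty$ bound for $u^\dag$; beyond that step, everything else is energy bookkeeping and a standard weighted-to-unweighted optimization in $\rho$.
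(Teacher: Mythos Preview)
Your proposal is correct and follows essentially the same route as the paper: the same test function $\varphi = \frac{a^\dag-a}{a^\dag}u^\dag$, the same $L^\infty$ bound on $u^\dag$ to control $\|\nabla\varphi\|_{L^2(\Omega)}$, and the identical $\Omega_\rho$/$\Omega\setminus\Omega_\rho$ splitting with optimization in $\rho$ for the second assertion. The only cosmetic difference is in deriving the key identity: the paper integrates by parts once and averages with the original expression, whereas you expand $\nabla\varphi$ and invoke the weak form of $u^\dag$ with the auxiliary test function $u^\dag w^2$; these are equivalent manipulations leading to the same quadratic functional.
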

\begin{proof}
    By the weak formulations of $u^\dag$ and $u(a)$, there holds \begin{equation*}
        ((a^\dag-a)\nabla u^\dag,\nabla \varphi) = (a\nabla(u(a) -u^\dag ),\nabla \varphi),\quad \forall \varphi\in H_0^1(\Omega).
    \end{equation*}
    Let $\varphi = \frac{a^\dag-a}{a^\dag}u^\dag$. Then clearly,
    \begin{align*}
        \nabla \varphi = \frac{\nabla(a^\dag-a)}{a^\dag}u^\dag - \frac{(a^\dag-a)\nabla a^\dag}{(a^\dag)^2}u^\dag + \frac{a^\dag-a}{a^\dag}\nabla u^\dag.
    \end{align*}
    Using the regularity estimate $\|u^\dagger\|_{L^\infty(\Omega)}\le c\|f\|_{L^\infty(\Omega)}$ \cite[Theorem 8.15]{Gilbarg:1977}, where $c$ only depends on $\underline{c}_a$, $\overline{c}_a$, $\Omega$, and the \textit{a priori} estimate
    $$\|\nabla u^\dagger\|_{L^2(\Omega)}\le c\|f\|_{H^{-1}(\Omega)}\le c\|f\|_{L^\infty(\Omega)},$$
we deduce
    \begin{align*}
        \|\nabla \varphi\|_{L^2(\Omega)} &\leq \Big\|\nabla\Big(\frac{a^\dag-a}{a^\dag}\Big)\Big\|_{L^2(\Omega)} \big\|u^\dag\big\|_{L^\infty(\Omega)} + \Big\|\frac{a^\dag-a}{a^\dag}\Big\|_{L^\infty(\Omega)}\big\|\nabla u^\dag\big\|_{L^2(\Omega)} \\
        & \leq c\big(\|\nabla a^\dag\|_{L^2(\Omega)} + \|\nabla a\|_{L^2(\Omega)}+1\big)\|f\|_{L^\infty(\Omega)}\leq c,
    \end{align*}
    with $c$ depending on $\underline{c}_a$, $\overline{c}_a$, $M$, $d$, $\Omega$ and $\|f\|_{L^\infty(\Omega)}$. Hence $\varphi\in H^1_0(\Omega)$. By H\"{o}lder's inequality, we arrive at
    \begin{equation*}
        ((a^\dag-a)\nabla u^\dag,\nabla \varphi) \leq c\|\nabla( u^\dag - u)\|_{L^2(\Omega)}.
    \end{equation*}
For the term $((a^\dag-a)\nabla u^\dag,\nabla \varphi)$, by integration by parts, we get
    \begin{equation*}
  ((a^\dag-a)\nabla u^\dag,\nabla \varphi) = -\Big(\nabla\Big(\frac{a^\dag-a}{a^\dag}\Big),a^\dag \varphi \nabla u^\dag\Big)-\Big(\frac{a^\dag-a}{a^\dag} \varphi,\nabla\cdot(a^\dag\nabla u^\dag)\Big).
    \end{equation*}
Using the relation $f=-\nabla\cdot(a^\dag\nabla u^\dag)$ in $\Omega$, we deduce
\begin{align*}
  ((a^\dag-a)\nabla u^\dag,\nabla \varphi) & = \frac{1}{2}((a^\dag-a)\nabla u^\dag,\nabla \varphi) \\&\quad -\frac12\bigg(\nabla\Big(\frac{a^\dag-a}{a^\dag}\Big),a^\dag \varphi \nabla u^\dag\bigg) +\frac12\Big(\frac{a^\dag-a}{a^\dag} \varphi, f\Big).
\end{align*}
This and the choice $\varphi=\frac{a^\dag-a}{a^\dag}u^\dag$ yield the identity
\begin{equation*}
     ((a^\dag-a)\nabla u^\dag,\nabla \varphi) = \frac12\int_{\Omega}\Big|\frac{a^\dag - a}{a^\dag}\Big|^2\Big(fu^\dag + a^\dag|\nabla u^\dag|^2\Big)\ {\rm d}x.
\end{equation*}
This proves the first assertion. Now we partition the domain $\Omega$ into two disjoint sets $\Omega=\Omega_\rho\cup\Omega_\rho^c$:
\begin{equation*}
 \Omega_\rho = \{ x\in\Omega: ~ \dist(x,\partial\Omega) \ge \rho \} \quad \text{and}\quad  \Omega_\rho^c = \Omega\backslash \Omega_\rho,
\end{equation*}
with $\rho>0$ to be chosen. On the subdomain $\Omega_\rho$, we have
\begin{align*}
 \int_{\Omega_\rho} (a^\dag-a)^2 \,\d x &=  \rho^{-\beta} \int_{\Omega_\rho} (a^\dag-a)^2 \rho^{\beta} \,\d x \\
 &\le  \rho^{-\beta} \int_{\Omega_\rho} (a^\dag-a )^2 \mathrm{dist}(x,\partial\Omega)^\beta \,\d x\\
 &\le  c_0\overline{c}_a^2 \rho^{-\beta} \int_{\Omega_\rho} \Big(\frac{a^\dag-a }{a^\dag}\Big)^2  \big(  a^\dag | \nabla u^\dag  |^2 + fu^\dag \big) \,\d x\\
&\le c\rho^{-\beta}\|\nabla (u^\dagger-u)\|_{L^2(\Omega)}.
\end{align*}
On the subdomain $\Omega_\rho^c$, by the box constraint of $\mathcal{A}$, we have
\begin{align*}
 \int_{\Omega_\rho^c} (a^\dag-a )^2 \,\d x \le   c |\Omega_\rho^c|  \le c\rho.
\end{align*}
By balancing these two terms with the choice $\rho=\|\nabla (u^\dagger-u)\|_{L^2(\Omega)}^{\frac{1}{\beta+1}}$, we arrive at the desired estimate.
\end{proof}
\begin{remark}\label{rmk:Bonito_positive_cond}
To derive the standard $L^2(\Omega)$ estimate, Bonito et al \cite{Bonito:2017} proposed the  positivity condition \eqref{eqn:Bonito_cond}. It holds under mild regularity assumptions on the problem data. For example, PC(2) holds if $\Omega$ is a Lipschitz domain, $a^\dagger \in \mathcal{A}$, and $f \in L^2(\Omega )$ with $f\geq c_f>0$ in $\Omega$. Further, if $\Omega$ is a $C^{2,\mu}$ domain with $\mu\in(0,1)$, $a^\dag\in C^{1,\mu}(\Omega)\cap \mathcal{A}$, $f\in C^\mu(\Omega)$ with $f\geq c_f>0$, then PC(0) holds. These results follow from the strong maximum principle and Schauder estimate for second-order elliptic equations, the decay rate of Green's function near the boundary $\partial\Omega$; see, e.g., \cite[Corollaries 3.4 and 3.8]{Bonito:2017} for details.
\end{remark}
%\begin{remark}\label{rmk:Bonito_const_dep}
%Note the stability results \red{Theorems \ref{thm:FEM_Falk} and  \ref{thm:FEM_Wang}} rely on the special test functions $\varphi=(a^\dagger-a)e^{-2k\bm{x}\cdot \bm{\nu}}$ and $\varphi=(a^\dagger)^{-1}(a^\dagger-a)e^{-2 c_0 \underline{c}_a^{-1} u^\dagger }$, respectively. These exponential type test functions suggests that the constants $c$ appearing in the inequality \eqref{eqn:FEM_Falk_err} and \eqref{eqn:FEM_Wang_err} would increase exponentially with respect to the norm of $u^\dagger$. In contrast, Theorem \ref{thm:Bonito_stab} utilizes the test function $\varphi=(a^\dagger)^{-1}(a^\dagger-a)u^\dagger$, and the constant $c$ in \eqref{ineq:stab-Bonito} depends linearly on $\|f\|_{L^\infty(\Omega)}$ and $\max(\|a^\dagger\|_{L^2(\Omega)},\|a\|_{L^2(\Omega)} )$; see \cite[Theorem 2.2]{Bonito:2017}.
%\end{remark}
\begin{remark}\label{rmk:Bonito_const_dep}
Note that the stability result in Theorem \ref{thm:Falk_stab} relies on the test function $\varphi=(a^\dagger-a)e^{-2k\bm{x}\cdot \bm{\nu}}$. The exponential type suggests that the constant $c$ in the estimate \eqref{eqn:Falk_stab}  increases exponentially with respect to the quantity $\|u^\dagger\|_{W^{2,\infty}(\Omega)}$. In contrast, Theorem \ref{thm:Bonito_stab} utilizes the test function $\varphi=\frac{(a^\dagger-a)}{a^\dagger}u^\dagger$, and the constant $c$ in \eqref{eqn:Bonito_stab} depends linearly on $\|f\|_{L^\infty(\Omega)}$ and $\max(\|a^\dagger\|_{L^2(\Omega)},\|a\|_{L^2(\Omega)} )$ \cite[Theorem 2.2]{Bonito:2017}.
\end{remark}

%To derive the standard $L^2(\Omega)$ estimate, Bonito et al \cite{Bonito:2017} proposed the following positivity condition PC($\beta$):
%\begin{condition}\label{cond:Bonito}
%     There exists a constant $\beta\geq0$ such that
%    \begin{equation*}
%        \mbox{\rm PC}(\beta):\quad fu^\dag + a^\dag|\nabla u^\dag|^2 \geq c\dist(x,\partial\Omega)^\beta.
%    \end{equation*}
%\end{condition}
%This condition has been shown for some certain cases. For example, PC(2) holds if $\Omega$ is a Lipschitz domain, $a^\dagger \in \mathcal{A}$, and $f \in L^2(\Omega )$ with a strictly positive lower bound in $\Omega$. Further, if $\Omega$ is a $C^{2,\mu}$ domain with $\mu\in(0,1)$, $a^\dag\in C^{1,\mu}(\Omega)\cap \mathcal{A}$, $f\in C^\mu(\Omega)$ with $f\geq c_f>0$, then PC(0) holds. The proofs follow from the maximum principle and Schauder estimates for second-order scalar elliptic equations, the decay rate of the Green function near the boundary $\partial\Omega$, see e.g., \cite[Corollaries 3.4 and 3.8]{Bonito:2017} for the related analysis.

More recently, Zhang and Liu \cite{LiuZhang:2024} derived conditional H\"older stability for diffusion coefficient identification with measurements on a subdomain, under the assumption the diffusion coefficient $a$ is analytic.

\section{Galerkin FEM}\label{sec:FEM}
In this section we describe the classical Galerkin finite element method (FEM) for solving inverse problems, and discuss relevant error analysis. The discussions focus on diffusion coefficient identification. These results serve as benchmarks for neural solvers for PDE inverse problems.

First we recall briefly the Galerkin FEM approximation \cite{Cialet:2002,Brenner:2002,Thomee:2006}.
Let $\mathcal{T}_h$ be a shape regular quasi-uniform triangulation of the
domain $\Omega $ into $d$-simplexes with a mesh size $h>0$. Over the triangulation $\mathcal{T}_h$,
we define an $H^1(\Omega)$ conforming piecewise polynomial finite element space $V^r_h$ of degree $r$ by
\begin{equation*}
  V^r_h= \left\{v_h\in H^1(\Omega):\ v_h|_T \in P_r(T),\ \forall\, T \in \mathcal{T}_h\right\},
\end{equation*}
and $X^r_h = V^r_h\cap H_0^1(\Omega)$, with $P_r(T)$ being the space of polynomials of degree at most $r$ over the element $T$. For the case $r=1$, we suppress the superscript $r$ and write $V_h$ and $X_h$ instead. The discussions below mostly focus on the case $r=1$: the concerned coefficient / parameter often has low regularity and thus using high-order elements does not bring much benefit, and it is convenient to enforce the box constraint on the parameter, which is physically very important.

We extensively use the following operators on $X_h$ and $V_h$. We define the $L^2(\Omega)$ projection $P_h:L^2(\Omega)\mapsto X_h$ by
\begin{equation*}
     (P_h v,\varphi_h) =(v,\varphi_h) , \quad \forall v\in L^2(\Omega),\ \varphi_h\in X_h.
\end{equation*}
Then for $2\leq p\leq \infty$ and $s=0,1,2$, $k=0,1$ with $k\leq s$, there holds \cite[p. 32]{Thomee:2006}:
\begin{equation}\label{ineq:P_h-approx}
	\big\|v-P_hv\big\|_{W^{k,p}(\Omega)}\leq  ch^{s-k}\big\|v\big\|_{W^{s,p}(\Omega)}, \quad \forall v\in W^{s,p}(\Omega) \cap H_0^1(\Omega).
\end{equation}
Let $\Pi_h: C(\overline{\Omega})\mapsto V_h$ be the standard Lagrange interpolation operator. Then it satisfies the following error estimate for $s=1,2$ and $1 \le p\le \infty$ with $sp>d$ \cite[Corollary 4.4.24]{Brenner:2002}:
\begin{equation}\label{ineq:Pi_h-approx}
	\|v-\Pi_hv\|_{L^p(\Omega)} + h\|v-\Pi_hv\|_{W^{1,p}(\Omega)} \leq ch^s \|v\|_{W^{s,p}(\Omega)}, \,\forall v\in W^{s,p}(\Omega).
\end{equation}

\subsection{Error analysis for diffusion coefficient identification}\label{subsec:FEM_err}
Numerically identifying the diffusion coefficient $a$ using the Galerkin FEM has a long history \cite{yeh1986review}. The rigorous error analysis dates at least back to the seminal work of Falk \cite{Falk:1983}. Falk \cite{Falk:1983} investigated problem
\eqref{eqn:gov_Falk}
and derived error bounds based on the conditional stability in Theorem \ref{thm:Falk_stab}. In practice, the measurement data $z^\delta$ is corrupted by noise:
\begin{equation}\label{eqn:internal_data}
    z^\delta(x) = u^\dag(x) + \xi(x), \quad\forall x\in\Omega,
\end{equation}
with the pointwise noise $\xi\in L^2(\Omega)$ and the noise level $\delta$ given by
$$\delta:=\|u^\dag - z^\delta\|_{L^2(\Omega)}.$$
To recover the unknown parameter $a^\dag$ from the noisy data $z^\delta$, Falk employed the standard output least-squares formulation:
\begin{equation}\label{eqn:FEM_loss_Falk}
    \min_{a\in\mathcal{A}}J(a):= \|u(a) - z^\delta\|^2_{L^2(\Omega)}.
\end{equation}
Note that the formulation does not involve a penalty term, and the optimization problem is generally ill-posed, i.e., the existence of a minimizer is not ensured \cite{KohnStrang:1986}.
Falk discretized the objective $J(a)$ using the Galerkin FEM scheme in two steps: (i) Fix a mesh size $h>0$, and define a conforming  $P_r$ element space $V^r_h$ over a quasi-uniform triangulation $\mathcal{T}_h$ of $\Omega$; (ii) Use $V^r_h$ and $\mathcal{A}^{r+1}_h:= \mathcal{A}\cap V^{r+1}_h$ to discretize the state $u$ and the diffusion coefficient $a$, respectively.
Then the FEM approximation problem reads
\begin{equation}\label{eqn:FEM_loss_Falk_dis}
     \min_{a_h\in\mathcal{A}^{r+1}_h}J_h(a_h):= \|u_h(a_h) - z^\delta\|^2_{L^2(\Omega)},
\end{equation}
where the discrete state $u_h\equiv u_h(a_h)\in V_h^r$ with $(u_h,1)= 0$ satisfies
\begin{equation}\label{eqn:FEM_vari_Falk_dis}
    (a_h\nabla u_h,\nabla \varphi_h) = (f,\varphi_h) + ( g,\varphi_h)_{L^2(\partial\Omega)},\quad \forall \varphi_h\in V_h.
\end{equation}
Following the standard argument for finite-dimensional optimization problems, the discrete problem \eqref{eqn:FEM_loss_Falk_dis}-\eqref{eqn:FEM_vari_Falk_dis} admits at least one global minimizer $a_h^*\in \mathcal{A}_h^{r+1}$. Moreover, it depends continuously on the data. Falk \cite[Theorem 1]{Falk:1983} provided a first error bound on the discrete approximation $a_h^*$. The main challenge in the error analysis lies in the nonlinear dependence of the parameter $a_h^*$ on the given observation $z^\delta$.
\begin{theorem}\label{thm:FEM_Falk}
Let $a^\dag\in H^{r+1}(\Omega)\cap\mathcal{A}$, $u^\dag\in W^{r+3,\infty}(\Omega)$ and $\Gamma:=\{x\in\partial\Omega: \partial_\nu u^\dag>0\}\in C^{r+2}$. Let $a_h^*\in\mathcal{A}^{r+1}_h$ be a discrete minimizer to problem \eqref{eqn:FEM_loss_Falk_dis}-\eqref{eqn:FEM_vari_Falk_dis}.
Then under condition \eqref{eqn:Falk_cond},
there holds    \begin{equation}\label{eqn:FEM_Falk_err}
        \|a^\dag-a^*_h\|_{L^2(\Omega)} \leq c\big(h^r + h^{-2}\delta\big).
    \end{equation}
\end{theorem}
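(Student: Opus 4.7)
The plan is to invoke Falk's conditional stability (Theorem~\ref{thm:Falk_stab}) as the bridge between the parameter error $\|a^\dagger - a_h^*\|_{L^2(\Omega)}$ and the $H^1$-seminorm state error $\|\nabla(u(a_h^*) - u^\dagger)\|_{L^2(\Omega)}$, and to control the latter by combining the minimality of $a_h^*$, a classical $L^2$ Galerkin error bound, and an inverse estimate on the finite element space $V_h^r$.

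\textbf{Step 1 ($L^2$-bound on the discrete state).} I would first construct an admissible competitor $\widetilde a_h \in \mathcal{A}_h^{r+1}$ that approximates $a^\dagger$ at rate $h^{r+1}$ in $L^2$: for $r=0$ the nodal interpolant $\Pi_h a^\dagger$ works since it preserves the box constraint in~\eqref{eqn:box_ad}, while for $r\geq 1$ a quasi-interpolation (or $L^2$-projection composed with truncation) is needed. Inserting $\widetilde a_h$ into the minimality inequality $J_h(a_h^*) \leq J_h(\widetilde a_h)$, expanding around $u^\dagger$, and applying Young's inequality gives
\begin{equation*}
\|u_h(a_h^*) - u^\dagger\|_{L^2(\Omega)}^2 \leq c\|u_h(\widetilde a_h) - u^\dagger\|_{L^2(\Omega)}^2 + c\delta^2.
\end{equation*}
The first term on the right is an FEM error with a perturbed coefficient; splitting through the continuous solution $u(\widetilde a_h)$ and combining the standard $L^2$ Galerkin estimate with the Lipschitz continuity of the forward map produces $\|u_h(\widetilde a_h) - u^\dagger\|_{L^2(\Omega)} \leq ch^{r+1}$, hence $\|u_h(a_h^*) - u^\dagger\|_{L^2(\Omega)} \leq c(h^{r+1} + \delta)$.

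\textbf{Step 2 (transfer to $H^1$-seminorm and apply stability).} Writing $u_h(a_h^*) - u^\dagger = (u_h(a_h^*) - \Pi_h u^\dagger) + (\Pi_h u^\dagger - u^\dagger)$ and observing that the first summand lies in $V_h^r$, the inverse estimate on $V_h^r$ combined with~\eqref{ineq:Pi_h-approx} and Step~1 yields $\|\nabla(u_h(a_h^*) - u^\dagger)\|_{L^2(\Omega)} \leq c(h^r + h^{-1}\delta)$. Adding the Galerkin consistency error $\|\nabla(u(a_h^*) - u_h(a_h^*))\|_{L^2(\Omega)} \leq ch^r$ — which uses the uniform $W^{1,\infty}$ bound on $a_h^*$ (coming from its piecewise polynomial structure together with the box constraint) and the regularity hypotheses on $u^\dagger$ and $\Gamma$ — produces $\|\nabla(u(a_h^*) - u^\dagger)\|_{L^2(\Omega)} \leq c(h^r + h^{-1}\delta)$. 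The admissibility $a^\dagger, a_h^* \in \mathcal{A}_M$ holds uniformly in $h$ by an inverse estimate on $V_h^{r+1}$, so Theorem~\ref{thm:Falk_stab} applies and, after squaring the stability to the form $\|a^\dagger - a_h^*\|_{L^2(\Omega)}^2 \leq c\|\nabla(u(a_h^*) - u^\dagger)\|_{L^2(\Omega)}$ and balancing the two components of the $H^1$-seminorm bound, delivers the asserted estimate $c(h^r + h^{-2}\delta)$.

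\textbf{Main obstacle.} The sharpest difficulty is the delicate interplay between the inverse estimate (which loses a factor $h^{-1}$ when passing from $L^2$ to $H^1$) and the H\"older-$\frac12$ modulus of the conditional stability; obtaining precisely the $h^{-2}\delta$ noise-amplification term, rather than a weaker $h^{-1/2}\delta^{1/2}$ that results from a naive square-root of the $H^1$-bound, requires careful bookkeeping, and in practice one may need to revisit the weighted-test-function proof of Theorem~\ref{thm:Falk_stab} rather than use only its packaged conclusion. A secondary hurdle is the construction of $\widetilde a_h \in \mathcal{A}_h^{r+1}$ at optimal order: Lagrange interpolation does not preserve pointwise bounds when $r \geq 1$, so a Cl\'ement or Scott--Zhang quasi-interpolation, possibly combined with a nodewise truncation into $[\underline{c}_a, \overline{c}_a]$, must be used. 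Finally, the regularity of $u(a_h^*)$ needed to achieve the $h^r$ Galerkin consistency rate must be controlled uniformly in $a_h^*$, exploiting its piecewise polynomial structure together with the box constraint.
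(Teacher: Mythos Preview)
Your outline contains the right ingredients (minimality, inverse estimate, conditional stability) but the route you describe does not close. Two concrete failures:

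\textbf{(i) The packaged stability loses a square root.} Feeding your Step~2 bound $\|\nabla(u(a_h^*)-u^\dag)\|_{L^2(\Omega)}\le c(h^r+h^{-1}\delta)$ into Theorem~\ref{thm:Falk_stab} yields $\|a^\dag-a_h^*\|_{L^2(\Omega)}\le c(h^r+h^{-1}\delta)^{1/2}$, i.e.\ $h^{r/2}+h^{-1/2}\delta^{1/2}$, not $h^r+h^{-2}\delta$. There is no ``balancing'' that recovers the stated estimate from this; you flag the issue honestly in your obstacle paragraph, but the resolution you gesture at---re-running the weighted test-function argument---is precisely the paper's proof and is structurally different from your Steps~1--2.

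\textbf{(ii) The detour through $u(a_h^*)$ is not justified.} To get $\|\nabla(u(a_h^*)-u_h(a_h^*))\|_{L^2(\Omega)}\le ch^r$ you need $u(a_h^*)\in H^{r+1}(\Omega)$ with an $h$-uniform bound. But $a_h^*\in V_h^{r+1}$ is globally only $C^0$, and the inverse estimate gives merely $\|\nabla a_h^*\|_{L^\infty(\Omega)}\le ch^{-1}$; elliptic regularity then yields $\|u(a_h^*)\|_{H^2(\Omega)}\le ch^{-1}$, so the Galerkin rate collapses. The box constraint and piecewise-polynomial structure do not repair this.

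\textbf{What the paper does.} The paper mimics the stability proof directly at the discrete level (Remark~\ref{rmk:FEM_Falk_proof}): take $\varphi=(P_h a^\dag-a_h^*)e^{-2k\bm x\cdot\bm\nu}$, write the weighted identity with $\|P_h a^\dag-a_h^*\|_{L^2}^2$ on the left, and split the right-hand side by inserting $\Pi_h\varphi$ and using the discrete weak form~\eqref{eqn:FEM_vari_Falk_dis}. The three resulting terms involve only $u^\dag$ and $u_h(a_h^*)$---never the continuous $u(a_h^*)$---and each is linear in $\|P_h a^\dag-a_h^*\|_{L^2}$ through $\|\nabla\Pi_h\varphi\|_{L^2}$, so one factor cancels and the square root never appears. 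The $h^{-2}\delta$ arises from combining the inverse estimate on $\|\nabla(u^\dag-u_h(a_h^*))\|_{L^2}$ with the inverse-estimate factor $h^{-1}$ in $\|\nabla\Pi_h\varphi\|_{L^2}$.
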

\begin{remark}\label{rmk:FEM_Falk_proof}
The conditional stability in Theorem \ref{thm:Falk_stab} motivates the error analysis of $a^\dag$. Taking $\varphi = (P_h a^\dag-a_h^*)e^{-2k\bm x\cdot\bm \nu}\in H^1(\Omega)$ yields
  \begin{align*}
    &\int_\Omega(P_h a^\dag - a_h^*)^2\big( k\nabla u^\dag\!\cdot\!\bm\nu + \tfrac12\Delta u^\dag \big)e^{-2k\bm x\cdot\bm \nu}\ {\rm d}x\\
    =&  -\int_\Omega (P_h a^\dag - a_h^* ) \nabla u^\dagger \cdot \nabla\varphi\ \d x \\
    =& \int_\Omega (P_h a^\dag - a_h^* ) \nabla u^\dagger \cdot \nabla (\Pi_h\varphi -\varphi)\ \d x-\int_\Omega (P_h a^\dag - a^\dagger ) \nabla u^\dagger \cdot \nabla \Pi_h\varphi\  \d x \\
    &+\int_\Omega  a_h^* \nabla( u^\dagger-u_h(a_h^*) )\cdot \nabla \Pi_h\varphi \ \d x,
    \end{align*}
where $P_h$ is the $L^2(\Omega)$ projection operator into $V_h^r$ and $\Pi_h$ is the Lagrange interpolation onto $V_h^{r+1}$. Using the approximation property of the FEM space $V_h^r$, one can derive the error of the state approximation $u_h(a_h^*)$ in the $L^2(\Omega)$ norm. Using the FEM inverse estimate and the stability result, we get the desired error bound.
\end{remark}
\begin{remark}\label{rmk:FEM_Falk_order}
The $L^2(\Omega)$ error bound in Theorem \ref{thm:FEM_Falk} depends explicitly on the noise level $\delta$ and the mesh size $h$, which in practice gives a useful guideline on an \textit{a priori} choice of the mesh size $h$. For example, by choosing $h\sim\delta^{\frac{1}{r+2}}$, one can achieve an convergence rate $O(\delta^{\frac{r}{r+2}})$. This rate is consistent with the conditional stability estimate in Theorem \ref{thm:Falk_stab}. Indeed, Theorem \ref{thm:Falk_stab} implies
    \begin{equation*}
        \|a^\dag-a \|_{L^2(\Omega)}^2\leq  c \|u^\dag - u(a) \|_{H^1(\Omega)}\|\varphi \|_{H^1(\Omega)},
    \end{equation*}
    where $\varphi= e^{-2k \bm x \cdot \bm\nu }(a^\dag-a)$ is the test function with the vector $\bm\nu$ satisfying condition \eqref{eqn:Falk_cond}. Then with $e^{-2k \bm x \cdot \bm\nu }\in W^{1,\infty}(\Omega)$, we obtain
    \begin{equation*}
        \|a^\dag-a \|_{L^2(\Omega)}^2\leq  c \|u^\dag - u(a) \|_{H^1(\Omega)}\|a^\dag-a\|_{H^1(\Omega)}.
    \end{equation*}
By Gagliardo-Nirenberg interpolation inequality \cite{Brezis:2018} for all $s>1$,
\begin{equation}\label{eqn:Gagliardo-Nirenberg}
\|v\|_{H^1(\Omega)}\leq c\|v\|^{1-\frac{1}{s} }_{L^2(\Omega)}\|v\|^{\frac{1}{s}}_{H^s(\Omega)},
    \end{equation}
    there hold for $a^\dag,a\in H^r(\Omega)$,
    \begin{align*}
        \|a^\dagger-a \|_{H^1(\Omega)} \leq  c\|a^\dagger-a \|_{L^2(\Omega)}^{1-\frac{1}{r}} \|a^\dagger-a\|_{H^{r}(\Omega)}^{\frac{1}{r}}\leq  c\|a^\dagger-a \|_{L^2(\Omega)}^{1-\frac{1}{r}},
    \end{align*}
    and under the a priori assumption $u^\dag, u(a)\in H^{r+2}(\Omega)$, there holds
    \begin{equation*}
    \begin{split}
        \|u^\dagger - u(a) \|_{H^1(\Omega)} &\leq  \|u^\dagger - u(a) \|_{L^2(\Omega)}^{1-\frac{1}{r+2}}  \|u^\dagger - u(a) \|_{H^{r+2}(\Omega)}^{ \frac{1}{r+2}}\\& \leq c\|u^\dagger - u(a) \|_{L^2(\Omega)}^{\frac{r+1}{r+2}}.
    \end{split}
    \end{equation*}
    Combining the last two estimates yields
    \begin{comment}
     \begin{align*}
       & \|a^\dagger-a\|_{L^2(\Omega)}^2 \\\leq & c \|u^\dagger - u(a) \|_{H^1(\Omega)}\|\varphi \|_{H^1(\Omega)}\\
        \leq & c \|u^\dagger - u(a) \|_{L^2(\Omega)}^{1-\frac{1}{r+1}}  \|u^\dagger - u(a) \|_{H^{r+1}(\Omega)}^{ \frac{1}{r+1}}  \|a^\dagger-a \|_{L^2(\Omega)}^{1-\frac{1}{r+1}} \|a^\dagger-a\|_{H^{r+1}(\Omega)}^{\frac{1}{r+1}}.
    \end{align*}
    \end{comment}
    \begin{equation*}
        \|a^\dagger-a\|^2_{L^2(\Omega)}\leq c\|u^\dagger - u(a) \|_{L^2(\Omega)}^{\frac{r+1}{r+2}}\|a^\dagger-a \|_{L^2(\Omega)}^{1-\frac{1}{r}}.
    \end{equation*}
    Direct computation leads to the desired estimate
    \begin{equation*}
        \|a^\dagger-a\|_{L^2(\Omega)}\leq c\|u^\dagger - u(a) \|_{L^2(\Omega)}^{\frac{r}{r+2}}.
    \end{equation*}
Note that the error bound \eqref{eqn:FEM_Falk_err} involves the factor $h^{-2}$, due to the absence of any penalty on $a$. The regularizing effect is achieved solely with the FEM discretization, with the parameter $h$ controlling the amount of regularization.
\end{remark}

In 2010, Wang and Zou \cite{Wang:2010} improved the error analysis of diffusion coefficient identification with a zero Neumann boundary condition:
\begin{equation}\label{eqn:gov_Wang}
    \left\{\begin{aligned}
       -\nabla\cdot\big(a\nabla u\big) &= f,\quad \mbox{in}\ \Omega, \\  a\partial_{\nu} u &= 0, \quad \mbox{on}\ \partial\Omega.
    \end{aligned}\right.
\end{equation}
Following the well established variational regularization, one standard approach to obtain a numerically stable reconstruction is
the following regularized output least-squares method with an $H^1(\Omega)$ seminorm penalty
\begin{equation}\label{eqn:FEM_loss_Wang}
    \min_{a \in \mathcal{A}} {J_\gamma(a)}=\frac12 \|u(a) - z^\delta\|_{L^2(\Omega)}^2  + \frac\gamma2\|\nabla a \|_{L^2(\Omega) }^2,
\end{equation}
with {the admissible set $\mathcal{A}$ given by \eqref{eqn:box_ad}}, where $u\equiv u(a)\in H^1(\Omega)$ satisfies $(u, 1)=0$ and  the variational problem
\begin{equation}\label{eqn:FEM_vari_Wang}
 (a\nabla u,\nabla \varphi) =  (f,\varphi),\quad \forall  \varphi\in H^1(\Omega).
\end{equation}
The $H^1(\Omega)$ seminorm penalty is suitable for recovering a smooth diffusion coefficient. The penalty parameter $\gamma>0$  controls the strength of the penalty \cite{EnglKunischNeubauer:1989,ItoJin:2015}. Using the direct method in calculus of variation, it can be verified that for every $\gamma>0$, problem \eqref{eqn:FEM_loss_Wang}-\eqref{eqn:FEM_vari_Wang} has at least one global minimizer $a^*$, and further the sequence of minimizers converges subsequentially in the $H^1(\Omega)$ norm to a minimum seminorm solution as the noise level $\delta$ tends to zero, provided that $\gamma$ is chosen appropriately in accordance with $\delta$, i.e., $\lim_{\delta\to0^+}\gamma(\delta)^{-1}\delta^2= \lim_{\delta\to0^+}\gamma(\delta)=0$ (see, e.g., \cite{EnglKunischNeubauer:1989,ItoJin:2015}).

For the numerical implementation, Wang and Zou discretize both the diffusion coefficient $a$ and the state $u$ using the conforming piecewise linear FEM space $V_h$:
\begin{equation}\label{eqn:FEM_loss_Wang_dis}
     \min_{a_h\in\mathcal{A}_h}J_h(a_h):= \frac{1}{2}\|u_h(a_h) - z^\delta\|^2_{L^2(\Omega)}+\frac{\gamma}{2}\|\nabla a_h\|_{L^2(\Omega)}^2,
\end{equation}
where the discrete admissible set $\mathcal{A}_h=\mathcal{A}\cap V_h$ and the discrete state $u_h\equiv u_h(a_h)\in V_h$ with $( u_h,1 ) = 0$ satisfies
\begin{equation}\label{eqn:FEM_vari_Wang_dis}
    ( a_h\nabla u_h,\nabla \varphi_h) = ( f,\varphi_h), \quad \forall \varphi_h\in V_h.
\end{equation}
The well-posedness and convergence analysis of the discrete problem \eqref{eqn:FEM_loss_Wang_dis}-\eqref{eqn:FEM_vari_Wang_dis} are well established  \cite{Gutman:1990,Keung:1998,Zou:1998,HinzeKaltenbacher:2018}. For any fixed $h>0$, there
exists at least one discrete minimizer $a_h^*\in\mathcal{A}_h$ to the discrete problem \eqref{eqn:FEM_loss_Wang_dis}-\eqref{eqn:FEM_vari_Wang_dis}. Further, the sequence of discrete minimizers $\{a_h^*\}_{h>0}$ contains a subsequence that converges in the $H^1(\Omega)$ norm to a minimizer to problem \eqref{eqn:FEM_loss_Wang}-\eqref{eqn:FEM_vari_Wang}. The proof follows by a standard compactness argument in calculus of variation and the density of the space $V_h$ in $H^1(\Omega)$. Wang and Zou \cite{Wang:2010} derived the following error estimate on the approximation $a_h^*$ in terms of all algorithmic parameters, i.e., the mesh size $h$, regularization parameter $\gamma$ and noise level $\delta$.
\begin{theorem}\label{thm:FEM_Wang}
Let $a^\dag\in H^{2}(\Omega)\cap W^{1,\infty}(\Omega) \cap \mathcal{A}$, $u^\dag\in H^{2}(\Omega)\cap W^{1,\infty}(\Omega)$. Let $a_h^*\in\mathcal{A}_h$ be a discrete minimizer to problem \eqref{eqn:FEM_loss_Wang_dis}-\eqref{eqn:FEM_vari_Wang_dis}, and the following assumption be fulfilled: \begin{equation}\label{eqn:FEM_cond_Wang}
     \exists c_0>0 \text{ s.t. } c_0|\nabla u^\dag (x)|^2\ge f(x) \quad \text{ a.e. }x\in\Omega.
    \end{equation}
Then there holds
\begin{equation}\label{eqn:FEM_Wang_err}
        \|(a^\dag-a^*_h)\nabla u^\dagger\|_{L^2(\Omega)} \leq c\big(h^{\frac{1}{2}}\gamma^{-\frac{1}{2}} + h^{-\frac{1}{2}}\gamma^{-\frac{1}{4}}  \big)\big(h^2 + \delta + \gamma^{ \frac{1}{2}}  \big).
    \end{equation}
\end{theorem}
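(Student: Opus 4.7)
The plan is to follow the Lax paradigm (stability plus consistency) by combining the discrete minimality of $a_h^*$ with a Galerkin orthogonality identity, in the spirit of the proof of Theorem \ref{thm:Bonito_stab}. I would organize the argument in three stages.

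\emph{Stage 1 (a priori bounds from minimality).} Take a discrete competitor $a_I\in\mathcal{A}_h$, e.g.\ a truncation of $\Pi_h a^\dagger$ that stays inside the admissible box $\mathcal{A}$. Standard $L^2(\Omega)$ FEM estimates for the forward problem with a smooth coefficient give $\|u_h(a_I)-u^\dagger\|_{L^2(\Omega)}\le c h^2$, and $\|z^\delta - u^\dagger\|_{L^2(\Omega)}=\delta$. The inequality $J_h(a_h^*)\le J_h(a_I)$ then yields
\begin{equation*}
\|u_h(a_h^*)-u^\dagger\|_{L^2(\Omega)}\le c(h^2+\delta+\gamma^{1/2}),\qquad \|\nabla a_h^*\|_{L^2(\Omega)}\le c\bigl(1+\gamma^{-1/2}(h^2+\delta)\bigr).
\end{equation*}
An inverse-estimate upgrade is needed to promote the $L^2$-state error to an $H^1(\Omega)$ bound: writing $u_h(a_h^*)-u^\dagger = (u_h(a_h^*)-R_h u^\dagger)+(R_h u^\dagger - u^\dagger)$ for a suitable projection $R_h:H^1(\Omega)\to V_h$ and applying $\|\nabla v_h\|_{L^2}\le c h^{-1}\|v_h\|_{L^2}$ to the discrete piece yields $\|\nabla(u_h(a_h^*)-u^\dagger)\|_{L^2(\Omega)}\le c h^{-1}(h^2+\delta+\gamma^{1/2})$.

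\emph{Stage 2 (key identity and Bonito-type test function).} Subtracting \eqref{eqn:FEM_vari_Wang} from \eqref{eqn:FEM_vari_Wang_dis} produces the Galerkin orthogonality
\begin{equation*}
\bigl((a^\dagger - a_h^*)\nabla u^\dagger,\nabla\varphi_h\bigr)=\bigl(a_h^*\nabla(u_h(a_h^*)-u^\dagger),\nabla\varphi_h\bigr),\qquad\forall \varphi_h\in V_h.
\end{equation*}
I would take $\varphi_h$ to be a discrete surrogate for the Bonito-type test function, e.g.\ $\varphi_h = \Pi_h\bigl[\tfrac{a^\dagger-a_h^*}{a^\dagger}u^\dagger\bigr]$. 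Expanding $\nabla\varphi_h$, integrating by parts, and substituting the strong form $-\nabla\!\cdot\!(a^\dagger\nabla u^\dagger)=f$ exactly as in the proof of Theorem \ref{thm:Bonito_stab}, the LHS reduces to the principal quadratic term $\tfrac12\int_\Omega\bigl(\tfrac{a^\dagger-a_h^*}{a^\dagger}\bigr)^2\bigl(fu^\dagger + a^\dagger|\nabla u^\dagger|^2\bigr)\,\mathrm{d}x$ modulo interpolation residuals coming from $\Pi_h$. The structural assumption \eqref{eqn:FEM_cond_Wang}, i.e.\ $c_0|\nabla u^\dagger|^2\ge f$, is precisely what is needed to bound the weight $fu^\dagger+a^\dagger|\nabla u^\dagger|^2$ pointwise below by a multiple of $|\nabla u^\dagger|^2$ (after absorbing a factor involving $\|u^\dagger\|_{L^\infty(\Omega)}$), so that this principal term controls $\|(a^\dagger-a_h^*)\nabla u^\dagger\|_{L^2(\Omega)}^2$.

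\emph{Main obstacle.} The delicate part is the careful bookkeeping on the RHS of the Galerkin identity, dominated by $c\|\nabla(u_h(a_h^*)-u^\dagger)\|_{L^2(\Omega)}\|\nabla\varphi_h\|_{L^2(\Omega)}$. The factor $\|\nabla\varphi_h\|_{L^2(\Omega)}$ splits into two contributions: the target quantity $\|(a^\dagger-a_h^*)\nabla u^\dagger\|_{L^2(\Omega)}$ (which is absorbed back into the LHS by Young's inequality) and a cross-term $\|\nabla a_h^*\|_{L^2(\Omega)}\|u^\dagger\|_{L^\infty(\Omega)}\le c\gamma^{-1/2}(h^2+\delta+\gamma^{1/2})$ coming from Stage 1. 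Combining the latter with the $h^{-1}$ factor from the inverse estimate and applying Young's inequality with an appropriately chosen balance produces, after taking square roots, the two prefactor contributions $h^{1/2}\gamma^{-1/2}$ and $h^{-1/2}\gamma^{-1/4}$ appearing in \eqref{eqn:FEM_Wang_err} (the second being essentially the geometric mean of $h^{-1}$ and $\gamma^{-1/2}$). The remaining technicalities---verifying that the interpolation residuals of $\Pi_h$ applied to a product of FEM-like functions, together with the perturbation caused by truncating $\Pi_h a^\dagger$ to enforce membership in $\mathcal{A}_h$, are of lower order---are standard but tedious FEM-interpolation bookkeeping.
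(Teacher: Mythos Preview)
Your overall Lax-type scaffolding (Stage~1 minimality bounds, inverse-estimate upgrade to $H^1$, Galerkin identity, absorption via Young) matches the paper's strategy. The gap is in Stage~2: the Bonito test function $\varphi=\frac{a^\dag-a_h^*}{a^\dag}u^\dag$ is the wrong choice for this Neumann problem, and the hypothesis \eqref{eqn:FEM_cond_Wang} does \emph{not} bound the Bonito weight $fu^\dag+a^\dag|\nabla u^\dag|^2$ below by a multiple of $|\nabla u^\dag|^2$.

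The issue is that $u^\dag$ has zero mean (normalization $(u,1)=0$) and therefore changes sign. The condition $f\le c_0|\nabla u^\dag|^2$ is one-sided; it gives no lower bound on $f$. At points where $u^\dag<0$ and $f$ is close to $c_0|\nabla u^\dag|^2$, the term $fu^\dag$ is comparable to $-c_0|u^\dag|\,|\nabla u^\dag|^2$, and absorbing this into $a^\dag|\nabla u^\dag|^2$ would require the extra smallness condition $c_0\|u^\dag\|_{L^\infty(\Omega)}<\underline{c}_a$, which is not part of the theorem. Conversely, where $u^\dag>0$ and $f$ is very negative, $fu^\dag$ is again uncontrolled from below.

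The paper instead uses the exponential test function
\[
\varphi=\frac{a^\dag-a_h^*}{a^\dag}\,e^{-2c_0\underline{c}_a^{-1}u^\dag},
\]
which (after the same half-and-half integration by parts) produces the weight $2c_0\underline{c}_a^{-1}a^\dag|\nabla u^\dag|^2-f$ rather than $fu^\dag+a^\dag|\nabla u^\dag|^2$. Now the one-sided hypothesis $f\le c_0|\nabla u^\dag|^2$ together with $a^\dag\ge\underline{c}_a$ gives immediately
\[
2c_0\underline{c}_a^{-1}a^\dag|\nabla u^\dag|^2-f\;\ge\;2c_0|\nabla u^\dag|^2-c_0|\nabla u^\dag|^2\;=\;c_0|\nabla u^\dag|^2,
\]
with no smallness restriction. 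The exponential factor $e^{-2c_0\underline{c}_a^{-1}u^\dag}$ is harmless since $u^\dag\in L^\infty(\Omega)$. Once you replace the Bonito test function by this exponential one (and take $\varphi_h=\Pi_h\varphi$ in the discrete identity), the rest of your argument goes through essentially as written.
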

The proof of Theorem \ref{thm:FEM_Wang} is similar to Theorem \ref{thm:FEM_Falk}. First, using the test function $\varphi= \frac{a^\dag-a}{a^\dag}e^{-2c_0 \underline{c}_a^{-1} u^\dag }$, the following weighted stability estimate holds:
    \begin{equation*}
        \int_\Omega \frac{(a^\dag - a)^2}{(a^\dag)^2}\big(  2c_0\underline{c}_a^{-1} a^\dag|\nabla u^\dag|^2 - f \big)e^{-2c_0 \underline{c}_a^{-1} u^\dag }\d x \leq c\|\nabla(u(a) - u^\dag)\|_{L^2(\Omega)}.
    \end{equation*}
    The box constraint \eqref{eqn:box_ad} and the choice of $c_0$ imply
    \begin{equation*}
         \|(a^\dag-a )\nabla u^\dagger\|_{L^2(\Omega)}   \leq c\|\nabla(u(a) - u^\dag)\|^{\frac12}_{L^2(\Omega)},
    \end{equation*}
    This estimate motivates the test function $\varphi= \frac{a^\dag-a_h^*}{a^\dag}e^{-2c_0 \underline{c}_a^{-1} u^\dag }\in H^1(\Omega)$ in \eqref{eqn:gov_Wang} and $\varphi_h= \Pi_h \varphi\in V_h$ in \eqref{eqn:FEM_vari_Wang_dis}. Together with the \textit{a priori} estimate on $\|u_h(\Pi_h a^\dag)- u(a^\dag)\|_{L^2(\Omega)}$ (cf. Lemma \ref{lem:FEM_uq-uh} below), the desired  estimate \eqref{eqn:FEM_Wang_err} follows.
\begin{remark}\label{rmk:FEM_Wang_order}
Theorem \ref{thm:FEM_Wang} indicates that we can provide an error bound on the discrete approximation $a_h^*$ in the subregion in which the gradient $\nabla u^\dag$ is non-vanishing. The data regularity assumption in Theorem \ref{thm:FEM_Wang} is  weaker than that in Theorem \ref{thm:FEM_Falk}. The positivity condition \eqref{eqn:FEM_cond_Wang} holds if $f\in L^\infty(\Omega)$ and $u(a^\dag)\in C(\overline{\Omega})$ with nonvanishing flux. In contrast,  Theorem \ref{thm:FEM_Falk} requires nonvanishing flux in one specific direction, which is more restrictive.
The error estimate \eqref{eqn:FEM_Wang_err} provides a guidance for choosing the algorithmic parameters $h$ and $\gamma$. By choosing $h\sim \delta^{\frac{1}{2}}$ and $\gamma\sim \delta^2$, we arrive at
    \begin{equation*}
        \|(a^\dag-a^*_h)\nabla u^\dagger\|_{L^2(\Omega)} \leq c \delta^{\frac{1}{4}}.
    \end{equation*}
\end{remark}

Jin and Zhou \cite{Jin:2021Error} adapted the stability result in Theorem \ref{thm:Bonito_stab} to the numerical analysis of  problem \eqref{eqn:inv_cond_gov}. To recover
the diffusion coefficient $a^\dag$ from the given internal noisy data $z^\delta$ in \eqref{eqn:internal_data}, they employ the standard
output least-squares formulation with an $H^1(\Omega)$ seminorm penalty and then discretize the regularized problem by continuous piecewise linear elements:
\begin{equation}\label{eqn:FEM_loss_Jin_dis}
    \min_{a_h \in \mathcal{A}_h} J_{\gamma,h}(a_h)=\frac12  \|u_h(a_h) - z ^\delta\|_{L^2(\Omega)}^2  + \frac\gamma2\|\nabla a_h \|_{L^2(\Omega) }^2,
\end{equation}
subject to $a_h\in\mathcal{A}_h:= V_h\cap \mathcal{A}$ and $u_h\equiv u_h(a_h)\in X_h = V_h\cap H_0^1(\Omega)$ satisfying
\begin{align}\label{eqn:FEM_vari_Jin_dis}
(a_h \nabla u_h, \nabla \varphi_h)=(f,\varphi_h),\quad \forall \varphi_h \in X_h.
\end{align}
Jin and Zhou derived an error bound on the FEM approximation $a_h^*$ under the following regularity assumption on the problem data.

\begin{assumption}\label{Jin2021Error:ass-ellip}
$a^\dag \in H^2(\Omega) \cap W^{1,\infty}(\Omega) \cap \mathcal A$ and $f \in L^\infty(\Omega)$.
\end{assumption}
The elliptic regularity theory implies that the exact state $u^\dag$ satisfies
(see \cite[Lemma 2.1]{LiSun:2017} and \cite[Theorems 3.3 and 3.4]{GruterWidman:1982})
\begin{equation*}
   u^\dag \in H^2(\Omega) \cap W^{1,\infty}(\Omega)\cap H_0^1(\Omega).
\end{equation*}
Note that the regularity result requires only $a^\dag\in W^{1,\infty}(\Omega)\cap \mathcal{A}$.

The overall analysis strategy follows the folklore Lax theorem by combining conditional stability in Theorem \ref{thm:Bonito_stab} with  consistency bounds. The next lemma gives an approximation result on the intermediate state $u_h(\Pi_ha^\dag)$, which plays a crucial role in the derivation of the error bound of the state $u^\dag$ in Lemma \ref{lem:FEM_priori} below. The key ingredients include the approximation property $\|a^\dag - \Pi_ha^\dag\|_{L^2(\Omega)} \leq ch^2$ and the theory of the Galerkin FEM.
\begin{lemma}\label{lem:FEM_uq-uh}
    Let Assumption \ref{Jin2021Error:ass-ellip} be fulfilled. Then there holds
    \begin{equation*}
        \|u^\dag - u_h(\Pi_ha^\dag)\|_{L^2(\Omega)} \leq ch^2.
    \end{equation*}
\end{lemma}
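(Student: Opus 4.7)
The approach is to introduce the Galerkin approximation $u_h(a^\dag)\in X_h$ for the \emph{exact} coefficient $a^\dag$ and split via the triangle inequality
\begin{equation*}
  \|u^\dag - u_h(\Pi_h a^\dag)\|_{L^2(\Omega)} \le \|u^\dag - u_h(a^\dag)\|_{L^2(\Omega)} + \|u_h(a^\dag) - u_h(\Pi_h a^\dag)\|_{L^2(\Omega)}.
\end{equation*}
Under Assumption \ref{Jin2021Error:ass-ellip}, elliptic regularity gives $u^\dag\in H^2(\Omega)\cap W^{1,\infty}(\Omega)\cap H^1_0(\Omega)$, so the first summand is controlled by the classical Aubin--Nitsche argument for a second-order elliptic problem with Lipschitz coefficient, yielding $\|u^\dag - u_h(a^\dag)\|_{L^2(\Omega)}\le ch^2\|u^\dag\|_{H^2(\Omega)}$. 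The task reduces to bounding the perturbation $w_h := u_h(a^\dag) - u_h(\Pi_h a^\dag)\in X_h$ at the same rate.

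Subtracting the two discrete variational problems produces the Strang-type identity
\begin{equation*}
  (a^\dag\nabla w_h,\nabla\varphi_h) = ((\Pi_h a^\dag - a^\dag)\nabla u_h(\Pi_h a^\dag),\nabla\varphi_h), \qquad \forall\varphi_h\in X_h.
\end{equation*}
A first pass, testing with $\varphi_h=w_h$ and pairing $\|\Pi_h a^\dag - a^\dag\|_{L^\infty(\Omega)}\le ch\|a^\dag\|_{W^{1,\infty}(\Omega)}$ against the a priori bound $\|\nabla u_h(\Pi_h a^\dag)\|_{L^2(\Omega)}\le c\|f\|_{H^{-1}(\Omega)}$, yields only the preliminary rate $\|\nabla w_h\|_{L^2(\Omega)}\le ch$. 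Combined with the standard $H^1$ FEM error bound for $u^\dag - u_h(a^\dag)$, this gives an intermediate consistency $\|\nabla(u^\dag - u_h(\Pi_h a^\dag))\|_{L^2(\Omega)}\le ch$ that will feed the next step.

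The decisive step, which I expect to be the main obstacle, is sharpening the energy bound on $w_h$ from $O(h)$ to $O(h^2)$. I return to the Strang identity with $\varphi_h=w_h$ and decompose $\nabla u_h(\Pi_h a^\dag) = \nabla u^\dag + \nabla(u_h(\Pi_h a^\dag) - u^\dag)$. The smooth piece is controlled by the improved $L^2$ interpolation estimate:
\begin{equation*}
  |((\Pi_h a^\dag - a^\dag)\nabla u^\dag,\nabla w_h)| \le \|\Pi_h a^\dag - a^\dag\|_{L^2(\Omega)}\|\nabla u^\dag\|_{L^\infty(\Omega)}\|\nabla w_h\|_{L^2(\Omega)} \le ch^2\|\nabla w_h\|_{L^2(\Omega)},
\end{equation*}
exploiting $\|\Pi_h a^\dag - a^\dag\|_{L^2(\Omega)}\le ch^2\|a^\dag\|_{H^2(\Omega)}$ and $u^\dag\in W^{1,\infty}(\Omega)$. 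The residual piece, paired with the weaker $L^\infty$ bound on the coefficient error and the intermediate $H^1$ consistency above, contributes at most $ch\cdot ch\cdot\|\nabla w_h\|_{L^2(\Omega)} = ch^2\|\nabla w_h\|_{L^2(\Omega)}$. Cancelling one factor of $\|\nabla w_h\|_{L^2(\Omega)}$ gives $\|\nabla w_h\|_{L^2(\Omega)}\le ch^2$, and Poincar\'e's inequality in $H^1_0(\Omega)$ upgrades this to $\|w_h\|_{L^2(\Omega)}\le ch^2$. The crux is that a one-shot $L^\infty$ estimate on the coefficient error stalls at $O(h)$ and would otherwise require a (non-trivial) $W^{1,\infty}$-stability result for the FEM state; the bootstrap above sidesteps this by redistributing the error between the regular part $\nabla u^\dag$ (paired with the sharper $L^2$ interpolation stemming from $a^\dag\in H^2(\Omega)$) and the small residual (paired with the crude $L^\infty$ bound), thereby activating both the $H^2$-regularity of $a^\dag$ and the $W^{1,\infty}$-regularity of $u^\dag$ supplied by Assumption \ref{Jin2021Error:ass-ellip}.
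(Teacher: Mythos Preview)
Your proof is correct and follows essentially the same approach as the paper: triangle inequality via $u_h(a^\dag)$, a Strang-type identity for $w_h$, the split $\nabla u_h = \nabla u^\dag + \nabla(u_h - u^\dag)$, and pairing the $L^2$ (resp.\ $L^\infty$) interpolation error of $a^\dag$ with $\|\nabla u^\dag\|_{L^\infty}$ (resp.\ the $O(h)$ gradient residual). The one difference is that the paper writes the identity as $(\Pi_h a^\dag\nabla w_h,\nabla\varphi_h) = ((a^\dag-\Pi_h a^\dag)\nabla u_h(a^\dag),\nabla\varphi_h)$, so $u_h(a^\dag)$ appears on the right and the needed bound $\|\nabla(u_h(a^\dag)-u^\dag)\|_{L^2}\le ch$ is the standard Galerkin $H^1$ error---no preliminary bootstrap pass is required.
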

\begin{proof}
    By a standard duality argument, we have
    \begin{equation*}
        \|u^\dag - u_h(a^\dag)\|_{L^2(\Omega)} + h\|\nabla(u^\dag - u_h(a^\dag))\|_{L^2(\Omega)}\leq ch^2.
    \end{equation*}
     Let $w_h= u_h(\Pi_ha^\dag) - u_h(a^\dag)$. Using the weak formulations of $u_h(\Pi_ha^\dag)$ and $u_h(a^\dag)$, we get for any $\varphi_h\in X_h$,
    \begin{align*}
        & (\Pi_ha^\dag\nabla w_h,\nabla \varphi_h) =  ((a^\dag-\Pi_ha^\dag)\nabla u_h(a^\dag),\nabla \varphi_h) \\
         =&  ((a^\dag-\Pi_ha^\dag)\nabla (u_h(a^\dag) - u^\dag),\nabla \varphi_h) + ((a^\dag-\Pi_ha^\dag)\nabla u^\dag,\nabla \varphi_h).
    \end{align*}
    Let $\varphi_h = w_h$. Then H\"{o}lder's inequality gives
    \begin{align*}
        \|\nabla w_h\|_{L^2(\Omega)}& \leq c\|a^\dag-\Pi_ha^\dag\|_{L^\infty(\Omega)}\|\nabla (u_h(a^\dag) - u^\dag)\|_{L^2(\Omega)} \\& \quad + c\|a^\dag-\Pi_ha^\dag\|_{L^2(\Omega)}\|\nabla u^\dag\|_{L^\infty(\Omega)} \\& \leq
         ch^2\big(\|a^\dag\|_{W^{1,\infty}(\Omega)}\|u^\dag\|_{H^2(\Omega)} + \|a^\dag\|_{H^2(\Omega)} \|\nabla u^\dag\|_{L^\infty(\Omega)}\big),
    \end{align*}
and by Poinc{a}r\'{e} inequality, $$\|w_h\|_{L^2(\Omega)} \leq
     c\|\nabla w_h\|_{L^2(\Omega)}\leq
      ch^2.$$
This and the triangle inequality complete the proof of the lemma.
\end{proof}

The next result gives the state approximation $ u_h(a_h^*) - u^\dag$ and an \textit{a priori} $L^2(\Omega)$ bound on $\nabla a_h^*$. It plays the role of consistency bounds in Lax theorem.
\begin{lemma}\label{lem:FEM_priori}
 Let $(a_h^*,u_h(a_h^*)) \in \mathcal{A}_h
\times X_h$ be a minimizing pair of problem \eqref{eqn:FEM_loss_Jin_dis}--\eqref{eqn:FEM_vari_Jin_dis}. Then there holds
\begin{equation*}
    \| u_h(a_h^*) - u^\dag  \|_{L^2(\Omega)} + \gamma^\frac12 \| \nabla a_h^* \|_{L^2(\Omega)} \le c(h^2 +\delta+\gamma^{\frac12}).
\end{equation*}
\end{lemma}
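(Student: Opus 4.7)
The natural approach is the classical "energy" argument for Tikhonov-type regularization: exploit the minimality of $a_h^*$ against a well-chosen admissible comparison element, for which the Lagrange interpolant $\Pi_h a^\dag$ is the obvious candidate. First, one must check $\Pi_h a^\dag \in \mathcal{A}_h$: under Assumption \ref{Jin2021Error:ass-ellip}, $a^\dag \in W^{1,\infty}(\Omega) \subset C(\overline{\Omega})$, and on each simplex the $P_1$ Lagrange interpolant is a convex combination of nodal values, so the box constraint $\underline{c}_a \le a^\dag \le \overline{c}_a$ is preserved and $\Pi_h a^\dag \in V_h \cap \mathcal{A} = \mathcal{A}_h$. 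Standard interpolation stability also gives $\|\nabla \Pi_h a^\dag\|_{L^2(\Omega)} \le c\|a^\dag\|_{W^{1,\infty}(\Omega)} \le c$.

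Next, the minimality $J_{\gamma,h}(a_h^*) \le J_{\gamma,h}(\Pi_h a^\dag)$ reads
\begin{equation*}
\tfrac12\|u_h(a_h^*)-z^\delta\|_{L^2(\Omega)}^2 + \tfrac{\gamma}{2}\|\nabla a_h^*\|_{L^2(\Omega)}^2 \le \tfrac12\|u_h(\Pi_h a^\dag)-z^\delta\|_{L^2(\Omega)}^2 + \tfrac{\gamma}{2}\|\nabla \Pi_h a^\dag\|_{L^2(\Omega)}^2.
\end{equation*}
For the right-hand side, the triangle inequality together with Lemma \ref{lem:FEM_uq-uh} and $\|u^\dag - z^\delta\|_{L^2(\Omega)} = \delta$ yields $\|u_h(\Pi_h a^\dag)-z^\delta\|_{L^2(\Omega)} \le ch^2 + \delta$, so the entire right-hand side is bounded by $c(h^4 + \delta^2 + \gamma)$. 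For the left-hand side, the elementary inequality $(A-B)^2 \ge \tfrac12 A^2 - B^2$ applied in $L^2(\Omega)$ produces the lower bound
\begin{equation*}
\tfrac12\|u_h(a_h^*)-z^\delta\|_{L^2(\Omega)}^2 \ge \tfrac14\|u_h(a_h^*)-u^\dag\|_{L^2(\Omega)}^2 - \tfrac12\delta^2.
\end{equation*}
Combining the two estimates and absorbing the $\tfrac12\delta^2$ term into the right-hand side gives
\begin{equation*}
\|u_h(a_h^*)-u^\dag\|_{L^2(\Omega)}^2 + \gamma\|\nabla a_h^*\|_{L^2(\Omega)}^2 \le c(h^4 + \delta^2 + \gamma),
\end{equation*}
after which taking square roots termwise yields the claimed bound.

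Structurally the argument is a standard energy estimate, and the only non-trivial input is the consistency bound $\|u_h(\Pi_h a^\dag) - u^\dag\|_{L^2(\Omega)} \le ch^2$ supplied by Lemma \ref{lem:FEM_uq-uh}, which relies on the full regularity in Assumption \ref{Jin2021Error:ass-ellip}. Without this $h^2$ rate, the discretization contribution would fail to match the noise and regularization scales, and the conditional-stability-based error analysis to follow could not be closed. The one other subtle point is the admissibility check for $\Pi_h a^\dag$: this is precisely why $\Pi_h$ is used here rather than the $L^2$ projection $P_h$, which need not preserve the pointwise box constraint and would therefore not yield a comparison element in $\mathcal{A}_h$.
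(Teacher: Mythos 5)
Your proposal is correct and follows essentially the same route as the paper: compare $a_h^*$ against the admissible interpolant $\Pi_h a^\dag$ via the minimizing property, bound the right-hand side with Lemma \ref{lem:FEM_uq-uh}, the noise level $\delta$ and the $H^1(\Omega)$ stability of $\Pi_h$, then pass from $\|u_h(a_h^*)-z^\delta\|_{L^2(\Omega)}$ to $\|u_h(a_h^*)-u^\dag\|_{L^2(\Omega)}$ by a further triangle inequality. Your explicit verification that the $P_1$ Lagrange interpolant preserves the box constraint (whence $\Pi_h a^\dag\in\mathcal{A}_h$, a point the paper asserts without comment) is a welcome clarification but does not change the argument.
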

\begin{proof}
    By the minimizing property of $a_h^*$, since $\Pi_h a^\dag\in\mathcal{A}_h$, it follows that
    \begin{equation*}
      \| u_h(a_h^*) - z^\delta  \|^2_{L^2(\Omega)} + \gamma \| \nabla a_h^* \|^2_{L^2(\Omega)} \le \| u_h(\Pi_h a^\dag) - z^\delta  \|^2_{L^2(\Omega)} + \gamma \| \nabla \Pi_h a^\dag \|^2_{L^2(\Omega)}.
    \end{equation*}
    Since the interpolation operator $\Pi_h$ is $H^1(\Omega)$ stable, by Lemma \ref{lem:FEM_uq-uh} and the triangle inequality, we obtain
    \begin{align*}
       & \| u_h(a_h^*) - z^\delta  \|^2_{L^2(\Omega)} + \gamma \| \nabla a_h^* \|^2_{L^2(\Omega)} \\
        \leq &c\big( \| u_h(\Pi_h a^\dag) -u^\dag  \|^2_{L^2(\Omega)} +  \| u^\dag  - z^\delta  \|^2_{L^2(\Omega)} + \gamma  \big) \\
        \leq &c(h^4 + \delta^2 + \gamma).
    \end{align*}
    Then the triangle inequality completes the proof of the lemma.
\end{proof}

Now we can state an error bound on the discrete approximation $a_h^*$ under Assumption \ref{Jin2021Error:ass-ellip}. The proof proceeds in two steps: we first derive a weighed error estimate by taking $\varphi = \frac{a^\dag-a_h^*}{a^\dag}u^\dag\in H_0^1(\Omega)$ in the weak formulation and using Lemma \ref{lem:FEM_priori}; then we remove the weight function $a^\dag|\nabla u^\dag|^2 + fu^\dag $ and obtain the standard $L^2(\Omega)$ error estimate under the positivity condition PC($\beta$).
\begin{theorem}\label{thm:FEM_error}
Let Assumption \ref{Jin2021Error:ass-ellip} be fulfilled. Let $a^\dag$ be the exact diffusion coefficient and $a_h^* \in \mathcal{A}_h$ a minimizer of
problem \eqref{eqn:FEM_loss_Jin_dis}-\eqref{eqn:FEM_vari_Jin_dis}. Then with $\eta=h^2 +\delta+\gamma^{\frac12}$, there holds
\begin{equation*}
  \int_\Omega \Big(\frac{a^\dag-a_h^*}{a^\dag}\Big)^2 \Big(  a^\dag | \nabla u^\dag  |^2 + fu^\dag \Big) \,{\rm d} x\leq  c(h \gamma^{-\frac12}\eta + \min(h + h^{-1} \eta,1)) \gamma^{-\frac12}\eta.
\end{equation*}
Moreover, if condition \eqref{eqn:Bonito_cond} holds, then
\begin{equation*}
    \|a^\dag-a_h^*\|_{L^2(\Omega)}  \le c((h \gamma^{-\frac12}\eta + \min( h^{-1} \eta,1)) \gamma^{-\frac12}\eta)^{\frac1{2(1+\beta)}}.
\end{equation*}
\end{theorem}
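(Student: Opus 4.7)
The plan is to reproduce, at the discrete level, the algebraic identity that drives the proof of the conditional stability estimate in Theorem~\ref{thm:Bonito_stab}, and to absorb the Galerkin residual using the consistency bound of Lemma~\ref{lem:FEM_priori}. Set $e := a^\dagger - a_h^*$ and use the Bonito test function $\varphi = (e/a^\dagger)\,u^\dagger$. Under Assumption~\ref{Jin2021Error:ass-ellip}, elliptic regularity gives $u^\dagger\in H^2(\Omega)\cap W^{1,\infty}(\Omega)\cap H^1_0(\Omega)$, and since $a_h^*$ is piecewise linear, $\varphi\in H^1_0(\Omega)$ and $\varphi$ is piecewise $H^2$ on $\mathcal{T}_h$. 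Testing the weak form for $u^\dagger$ with $\varphi$, testing the discrete weak form for $u_h(a_h^*)$ with an interpolant $\varphi_h\in X_h$, and subtracting yields the identity
\begin{equation*}
(e\,\nabla u^\dagger,\nabla\varphi) = (f,\varphi-\varphi_h) - (a_h^*\,\nabla u^\dagger,\nabla(\varphi-\varphi_h)) + \bigl(a_h^*\,\nabla(u_h(a_h^*)-u^\dagger),\nabla\varphi_h\bigr),
\end{equation*}
whose left-hand side, by the integration-by-parts computation reproduced in the proof of Theorem~\ref{thm:Bonito_stab}, equals exactly $\tfrac12\int_\Omega (e/a^\dagger)^2\,(a^\dagger|\nabla u^\dagger|^2 + fu^\dagger)\,\rmd x$, i.e.\ the weighted quantity to be estimated.

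The three residual terms I would then estimate using the box constraint on $a^\dagger$, the elliptic regularity of $u^\dagger$, and the a priori bound $\gamma^{1/2}\|\nabla a_h^*\|_{L^2(\Omega)}\le c\eta$ from Lemma~\ref{lem:FEM_priori}. Together these control $\|\nabla\varphi\|_{L^2(\Omega)}$ and the elementwise $H^2$ seminorm of $\varphi$ by $c\gamma^{-1/2}\eta$, so standard interpolation estimates on each $T\in\mathcal{T}_h$ give $\|\varphi-\varphi_h\|_{L^2(\Omega)} + h\|\nabla(\varphi-\varphi_h)\|_{L^2(\Omega)}\le ch^2\gamma^{-1/2}\eta$, which handles the first two residuals. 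For the third I need an $H^1$ state error, which is produced in two complementary ways: the triangle inequality through the intermediate $u_h(\Pi_h a^\dagger)$ combined with an inverse estimate on $X_h$ (using Lemmas~\ref{lem:FEM_uq-uh} and~\ref{lem:FEM_priori}) gives $\|\nabla(u^\dagger - u_h(a_h^*))\|_{L^2(\Omega)}\le c(h + h^{-1}\eta)$, while the trivial a priori bound furnishes a universal constant. Taking the smaller of the two produces the factor $\min(h+h^{-1}\eta,1)$, and summing all three contributions gives the announced weighted estimate.

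For the $L^2(\Omega)$ bound I would repeat the domain-splitting argument from the end of the proof of Theorem~\ref{thm:Bonito_stab}: partition $\Omega = \Omega_\rho \cup \Omega_\rho^c$ for $\rho>0$, invoke PC$(\beta)$ on $\Omega_\rho$ to control $\int_{\Omega_\rho} e^2\,\rmd x$ by $c\rho^{-\beta}$ times the weighted integral just bounded, use the box constraint to bound $\int_{\Omega_\rho^c} e^2\,\rmd x \le c\rho$, and optimize in $\rho$ (proportional to the $1/(1+\beta)$-power of the weighted-bound quantity) to produce the exponent $1/(2(1+\beta))$. The main obstacle lies in the middle step: because $\varphi$ inherits the piecewise-linear regularity of $a_h^*$, only elementwise interpolation rates are available and careful bookkeeping of the piecewise $H^2$ seminorm through $\|\nabla a_h^*\|_{L^2(\Omega)}$ is required; moreover, Lemma~\ref{lem:FEM_priori} supplies only an $L^2$ state error, so an inverse-estimate detour through $u_h(\Pi_h a^\dagger)$ must be used to upgrade it to an $H^1$ error, and this detour introduces the $h^{-1}\eta$ factor that ultimately forces the $\min(\cdot,1)$ truncation in the final bound.
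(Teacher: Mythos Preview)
Your proposal is correct and follows the same overall strategy as the paper: the Bonito test function $\varphi=(e/a^\dag)u^\dag$, a split into a consistency term and a state-error term, Lemma~\ref{lem:FEM_priori} for the control of $\|\nabla a_h^*\|_{L^2(\Omega)}$, an inverse-estimate detour to upgrade the $L^2$ state error to $H^1$, and the domain-splitting argument for the final $L^2(\Omega)$ bound.

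The one notable technical difference is in how the consistency term is handled. The paper does \emph{not} rely on piecewise $H^2$ regularity of $\varphi$; instead it integrates by parts first,
\[
((a^\dag-a_h^*)\nabla u^\dag,\nabla(\varphi-P_h\varphi)) = -\bigl(\nabla\cdot((a^\dag-a_h^*)\nabla u^\dag),\,\varphi-P_h\varphi\bigr),
\]
and then uses only $\|\varphi-P_h\varphi\|_{L^2(\Omega)}\le ch\|\nabla\varphi\|_{L^2(\Omega)}$ together with $\nabla\cdot((a^\dag-a_h^*)\nabla u^\dag)\in L^2(\Omega)$. This is slightly cleaner, since it avoids the elementwise $H^2$ bookkeeping you flag as the ``main obstacle''; your route via $\Pi_h$ and piecewise $H^2$ bounds is valid (the second derivatives of $a_h^*$ vanish elementwise and $a^\dag\in H^2\cap W^{1,\infty}$ suffices) and in fact yields a consistency contribution $O(h\gamma^{-1/2}\eta)$, which is at least as sharp as the paper's $O(h\gamma^{-1}\eta^2)$. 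For the inverse-estimate step the paper routes through $P_h u^\dag$ rather than $u_h(\Pi_h a^\dag)$, but your variant produces the same $c(h+h^{-1}\eta)$ bound.
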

\begin{proof}
Let $u^\dag=u(a^\dag)$. By integration by parts and the weak formulations of $u^\dag$ and $u_h(a_h^*)$, we have the following splitting for any $\varphi\in H_0^1(\Omega)$,
\begin{equation}\label{eqn:sp-00}
    \begin{split}
        &((a^\dag-a_h^*)\nabla u^\dag,\nabla \varphi) \\
        =&((a^\dag-a_h^*)\nabla u^\dag,\nabla(\varphi-P_h \varphi))+ (a^\dag\nabla u^\dag-a_h^*\nabla u^\dag,\nabla P_h \varphi) \\=&-(\nabla\cdot((a^\dag-a_h^*)\nabla u^\dag),  \varphi-P_h \varphi ) + (a_h^*\nabla (u_h(a_h^*) - u^\dag),\nabla P_h \varphi)\nonumber =: {\rm I}_1+{\rm I}_2.
    \end{split}
\end{equation}
It remains to bound the two terms ${\rm I}_1$ and ${\rm I}_2$ separately. By the triangle inequality, we have
\begin{align*}
 \| \nabla\cdot((a^\dag-a_h^*)\nabla u^\dag)\|_{L^2(\Omega)} & \le \|a^\dag-a_h^*\|_{L^\infty(\Omega)}\|\Delta u^\dag\|_{L^2(\Omega)} \\& \quad +  \| \nabla (a^\dag-a_h^*)\|_{L^2(\Omega)}  \| \nabla u^\dag \|_{L^\infty(\Omega)}.
\end{align*}
In view of Assumption \ref{Jin2021Error:ass-ellip} and the box constraint of $\mathcal{A}$, we derive
\begin{equation*}
 \| \nabla\cdot ((a^\dag-a_h^*)\nabla u)\|_{L^2(\Omega)}
 \le c (1+  \| \nabla a_h^* \|_{L^2(\Omega)}).
\end{equation*}
This estimate and the Cauchy-Schwarz inequality imply
\begin{equation*}
 |{\rm I}_1 | \le  c(1+\| \nabla a_h^* \|_{L^2(\Omega)} ) \|   \varphi-P_h \varphi \|_{L^2(\Omega)}.
\end{equation*}
Let $\varphi\equiv \frac{a^\dag-a_h^*}{a^\dag} u^\dag$. Then
\begin{equation*}
\nabla  \varphi = \bigg(\frac{\nabla(a^\dag-a_h^*)}{a^\dag} - \frac{(a^\dag-a_h^*)\nabla a^\dag}{(a^\dag)^2}\bigg) u^\dag + \frac{a^\dag-a_h^*}{a^\dag}\nabla u^\dag,
\end{equation*}
By  Lemma \ref{lem:FEM_priori}, the box constraint of the admissible set $\mathcal{A}$ and Assumption \ref{Jin2021Error:ass-ellip}, we have
\begin{align*}
  \|\nabla \varphi\|_{L^2(\Omega)}&\le c\big[(1+\|\nabla a_h^*\|_{L^2(\Omega)})\|u^\dag \|_{L^\infty(\Omega)}
  + \| \nabla u^\dag \|_{L^2(\Omega)}\big]\\
     &\le c(1+\|\nabla a_h^*\|_{L^2(\Omega)}) \leq c\gamma^{-\frac12}\eta.
\end{align*}
Hence $ \varphi\in H_0^1(\Omega)$.
Now the approximation property of the operator $P_h$ implies
\begin{equation*}
  \| \varphi-P_h \varphi\|_{L^2(\Omega)} \le ch\| \nabla  \varphi\|_{L^2(\Omega)} \le ch(1+\|\nabla a_h^*\|_{L^2(\Omega)})\leq ch\gamma^{-\frac12}\eta.
\end{equation*}
Thus by Lemma \ref{lem:FEM_priori}, the term ${\rm I}_1 $ in \eqref{eqn:sp-00} can be bounded by
\begin{align}\label{eqn:I1-elliptic}
  |{\rm I}_1 | & \le  ch (1+\| \nabla a_h ^*\|_{L^2(\Omega)} )^2 \le  c    h(1+\gamma^{-1}\eta^2) \le  c    h \gamma^{-1}\eta^2.
\end{align}
For the term ${\rm I}_2$, by the triangle inequality, inverse inequality on the FEM space $X_h$, the $L^2(\Omega)$
stability of the operator $P_h$ and Lemma \ref{lem:FEM_priori}, we have
\begin{align*}
&\|  \nabla(u^\dag- u_h(a_h^*)) \|_{L^2(\Omega)} \\
 \leq &\|  \nabla(u^\dag - P_hu^\dag ) \|_{L^2(\Omega)} + ch^{-1}\|  P_h u^\dag  - u_h(a_h^*)  \|_{L^2(\Omega)}\\
   \leq &c(h + h^{-1}\| u^\dag - u_h(a_h^*)   \|_{L^2(\Omega)})\le  c(h + h^{-1}\eta).
\end{align*}
Meanwhile, there holds the \textit{a priori} estimate
$$\|  \nabla(u - u_h(a_h^*)) \|_{L^2(\Omega)} \le c.$$ Hence,
the Cauchy-Schwarz inequality and Lemma \ref{lem:FEM_priori} imply
\begin{align}
  {\rm I}_2  & \le    \|  \nabla(u^\dag - u_h(a_h^*)) \|_{L^2(\Omega)} \|  \nabla  \varphi\|_{L^2(\Omega)}\nonumber\\
   &\le c \min(h + h^{-1}\eta, 1)(1+\|\nabla a_h^* \|_{L^2(\Omega)})\nonumber\\
   &\le c \min(h^{-1}\eta, 1) \gamma^{-\frac12}\eta.\label{eqn:I2-elliptic}
\end{align}
The  estimates \eqref{eqn:I1-elliptic} and \eqref{eqn:I2-elliptic} together imply
\begin{align*}
 ((a^\dag-a_h^*)\nabla u^\dag,\nabla \varphi)
  \le  c(h \gamma^{-\frac12}\eta + \min(h + h^{-1} \eta,1)) \gamma^{-\frac12}\eta.
\end{align*}
By repeating the argument of Theorem \ref{thm:Bonito_stab}
for the term $((a^\dag-a_h^*)\nabla u^\dag,\nabla \varphi)$, we get the identity
\begin{equation*}
  ((a^\dag-a_h^*)\nabla u^\dag,\nabla \varphi)  = \frac12\int_\Omega \Big(\frac{a^\dag-a_h^*}{a^\dag}\Big)^2 \big(  a^\dag | \nabla u^\dag  |^2 + fu^\dag \big) \,\d x.
\end{equation*}
This identity and the preceding estimates on ${\rm I}_1$ and ${\rm I}_2$ lead directly to the first assertion.
The second assertion follows from the argument in Theorem \ref{thm:Bonito_stab}.
\end{proof}

\begin{remark}
Theorem \ref{thm:FEM_error} provides useful guidance for choosing the algorithmic parameters. With the choice {$\gamma\sim \delta^2$}
and {$h\sim\delta^\frac12$}, there holds
\begin{equation*}
    \|a^\dag-a_h^*\|_{L^2(\Omega)}  \le c\delta^{\frac1{4(1+\beta)}}.
\end{equation*}
The choice {$\gamma\sim\delta^2$} differs from the usual condition
for Tikhonov regularization, i.e., ${ \lim_{\delta\to0^+}\frac{\delta^2}{\gamma(\delta)}=0,}$
but it agrees  with conditional stability in Theorem \ref{thm:cond-stab in abstract form}. It is instructive to compare the rate with the conditional stability estimate in Theorem \ref{thm:Bonito_stab}. By the a prior regularity estimate $u(a),u(a^\dagger)\in H^2(\Omega)$ for $a,a^\dagger\in \mathcal{A}\cap W^{1,p}(\Omega)$ with $p>d$ and
Gagliardo-Nirenberg interpolation inequality \eqref{eqn:Gagliardo-Nirenberg},
we have
\begin{equation*}
\|a-a^\dagger\|_{L^2(\Omega)}\leq c\|u(a)-u(a^\dagger)\|_{L^2(\Omega)}^\frac{1}{4(1+\beta)}.
\end{equation*}

\end{remark}

\begin{remark}\label{rmk:FEM_otherformulation}
The discussions so far focus on the output least-squares formulation. There are other approaches to reconstruct the diffusion coefficient $a$ and establish relevant error bounds. One approach relies on reformulating the inverse problem as a transport equation for the diffusion coefficient $a$. Richter \cite{Richter:1981,Richter:1981numer} proved the uniqueness of problem \eqref{eqn:gov_Falk} under the assumption that $a$ is given on the inflow boundary, the portion of the boundary with $\partial_\nu a<0$, and
\begin{equation}\label{eqn:FEM_Richter_cond}
    \inf_{\Omega} \max\{|\nabla u^\dagger |, \Delta u^\dagger \}>0.
\end{equation}
In addition, Richter \cite{Richter:1981numer} proposed a modified upwind difference scheme for the transport problem and proved an $O(h)$ convergence rate under the assumption $u^\dagger\in C^3(\overline{\Omega})$ and $a^\dagger\in C^2(\overline{\Omega}) $. Kohn and Lowe \cite{Kohn:1988} proposed the following discrete scheme to solve  problem  \eqref{eqn:gov_Falk}
\begin{align}\label{eqn:FEM_Kohn}
    \min_{\sigma_h\in \mathcal{K}_h, a_h\in \mathcal{A}_h} \| \sigma_h- &a_h\nabla z^\delta \|_{L^2(\Omega)}^2+\| \nabla\cdot \sigma_h+f \|_{L^2(\Omega)}^2\nonumber\\
      &+\|  \sigma_h\cdot n-g \|_{L^2(\partial\Omega)}^2
       +\gamma\| \nabla a_h\|_{L^2(\Omega)}^2,
\end{align}
where $\mathcal{A}_h=\mathcal{A}\cap V_h$ and $\mathcal{K}_h=V_h^{\otimes d}$ is the $d$-fold product space of $V_h$. The objective function measures the equation error but with $u^\dagger$ replaced by noise measurement $z^\delta$.
Another widely adopted approach involves minimizing the equation error. Kohn and Lowe proved that under the assumptions $u^\dagger\in H^3(\Omega)$, $\Delta u^\dagger\in C(\overline{\Omega})$, $a^\dagger\in H^2(\Omega)$ and $\|u^\dagger-z^\delta\|_{H^1(\Omega)}\le \delta$, there holds \cite[Theorem 4]{Kohn:1988}
\begin{equation*}
    \|a_h^*-a^\dagger\|_{L^2(\Omega)}\le c(h+\delta+\gamma^{\frac{1}{2}})^{\frac{1}{2}},
\end{equation*}
under the condition
\begin{equation}\label{eqn:FEM_cond_Kohn}
    \begin{aligned}
            &\forall \psi \in H^1(\Omega)\text{, equation }\nabla u^\dagger\cdot\nabla v_\psi=\psi \\
            &\text{has a solution $v_\psi$ with }\|v_\psi\|_{H^1(\Omega)}\le c\|\psi\|_{H^1(\Omega)}.
    \end{aligned}
\end{equation}
Condition \eqref{eqn:FEM_cond_Kohn} is weaker than condition \eqref{eqn:Falk_cond} in Falk's result.
\end{remark}

The numerical analysis of finite element approximations have a rather long history, but deriving error bounds on the discrete approximations is relatively recent, especially for nonlinear inverse problems, due to the nonlinearity of the forward maps, and one cannot apply the stability results for the continuous inverse problems directly. The error analysis has been conducted for several other nonlinear inverse problems, including diffusion coefficient recovery in the standard parabolic and time-fractional diffusion problems \cite{Jin:2021Error, Jin:2021Numerical,Jin:2023recovery,Jin:2024numerical} from either space-time measurement or terminal data, inverse potential problem \cite{CenShinZhou:2024,Zhang:2022potential,Jin:2023potential,JinShinZhou:2024,jin:2024stochastic}, simultaneous recovery of the potential and diffusion coefficient from two internal measurements (i.e., quantitative photo-acoustic tomography in the diffusive regime) \cite{Cen:2024multi,ACZ:2025}. The backward problem for time-fractional diffusion was thoroughly analyzed in
\cite{WuYangZhou:2025,ZhangZhou:2020IP, ZhangZhou:2022SISC, ZhangZhou:2023IP,jin2023:Inverse}. There have also been significant recent advances on the numerical analysis of unique continuation / data assimilation problems for elliptic, parabolic and wave equations, especially using stabilized FEMs \cite{Burman:2013,Burman:2018control,Burman:2018heat,Burman:2020wave,Burman:2020convectionI,Burman:2022convectionII}.

\begin{comment}
The method can provide optimal error estimates that are comparable to known conditional stability results, but it seems limited to linear unique continuation problems so far.
\end{comment}

\section{Hybrid schemes}\label{sec:Hybrid}

In this section, we describe a class of schemes known as hybrid schemes, which aim to combine the strengths of traditional numerical methods with modern data-driven techniques. Indeed, many traditional computational techniques enjoy rigorous mathematical guarantees but face challenges for some problem settings, whereas machine learning techniques based on DNNs exhibit strong empirical performance but often lack rigorous mathematical underpinnings. Hybrid schemes aim to combine the strengths of these two different classes of methods.

\subsection{Preliminary}\label{subsec:Hybrid_Pre}

First we describe notation and properties of the concerned class of deep neural networks (DNNs). Let $\{d_\ell\}_{\ell=0}^L \subset\mathbb{N}$ be fixed with $d_0=d$. The parameterization $\Theta=
\{(A^{(\ell)},b^{(\ell)})_{\ell=1}^L\}$ consists of weight matrices and bias vectors, with
$A^{(\ell)}=[W_{ij}^{(\ell)}]\in \mathbb{R}^{d_\ell\times d_{\ell-1}}$ and $b^{(\ell)}=
[b^{(\ell)}_i]\in\mathbb{R}^{d_{\ell}}$ the weight matrix and bias vector at the $\ell$-th layer, respectively.
Then a DNN $v_\theta:= v^{(L)}:\Omega\subset\mathbb{R}^d\to\mathbb{R}^{d_L}$
realized by a parameter vector $\theta\in\Theta$ is defined recursively by
\begin{equation}\label{eqn:NN_realization}
\left\{\begin{aligned}
v^{(0)}&=x,\quad x\in\Omega\subset\mathbb{R}^d,\\		
v^{(\ell)}&=\rho(A^{(\ell)}v^{(\ell-1)}+b^{(\ell)}),\quad \ell=1,2,\cdots,L-1,\\
		v^{(L)}&=A^{(L)}v^{(L-1)}+b^{(L)},
	\end{aligned}\right.
\end{equation}
where the nonlinear activation function $\rho:\mathbb{R}\to\mathbb{R}$ is applied componentwise to a vector. The DNN $v_\theta$ has a depth $L$ and width $W:=
\max_{\ell=0,\dots,L}(d_{\ell})$. We denote the set of DNN functions of depth $L$, with at most $N_\theta$ nonzero weights, and maximum weight bound $R$ by
\begin{equation*}
    \mathcal{N}(L,N_\theta,R) =: \{v_\theta \mbox{ is a DNN of depth }L: \|\theta\|_{\ell^0}\leq N_\theta, \|\theta\|_{\ell^\infty}\leq R\},
\end{equation*}
where $\|\cdot\|_{\ell^0}$ and $\|\cdot\|_{\ell^\infty}$ denote, respectively, the number of nonzero entries in and the maximum norm of a vector.

The properties of a neural network $v_\theta$ depend heavily on the choice of the activation function $\rho$. We discuss several common choices below.
\begin{itemize}
    \item The rectified linear unit, $\mathrm{ReLU}$, defined by
    \begin{equation*}
        \mathrm{ReLU}(x)=\max(x,0),
    \end{equation*}
    is a very popular choice. It is easy to compute, scale-invariant, and the reduces the vanishing gradient problem.  One common issue with the ReLU networks is that of "dying neurons", since the gradient of ReLU is zero for $x<0$. This issue can be overcome by using the so-called leaky ReLU function
    \begin{equation*}
        \mathrm{ReLU}_\alpha(x)=\max(x,\alpha x),
    \end{equation*}
    with $\alpha\in (0,1)$. Other smooth variants are the Gaussian error linear unit, sigmoid linear unit, exponential linear unit and softplus.
    \item The logistic function is defined by
    \begin{equation*}
        \rho(x)=\frac{1}{1+e^{-x}}.
    \end{equation*}
    It is a smooth approximation of the Heaviside function, and is sigmoidal (i.e., it is smooth, monotonic and has horizontal asymptotes as $x\to\pm\infty$).
    \item The hyperbolic tangent, or tanh, defined by
    \begin{equation*}
        \rho(x)=\frac{e^x-e^{-x}}{e^x+e^{-x}}
    \end{equation*}
    is also smooth sigmoidal function. One problem with sigmoidal functions is that the gradient vanishes away from zero, which might cause problems when using gradient type training algorithms.
\end{itemize}
In neural solvers for PDE direct and inverse problems, smooth activation functions are typically adopted and the most popular choices are tanh and logistics, due to the regularity requirements of the underlying PDE. In practice, the activation function $\rho$ can also be trainable.

The approximation theory of DNNs initially focuses on universal approximation properties \cite{Cybenko:1989,Hornik:1991}, e.g., every continuous function defined on a compact domain can be approximated arbitrarily well by a shallow neural network under certain mild conditions on $\rho$. Later, the works \cite{Barron:1994,Mhaskar:1996} systematically investigated  approximation capabilities of sigmoid-activated DNNs for (piecewise) smooth functions. The analysis is based on global polynomial approximation and the approximation capacity of  DNNs.  For the ReLU activation, Yarotsky \cite{Yarotsky:2017} established a rigorous approximation theory for DNNs approximating localized Taylor polynomials. The key lies in constructing  an exact partition of unity using the ReLU activation. The work \cite{Guhring:2021} extends the approximation results to a wide class of smooth activation functions using an approximate partition of unity. The following approximation result holds for the $\tanh$ activation \cite[Theorem 4.9]{Guhring:2021}.
\begin{lemma}\label{lem:NN_approx_tanh}
Let $\rho\equiv\tanh:x\mapsto\frac{e^x-e^{-x}}{e^x+e^{-x}}$. Let $s\in\mathbb{N}\cup\{0\}$ and $p\in[1,\infty]$ be fixed, and $v\in W^{k,p}
(\Omega)$ with $\mathbb{N}\ni k\geq s+1$. Then for any $\epsilon>0$, there exists one
\begin{equation*}
    v_\theta\in \mathcal{N}\left(c\log(d+k),c\epsilon^{-\frac{d}{k-s-\mu (s=2)}},c \epsilon^{-2-\frac{2(d/p+d+k+\mu(s=2))+d/p+d}{k-s-\mu(s=2)}}\right),
\end{equation*}
where $\mu>0$ is arbitrarily small, such that $\|v-v_\theta\|_{W^{s,p}(\Omega)} \leq \epsilon$.
\end{lemma}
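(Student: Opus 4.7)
The plan is to follow the approximate partition of unity plus local polynomial approximation strategy that was pioneered for ReLU networks by Yarotsky and adapted to smooth activations in \cite{Guhring:2021}. After a standard Sobolev extension, assume $\Omega=(0,1)^d$. For a resolution parameter $N$ to be chosen at the end, partition $\Omega$ into $N^d$ closed subcubes $C_i$ of side $1/N$ and let $P_i$ be the averaged Taylor polynomial of $v$ of degree $k-1$ on $C_i$. A Bramble--Hilbert type estimate gives the local bound $\|v-P_i\|_{W^{s,p}(C_i)}\le cN^{-(k-s)}\|v\|_{W^{k,p}(\widetilde{C}_i)}$ on a slightly enlarged patch $\widetilde{C}_i$. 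This is the only place where the smoothness of $v$ enters.

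Next I would build three tanh subnetworks. First, an approximate partition of unity $\{\phi_{i,\theta}\}$ subordinate to the cover $\{\widetilde{C}_i\}$: using the identity $\tfrac12(\tanh(\lambda x)+1)\to \mathbf{1}_{x>0}$ smoothly as $\lambda\to\infty$, and products of $d$ one-dimensional bumps, one constructs $\phi_{i,\theta}$ of fixed depth with $\sum_i\phi_{i,\theta}\equiv 1$ up to an error that can be driven below any prescribed tolerance by inflating the inner weights. Second, a multiplication gadget $\Pi_\theta(x,y)\approx xy$ on any prescribed compact set, based on the identity $xy=\tfrac12((x+y)^2-x^2-y^2)$ and a shallow tanh approximation of the square, which inherits error $\varepsilon'$ in any $W^{s,\infty}$ norm for a cost polynomial in $1/\varepsilon'$. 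Third, tensorizing $\Pi_\theta$ in a balanced binary tree of depth $O(\log k)$ realizes all monomials $x^{\alpha}$ with $|\alpha|\le k-1$, whence any local polynomial $P_i$ is representable with arbitrary precision and depth $O(\log(d+k))$.

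The candidate is then $v_\theta := \sum_i \Pi_\theta(\phi_{i,\theta}, P_{i,\theta})$, which is a single tanh network of depth $O(\log(d+k))$ whose nonzero weights and bias count is $O(N^d)$. The $W^{s,p}$ error splits into the polynomial approximation term, of size $O(N^{-(k-s)})$, plus a finite number of terms of the form $\|\phi_{i,\theta}P_i-\Pi_\theta(\phi_{i,\theta},P_{i,\theta})\|_{W^{s,p}}$ and $\|\sum_i\phi_{i,\theta}-1\|_{W^{s,p}}\|v\|_{W^{s,\infty}}$. Both auxiliary errors can be tightened below $\varepsilon/N^d$ by enlarging the inner weights in $\phi_{i,\theta}$ and $\Pi_\theta$ polynomially. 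Balancing yields the choice $N\sim \varepsilon^{-1/(k-s)}$ and gives the stated parameter counts; the logarithmic depth comes from the product tree, the weight count $\varepsilon^{-d/(k-s)}$ from the cardinality of the cover, and the weight magnitude bound from the polynomial blow-up needed to sharpen the partition of unity and the square gadget up to $W^{s,p}$.

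The main obstacle is controlling the error in the Sobolev norm $W^{s,p}$, not merely in $L^p$ or $L^\infty$. Differentiating $\Pi_\theta(\phi_{i,\theta},P_{i,\theta})$ up to order $s$ produces cross terms containing derivatives of the partition of unity, whose magnitude scales as $N^{s}$, so every auxiliary tolerance must be tightened by corresponding powers of $N$; propagating these losses through the product tree is what fixes the exponent $-2-\tfrac{2(d/p+d+k+\mu\mathbf{1}_{s=2})+d/p+d}{k-s-\mu\mathbf{1}_{s=2}}$ on the maximum weight. The case $s=2$ is especially delicate because second derivatives of the product include genuinely mixed terms $\phi'_{i,\theta}\,\partial P_{i,\theta}$ whose combined Sobolev norm cannot be absorbed without sacrificing an arbitrarily small power $\mu$ in the exponent of $N$, which is the origin of the $\mu\mathbf{1}_{s=2}$ correction in the statement. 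Once this bookkeeping is carried out carefully, the conclusion falls out by choosing $N$ as above and collecting the complexities.
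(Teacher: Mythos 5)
Your proposal is correct and follows essentially the same route as the paper, which itself sketches the proof of \cite[Theorem 4.9]{Guhring:2021}: an approximate tanh-based partition of unity on a uniform grid of $O(N^d)$ subcubes, localized (averaged) Taylor polynomials with a Bramble--Hilbert bound of order $N^{-(k-s)}$, tanh multiplication gadgets assembled in a product tree of depth $O(\log(d+k))$, and the balancing choice $N\sim\epsilon^{-1/(k-s-\mu(s=2))}$. Your accounting of where the $\mu\mathbf{1}_{s=2}$ loss and the weight-magnitude exponent come from (second-derivative bookkeeping against the partition of unity and the sharpening of the gadgets) is consistent with the source construction, so there is nothing substantive to add.
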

\begin{comment}
\begin{remark}\label{rmk:NN_approx_tanh_proof}
On the domain $\Omega=(0,1)^d$, Guhring and Raslan \cite[pp. 127-128]{Guhring:2021} proved Lemma \ref{lem:NN_approx_tanh} using two steps. They first approximate a function $v\in W^{k,p}(\Omega)$ by a localized Taylor polynomial $v_{\rm poly}$:
$\|v-v_{\rm poly}\|_{W^{s,p}(\Omega)}\leq c_{\rm poly}N^{-(k-s-\mu(s=2))}$,
where the construction $($see \cite[Definition 4.4]{Guhring:2021} for details$)$ of $v_{\rm poly}$ relies on an approximate partition of unity $($depending on $N\in\mathbb{N}$$)$ and the constant $c_{\rm poly}=c_{\rm poly}(d, p, k,s)>0$. Next they show that there exists a DNN parameter $\theta$, satisfying the conditions in Lemma \ref{lem:NN_approx_tanh}, such that \cite[Lemma D.5]{Guhring:2021}:
     \begin{equation*}
        \|v_{\rm poly}-v_\theta\|_{W^{s,p}(\Omega)} \leq c_{\rm NN}\|v\|_{W^{k,p}(\Omega)}\tilde{\epsilon},
     \end{equation*}
    where the constant $c_{\rm NN}=c_{\rm NN}(d,p,k,s)>0$ and  $\tilde{\epsilon}\in(0,\frac12)$.
    Now for small $\epsilon>0$, the desired estimate follows directly from the choice below
    $
        N=(\frac{\epsilon}{2c_{\rm poly}})^{-\frac{1}{k-s-\mu(s=2)}}$ and $\tilde{\epsilon}=\frac{\epsilon}{2c_{\rm NN}\|v\|_{W^{k,p}(\Omega)}}$.
\end{remark}
\end{comment}

On the domain $\Omega=(0,1)^d$, Guhring and Raslan \cite{Guhring:2021} prove Lemma \ref{lem:NN_approx_tanh} in three steps. They first divide $(0,1)^d$ into $(N+1)^d$ equal hypercubes with a grid size $1/N$ and construct an approximate partition of unity by DNNs \cite[Lemma 4.5]{Guhring:2021}. Next, they approximate a function $v\in W^{k,p}(\Omega)$ by a localized Taylor polynomial $v_{\rm poly}$:
$\|v-v_{\rm poly}\|_{W^{s,p}(\Omega)}\leq c_{\rm poly}N^{-(k-s-\mu(s=2))}$,
where the construction of $v_{\rm poly}$ relies on the approximate partition of unity and the constant $c_{\rm poly}=c_{\rm poly}(d, p, k,s)>0$. Finally, they show that there exists a DNN parameter $\theta$, satisfying the conditions in Lemma \ref{lem:NN_approx_tanh}, such that \cite[Lemma D.5]{Guhring:2021}:
     \begin{equation*}
        \|v_{\rm poly}-v_\theta\|_{W^{s,p}(\Omega)} \leq c_{\rm DNN}\|v\|_{W^{k,p}(\Omega)}\tilde{\epsilon},
     \end{equation*}
    where the constant $c_{\rm DNN}=c_{\rm DNN}(d,p,k,s)>0$ and  $\tilde{\epsilon}\in(0,\frac12)$.
    Now for small $\epsilon>0$, the desired estimate follows from the choice
    $N=(\frac{\epsilon}{2c_{\rm poly}})^{-\frac{1}{k-s-\mu(s=2)}}$ and $\tilde{\epsilon}=\frac{\epsilon}{2c_{\rm NN}\|v\|_{W^{k,p}(\Omega)}}$.

Below we take $k=1$ and $s=0$ in Lemma \ref{lem:NN_approx_tanh}. For any $\epsilon>0$ and $p\geq 1$, we denote the DNN parameter set by $\mathfrak{P}_{p,\epsilon}\subset \Theta$ for
\begin{equation*}
    \mathcal{N}\Big(c\log(d+1), c\epsilon^{-\frac{d}{1-\mu}}, {c \epsilon^{-\frac{4p+3d+3pd}{p(1-\mu)}} }\Big).
\end{equation*}
The next lemma gives the boundedness of derivatives of tanh-activated DNNs. This result is useful for bounding the quadrature error.
\begin{lemma}\label{lem:NN_bounds}
    Let $\theta \in \mathfrak{P}_{\infty,\epsilon}$, of depth $L$, width $W$ and maximum bound $R$, and $v_\theta$ be its DNN realization, with $RW>2$. Then the following estimates hold
    \begin{align*}
        \|  v_\theta\|_{L^\infty(\Omega)} \le  cR(W+1),&\quad  |v_\theta|_{W^{1,\infty}(\Omega)} \le  cR^L W^{L-1},\\
         |v_\theta|_{W^{2,\infty}(\Omega)} \le  cR^{2L} W^{2L-2},&\quad  |v_\theta|_{W^{3,\infty}(\Omega)} \le  cR^{3L} W^{3L-3},
    \end{align*}
    where $|\cdot|_{W^{k,\infty}(\Omega)}$ ($k=1,2,3$) denotes the $W^{k,\infty}(\Omega)$ seminorm.
\end{lemma}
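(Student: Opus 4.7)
The plan is to exploit the recursive definition \eqref{eqn:NN_realization} and proceed by induction on the layer index $\ell$. The essential inputs are elementary: the tanh activation satisfies $|\rho(x)|\le 1$, and one checks that $|\rho^{(k)}(x)|\le c_k$ on $\mathbb{R}$ for $k=1,2,3$; combined with the entrywise bound $\|\theta\|_{\ell^\infty}\le R$ and the width at most $W$, each affine layer satisfies $\|A^{(\ell)}\|_\infty\le RW$ and $\|b^{(\ell)}\|_\infty\le R$.

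First I would treat the $L^\infty$ bound directly. Since $v^{(L-1)}=\rho(\cdot)$ takes values in $[-1,1]^{d_{L-1}}$, the readout $v_\theta=A^{(L)}v^{(L-1)}+b^{(L)}$ has $\|v_\theta\|_{L^\infty(\Omega)}\le RW\cdot 1+R=R(W+1)$. Next I would attack the first-order seminorm. Differentiating \eqref{eqn:NN_realization} in $x_i$ and using $|\rho'|\le 1$, I get the recursion $\|\partial_i v^{(\ell)}\|_\infty\le RW\,\|\partial_i v^{(\ell-1)}\|_\infty$ for $1\le \ell\le L-1$, with the base case $\|\partial_i v^{(1)}\|_\infty\le R$ (since $\partial_i v^{(0)}=e_i$ has $\ell^\infty$-norm $1$). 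Iterating $L-2$ times through the hidden layers and once more through the unactivated output layer yields $\|\partial_i v_\theta\|_\infty\le R\cdot (RW)^{L-2}\cdot RW=R^LW^{L-1}$, which is the second claim.

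For the higher-order seminorms I would use the Faà di Bruno formula layer by layer. Writing $z^{(\ell)}=A^{(\ell)}v^{(\ell-1)}+b^{(\ell)}$, the mixed partial $\partial^\alpha v^{(\ell)}$ with $|\alpha|=k$ decomposes as
\begin{equation*}
\partial^\alpha v^{(\ell)}=\sum_{\pi\in P(\alpha)} \rho^{(|\pi|)}(z^{(\ell)})\odot\prod_{B\in\pi} A^{(\ell)}\partial^{B} v^{(\ell-1)},
\end{equation*}
where the sum runs over set partitions $\pi$ of the multi-index $\alpha$. Setting $M_k^{(\ell)}:=\max_{|\alpha|=k}\|\partial^\alpha v^{(\ell)}\|_{L^\infty}$ and using the uniform bound on $\rho^{(m)}$, this yields a recursion
\begin{equation*}
M_k^{(\ell)}\le c_k\,RW\sum_{k_1+\cdots+k_m=k}\prod_{j=1}^m M_{k_j}^{(\ell-1)}.
\end{equation*}
An induction on $k$, taking the first-order bound $M_1^{(\ell)}\le R(RW)^{\ell-1}$ as a baseline, would then propagate the ansatz $M_k^{(\ell)}\le C_k (RW)^{\ell-1}(R(RW)^{\ell-1})^{k-1}=C_k R^{k\ell}W^{k(\ell-1)+\,\text{lower}}$ through the recursion, with the dominant term being the single partition $\pi=\{\{i_1\},\ldots,\{i_k\}\}$ that produces $(RW)(M_1^{(\ell-1)})^k$. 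Applying the readout $A^{(L)}$ finally gives $|v_\theta|_{W^{k,\infty}}\le c\,R^{kL}W^{k(L-1)}$ for $k=2,3$, matching the stated bounds.

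The main obstacle, and the reason for the hypothesis $RW>2$, will be the bookkeeping of the cross-partition terms in the Faà di Bruno expansion: one has to verify that every mixed partition $\pi$ with $|\pi|<k$ contributes a strictly lower power of $(RW)$ than the fully split partition, so that after $L$ iterations all subdominant contributions are absorbed into the constant $c$ via a geometric series whose ratio exceeds $2$. Once this combinatorial check is carried out, the three stated seminorm bounds follow immediately.
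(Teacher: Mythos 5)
Your argument is correct and follows essentially the paper's proof: the same explicit bounds on $\rho=\tanh$ and its first three derivatives, the same direct estimate $\|v_\theta\|_{L^\infty(\Omega)}\le RW+R$, and the same layerwise chain-rule recursion with factor $RW$ and base case $R$ for the first-order seminorm, after which the paper merely asserts that the higher-order estimates ``follow similarly''---which your Fa\`a di Bruno bookkeeping simply makes explicit. One small slip that does not affect the conclusion: in your higher-order recursion the prefactor should be $(RW)^{|\pi|}$ rather than $RW$, since each block $B\in\pi$ carries its own copy of $A^{(\ell)}$; the dominant fully-split partition then contributes $(RW)^k\big(M_1^{(\ell-1)}\big)^k\le c\,R^{k\ell}W^{k(\ell-1)}$, so the stated exponents $R^{kL}W^{k(L-1)}$ for $k=2,3$ are unchanged.
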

\begin{proof}
For the activation $\rho\equiv \tanh$, there holds
\begin{align*}
        \rho'(x)&=1-\rho^2(x),\quad \rho''(x)=-2\rho(x)(1-\rho^2(x)),\\ \rho'''(x)&=(6\rho^2(x)-2)(1-\rho^2(x)).
    \end{align*}
Therefore, the following estimates hold:
\begin{equation}\label{eqn:NN_act_bound}
\begin{aligned}
&\|\rho \|_{L^\infty(\mathbb{R})}\le 1,\quad \|\rho' \|_{L^\infty(\mathbb{R})}\le 1,\\ &\|\rho'' \|_{L^\infty(\mathbb{R})}\le 1,\quad \|\rho''' \|_{L^\infty(\mathbb{R})}\le 2.
\end{aligned}
\end{equation}
    By the DNN realization \eqref{eqn:NN_realization}, we have for every layer $\ell=1,\dots,L-1$ and each $i=1,\cdots,d_\ell$,
    $v^{(\ell)}_i=\rho\big(\sum_{j=1}^{d_{\ell-1}}A^{(\ell)}_{ij}v^{(\ell-1)}_j+b^{(\ell)}_i\big)$. The estimate \eqref{eqn:NN_act_bound} direct yields   $\|v_i^{(L-1)}\|_{L^\infty(\Omega)}\le 1$. By the realization \eqref{eqn:NN_realization} and the structure of the DNN, we derive
    \begin{equation*}
        \|  v_\theta\|_{L^\infty(\Omega)}\le \|A^{(L)} v^{(L-1)}\|_{L^\infty(\Omega)}+ \|b^{(L)} \|_{L^\infty(\Omega)}\le RW+R.
    \end{equation*}
    Now we  bound the derivatives of $v_\theta$. For the $\ell$-th layer, fix $1\le m\le d$. Direct computation with the chain rule gives
    \begin{align*}
    	 \partial_{x_m}v^{(\ell)}_i=\rho'\Big(\sum_{j=1}^{d_{\ell-1}}A^{(\ell)}_{ij}v^{(\ell-1)}_j+b^{(\ell)}_i\Big)\Big(\sum_{j=1}^{d_{\ell-1}}A^{(\ell)}_{ij}\partial_{ x_m}v^{(\ell-1)}_j\Big).
    \end{align*}
    By \eqref{eqn:NN_act_bound}, there holds
    \begin{align*}
    	\|\partial_{x_m}v^{(\ell)}_i\|_{L^\infty(\Omega)}\le RW  \max_{1\le j\le d_\ell}\|\partial_{ x_m}v^{(\ell-1)}_j\|_{L^\infty(\Omega)}.
    \end{align*}
    Note also the trivial estimate
    \begin{align*}
        \|\partial_{ x_m}v^{(1)}_i\|_{L^\infty(\Omega)}\leq \bigg\|\rho'\Big(\sum_{j=1}^{d}A^{(1)}_{ij}x_j+b^{(1)}_i\Big)A^{(1)}_{im}\bigg\|_{L^\infty(\Omega)}\leq R.
    \end{align*}
    Taking maximum over $i=1,\dots,d_\ell$ and then applying the inequality recursively lead to
    \begin{align*}
        \max_{1\le i\le d_\ell }\|\partial_{x_m}v_i^{(\ell)}\|_{L^\infty(\Omega)} \leq  (RW)^{\ell-1}\max_{1\le i\le d_1 }\|\partial_{x_m}v_i^{(1)}\|_{L^\infty(\Omega)}\le R^{\ell}W^{\ell-1}.
    \end{align*}
    This completes the proof of the estimate of the first-order derivative of DNN functions. The estimates on high order derivatives follow similarly.
\end{proof}

\subsection{Hybrid DNN-FEM scheme}\label{subsec:Hybrid_err}
Now we describe a hybrid DNN-FEM scheme \cite{Cen:2024} for diffusion coefficient identification \eqref{eqn:inv_cond_gov} and give an error analysis. The scheme is based on the least-squares formulation with an $H^1(\Omega)$-seminorm penalty. We employ an alternative discretization strategy when compared with \eqref{eqn:FEM_loss_Jin_dis}-\eqref{eqn:FEM_vari_Jin_dis}. In the hybrid scheme, we parameterize the diffusion coefficient $a$ by one DNN and approximate the state $u$ using the Galerkin FEM. Let $X_h$ be the piecewise linear FEM space defined in Section \ref{sec:FEM} and $\mathfrak{P}_{p,\epsilon}$ be the DNN function class given in Section \ref{subsec:Hybrid_Pre}. Then the hybrid DNN-FEM scheme reads
\begin{equation}\label{eqn:hybrid_loss}
	\min_{\theta\in\mathfrak{P}_{p,\epsilon}} J_{\gamma,h}(a_\theta)=\frac{1}{2}\|u_h(a_\theta)-z^{\delta}\|^2_{L^2(\Omega)}+\frac{\gamma}{2}\|\nabla a_\theta\|_{L^2(\Omega)}^2,
\end{equation}
where the discrete state $u_h\equiv u_h(a_\theta)\in X_h$ satisfies the following discrete variational problem
\begin{equation}\label{eqn:hybrid_vari}	
	(a_\theta\nabla u_h,\nabla\varphi_h)=(f,\varphi_h), \quad\forall \varphi_h\in X_h.
\end{equation}

Compared with the piecewise polynomial FEM space $X_h$, DNN functions are globally defined with a nonlinear composition structure. This leads to the following two challenges of the scheme \eqref{eqn:hybrid_loss}-\eqref{eqn:hybrid_vari}. First,  unlike compactly supported FEM basis functions, it is challenging to impose the box constraint of the admissible set $\mathcal{A}$ directly. Second, the scheme \eqref{eqn:hybrid_loss}-\eqref{eqn:hybrid_vari} requires evaluating various integrals involving DNNs, which cannot be computed exactly as for FEM functions.

For the first issue, we employ a cutoff operator $P_{\!\!\mathcal{A}\!}:H^1(\Omega)\rightarrow \mathcal{A}$ defined by
\begin{equation}\label{eqn:hybrid_proj}
    P_{\!\!\mathcal{A}\!}(v) = \min(\max(c_0,v),c_1).
\end{equation}
The operator $P_{\!\!\mathcal{A}\!}$ is stable in the following sense \cite[Corollary 2.1.8]{Ziemer:1989}
\begin{equation}\label{eqn:hybrid_proj_stb}
\| \nabla P_{\!\!\mathcal{A}\!}(v) \|_{L^p(\Omega)} \le \|\nabla v \|_{L^p(\Omega)},\quad \forall v \in W^{1,p}(\Omega),p\in[1,\infty],
\end{equation}
and moreover, for all $v\in \mathcal{A}$, there holds
\begin{equation}\label{eqn:hybrid_proj_approx}
\|  P_{\!\!\mathcal{A}\!}(w) - v \|_{L^p(\Omega)} \le \| w - v \|_{L^p(\Omega)},\quad \forall w \in L^{p}(\Omega),~p\in[1,\infty].
\end{equation}

For the second issue, we employ a quadrature scheme. There are many possible quadrature rules \cite{Thomee:2006,Cialet:2002}. We describe one simple scheme. For each element $K\in \mathcal{T}_h$,  we uniformly divide it into $2^{dn}$ sub-simplexes,  denoted by $\{ K_i\}_{i=1}^{2^{dn}}$, with a diameter $h_K/2^n$. The division for $d=1,2$ is trivial, and for $d=3$, it is also feasible  \cite{Ong:1994}. Then consider the following quadrature rule over the element $K$ (with $P_{\!\!j}^i$ denoting the $j$th node of the $i$th sub-simplex $K_i$):
\begin{equation*}
    Q_K(v) = \sum_{i=1}^{2^{dn}} \frac{|K_i|}{d+1} \sum_{j=1}^{d+1} v(P_{\!\!j}^i),\quad \forall v\in C(\overline K).
\end{equation*}
The continuous embedding $H^2(\Omega) \hookrightarrow C(\overline{\Omega})$ (for $d=1,2,3$) and Bramble--Hilbert lemma \cite[Theorem 4.1.3]{Cialet:2002} imply
\begin{equation*}
    \Big|\int_{K} v \ \mathrm{d}x - Q_K(v)\Big| \leq
    c |K|^{\frac12} 2^{-2n} h_K^2  |v|_{H^2(K)}, \quad \forall v\in H^2(K).
\end{equation*}
Then we can define a global quadrature rule:
\begin{equation*}
    Q_h(v) =  \sum_{K\in \mathcal{T}_h} Q_K(v),\quad \forall v\in C(\overline \Omega),
\end{equation*}
which satisfies the following error estimate
\begin{equation}\label{eqn:hybrid_quad_err}
    \Big|\int_{\Omega} v \ \mathrm{d}x - Q_h(v)\Big| \leq c 2^{-2n} h^2 |v|_{H^{2}(\Omega)},\quad \forall v\in H^2(\Omega).
\end{equation}
Similarly, we define a discrete $L^2(\Omega)$ inner product $(\cdot,\cdot)_h$ by
\begin{equation*}
    (w,v)_h := Q_h(wv) = \sum_{K\in\mathcal{T}_h}Q_K(wv),\quad \forall w,v\in C(\overline \Omega).
\end{equation*}
Then the practical hybrid DNN-FEM approximation scheme reads
\begin{equation}\label{eqn:hybrid_loss_quad}
	\min_{\theta\in \mathfrak{P}_{\infty,\epsilon}} \widehat{J}_{\gamma,h}(a_\theta)=\frac{1}{2}\|\widehat{u}_h( P_{\!\!\mathcal{A}}(a_\theta))-z^{\delta}\|^2_{L^2(\Omega)}+\frac{\gamma}{2} Q_h(|\nabla a_\theta|^2),
\end{equation}
where the discrete state $\widehat{u}_h\equiv \widehat{u}_h(P_{\!\!\mathcal{A}}(a_\theta))\in X_h$ satisfies the following discrete variational problem
\begin{equation}\label{eqn:hybrid_vari_quad}	
    ( P_{\!\!\mathcal{A}}(a_\theta)\nabla \widehat{u}_h,\nabla\varphi_h)_h=(f,\varphi_h), \quad\forall \varphi_h\in X_h.
\end{equation}
Note that the projection operator $P_{\!\!\mathcal{A}}$ is applied only to the data discrepancy term, but not the penalty term due to the limited regularity of $P_{\!\!\mathcal{A}}(a_\theta)$: $P_{\!\!\mathcal{A}}(a_\theta)$ is generally only once differentiable, which precludes the use of gradient type algorithms to train the loss.
We focus on approximating the integrals involving DNNs only.
The variational problem \eqref{eqn:hybrid_loss_quad} additionally involves the quadrature approximation, which necessitates quantifying the associated error.
The presence of the operator $P_{\!\!\mathcal{A}}$ in the weak formulation ensures the $X_h$-ellipticity of the broken $L^2(\Omega)$ semi-inner product, and hence the well-posedness of problem \eqref{eqn:hybrid_loss_quad}-\eqref{eqn:hybrid_vari_quad}. We aim to derive (weighted) $L^2(\Omega)$ error estimates of the approximation $P_{\!\!\mathcal{A}}(a^*_\theta)$ based on the stability result in Theorem \ref{thm:Bonito_stab}.
The analysis of the quadrature error requires the following condition on the problem data, i.e., $a^\dag$ and $f$.
\begin{assumption}\label{assum:hybrid_reg_quad}
    $a^\dag\in W^{2,\infty}(\Omega)\cap \mathcal{A}$ and $f\in L^{\infty}(\Omega)$.
\end{assumption}
Next we state an analogue of Lemma \ref{lem:FEM_priori}, which provides an \textsl{a priori} bound on $\|u^\dag-\widehat{u}_h(P_{\!\!\mathcal{A}}(\widehat{a}^*_\theta))\|_{L^2(\Omega)}$ and $\nabla P_{\!\!\mathcal{A}}(\widehat{a}_\theta^*)$. The proof relies on the minimizing property of $\widehat{\theta}^\ast$ and the error estimate of the discrete state $\widehat{u}_h(P_{\!\!\mathcal{A}}(\widehat{a}^*_\theta))$.
\begin{lemma}\label{lem:hybrid_priori_quad}
    Let Assumption \ref{assum:hybrid_reg_quad} hold. Fix $\epsilon>0$, and let $\widehat{\theta}^*\in\mathfrak{P}_{\infty,\epsilon} $ be a minimizer to problem \eqref{eqn:hybrid_loss_quad}-\eqref{eqn:hybrid_vari_quad} and $\widehat{a}^*_\theta$ its DNN realization. Then the following estimate holds
    \begin{equation*}
        \|u^\dag -\widehat{u}_h( P_{\!\!\mathcal{A}} (\widehat{a}^*_\theta))\|^2_{L^2(\Omega)}+\gamma Q_h(|\nabla  P_{\!\!\mathcal{A}}(\widehat{a}_\theta^*)|^2)\leq c(h^4+\epsilon^2+\delta^2+\gamma).
    \end{equation*}
\end{lemma}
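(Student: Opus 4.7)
The plan is to mirror the structure of Lemma \ref{lem:FEM_priori}: exploit the minimizing property of $\widehat{\theta}^*$ against a well-chosen competitor in $\mathfrak{P}_{\infty,\epsilon}$ that imitates $a^\dag$, and then account separately for coefficient perturbation, FEM error, and quadrature error. First, I would apply Lemma \ref{lem:NN_approx_tanh} to Assumption \ref{assum:hybrid_reg_quad} with $k=2$, $s=1$, $p=\infty$ to obtain a DNN parameter $\tilde\theta\in \mathfrak{P}_{\infty,\epsilon}$ (whose size matches the set $\mathfrak{P}_{\infty,\epsilon}$ used in the scheme) such that $\|a^\dag - a_{\tilde\theta}\|_{W^{1,\infty}(\Omega)} \le c\epsilon$. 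Combining with \eqref{eqn:hybrid_proj_stb}--\eqref{eqn:hybrid_proj_approx} and $a^\dag\in \mathcal{A}$ yields $\|a^\dag - P_{\!\!\mathcal{A}}(a_{\tilde\theta})\|_{L^\infty(\Omega)}\le c\epsilon$ and $\|\nabla P_{\!\!\mathcal{A}}(a_{\tilde\theta})\|_{L^\infty(\Omega)}\le c$.

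Next I would reduce the target bound to a bound on the competitor loss. Since $P_{\!\!\mathcal{A}}$ is $1$-Lipschitz, one has $|\nabla P_{\!\!\mathcal{A}}(v)|\le |\nabla v|$ a.e., hence also at the quadrature nodes, so that $Q_h(|\nabla P_{\!\!\mathcal{A}}(\widehat{a}_\theta^*)|^2)\le Q_h(|\nabla \widehat{a}_\theta^*|^2)$. Combining with the triangle inequality
\begin{equation*}
\|u^\dag - \widehat{u}_h(P_{\!\!\mathcal{A}}(\widehat{a}^*_\theta))\|_{L^2(\Omega)}^2 \le 2\|\widehat{u}_h(P_{\!\!\mathcal{A}}(\widehat{a}^*_\theta))-z^\delta\|_{L^2(\Omega)}^2 + 2\delta^2,
\end{equation*}
the quantity on the left-hand side of the lemma is at most $c\bigl(\widehat{J}_{\gamma,h}(\widehat{a}^*_\theta)+\delta^2\bigr)$, and by the minimizing property, at most $c\bigl(\widehat{J}_{\gamma,h}(a_{\tilde\theta})+\delta^2\bigr)$.

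It then remains to show $\widehat{J}_{\gamma,h}(a_{\tilde\theta}) \le c(h^4+\epsilon^2+\delta^2+\gamma)$. For the penalty, since $Q_h$ is a positive rule of total mass $|\Omega|$, the uniform bound on $\nabla a_{\tilde\theta}$ gives $\gamma Q_h(|\nabla a_{\tilde\theta}|^2)\le c\gamma$. For the fidelity term, I would split
\begin{equation*}
\widehat{u}_h(P_{\!\!\mathcal{A}}(a_{\tilde\theta})) - u^\dag = \bigl[\widehat{u}_h(P_{\!\!\mathcal{A}}(a_{\tilde\theta}))-u_h(P_{\!\!\mathcal{A}}(a_{\tilde\theta}))\bigr] + \bigl[u_h(P_{\!\!\mathcal{A}}(a_{\tilde\theta}))-u_h(a^\dag)\bigr] + \bigl[u_h(a^\dag)-u^\dag\bigr],
\end{equation*}
with $u_h(\cdot)$ denoting the exact Galerkin FEM state without quadrature. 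The last term is $O(h^2)$ in $L^2(\Omega)$ by standard duality. The middle term is a coefficient perturbation of size $c\epsilon$ in $L^\infty(\Omega)$, so a Strang-type argument combined with $H^1(\Omega)$ stability of $u_h$ and Poincar\'e inequality yields the bound $c\epsilon$ in $L^2(\Omega)$. The first term, measuring the quadrature consistency error on $X_h$, is handled via the first Strang lemma and the local quadrature estimate \eqref{eqn:hybrid_quad_err} applied element-by-element, and then bounded by $c\,h^2 2^{-2n}$ times a seminorm of $P_{\!\!\mathcal{A}}(a_{\tilde\theta})\nabla u_h(a^\dag)\cdot \nabla\varphi_h$, controlled through Lemma \ref{lem:NN_bounds} and the box constraint. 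Squaring and summing gives $\|\widehat{u}_h(P_{\!\!\mathcal{A}}(a_{\tilde\theta}))-u^\dag\|_{L^2(\Omega)}^2 \le c(h^4+\epsilon^2)$, and with $\|u^\dag-z^\delta\|_{L^2(\Omega)}=\delta$ the desired bound on $\widehat{J}_{\gamma,h}(a_{\tilde\theta})$ follows.

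The main obstacle will be the quadrature consistency term: the product $P_{\!\!\mathcal{A}}(a_{\tilde\theta})\nabla u_h(a^\dag)$ is only piecewise Lipschitz (because the cutoff $P_{\!\!\mathcal{A}}$ may destroy $W^{2,\infty}$ regularity along level sets $\{a_{\tilde\theta}=c_0\}$ and $\{a_{\tilde\theta}=c_1\}$), so \eqref{eqn:hybrid_quad_err} cannot be applied naively. I would circumvent this either by first comparing with $u_h(a_{\tilde\theta})$ computed from the unprojected (and thus $C^\infty$) DNN $a_{\tilde\theta}$ before invoking the quadrature bound, or by splitting the domain into the region where $P_{\!\!\mathcal{A}}(a_{\tilde\theta})=a_{\tilde\theta}$ (where full smoothness holds) and the complementary region (whose measure is $O(\epsilon)$ by the gap hypothesis on $\underline c_a,\overline c_a$) and bounding each contribution separately.
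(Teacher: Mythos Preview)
Your approach is correct and matches the paper's: the paper only sketches the proof, saying it ``relies on the minimizing property of $\widehat{\theta}^\ast$ and the error estimate of the discrete state,'' i.e., exactly the competitor-comparison structure of Lemma~\ref{lem:FEM_priori} that you reproduce.

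One simplification is available for the quadrature obstacle you flag. Rather than routing through $u_h(P_{\!\!\mathcal{A}}(a_{\tilde\theta}))$ or the unprojected $u_h(a_{\tilde\theta})$, go through $\widehat u_h(a^\dag)$ directly:
\[
\widehat u_h(P_{\!\!\mathcal{A}}(a_{\tilde\theta})) - u^\dag
= \bigl[\widehat u_h(P_{\!\!\mathcal{A}}(a_{\tilde\theta})) - \widehat u_h(a^\dag)\bigr]
+ \bigl[\widehat u_h(a^\dag) - u_h(a^\dag)\bigr]
+ \bigl[u_h(a^\dag) - u^\dag\bigr].
\]
The first bracket is an $O(\epsilon)$ coefficient perturbation in the coercive discrete form $(\cdot,\cdot)_h$; the second is pure quadrature consistency with the \emph{exact} coefficient $a^\dag\in W^{2,\infty}(\Omega)$ (Assumption~\ref{assum:hybrid_reg_quad}), so Lemma~\ref{lem:hybrid_quad_prod_err} applies with $k=2$ to give $O\bigl((2^{-n}h)^2\bigr)\le O(h^2)$; the third is the standard $O(h^2)$ FEM error. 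This sidesteps the $W^{2,\infty}$ defect of $P_{\!\!\mathcal{A}}(a_{\tilde\theta})$ without needing your domain-splitting workaround (whose ``gap hypothesis'' on $\underline c_a,\overline c_a$ is not among the paper's assumptions).
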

The next lemma gives the quadrature error for the bilinear form. The estimate indicates that choosing a large subdivision level $n$ can ensure a small quadrature error.
\begin{lemma}\label{lem:hybrid_quad_prod_err}
The following error estimate holds for any $v_h,w_h\in X_h$, $n\in\mathbb{N}$ and $k=1,2$:
    \begin{align*}
        &|(a\nabla v_h,\nabla w_h)-(a \nabla v_h,\nabla w_h)_h|\\
        \leq &c (2^{-n}h)^k \|a\|_{W^{k,\infty}(\Omega)}\|\nabla v_h\|_{L^2(\Omega)}\|\nabla w_h\|_{L^2(\Omega)}.
    \end{align*}
\end{lemma}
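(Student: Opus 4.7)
The plan is to exploit the fact that $v_h, w_h \in X_h$ are piecewise linear on $\mathcal{T}_h$, so their gradient dot product is piecewise constant. Writing $\beta_K := \nabla v_h \cdot \nabla w_h|_K$, we can factor it out of both integrals on each element and reduce everything to quantifying the quadrature error for $a$ alone:
\begin{equation*}
(a\nabla v_h,\nabla w_h) - (a\nabla v_h,\nabla w_h)_h = \sum_{K\in\mathcal{T}_h} \beta_K \Big( \int_K a\,\d x - Q_K(a) \Big).
\end{equation*}
This reduction is the key simplification; without it, one would need estimates on second derivatives of products of FEM functions with $a$, as in the global estimate \eqref{eqn:hybrid_quad_err}.

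Next I would establish the per-element quadrature error for $a$. Since $Q_K$ is the vertex-based rule on the uniform subdivision $\{K_i\}$ of $K$ with sub-simplex diameter bounded by $h_K/2^n$, a scaling / Bramble–Hilbert argument on the reference simplex yields, for each sub-simplex,
\begin{equation*}
\Big|\int_{K_i} a\,\d x - Q_{K_i}(a)\Big| \leq c\,(h_K/2^n)^k |K_i|\, \|a\|_{W^{k,\infty}(K_i)},\qquad k=1,2.
\end{equation*}
Here the $W^{k,\infty}$ form (rather than an $H^k$-seminorm) comes from pulling out the $L^\infty$ norm of the $k$-th derivatives and integrating the remaining constant over $K_i$. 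Summing over $i$ on each $K$ gives
\begin{equation*}
\Big|\int_K a\,\d x - Q_K(a)\Big| \leq c\,(2^{-n}h)^k |K|\, \|a\|_{W^{k,\infty}(K)}.
\end{equation*}

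Finally, combine and apply Cauchy--Schwarz elementwise. Using $|\beta_K| \leq |\nabla v_h|_K |\nabla w_h|_K$ and constancy of the gradients on each $K$,
\begin{align*}
\Big|\sum_K \beta_K \Big( \int_K a\,\d x - Q_K(a) \Big)\Big|
&\leq c\,(2^{-n}h)^k \|a\|_{W^{k,\infty}(\Omega)} \sum_K |\nabla v_h|_K\, |\nabla w_h|_K\, |K| \\
&\leq c\,(2^{-n}h)^k \|a\|_{W^{k,\infty}(\Omega)} \|\nabla v_h\|_{L^2(\Omega)} \|\nabla w_h\|_{L^2(\Omega)},
\end{align*}
which is the claimed bound.

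I do not expect a serious obstacle here: the reduction to quadrature error on $a$ alone is immediate from piecewise linearity, and the sub-simplex estimate is a textbook Bramble–Hilbert / Taylor remainder calculation. The only point requiring care is the $W^{k,\infty}$-vs-$H^k$ distinction, which must be tracked through the reference-element scaling so that the final norm is the $L^\infty$-type one stated, not the $L^2$-type used in \eqref{eqn:hybrid_quad_err}; otherwise the subsequent Cauchy--Schwarz step over elements would not cleanly produce the $L^2$ norms of $\nabla v_h$ and $\nabla w_h$.
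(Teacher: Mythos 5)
Your proposal is correct and is essentially the paper's proof in slightly different notation: the paper inserts the sub-simplex Lagrange interpolant $\Pi_{K_j}a$ and uses exactness of the vertex rule on $P_1(K_j)$ (together with the same piecewise constancy of $\nabla v_h\cdot\nabla w_h$ that you make explicit), which is equivalent to your identity $\int_K a\,\mathrm{d}x - Q_K(a)=\sum_j\int_{K_j}(a-\Pi_{K_j}a)\,\mathrm{d}x$, and then applies the same local $W^{k,\infty}$ estimate and elementwise Cauchy--Schwarz assembly. Your closing remark about tracking the $W^{k,\infty}$ (rather than $H^k$) norm through the scaling is exactly the point the paper handles via the $L^\infty$ interpolation estimate $\|a-\Pi_{K_j}a\|_{L^\infty(K_j)}\le c\,(2^{-n}h)^k\|a\|_{W^{k,\infty}(K_j)}$.
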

\begin{proof}
    Let $\Pi_{K_j}: C(K_j) \rightarrow P_1(K_j)$ be the Lagrange nodal interpolation operator on the sub-simplex $K_j$. Since the quadrature rule on $K_j$ is exact for $P_1(K_j)$, we have $$(a\nabla v_h,\nabla w_h)-(a \nabla v_h,\nabla w_h)_h =  \sum_{K\in \mathcal{T}_h} \sum_{j=1}^{2^{dn}}  \int_{K_j} (a - \Pi_{K_j}a) \nabla v_h\cdot\nabla w_h\,\d x.$$
    Then the local estimate for the Lagrange interpolation operator $\Pi_{K_j}$ leads to
        \begin{align*}
            &|(a\nabla v_h,\nabla w_h)-(a \nabla v_h,\nabla w_h)_h|\\
            \le& \sum_{K\in \mathcal{T}_h} \sum_{j=1}^{2^{dn}}
            \int_{K_j} \Big| (a - \Pi_{K_j}a) \nabla v_h\cdot\nabla w_h \Big| \,\d x\\
            \le& c \sum_{K\in \mathcal{T}_h} \sum_{j=1}^{2^{dn}} (2^{-n} h)^k \| a \|_{W^{k,\infty}(K_j)}
            \| \nabla v_h\|_{L^2(K_j)} \|\nabla w_h \|_{L^2(K_j)}\\
            \le& c  (2^{-n} h)^k \| a \|_{W^{k,\infty}(\Omega)} \| \nabla v_h\|_{L^2(\Omega)} \|\nabla w_h \|_{L^2(\Omega)}.
        \end{align*}
    This proves the desired estimate.
\end{proof}

Next we show an \textit{a priori} bound on the quadrature error of the penalty term, which follows directly from the estimate \eqref{eqn:hybrid_quad_err} and Lemma \ref{lem:NN_bounds}.
\begin{lemma}\label{lem:hybrid_quad_H1err}
Fix $\theta \in \mathfrak{P}_{\infty,\epsilon}$, of depth $L$, width $W$ and bound $R$, with $RW>2$, and let $v_\theta$ be its DNN realization. Then the following error estimate holds
    \begin{align*}
        \|\nabla v_\theta\|^2_{L^2(\Omega)}-Q_h(\|\nabla v_\theta\|_{\ell^2}^2)\leq c2^{-2n}h^2R^{4L}{W}^{4L-4}.
    \end{align*}
\end{lemma}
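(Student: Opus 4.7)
The plan is to apply the global quadrature error bound \eqref{eqn:hybrid_quad_err} directly to the integrand $v := \|\nabla v_\theta\|_{\ell^2}^2 = \sum_{i=1}^d (\partial_i v_\theta)^2$, so the main task reduces to producing a clean upper bound on the $H^2(\Omega)$ seminorm $|v|_{H^2(\Omega)}$ in terms of the DNN hyperparameters $L$, $W$, $R$. The structural observation is that $v$ is a quadratic function of the first-order derivatives of $v_\theta$, so its second derivatives involve at most third-order derivatives of $v_\theta$, all of which are controlled by Lemma \ref{lem:NN_bounds}.

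First I would differentiate $v$ twice using the chain and product rules. A direct computation gives
\begin{equation*}
\partial_k\partial_j v = 2\sum_{i=1}^d\bigl[(\partial_k\partial_i v_\theta)(\partial_j\partial_i v_\theta) + (\partial_i v_\theta)(\partial_k\partial_j\partial_i v_\theta)\bigr].
\end{equation*}
Taking absolute values and applying the triangle inequality pointwise yields
$|\partial_k\partial_j v| \le c\,(|v_\theta|_{W^{2,\infty}(\Omega)}^2 + |v_\theta|_{W^{1,\infty}(\Omega)}|v_\theta|_{W^{3,\infty}(\Omega)})$, with a constant depending only on $d$. Squaring, integrating over $\Omega$, and summing over $j,k$ then gives
\begin{equation*}
|v|_{H^2(\Omega)}^2 \le c\bigl(|v_\theta|_{W^{2,\infty}(\Omega)}^4 + |v_\theta|_{W^{1,\infty}(\Omega)}^2 |v_\theta|_{W^{3,\infty}(\Omega)}^2\bigr).
\end{equation*}

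Next I would invoke Lemma \ref{lem:NN_bounds} to replace the three seminorms by $cR^L W^{L-1}$, $cR^{2L}W^{2L-2}$ and $cR^{3L}W^{3L-3}$. Both terms on the right collapse to the same order $c R^{8L}W^{8L-8}$, hence $|v|_{H^2(\Omega)} \le cR^{4L}W^{4L-4}$. Finally, plugging $v$ into \eqref{eqn:hybrid_quad_err} delivers the claimed bound
\begin{equation*}
\bigl|\|\nabla v_\theta\|_{L^2(\Omega)}^2 - Q_h(\|\nabla v_\theta\|_{\ell^2}^2)\bigr| \le c\,2^{-2n}h^2 R^{4L}W^{4L-4}.
\end{equation*}

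The only genuinely nontrivial point is verifying that $v\in H^2(\Omega)$ so that \eqref{eqn:hybrid_quad_err} applies; this is immediate because $v_\theta$ is a composition of $\tanh$-activated affine maps and is therefore $C^\infty(\Omega)$, and the condition $RW>2$ ensures the bounds of Lemma \ref{lem:NN_bounds} are not vacuous. No further technicalities are expected—this is essentially a mechanical differentiation followed by an application of the two cited results.
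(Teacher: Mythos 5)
Your proposal is correct and follows exactly the route the paper intends: the text states that Lemma \ref{lem:hybrid_quad_H1err} ``follows directly from the estimate \eqref{eqn:hybrid_quad_err} and Lemma \ref{lem:NN_bounds}'', and you supply precisely those details, differentiating $v=\|\nabla v_\theta\|_{\ell^2}^2$ twice, bounding $|v|_{H^2(\Omega)}$ via the $W^{k,\infty}$ seminorm estimates so that both terms collapse to $cR^{4L}W^{4L-4}$, and then invoking the quadrature bound. Your arithmetic checks out (including the square root step from $R^{8L}W^{8L-8}$), and noting the smoothness of the $\tanh$ realization to justify $v\in H^2(\Omega)$ is an appropriate, if routine, verification.
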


Now we can state an error estimate on the approximation $\widehat{a}_\theta^*$.
\begin{theorem}\label{thm:hybrid_error_quad}
Let Assumption \ref{assum:hybrid_reg_quad} hold. Fix $\epsilon>0$, and let $\widehat{\theta}^*\in \mathfrak{P}_{\infty,\epsilon}$ be a minimizer to problem \eqref{eqn:hybrid_loss_quad}-\eqref{eqn:hybrid_vari_quad} and $\widehat{a}_\theta^*$ its DNN realization. Then with
\begin{equation*}
 \eta^2:=h^4+\epsilon^2+\delta^2+\gamma \quad\mbox{and}\quad \zeta:=1+\gamma^{-1}\eta^2+2^{-2n} h^2 R^{4L}{W}^{4L-4},
\end{equation*}
there holds
\begin{align*}		
    &\int_{\Omega}\Big(\frac{a^\dag- P_{\!\!\mathcal{A}}(\widehat{a}_\theta^*)}{a^\dag}\Big)^2\big(a^\dag |\nabla u^\dag|^2+fu^\dag\big)\ \mathrm{d}x\\
    \leq& c\big(h\zeta^\frac12+ (\min(h^{-1}\eta+h,1) +  2^{-n}h R^{L}{W}^{L} \big)\zeta^\frac12.
\end{align*}
Moreover, if condition \eqref{eqn:Bonito_cond} holds, then
\begin{equation*}
    \|a^\dagger-P_{\!\!\mathcal{A}}(\widehat{a}_\theta^*)\|_{L^2(\Omega)}\leq c\big(h\zeta^\frac12+ (\min(h^{-1}\eta+h,1) +  2^{-n}h R^{L}{W}^{L} )\zeta^\frac12\big)^{\frac{1}{2(\beta+1)}}.
\end{equation*}
\end{theorem}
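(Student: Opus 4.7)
The plan is to adapt the argument for Theorem \ref{thm:FEM_error} to the hybrid setting, where the two new complications are the DNN parameterization of the coefficient and the quadrature approximation of the bilinear form. As in Theorem \ref{thm:Bonito_stab}, the starting point is the test function $\varphi = \frac{a^\dag - P_{\!\!\mathcal{A}}(\widehat{a}_\theta^*)}{a^\dag} u^\dag \in H_0^1(\Omega)$, for which integration by parts converts the weighted quantity on the left-hand side of the theorem into $2\,((a^\dag - P_{\!\!\mathcal{A}}(\widehat{a}_\theta^*))\nabla u^\dag, \nabla \varphi)$. The main task is therefore to bound this bilinear form.

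I would then decompose the bilinear form by inserting $P_h\varphi \in X_h$ and using the weak formulation for $u^\dag$ together with the discrete problem \eqref{eqn:hybrid_vari_quad} for $\widehat{u}_h$. In addition to the two terms $I_1$ and $I_2$ that appear in the pure FEM splitting \eqref{eqn:sp-00}, a third term $I_3 = (P_{\!\!\mathcal{A}}(\widehat{a}_\theta^*)\nabla \widehat{u}_h, \nabla P_h\varphi)_h - (P_{\!\!\mathcal{A}}(\widehat{a}_\theta^*)\nabla \widehat{u}_h, \nabla P_h\varphi)$ appears, accounting for the quadrature consistency defect in \eqref{eqn:hybrid_vari_quad}. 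To control $\|\nabla\varphi\|_{L^2(\Omega)}$ I would combine the stability \eqref{eqn:hybrid_proj_stb} of $P_{\!\!\mathcal{A}}$ with Lemma \ref{lem:hybrid_priori_quad} and the quadrature error bound of Lemma \ref{lem:hybrid_quad_H1err}, which yield $\|\nabla P_{\!\!\mathcal{A}}(\widehat{a}_\theta^*)\|_{L^2(\Omega)}^2 \le c\zeta$ with $\zeta$ as defined in the theorem statement. Then $I_1$ is bounded by $c h \zeta$, exactly as in \eqref{eqn:I1-elliptic}. For $I_2$, an inverse estimate on $X_h$ combined with Lemma \ref{lem:hybrid_priori_quad} gives $\|\nabla(u^\dag - \widehat{u}_h)\|_{L^2(\Omega)} \le c\min(h + h^{-1}\eta, 1)$, so $|I_2| \le c\min(h^{-1}\eta + h, 1)\,\zeta^{1/2}$. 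For the new term $I_3$, Lemma \ref{lem:hybrid_quad_prod_err} applied with $k=1$ together with Lemma \ref{lem:NN_bounds} and the $W^{1,\infty}$ stability of the cutoff $P_{\!\!\mathcal{A}}$ produces $|I_3| \le c\,2^{-n} h R^L W^L \,\zeta^{1/2}$. Summing these three bounds and invoking the identity from Theorem \ref{thm:Bonito_stab} yields the first (weighted) assertion.

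For the unweighted $L^2(\Omega)$ estimate, I would repeat the subdomain decomposition used at the end of the proof of Theorem \ref{thm:Bonito_stab}: split $\Omega = \Omega_\rho \cup \Omega_\rho^c$, use the positivity condition $\mathrm{PC}(\beta)$ on $\Omega_\rho$ to remove the weight $a^\dag|\nabla u^\dag|^2 + fu^\dag$, control the integral over $\Omega_\rho^c$ by $c\rho$ via the box constraint in $\mathcal{A}$ and $|\Omega_\rho^c| \le c\rho$, then balance the two contributions by an optimal choice of $\rho$.

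The main obstacle is the bookkeeping: three distinct error sources (DNN accuracy $\epsilon$, mesh size $h$, and quadrature refinement $n$) must be tracked in parallel, and the cutoff $P_{\!\!\mathcal{A}}(\widehat{a}_\theta^*)$ possesses only $W^{1,\infty}$ regularity (not $W^{2,\infty}$), which forces the use of $k=1$ in Lemma \ref{lem:hybrid_quad_prod_err} and consequently only an $O(2^{-n}h)$ quadrature consistency rate, with the DNN-dependent prefactor $R^L W^L$ controlled via Lemma \ref{lem:NN_bounds}. Ensuring that the DNN approximation error $\epsilon$ (hidden inside $\eta$ and $\zeta$) propagates cleanly through each term while preserving the correct $\gamma^{-1/2}$ scaling is the delicate part of the verification.
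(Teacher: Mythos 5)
Your proposal is correct and follows essentially the same route as the paper's proof: the same test function $\varphi=\frac{a^\dag-P_{\!\!\mathcal{A}}(\widehat{a}_\theta^*)}{a^\dag}u^\dag$, the same three-term splitting (your $I_3$ is exactly the paper's quadrature-defect term $\mathrm{III}$), the same bounds on $\|\nabla\varphi\|_{L^2(\Omega)}$ and on the three terms via Lemmas \ref{lem:hybrid_priori_quad}, \ref{lem:hybrid_quad_H1err}, \ref{lem:hybrid_quad_prod_err} and \ref{lem:NN_bounds}, and the same identity plus subdomain-balancing argument from Theorem \ref{thm:Bonito_stab} for both assertions. You even correctly identify that the limited $W^{1,\infty}$ regularity of the cutoff forces $k=1$ in Lemma \ref{lem:hybrid_quad_prod_err}, where the paper's proof writes ``with $p=1$'' (a typo for $k=1$).
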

\begin{proof}
    By the weak formulations of $u^\dag$ and $\widehat{u}_h(P_{\!\!\mathcal{A}}(\widehat{a}_\theta^*))$, cf. \eqref{eqn:inv_cond_gov} and \eqref{eqn:hybrid_vari_quad}, respectively, we have for any  $\varphi\in H_0^1(\Omega)$,
    \begin{align*}
       & \big((a^\dag-  P_{\!\!\mathcal{A}}(\widehat{a}_\theta^*))\nabla u^\dagger,\nabla\varphi\big)\\
        = &\big ((a^\dag- P_{\!\!\mathcal{A}}( \widehat{a}_\theta^*))\nabla u^\dagger,\nabla(\varphi-P_h\varphi)\big)
        + \big( (a^\dag- P_{\!\!\mathcal{A}}(\widehat{a}_\theta^*))\nabla u^\dagger,\nabla P_h\varphi\big) \\
        =& -\big(\nabla\cdot((a^\dag- P_{\!\!\mathcal{A}}(\widehat{a}_\theta^*))\nabla u^\dagger),\varphi-P_h\varphi\big)\\
        &+\big( P_{\!\!\mathcal{A}}(\widehat{a}_\theta^*)\nabla(\widehat{u}_h(P_{\!\!\mathcal{A}}(\widehat{a}_\theta^*))-u^\dagger),\nabla P_h\varphi\big)  \\
        & + [\big( P_{\!\!\mathcal{A}}(\widehat{a}_\theta^*)\nabla \widehat{u}_h(P_{\!\!\mathcal{A}}(\widehat{a}_\theta^*)),\nabla P_h\varphi\big)_h - \big( P_{\!\!\mathcal{A}}(\widehat{a}_\theta^*)\nabla \widehat{u}_h(P_{\!\!\mathcal{A}}(\widehat{a}_\theta^*)),\nabla P_h\varphi\big)] \\ =:& {\rm I + II + III}.
    \end{align*}
    Next we take $\varphi\equiv\frac{a^\dag- P_{\!\!\mathcal{A}}(\widehat{a}_\theta^*)}{a^\dag}u^\dag$ in the identity and bound the three terms separately. By the stability estimate \eqref{eqn:hybrid_proj_stb} of the operator $P_{\!\!\mathcal{A}}$ and Lemmas \ref{lem:hybrid_priori_quad} and \ref{lem:hybrid_quad_H1err}, we obtain
    \begin{align*}
        &\|\nabla  P_{\!\!\mathcal{A}}(\widehat{a}_\theta^*)\|^2_{L^2(\Omega)}
        \le \| \nabla  \widehat{a}_\theta^*\|^2_{L^2(\Omega
        )} \\
        =&Q_h(|\nabla\widehat{a}_\theta^*|^2) +  [\| \nabla{\widehat{a}}_\theta^*\|^2_{L^2(\Omega)}-Q_h(|\nabla\widehat{a}_\theta^*|^2)]\\
        \le& c (\gamma^{-1}\eta^2 \, + 2^{-2n}h^2R^{4L}W^{4L-4}).
    \end{align*}
    Thus we can bound $\|\nabla \varphi\|_{L^2(\Omega)}$ by
    \begin{equation*}
        \|\nabla \varphi\|_{L^2(\Omega)}\leq c(1+\|\nabla  P_{\!\!\mathcal{A}}(\widehat{a}_\theta^*)\|_{L^2(\Omega)})
        \leq c\zeta^\frac12.
    \end{equation*}
    Repeating the argument of Theorem \ref{thm:FEM_error} and applying Lemma \ref{lem:hybrid_priori_quad} yield
    \begin{align*}
        |{\rm I}| &\leq ch(1+\|\nabla P_{\!\!\mathcal{A}} (\widehat{a}_\theta^*)\|^2_{L^2(\Omega)}) \leq ch\zeta,\\
        |{\rm II}| &\le  c\big(1+\|\nabla P_{\!\!\mathcal{A}}(\widehat{a}_\theta^*)\|_{L^2(\Omega)}\big)\|\nabla(\widehat{u}_h(P_{\!\!\mathcal{A}}(\widehat{a}_\theta^*))-u^\dagger)\|_{L^{2}(\Omega)} \\
        &\le c \min(h^{-1}\eta+h,1) \zeta^\frac12.
        \end{align*}
    From Lemma \ref{lem:hybrid_quad_prod_err} (with $p=1$), Lemma \ref{lem:NN_bounds} and the stability of the operator $P_{\!\!\mathcal{A}}$, we deduce
    \begin{align*}
        |{\rm III}|&\leq c2^{-n}h\|P_{\!\!\mathcal{A}}(\widehat{a}_\theta^*)\|_{W^{1,\infty}(\Omega)} \|\nabla\widehat{u}_h(P_{\!\!\mathcal{A}}(\widehat{a}_\theta^*))\|_{L^2(\Omega)}
        \|\nabla P_h \varphi\|_{L^2(\Omega)}\\
        &\leq  c 2^{-n}h  \zeta^{\frac12}
        (\|  P_{\!\!\mathcal{A}}( \widehat{a}_\theta^*) \|_{L^\infty(\Omega)} + \| \nabla P_{\!\!\mathcal{A}}(\widehat{a}_\theta^*) \|_{L^\infty(\Omega)})\\
        &\leq  c 2^{-n}h \zeta^{\frac12}
        (1 + \| \nabla \widehat{a}_\theta^* \|_{L^\infty(\Omega)})
        \leq c 2^{-n}h R^LW^{L} \zeta ^\frac12.
    \end{align*}
    Upon repeating the argument in Theorem \ref{thm:Bonito_stab}, we obtain
    \begin{equation*}
    	\big((a^\dag-P_{\!\!\mathcal{A}}(\widehat{a}_\theta^*))\nabla u^\dag,\nabla\varphi\big) = \frac12\int_{\Omega}\Big(\frac{a^\dag-P_{\!\!\mathcal{A}}(\widehat{a}_\theta^*)}{a^\dag}\Big)^2\big(a^\dag|\nabla u^\dagger|^2+fu^\dagger\big)\ \mathrm{d}x.
    \end{equation*}
    This yields the desired weighted $L^2(\Omega)$ estimate. The proof of the second assertion is identical with that of Theorem \ref{thm:Bonito_stab}.
\end{proof}
\begin{remark}\label{rmk:hybrid_err_quad}
Theorem \ref{thm:hybrid_error_quad} provides useful guideline for choosing the algorithmic parameters:
$\gamma=\mathcal{O}(\delta^2)$, $ h=\mathcal{O}(\delta^\frac12)$, and $\epsilon=\mathcal{O}(\delta)$.
    Then under condition \eqref{eqn:Bonito_cond}, we obtain $$\|a^\dag-P_{\!\!\mathcal{A}}(a_\theta^*)\|_{L^2(\Omega)}\leq c\delta^\frac{1}{4(1+\beta)},$$ provided that the quadrature error is sufficiently small.  This result is comparable with that for the purely FEM approximation in Theorem \ref{thm:FEM_error}.
    The quadrature error involves a factor $R^{4L}W^{4L-1}$, which can be large for DNNs, and hence it may require a large $n$ to compensate its influence on the reconstruction $P_{\!\!\mathcal{A}}(\widehat{a}_\theta^*)$. % Indeed, one may take $2^{-2n} h^2 R^{4L}{W}^{4L-4}=\mathcal{O}(1)$. This and the choice $\widehat{\theta}^*\in \mathfrak{P}_{\infty,\epsilon} $, i.e., $L=\mathcal{O}(\log(d+2))$, $N_\theta=\mathcal{O}(\epsilon^{-\frac{d}{1-\mu}})$ and $R=\mathcal{O}(\epsilon^{-2-\frac{2+3d}{1-\mu}})$ directly imply $n=\mathcal{O}(d|\log \epsilon|)$. This estimate is a bit pessimistic.
    However, in practice, the choice $n=0$ suffices the desired accuracy.
\end{remark}

Now we briefly review various hybrid approaches in the literature. To identify the diffusion coefficient $a$, Berg and Nystr{\"o}m \cite{Berg:2021} approximate the unknown coefficient $a$ and the state $u$ by DNNs and the FEM, respectively, and present extensive numerical experiments to show its robustness for treating noisy and incomplete data. However, the formulation does not treat the box constraint and impose the penalty on the unknown coefficient. Later, Mitusch et al \cite{Mitusch:2021} extend the hybrid formulation to both stationary and transient and nonlinear PDEs, e.g., a heat equation with nonlinear diffusion coefficient $a(x,t,u)$.
The diffusion coefficient $a(x,t,u)$ is approximated by one DNN, and the state $u(x,t)$ is approximated using the Galerkin FEM in space and backward difference in time. The loss is designed to minimize the error between the solution of the DNN-FEM model and the observational data.

%In both works \cite{Berg:2021,Mitusch:2021}, the authors focus on the numerical perspective of the hybrid NN-FEM method.

Badia and Mart{\'i}n \cite{Badia:2024,Badia:2024compatible} presented a slightly different approach, by interpolating the DNN representation of the unknowns onto FEM spaces. For problem \eqref{eqn:inv_cond_gov}, the diffusion coefficient $a$ and the state $u$ are approximated by two separate DNNs $a_\theta$ and $u_\kappa$. Then the DNNs are interpolated onto FEM spaces and the loss is given as
\begin{equation*}
    J(a_\theta,u_\kappa)=\|u_\kappa-z^\delta\|_{L^2(\Omega)}^2+\gamma\|R_h(\Pi_h(a_\theta), \Pi_h(u_\kappa)) \|_{H^{-1}(\Omega)},
\end{equation*}
where $\gamma>0$ is the penalty parameter, and $R_h$ denotes the PDE residual in the weak form:
\begin{equation*}
    R_h(\Pi_h(a_\theta), \Pi_h(u_\kappa)) \varphi_h= ( f,\varphi_h)- ( \Pi_h(a_\theta)\nabla \Pi_h(u_\kappa)), \nabla \varphi_h),\quad \forall \varphi_h\in X_h.
\end{equation*}
The formulation is similar to physics informed neural networks (PINNs), which will be discussed in Section \ref{subsec:NN_PINN}. By interpolating DNNs into the FEM space $X_h$, the approach overcomes the challenges encountered in the PINN formulation, e.g., imposing the Dirichlet boundary condition, selecting collocation points, and controlling quadrature errors. The numerical results show that it performs well when either the mesh resolution or the polynomial order increases.

The hybrid DNN-FEM approach has been investigated extensively in solid mechanics. Huang et al \cite{Huang:2020} employ DNNs to represent unknown constitutive relation and to formulate a loss by the Galerkin FEM. Consider the following model problem
\begin{equation*}
    \left\{
        \begin{aligned}
            \sigma_{ij,j}&=f_i,&&\mbox{ in }\Omega,\\
            u_i&=\overline{u}_i,&&\mbox{ on }\Gamma_u,\\
            n_j\sigma_{ij}&=\overline{t}_i,&&\mbox{ on }\Gamma_t,
        \end{aligned}
    \right.
\end{equation*}
where $\Gamma_u$ and $\Gamma_t$ denote displacement boundary and traction boundary, respectively. The displacement $u_i$ and the stress tensor $\sigma_{ij}$ are related by
\begin{equation*}
    \sigma_{ij}= \sum_{m=1}^d\sum_{n=1}^d C_{ijmn}\epsilon_{mn}, \quad\mbox{with } \epsilon_{mn}=\frac{1}{2}\left(\frac{\partial u_n}{\partial x_m}+ \frac{\partial u_m}{\partial x_n} \right),
\end{equation*}
where $C_{ijmn}$ denotes the constitutive tensor components. Huang et al approximate the unknown tensor $C_{ijmn}$ by DNNs, and discretize the state equation via the FEM. See also the work \cite{ChenGu:2021} on learning linear elasticity relations from the measured displacement field, arising in elastography. It is worth noting that most of the existing schemes have not been theoretically analyzed.

%\begin{remark}\label{rmk:hybrid_ref_1}
%    Recently, the hybrid NN-FEM approachs for solving inverse problems have garnered significant attention, see \cite{Huang:2020,Berg:2021,Mitusch:2021,Brevis:2022,Cen:2024,Badia:2024,Badia:2024compatible}. These proposed methods utilize NNs to represent the unknown coefficients and employ FEMs to discretize the forward solutions. This approach combines the high expressivity of NNs with the solid theoretical foundations of FEMs. In particular, Brevis et al \cite{Brevis:2022} introduce a framework incorporating a neural-network function as a control variable in the FEM test space. The authors prove that the discrete weak forms are stable, and that quasi-minimizing neural controls exist, which converge quasi-optimally.
%    The study on identifying the diffusion coefficient using hybrid discretization was initiated in \cite{Berg:2021}. Berg and Nystr{\"o}m \cite{Berg:2021} discretized the unknown coefficient and the state by NNs and Galerkin FEM, respectively, in an unregularized functional. Later, Mitusch et al \cite{Mitusch:2021} extended the hybrid formulation to both stationary and transient as well as (non)-linear PDEs, and discussed the crucial role of a proper penalty term, including the $H^1(\Omega)$ penalty, in order to stabilize the training process. In both works \cite{Berg:2021,Mitusch:2021}, the authors focus on the numerical perspective of the hybrid NN-FEM method.
%\end{remark}

\subsection{Hybrid DNN-LMM}
In this part, we describe the hybrid DNN-LNM for dynamics recovery.
Linear multistep methods (LMMs) are high-order discretization techniques for solving dynamical systems \cite{Hairer:1993SolvingI,Hairer:1993SolvingII}. Recently, LMMs are also employed for dynamics discovery, and integrated with DNNs. Consider the following dynamical system with the initial condition $y_0\in \mathbb{R}^d$:
\begin{equation}\label{eqn:hybrid_LMM_gov}
    \left\{
        \begin{aligned}
            \frac{\d}{\d t} y(t)&=f(y(t)),\quad 0<t<T,\\
            y(0)&=y_0 ,
        \end{aligned}
    \right.
\end{equation}
where $y\in C[0,T]^d$ is a vector-valued state function and $f:\mathbb{R}^d\to \mathbb{R}^d$ characterizes the dynamics. The forward model can be solved accurately using LMMs. Specifically, fix $N\in\mathbb{N}$, and let $h := T/N$ and $t_n = nh$ for $n=0,\dots,N$. Given the starting states $y_0,y_1,\dots,y_{M-1}$, we can compute $y_n$ for $n=M,M+1,\dots,N$ using the following $M$-step LMMs:
\begin{equation}\label{eqn:hybrid_LMM_forward_scheme}
    \sum_{m=0}^{M} \alpha_m y_{n-m}= h \sum_{m=0}^{M}\beta_m f(y_{n-m}),\quad n= M, M+1, \dots, N,
\end{equation}
where the coefficients $\alpha_m, \beta_m \in  \mathbb{R}$ for $m = 0, 1,\dots ,M$ are given and $\alpha_0$ is nonzero. The inverse problem is to recover the unknown dynamics $f$ using the knowledge of the state $y$ at the equidistant time steps $\{t_n\}_{n=1}^{N}$. One hybrid approach combining LMMs and DNNs for the problem was presented in \cite{Raissi:2018,Tipireddy:2019}, in which one DNN $f_\theta$ is employed to represent the unknown dynamics $f$ and the loss consists of the residual of the dynamics system discretized by the LMM scheme \eqref{eqn:hybrid_LMM_forward_scheme}, i.e.,
\begin{equation*}
     J(f_\theta)=\frac{1}{N-M+1}\sum_{n=M}^N\Big|\sum_{m=0}^{M} \alpha_m y_{n-m} - h\beta_m f_{\theta}(y_{n-m})\Big|^2.
\end{equation*}
The numerical experiments therein indicate that the method can discover complicated or high-dimensional systems.

Keller and Du \cite{DuKeller:2021} developed a first systematic framework for analyzing dynamics discovery using LMMs. Suppose that both the state $\{y_n=y(t_n)\}_{n=0}^{N}$ and the first $M$ approximate dynamics $\hat{f}_i\approx f(y_i)$, $i= 0, 1, \dots, M-1$ are given. The discovery of the dynamics $f$ is based on reformulating  the scheme \eqref{eqn:hybrid_LMM_forward_scheme} as a linear system for $\hat{f}$:
\begin{equation}\label{eqn:hybrid_LMM_inv_Du2021}
    \sum_{m=0}^{M} \beta_m \hat{f}_{n-m}= \frac{1}{h} \sum_{m=0}^{M}\alpha_m  y_{n-m},\quad n= M, M+1, \dots, N.
\end{equation}
The linear system admits a unique solution since the matrix is lower-triangular with nonzero diagonal entries. To analyze the reconstruction error, Keller and Du employed the first and the second characteristic polynomials \cite{Mayers:2003}:
\begin{equation}\label{eqn:hybrid_LMM_charpoly}
    \rho(z)=\sum_{m=0}^M \alpha_{M-m} z^m\quad \text{and}\quad \sigma(z)=\sum_{m=0}^M \beta_{M-m} z^m.
\end{equation}
The error analysis of the discovery of the dynamics $f$ using LMMs can then be formulated in terms of the characteristic polynomials $\rho(z)$ and $\sigma(z)$ \cite[Lemma 3.3, Theorem 3.10]{DuKeller:2021}. The analysis builds on the following concepts:
\begin{itemize}
    \item A LMM scheme for dynamics discovery is said to be consistent of degree $p$ if the local truncation error
    $$(\tau_h)_n:=\frac{1}{h}\sum_{m=0}^M \alpha_m y_{n-m}-\sum_{m=0}^M \beta_mf(y_{n-m})$$  satisfies
    $$N^{p-1}\|\tau_h\|_{\infty}\to 0\quad \mbox{as }h\to 0.$$
    A LMM is consistent of degree $p$ provided that
    $$\rho(e^z)-z\sigma(e^z)=O(z^{p+1})\quad \mbox{as }z\to 0.$$
    \item A LMM scheme for dynamics discovery is said to be stable if there exists a constant $c >0$ , not depending on $N$, such that, for any two grid functions $u, v$, we have
    \begin{equation*}
               \max_{0\le i\le N}|u_i-v_i|\le c\Big(\max_{0\le i\le M-1}|u_i-v_i|+\max_{0\le i\le N}\Big|\sum_{m=0}^M \beta_m (u_{i-m}-v_{i-m})\Big| \Big).
    \end{equation*}
    A LMM is stable provided that  the roots of $\sigma(z)$ have modulus less than $1$.
\end{itemize}

The characterization of the consistency error follows from the classical truncation error analysis for LMMs, while the characterization of stability is investigated using the recurrence of the error. By combining the consistency and stability, we obtain the following error estimate for the LMM scheme \cite[Theorem 3.14]{DuKeller:2021}.
\begin{theorem}\label{thm:hybrid_LMM_Du2021}
Let $f$ be the exact dynamics and $\hat{f}$ the solution of \eqref{eqn:hybrid_LMM_inv_Du2021}. If a LMM scheme for dynamics discovery is consistent of degree $p$ and stable, and the initialization satisfies $\max_{0\le i\le M-1}|f_i-\hat{f}_i|=O(h^p)$, then there holds $$\max_{0\le i\le N}|f_i-\hat{f}_i|=O(h^p).$$
\end{theorem}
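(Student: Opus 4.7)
The strategy is the classical Lax-style ``stability $+$ consistency $=$ convergence'' argument, but applied to the inverse linear system \eqref{eqn:hybrid_LMM_inv_Du2021} rather than to a forward marching scheme. First I would substitute the exact values $f_n:=f(y(t_n))$ into the identity defining the local truncation error. By the very definition of $(\tau_h)_n$, the exact grid data satisfy
$$\sum_{m=0}^M \beta_m f_{n-m} = \frac{1}{h}\sum_{m=0}^M \alpha_m y_{n-m} - (\tau_h)_n, \qquad n=M,\ldots,N,$$
while $\hat f$ satisfies exactly the same identity with $(\tau_h)_n$ removed. Subtracting produces the key error equation
$$\sum_{m=0}^M \beta_m\big(f_{n-m}-\hat f_{n-m}\big) = -(\tau_h)_n, \qquad n=M,\ldots,N.$$

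Next I would invoke the stability hypothesis with $u_i=f_i$ and $v_i=\hat f_i$. Substituting the error equation into the stability inequality and using the initialization assumption yields
$$\max_{0\le i\le N}\,|f_i-\hat f_i| \,\le\, c\Big(\max_{0\le i\le M-1}|f_i-\hat f_i| + \|(\tau_h)\|_\infty\Big) \,\le\, c\big(O(h^p)+\|(\tau_h)\|_\infty\big).$$
It then remains to show $\|(\tau_h)\|_\infty=O(h^p)$. This is precisely what the consistency hypothesis delivers: the polynomial identity $\rho(e^z)-z\sigma(e^z)=O(z^{p+1})$, upon matching coefficients, is equivalent to the classical order conditions $\sum_{m=0}^M m^k\alpha_m = k\sum_{m=0}^M m^{k-1}\beta_m$ for $k=0,1,\ldots,p$. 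Plugging the Taylor expansions of $y(t_n-mh)$ and $f(y(t_n-mh))=y'(t_n-mh)$ around $t_n$ into the definition of $(\tau_h)_n$ and invoking these order conditions annihilates all terms of order less than $h^p$, leaving $\|(\tau_h)\|_\infty \le c\,\|y^{(p+1)}\|_{L^\infty(0,T)}\,h^p$, with the requisite smoothness of $y$ inherited from that of $f$ via the ODE \eqref{eqn:hybrid_LMM_gov}.

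Combining the two bounds gives the claimed estimate $\max_{0\le i\le N}|f_i-\hat f_i|=O(h^p)$. The main conceptual point (rather than a technical obstacle) is to observe that, in contrast to the forward LMM theory where zero-stability is governed by the roots of the first characteristic polynomial $\rho$, here stability is equivalent to an invertibility and boundedness statement for the lower-triangular Toeplitz operator associated with $\sigma$, which is the content of the second characterization; given this, the convergence proof reduces to the bookkeeping above and no new estimate is required.
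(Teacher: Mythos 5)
Your proposal is correct and follows essentially the same route as the paper: you derive the error equation $\sum_{m=0}^M \beta_m(\hat f_{n-m}-f_{n-m})=(\tau_h)_n$ by subtracting the truncation-error identity from the defining linear system \eqref{eqn:hybrid_LMM_inv_Du2021}, apply the stability inequality with $u=f$, $v=\hat f$, and then absorb the truncation and initialization terms. Your extra paragraph verifying $\|\tau_h\|_\infty=O(h^p)$ via Taylor expansion and the order conditions encoded in $\rho(e^z)-z\sigma(e^z)=O(z^{p+1})$ merely fills in a step the paper dispatches with a one-line appeal to consistency, so there is no substantive difference.
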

\begin{proof}
    By \eqref{eqn:hybrid_LMM_inv_Du2021}, we have for $n=M,M+1,\dots,N$,
    \begin{equation*}
        \sum_{m=0}^{M} \beta_m (\hat{f}_{n-m}-f_{n-m})=\frac{1}{h}\sum_{m=0}^{M} \alpha_m y_{n-m}-\sum_{m=0}^{M} \beta_m f(y_{n-m})= (\tau_h)_n.
    \end{equation*}
    Since the LMM scheme for dynamics discovery is stable, there exists $c$ independent of $h$, such that
    \begin{equation*}
       \max_{0\le i\le N}|f_i-\hat{f}_i|\le c\big(\max_{0\le i\le M-1}|f_i-\hat{f}_i|+\max_{0\le i\le N}|(\tau_h)_i| \big).
    \end{equation*}
    By the consistency error and initialization error bound, we arrive at the desired error estimate.
\end{proof}

\begin{remark}\label{rmk:hybrid_LMM_Du2021}
The stability of LMMs for the direct problem is characterized by the polynomial $\rho(z)$. Thus, traditional stable LMMs for solving direct problems may be unstable for dynamics discovery. Indeed, the Adams-Bashforth and Adams-Moulton schemes are stable for all $M$ when solving direct problems \cite{Henrici:1962}. However, for dynamics discovery, the Adams-Bashforth scheme is stable only for $1\le M\le 6$ and the Adams-Moulton scheme is stable only for $M=0,1$ \cite[Theorem 4.2]{DuKeller:2021}.
\end{remark}

Later, Du et al \cite{DuGu:2022} combined  LMMs and DNNs for dynamics discovery. They employed the one-sided finite difference method (FDM) of order $p$ to generate the first $M$ unknown dynamics:
\begin{equation}\label{eqn:hybrid_LMM_aux_Du2022}
    \hat{f}_n=\frac{1}{h}\sum_{m=0}^{p} \gamma_m y_{n+m},\quad n=0,1,\dots,M-1,
\end{equation}
where $\gamma_m$ are the finite difference coefficients. The authors employed one DNN $f_\theta$ to represent the unknown dynamics and minimized the following loss $J_h(f_\theta)$ based on the residuals of \eqref{eqn:hybrid_LMM_inv_Du2021} and \eqref{eqn:hybrid_LMM_aux_Du2022}:
\begin{align}\label{eqn:hybrid_LMM_loss_Du2022}
        J_h(f_\theta)=&\frac{1}{N}\sum_{n=0}^{M-1}\left|f_\theta(y_n)-\frac{1}{h}\sum_{m=0}^{p} \gamma_m y_{n+m} \right|^2\\
        &+\frac1N\sum_{n=M}^{N}\left| \sum_{m=0}^{M}\beta_m f_\theta(y_{n-m})-\frac1h\sum_{m=0}^{M} \alpha_m y_{n-m} \right|^2.\nonumber
\end{align}
The error estimate of the discovery on the trajectory $\mathcal{T}:=\{y(t):0\le t\le T\}$ was analyzed in terms of the $\ell^2$ seminorm: $|f|_{2,h}:=((N+1)^{-1}\sum_{n=0}^{N}|f(y_n)|^2)^{1/2}$, for all $f\in C(\mathcal{T})$. Let $f$ be the exact dynamics and $f_{\mathcal{N},h}^*$ a global minimizer of \eqref{eqn:hybrid_LMM_loss_Du2022} using a LMM of order $p$ with an admissible set $\mathcal{N}$. Then the following error estimate holds \cite[Theorem 5.1]{DuGu:2022}.
%\begin{equation}\label{eqn:hybrid_LMM_err_Du2022}
%    \left| f_{\mathcal{N},h}^*-f   \right|_{2,h}< c\kappa(A_h)(h^p+e_{\mathcal{N}}),
%\end{equation}
\begin{theorem}\label{thm:hybrid_LMM_err_Du2022}
    Suppose $y\in C^\infty([0,T]^d)$ and $f$ is defined on $ \mathcal{T}'$, a small neighborhood of $\mathcal{T}$. Then we have
    \begin{equation*}
        \left| f_{\mathcal{N},h}^*-f   \right|_{2,h}< c\kappa(A_h)(h^p+e_{\mathcal{N}}),
    \end{equation*}
    where $e_{\mathcal{N}}$  satisfies $e_{\mathcal{N}}>\inf_{f_\theta\in \mathcal{N}}\sup_{y\in\mathcal{T}}|f_\theta(y)-f(y)| $, $\kappa(A_h)$ denotes the condition number of the matrix $A_h$ (characterized by the linear system \eqref{eqn:hybrid_LMM_inv_Du2021} and \eqref{eqn:hybrid_LMM_aux_Du2022}), and the constant $c>0$ is independent of $h$ and $\mathcal{N}$.
\end{theorem}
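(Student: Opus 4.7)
The plan is to cast the loss \eqref{eqn:hybrid_LMM_loss_Du2022} as a least-squares residual of a linear system, exploit the minimizing property of $f^*_{\mathcal{N},h}$ against a good neural-network approximant of $f$, and then invert the system using the condition number $\kappa(A_h)$. Set $F_\theta := (f_\theta(y_0),\ldots,f_\theta(y_N))^\top\in\mathbb{R}^{N+1}$, let $A_h\in\mathbb{R}^{(N+1)\times(N+1)}$ be the matrix whose first $M$ rows are identity rows and whose rows $n=M,\ldots,N$ implement the combination $\sum_{m=0}^M\beta_m F_{\theta,n-m}$, and let $b_h\in\mathbb{R}^{N+1}$ collect the right-hand sides of \eqref{eqn:hybrid_LMM_aux_Du2022}--\eqref{eqn:hybrid_LMM_inv_Du2021}. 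Then $N\,J_h(f_\theta)=\|A_h F_\theta-b_h\|_2^2$, and writing $F:=(f(y_0),\ldots,f(y_N))^\top$, Taylor-expansion truncation analysis of the FDM rule \eqref{eqn:hybrid_LMM_aux_Du2022} and the LMM \eqref{eqn:hybrid_LMM_inv_Du2021} together with $y\in C^\infty([0,T]^d)$ yields the $p$-th order consistency bound
\[
    \|A_h F-b_h\|_\infty\le c\,h^p.
\]

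By the definition of $e_{\mathcal{N}}$, choose $\tilde f_\theta\in\mathcal{N}$ with $\sup_{y\in\mathcal{T}}|\tilde f_\theta(y)-f(y)|<e_{\mathcal{N}}$, and let $\tilde F_\theta$ denote its grid sample. Since the entries of $A_h$ are the fixed coefficients $1,\beta_0,\ldots,\beta_M$, its operator norm $\|A_h\|$ is uniformly bounded in $h$, so
\[
    \|A_h\tilde F_\theta-b_h\|_2\le\|A_h\|\,\|\tilde F_\theta-F\|_2+\|A_h F-b_h\|_2\le c\sqrt{N+1}\,(e_{\mathcal{N}}+h^p).
\]
The minimizing property $\|A_h F^*-b_h\|_2\le\|A_h\tilde F_\theta-b_h\|_2$, where $F^*$ is the grid sample of $f^*_{\mathcal{N},h}$, combined with one more triangle inequality, gives
\[
    \|A_h(F^*-F)\|_2\le\|A_h F^*-b_h\|_2+\|A_h F-b_h\|_2\le c\sqrt{N+1}\,(e_{\mathcal{N}}+h^p).
\]

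To conclude, pass through $A_h^{-1}$ and convert the Euclidean bound into the discrete seminorm:
\[
    |F^*-F|_{2,h}=\frac{\|F^*-F\|_2}{\sqrt{N+1}}\le\frac{\|A_h^{-1}\|\,\|A_h(F^*-F)\|_2}{\sqrt{N+1}}\le c\,\|A_h^{-1}\|\,(e_{\mathcal{N}}+h^p).
\]
Using $\|A_h\|\ge\min(1,|\alpha_0|)>0$ to write $\|A_h^{-1}\|\le\kappa(A_h)/\min(1,|\alpha_0|)$ delivers the claimed estimate $|f^*_{\mathcal{N},h}-f|_{2,h}\le c\,\kappa(A_h)\,(e_{\mathcal{N}}+h^p)$.

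The routine ingredients are the two Taylor truncation expansions and the uniform-in-$h$ upper bound on $\|A_h\|$. The main subtlety lies in the last step: the factor $\kappa(A_h)$ (rather than merely $\|A_h^{-1}\|$) is legitimate precisely because $\|A_h\|$ is comparable to $1$; were the scheme such that $\|A_h\|$ degenerated with $h$, the correct prefactor would be $\|A_h^{-1}\|$ alone. This is also where the stability of the LMM for dynamics discovery, characterized through the second characteristic polynomial $\sigma(z)$ in Remark \ref{rmk:hybrid_LMM_Du2021}, enters implicitly: failure of the root condition on $\sigma$ causes $\kappa(A_h)$ to blow up and destroys the rate.
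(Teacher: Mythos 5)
Your proposal is correct and takes essentially the same route as the paper's proof: recast the loss as the least-squares residual $\tfrac1N\bigl\|A_h(f_\theta-f)-\begin{bmatrix} e_h\\ \tau_h\end{bmatrix}\bigr\|_2^2$ with truncation errors of order $h^p$, compare the minimizer against a near-best DNN approximant via the minimizing property and the triangle inequality, and invert through $\|A_h^{-1}\|_2\le c\,\kappa(A_h)$ using the uniform lower bound on $\|A_h\|_2$ (the identity rows already give $\|A_h\|_2\ge 1$, so your reference to $\min(1,|\alpha_0|)$ should really be to the diagonal entries $1$ and $\beta_0$ — a harmless slip). Your explicit bookkeeping of the $\sqrt{N+1}$ factor when passing from the Euclidean norm to the seminorm $|\cdot|_{2,h}$ is if anything more careful than the paper's version, which suppresses this normalization.
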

\begin{proof}
With $f_n:=f(y_n)$, we can rewrite the loss function $J_h(f_\theta)$ as
    \begin{align*}
        J_h(f_\theta)=&\frac{1}{N}\sum_{n=0}^{M-1}\left|(f_\theta-f)_n+f_n-\frac{1}{h}\sum_{m=0}^{p} \gamma_m y_{n+m} \right|^2\\
        &+\frac1N\sum_{n=M}^{N}\left| \sum_{m=0}^{M}\beta_m (f_\theta -f)_{n-m}
        +\sum_{m=0}^{M}\beta_m f_{n-m}-\frac1h\sum_{m=0}^{M} \alpha_m y_{n-m} \right|^2\\
        =:&\frac{1}{N}\left\| A_h (f_\theta-f)- \begin{bmatrix} e_h \\ \tau_h \end{bmatrix} \right\|_2^2,
    \end{align*}
where $e_h$ and $ \tau_h$ are the errors arising from one-side FDM and LMM, respectively. There exists $c>0$ independent of $h$ such that
    \begin{equation*}
        \|\tau_h\|_2+\|e_h\|_2 \le c h^p.
    \end{equation*}
    Since $f_{\mathcal{N},h}^*$ is a global minimizer of $J_h$, there holds $J_h(f_{\mathcal{N},h}^*)\le J_h(f_\theta)$ for all $f_\theta\in \mathcal{N}$. That is,
    \begin{align*}
        \frac{1}{N}\left\| A_h (f_{\mathcal{N},h}^*-f)- \begin{bmatrix} e_h \\ \tau_h \end{bmatrix} \right\|_2^2\le \frac{1}{N}\left\| A_h (f_\theta-f)- \begin{bmatrix} e_h \\ \tau_h \end{bmatrix} \right\|_2^2.
    \end{align*}
Consequently,
    \begin{align*}
        &\|f_{\mathcal{N},h}^*-f \|_2\le \|A_h^{-1}\|_2
        \left\| A_h (f_{\mathcal{N},h}^*-f) \right\|_2\\
        \le & \|A_h^{-1}\|_2 \big(  \| A_h\|_2 \| (f_\theta-f)\|_2 + 2(\|  e_h \|_2+  \|  \tau_h\|_2 )^2\big)\\
        \le &c \kappa_2(A_h)(e_{\mathcal{N}}+h^p).
    \end{align*}
This completes the proof of the theorem.
\end{proof}

The term $e_{\mathcal{N}}$ represents the DNN approximation error. By Lemma \ref{lem:NN_approx_tanh} or similar approximation results \cite{Shen:2020,Lu:2021approx}, we can bound $e_{\mathcal{N}}$ in terms of DNN parameters, e.g., width and depth. The condition number $\kappa(A_h)$ can be uniformly bounded in $h$, if all roots of the second characteristic polynomial $\sigma(z)$ have modulus smaller than $1$ \cite[Theorem 5.4]{DuGu:2022}. Therefore, the $\ell^2$ error decays to zero as $h \to  0^+$ and the DNN size approaches infinity.
%\end{remark}

The \textit{a priori} analysis in Du et al \cite{DuGu:2022} only quantifies the error evaluated at the given sample locations. Zhu et al \cite{ZhuWuTang:2024} investigated the error estimate on out of sample locations. Consider the training data
\begin{equation*}
    \mathcal{T}_{train}=\{ ( x_{k}, \phi_{h,f}(x_{k}), \dots, \phi_{Mh,f}(x_{k}) ) \}_{k=1}^{K},
\end{equation*}
where $( x_{k})_{k=1}^{K}$ denotes different locations of initial states, $\phi_{t,f}(x_{k}):=x_{k}+\int_0^t f(y(\tau ))\d \tau$ denotes the flow map of the dynamics $f$ starting at the state $x_{k}$. The training loss is formulated as
\begin{equation*}
    J_{h,K}(f_{\theta})=\frac{1}{K} \sum_{k=1}^{K} \left\| \sum_{m=0}^{M}h^{-1}\alpha_m \phi_{mh,f}(x_{k})-\sum_{m=0}^{M}\beta_m f_{\theta}( \phi_{mh,f}(x_{k}) )\right\|_2^2.
\end{equation*}
In addition, let $\mathcal{K}\subset \mathbb{R}^d$ be a bounded domain and $\mathcal{B}(\mathcal{K},R)=\bigcup_{x\in \mathcal{K}}\mathcal{B}(x,R)$ be a neighborhood of $\mathcal{K}$. We define the expected loss by
\begin{equation*}
    J(f_{\theta})=\int_{\mathcal{B}(\mathcal{K},R)}  \left\| \sum_{m=0}^{M}h^{-1}\alpha_m \phi_{mh,f}(x)-\sum_{m=0}^{M}\beta_m f_{\theta}( \phi_{mh,f}(x) )\right\|_2^2 \d x.
\end{equation*}

Then the following error estimate holds \cite[Theorem 4.1]{ZhuWuTang:2024}. For a minimizer $f_\theta^*$ of the empirical training loss $J_{h,K}$, the estimate \eqref{eqn:hybrid_LMM_err_ZhuWuTang2024} can be expressed explicitly in terms of the time step $h$, DNN architectural parameters $W$ and $L$ and the number of sampling points $K$  \cite[Theorem 4.2]{ZhuWuTang:2024}.
\begin{theorem}\label{thm:hybrid_LMM_err_ZhuWuTang2024}
Consider a stable and consistent LMM of order $p$. Let $Q, R, r>0$, and $\mathcal{K} \subset \mathbb{R}^d$ be a bounded domain. Suppose that the target function $f$ is analytic on $\mathcal{B}(\mathcal{K}, R+r)$ with $\|f\|_{\mathcal{B}(\mathcal{K}, R+r)}\le Q$.
Let $f_{\theta^*}$ be a learned vector field which satisfies
$$\| f_{\theta^*}^{(i)}\|_{\mathcal{B}(\mathcal{K}, R)}\le i!\cdot Q\cdot r^{-i}, \quad \mbox{for }i=0,\dots,I,$$
with $I \geq \gamma / h^{2 / 5}+1$. Then for any sufficiently small time step $0<h<h_0$, there holds
\begin{align}
\label{eqn:hybrid_LMM_err_ZhuWuTang2024}
\int_{\mathcal{K}}\left\|f_{\theta^*}(x)-f(x)\right\| \d x \leq c(h^p+ J(f_{\theta^*})^{\frac{1}{2}})
\end{align}
where  constants $\gamma, h_0, c$ depend only on $r / Q, R / Q$, the dimension $d$, the volume of the domain $\mathcal{B}(\mathcal{K}, R)$, and the LMM coefficients $\alpha_m$ and $\beta_m$.
\end{theorem}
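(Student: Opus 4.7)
The plan is to control $g:=f_{\theta^*}-f$ pointwise on $\mathcal{K}$ by combining three ingredients: (a) LMM consistency applied to the exact flow of $f$, (b) the $L^2$-control of the LMM residual provided by $J(f_{\theta^*})$, and (c) an analytic Taylor-inversion that converts the linear combination $\sum_m \beta_m g(\phi_{mh,f}(x))$ back into $g(x)$ itself, exploiting the analyticity of $f$ and the derivative bounds imposed on $f_{\theta^*}$.

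First I would form the pointwise residual
\[
r(x) := \sum_{m=0}^M h^{-1}\alpha_m \phi_{mh,f}(x) - \sum_{m=0}^M \beta_m f_{\theta^*}(\phi_{mh,f}(x)),
\]
so that $\|r\|_{L^2(\mathcal{B}(\mathcal{K},R))}^2 = J(f_{\theta^*})$. Because $t\mapsto\phi_{t,f}(x)$ is the exact solution of the ODE with initial state $x$, consistency of the LMM of order $p$, together with the uniform derivative bounds on $f$ coming from analyticity on $\mathcal{B}(\mathcal{K},R+r)$, yields a local truncation bound
\[
\sum_{m=0}^M h^{-1}\alpha_m \phi_{mh,f}(x) - \sum_{m=0}^M \beta_m f(\phi_{mh,f}(x)) = O(h^p)
\]
uniformly in $x \in \mathcal{B}(\mathcal{K},R)$. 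Subtracting the two identities gives $\sum_{m=0}^M \beta_m g(\phi_{mh,f}(x)) = O(h^p) - r(x)$, whose $L^2(\mathcal{B}(\mathcal{K},R))$-norm is at most $c(h^p + J(f_{\theta^*})^{1/2})$.

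The heart of the argument is to invert the operator $g \mapsto \sum_m \beta_m g(\phi_{mh,f}(\cdot))$. I would Taylor-expand each summand along the exact flow,
\[
g(\phi_{mh,f}(x)) = \sum_{k=0}^{I-1} \frac{(mh)^k}{k!}\, L_f^k g(x) + R_{I,m}(x),
\]
where $L_f$ is the Lie derivative along $f$, and Cauchy-type bounds coming from $\|f_{\theta^*}^{(i)}\|_{\mathcal{B}(\mathcal{K},R)} \leq i!\, Q\, r^{-i}$ and the matching analytic bound on $f$ control the remainder by $\|R_{I,m}\| \leq c(Mh/r)^I$. Reordering the sums yields
\[
\sum_{m=0}^M \beta_m g(\phi_{mh,f}(x)) = \sum_{k=0}^{I-1} \frac{h^k}{k!} \Bigl(\sum_{m=0}^M \beta_m m^k\Bigr) L_f^k g(x) + \sum_{m=0}^M \beta_m R_{I,m}(x).
\]
The $k=0$ coefficient equals $\sigma(1)=\sum_m \beta_m$, which is nonzero for any consistent LMM (since $\rho(1)=0$ forces $\rho'(1) = \sigma(1) \neq 0$ via the first Dahlquist consistency condition). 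Hence, modulo higher-order terms in $h$ and the truncation remainder, $g(x)$ is recovered as $\sigma(1)^{-1}$ times the left-hand side. The $k\geq 1$ terms each carry an $h$-factor and the same analytic bound on $L_f^k g$, so a short bootstrap absorbs them into an $O(h)$ correction of the base estimate, while the choice $I \geq \gamma/h^{2/5}+1$ ensures $(Mh/r)^I$ decays faster than any fixed polynomial in $h$ and thus the truncation is negligible.

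The main obstacle, as I see it, is precisely this Taylor-inversion step: the derivative bounds on $g$ grow like $k!/r^k$, which only marginally dominates $(mh)^k/k!$, so one must carefully balance the truncation order $I$, the decay of the remainder, and the stability constant $|\sigma(1)|^{-1}$ against the geometric blow-up of the Lie derivatives; presumably this is the source of the unusual exponent $h^{2/5}$ in the hypothesis on $I$. Once a pointwise bound of the form $\|g(x)\| \leq c(h^p + |r(x)|)$ is established, the conclusion $\int_{\mathcal{K}} \|g(x)\|\,\mathrm{d}x \leq c(h^p + J(f_{\theta^*})^{1/2})$ follows via Cauchy--Schwarz, since $\mathcal{K} \subset \mathcal{B}(\mathcal{K},R)$ has finite volume.
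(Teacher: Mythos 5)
Your reduction to controlling $G(x):=\sum_{m=0}^M\beta_m g(\phi_{mh,f}(x))$ with $g=f_{\theta^*}-f$ is fine, and so is the Lie-series expansion with leading coefficient $\sigma(1)$ (though note that $\sigma(1)\neq 0$ here follows from the discovery-stability hypothesis that all roots of $\sigma$ lie strictly inside the unit disk, not from consistency alone: consistency only gives $\rho(1)=0$ and $\rho'(1)=\sigma(1)$). The genuine gap is in the claimed bootstrap for the $k\geq 1$ terms. The only available bounds on $L_f^k g$ are the a priori Cauchy-type bounds inherited from $\|f_{\theta^*}^{(i)}\|_{\mathcal{B}(\mathcal{K},R)}\leq i!\,Q\,r^{-i}$ and the analyticity of $f$; these are absolute constants depending on $Q$ and $r$, not quantities proportional to any norm of $g$. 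Hence the term $h\,\bigl(\sum_m\beta_m m\bigr)L_f g(x)$ and its higher-order analogues are \emph{additive} errors of size $O(h)$, not multiplicative corrections $O(h)\|g\|$ that a contraction argument could absorb. What your scheme actually delivers is $\|g\|_{L^2(\mathcal{B}(\mathcal{K},R))}\leq c\,(h^p+J(f_{\theta^*})^{1/2}+h)$, an $O(h)$ bound that destroys the claimed rate for every $p\geq 2$. There is no easy repair within your framework: bounding derivatives of $g$ by $\|g\|$ itself fails in general (Cauchy estimates require sup-control on a complex neighborhood, which you do not have from a real $L^2$ bound, and two-constants or three-circle interpolation would only yield a fractional power of $\|g\|$, again degrading the rate).

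This is exactly the difficulty the paper's proof route is built to circumvent. Following \cite{ZhuWuTang:2024}, one compares $f_{\theta^*}$ not with $f$ directly but with a truncated inverse modified differential equation (IMDE) $\tilde f_h$: a vector field constructed order by order in $h$ so that the exact flow data of $f$ formally satisfies the LMM with right-hand side $\tilde f_h$. The $h^k$-expansion terms obstructing your inversion are thereby absorbed into $\tilde f_h$, so the LMM residual of $f_{\theta^*}-\tilde f_h$ is controlled by the loss $J(f_{\theta^*})^{1/2}$ plus a remainder made exponentially small by the truncation index $I\gtrsim h^{-2/5}$ — this is where the hypothesis on $I$ actually enters, not in a Taylor inversion around the $k=0$ coefficient as you surmised. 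The order-$p$ consistency then supplies $\|\tilde f_h-f\|=O(h^p)$ in the first step (your truncation-error computation is the germ of this step), and the passage from trajectory-wise information back to all of $\mathcal{K}$ is accomplished by an interpolation argument in which the analyticity of $\tilde f_h$ and the derivative bounds on $f_{\theta^*}$ control the interpolation residual while the expected loss controls the nodal errors. Without the IMDE, the $O(h)$ leftover in your expansion is unavoidable, so the proposal as written does not prove the stated bound.
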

Theorem \ref{thm:hybrid_LMM_err_ZhuWuTang2024} quantities the error in the whole domain $\mathcal{K}$. The proof of Theorem \ref{thm:hybrid_LMM_err_ZhuWuTang2024} utilizes the inverse modified differential equation (IMDE) \cite{ZhuJinZhuTang:2022}. The IMDE is a perturbed differential equation, which satisfies the LMM scheme \eqref{eqn:hybrid_LMM_forward_scheme} formally.
Zhu et al \cite{ZhuWuTang:2024} establish the estimate in two steps. The first step involves truncating the IMDE and obtaining an $O(h^p)$ estimation of the difference between the target function and the truncated IMDE.
In the second step, the error between the neural network function and the truncated IMDE on domain $\mathcal{K}$ is estimated using interpolation. The regularity properties of IMDE and $f_\theta$ control the interpolation residual, while the error on interpolation points is bounded by the expected loss $J(f_\theta)$. Thus, the difference between the learned network and the truncated IMDE is bounded by the trained loss $J(f_\theta)$ and an additional discrepancy term that can be made exponentially small.

%The estimate \eqref{eqn:hybrid_LMM_err_ZhuWuTang2024} quantities the error in the whole domain $\mathcal{K}$. The proof of \eqref{eqn:hybrid_LMM_err_ZhuWuTang2024} employs the inverse modified differential equation (IMDE) \cite{ZhuJinZhuTang:2022}. The IMDE is a perturbed differential equation, which satisfies the LMM scheme \eqref{eqn:hybrid_LMM_forward_scheme} formally. In \cite{ZhuWuTang:2024}, the authors truncate the IMDE and prove that the difference between the learned network and the truncated IMDE is bounded by the trained loss $J$ and one discrepancy term that can be made  small. They show that the error between the learned network and the target governing function is bounded by the LMM discretization error $O(h^p)$

\section{DNN based approaches}\label{sec:NN}

In this section, we discuss the unsupervised approaches using DNNs to approximate both the parameter and the state. The development of neural solvers for direct and inverse problems associated with PDEs has attracted much attention recently, and many neural solvers have been proposed, including physics informed neural networks \cite{Raissi:2019}, deep Galerkin method \cite{SirignanoSpiliopoulos:2018}, deep Ritz method \cite{EYu:2018} and weak adversarial network \cite{ZangBaoYe:2020}, deep least-squares method \cite{CaiChoiLiu:2024}.

The error analysis of neural network approximations for direct problems associated with PDEs have received much attention; see \cite{Lu:2021priori,muller:2022error,de:2024numerical,Jiao:2021DRM} and the references therein. The error bound is generally derived in two steps. First, the underlying loss enjoys certain coercivity that enables bounding the error of the numerical approximation by the loss. Second, one often decomposes the error in the loss into three parts. The next lemma gives a decomposition of the error for the loss.

\begin{lemma}\label{lem:error-decomp}
Let $\mathcal{J}(a)$ be the population loss defined over a hypothesis space $\mathcal{H}$, and the $\widehat{\mathcal{J}}(a)$ be the empirical loss. Let $\theta_\mathcal{A}$ be an approximate minimizer to $\widehat{\mathcal{J}}(a_\theta)$ by suitable algorithm over the DNN parameter set $\Theta$. Then the following error decomposition holds
\begin{align*}
    & \mathcal{J}(a_{\theta_\mathcal{A}})-\inf_{a\in \mathcal{H}} \mathcal{J}(a) \\
   \leq &2\sup_{\theta\in\Theta}|\mathcal{J}(a_{\theta})- \widehat{\mathcal{J}}(a_{\theta})| + [\widehat{\mathcal{J}}(a_{\theta_\mathcal{A}}) - \inf_{\theta\in\Theta}\widehat{\mathcal{J}}(a_{\theta})] \\
   &+ [\inf_{\theta\in\Theta}\mathcal{J}(a_{\theta})-\inf_{a\in\mathcal{H}}\mathcal{J}(a)].
\end{align*}
\end{lemma}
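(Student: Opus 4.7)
The plan is to establish the decomposition by inserting three intermediate quantities into the telescoping difference $\mathcal{J}(a_{\theta_\mathcal{A}}) - \inf_{a\in\mathcal{H}}\mathcal{J}(a)$ and then bounding the resulting four differences by the three terms on the right-hand side. Specifically, I would write
\begin{align*}
\mathcal{J}(a_{\theta_\mathcal{A}}) - \inf_{a\in \mathcal{H}}\mathcal{J}(a)
&= \bigl[\mathcal{J}(a_{\theta_\mathcal{A}}) - \widehat{\mathcal{J}}(a_{\theta_\mathcal{A}})\bigr] + \bigl[\widehat{\mathcal{J}}(a_{\theta_\mathcal{A}}) - \inf_{\theta\in\Theta}\widehat{\mathcal{J}}(a_{\theta})\bigr] \\
&\quad + \bigl[\inf_{\theta\in\Theta}\widehat{\mathcal{J}}(a_{\theta}) - \inf_{\theta\in\Theta}\mathcal{J}(a_{\theta})\bigr] + \bigl[\inf_{\theta\in\Theta}\mathcal{J}(a_{\theta}) - \inf_{a\in\mathcal{H}}\mathcal{J}(a)\bigr].
\end{align*}
This identifies the four contributions that need to be controlled.

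Next, I would dispatch each bracket separately. The second and fourth brackets are already exactly the optimization error and the approximation error appearing on the right-hand side, so no estimation is needed. The first bracket is bounded by $\sup_{\theta\in\Theta}|\mathcal{J}(a_\theta) - \widehat{\mathcal{J}}(a_\theta)|$ since $\theta_\mathcal{A}\in\Theta$. For the third bracket, I would use the elementary inequality $|\inf_\theta f(\theta) - \inf_\theta g(\theta)| \leq \sup_\theta |f(\theta)-g(\theta)|$, applied with $f = \widehat{\mathcal{J}}(a_\cdot)$ and $g = \mathcal{J}(a_\cdot)$, which yields another bound by $\sup_{\theta\in\Theta}|\mathcal{J}(a_\theta) - \widehat{\mathcal{J}}(a_\theta)|$. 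Combining the two generalization-type bounds produces the factor $2$ in front of the supremum term, and summing all four estimates gives the claimed inequality.

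There is no real obstacle here: the argument is the standard bias--variance--optimization decomposition familiar from statistical learning theory, and the only subtlety is the elementary $\inf$--$\inf$ inequality, which itself is a two-line argument (for any $\epsilon>0$ choose $\theta_\epsilon$ nearly attaining $\inf_\theta g$ and use $\inf_\theta f \leq f(\theta_\epsilon) \leq g(\theta_\epsilon) + \sup_\theta |f-g| \leq \inf_\theta g + \epsilon + \sup_\theta |f-g|$, then swap the roles of $f$ and $g$). Consequently the whole proof should fit in a short paragraph.
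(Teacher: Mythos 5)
Your proof is correct and follows essentially the same telescoping argument as the paper's: the paper inserts explicit global minimizers $\widehat{\theta}^*$ of $\widehat{\mathcal{J}}$ and $\theta^*$ of $\mathcal{J}$ into a five-term splitting and discards the nonpositive term $\widehat{\mathcal{J}}(a_{\widehat{\theta}^*})-\widehat{\mathcal{J}}(a_{\theta^*})\leq 0$, which is precisely the one-sided instance of your $\inf$--$\inf$ inequality realized at the test point $\theta^*$. The only (minor) refinement in your version is that by working with the infima directly you avoid assuming they are attained over $\Theta$, an assumption the paper's proof makes implicitly by positing global minimizers.
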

\begin{proof}
Let $\widehat{\theta}^*\in \Theta$ and $\theta^*\in \Theta$ be a global minimizer to the losses $\widehat{\mathcal{J}}$ and $\mathcal{J}$, respectively. Then we have
\begin{align*}
\mathcal{J}(a_{\theta_\mathcal{A}})-\inf_{a\in \mathcal{H}} \mathcal{J}(a) = & [\mathcal{J}(a_{\theta_\mathcal{A}})- \widehat{\mathcal{J}}(a_{\theta_\mathcal{A}})] + [\widehat{\mathcal{J}}(a_{\theta_\mathcal{A}}) - \widehat{\mathcal{J}}(a_{\widehat{\theta}^*})] \\
 &+ [\widehat{\mathcal{J}}(a_{\widehat{\theta}^*})-\widehat{\mathcal{J}}(a_{{\theta}^*})] + [\widehat{\mathcal{J}}(a_{{\theta}^*})-\mathcal{J}(a_{\theta^*})] \\
 &+ [\mathcal{J}(a_{\theta^*})-\inf_{a\in\mathcal{H}}\mathcal{J}(a)].
\end{align*}
By the minimizing property of $\widehat{\theta}^*$ to $\widehat{\mathcal{J}}$ over $\Theta$, we have $\widehat{\mathcal{J}}(a_{\widehat{\theta}^*})\leq \widehat{\mathcal{J}}(a_{{\theta}^*})$. Then the desired assertion follows.
\end{proof}

The decomposition in Lemma \ref{lem:error-decomp} splits the error into three parts: approximation error, statistical error and optimization error.
\begin{itemize}
\item[(i)] The \textit{approximation error} $\inf_{\theta\in\Theta}\mathcal{J}(a_{\theta})-\inf_{a\in\mathcal{H}}\mathcal{J}(a)$ is related to the approximation capacity of the DNN ansatz space to the space $\mathcal{H}$. Often a larger function class yields a smaller approximation error, and the faster is the decay of the approximation error for a smoother target function.
\item[(ii)] The \textit{statistical error} $2\sup_{\theta\in\Theta}|\mathcal{J}(a_{\theta})- \widehat{\mathcal{J}}(a_{\theta})|$ is due to the use of quadrature rules for numerical integration. In the high-dimensional case, this is commonly achieved with (quasi) Monte Carlo methods, and the error can be bounded via Rademacher complexity \cite[Definition 2]{BartlettMendelson:2002} and covering number  of the DNN function class.
\item[(iii)] The \textit{optimization error} $\widehat{\mathcal{J}}(a_{\theta_\mathcal{A}}) - \widehat{\mathcal{J}}(a_{\widehat{\theta}^*})$ arises from the nonconvexity of the loss due to the nonlinearity of the DNN functions, and thus the optimizer may not find a global minimum $\widehat{\theta}^*$ of the empirical loss $\widehat{\mathcal{J}}$.
\end{itemize}

This decomposition holds for many machine learning tasks. It was adapted to analyze neural PDE solvers recently. The works \cite{Lu:2021priori,Jiao:2021DRM,muller:2022error} investigate the error analysis of the deep Ritz method \cite{EYu:2018} for elliptic equations; see also the recent survey \cite{de:2024numerical} for relevant studies with physics informed neural networks.

So far there are only a few studies on the error analysis of DNN approximations for inverse problems.
Jin et al \cite{Jin:2022CDII} proposed a deep Ritz method for current density impedance imaging based on a relaxed weighted least gradient formulation. Recently, Cen et al \cite{Cen:2025} employed the mixed least-square DNN to recover the anisotropic diffusion coefficient. These works utilize the approximation results and Rademacher complexity argument to derive error bounds for the empirical loss, which may be insufficient for deriving bounds on the reconstruction error. For the latter, it is essential to design and analyze loss formulations based on the conditional stability results.

\subsection{Preliminary}\label{subsec:NN_pre}
We use the DNN architecture in Section \ref{subsec:Hybrid_Pre}, and set $k=2$ and $s=1$ in Lemma \ref{lem:NN_approx_tanh}. Let $\mathfrak{P}_{p,\epsilon}$ be the DNN parameter set
\begin{equation*}
    \mathcal{N}\Big(c\log(d+2), c\epsilon^{-\frac{d}{1-\mu}}, c \epsilon^{-\frac{6p+3d+3pd}{p(1-\mu)}}\Big).
\end{equation*}
We use the parallelization operation to assemble multiple DNNs into one larger DNN. It is useful for approximating the components of a vector-valued function. Moreover, the new DNN does not change the depth, and its width equals to the sum of that of the subnetworks.
\begin{lemma}\label{lem:NN_paral}
Let $\bar \theta=\{(\bar A^{(\ell)},\bar b^{(\ell)})\}_{\ell=1}^L,\tilde\theta = \{(\tilde A^{(\ell)},\tilde b^{(\ell)})\}_{\ell=1}^L\in\Theta$,  let $\bar v$ and $\tilde v$ be their DNN realizations, and define $\theta=\{( A^{(\ell)}, b^{(\ell)})\}_{\ell=1}^L$ by
	\begin{align*}
		 A^{(1)}&=
		\begin{bmatrix}
		\bar A^{(1)}  \\
		\tilde A^{(1)}
		\end{bmatrix}\in\mathbb{R}^{2d_1\times d_0}, \
	     A^{(\ell)}=
	    \begin{bmatrix}
	   \bar A^{(\ell)} & 0 \\
	    	0            & \tilde A^{(\ell)}
	    \end{bmatrix}\in\mathbb{R}^{2d_\ell\times 2d_{\ell-1}},\quad  \ell=2,\dots,L; \\
		 b^{(\ell)} &=
		\begin{bmatrix}
	\bar b^{(\ell)} \\
		\tilde b^{(\ell)}
		\end{bmatrix}\in \mathbb{R}^{2d_\ell}, \quad \ell=1,\dots,L.
	\end{align*}
Then {$ v_{\theta}=(\bar v_{\bar \theta},\tilde v_{\tilde \theta})^\top $} is the DNN realization of $\theta$,
of depth $L$ and width $2{W}$.
\end{lemma}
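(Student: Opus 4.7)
The plan is to prove Lemma \ref{lem:NN_paral} by induction on the layer index $\ell=0,1,\ldots,L$, showing at each stage that the intermediate output $v^{(\ell)}$ produced by the constructed parameter $\theta$ is precisely the vertical concatenation $\bigl(\bar v^{(\ell)},\,\tilde v^{(\ell)}\bigr)^\top$ of the corresponding intermediate outputs of the two subnetworks. The key observation is that the block structure of $A^{(\ell)}$ for $\ell \geq 2$ decouples the computations on the top and bottom blocks, and the componentwise action of the activation $\rho$ preserves any concatenation structure. So the proof is a direct verification using the recursive definition \eqref{eqn:NN_realization}; there is no real obstacle beyond careful bookkeeping of the indices.

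First I would handle the base case $\ell=0$, which is trivial since $v^{(0)} = x = \bar v^{(0)} = \tilde v^{(0)}$. Next, at $\ell=1$, the definition of $A^{(1)}$ as a vertical stack (rather than a block-diagonal matrix, since both subnetworks share the same input) gives directly
\begin{equation*}
A^{(1)} x + b^{(1)} = \begin{bmatrix} \bar A^{(1)} x + \bar b^{(1)} \\ \tilde A^{(1)} x + \tilde b^{(1)} \end{bmatrix},
\end{equation*}
and componentwise application of $\rho$ yields $v^{(1)} = \bigl(\bar v^{(1)},\,\tilde v^{(1)}\bigr)^\top$. For the inductive step at an intermediate layer $\ell \in \{2,\ldots,L-1\}$, assuming $v^{(\ell-1)} = \bigl(\bar v^{(\ell-1)},\,\tilde v^{(\ell-1)}\bigr)^\top$, the block-diagonal form of $A^{(\ell)}$ together with the definition of $b^{(\ell)}$ gives
\begin{equation*}
A^{(\ell)} v^{(\ell-1)} + b^{(\ell)} = \begin{bmatrix} \bar A^{(\ell)} \bar v^{(\ell-1)} + \bar b^{(\ell)} \\ \tilde A^{(\ell)} \tilde v^{(\ell-1)} + \tilde b^{(\ell)} \end{bmatrix},
\end{equation*}
and applying $\rho$ componentwise preserves the concatenation. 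The final layer $\ell = L$ is treated identically, except that no activation is applied, so $v^{(L)} = \bigl(\bar v_{\bar\theta},\,\tilde v_{\tilde\theta}\bigr)^\top$ as claimed.

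Finally, the architectural claims are read off directly from the construction: the constructed parameter $\theta$ has exactly $L$ layers, so depth equals $L$; at each layer $\ell=1,\ldots,L-1$ the width is $2d_\ell$, so the maximum width is $2W$, where $W=\max_{\ell}d_\ell$. The statement does not involve sparsity counts or maximum weight magnitudes, so no further bookkeeping is required, and no condition on $\rho$ beyond its componentwise action is invoked.
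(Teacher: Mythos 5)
Your induction is correct and is precisely the routine layerwise verification that the paper leaves implicit: Lemma \ref{lem:NN_paral} is stated there without proof, since the block structure of the weights and the componentwise action of $\rho$ make the concatenation property immediate from the recursion \eqref{eqn:NN_realization}. Your handling of the one asymmetry in the construction --- the first layer is a vertical stack rather than block-diagonal because both subnetworks read the same input $x$ --- is exactly the point worth making explicit, so nothing is missing.
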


For a Banach space $B$, the notation $B^{\otimes d}$ denotes the $d$-fold product space, and $ \mathfrak{P}_{p,\epsilon}^{\otimes d}$ denotes the parallelized DNN of $d$ identical DNN parameter classes $\mathfrak{P}_{p,\epsilon}$ with depth $L$ and width $W$. The next lemma gives the Lipschitz continuity of the tanh-DNN function class.
\begin{lemma}\label{lem:NN_Lip}
    Let $\Theta$ be a parametrization with depth $L$ and width $W$, and $\theta=\{(A^{(\ell)},b^{(\ell)})_{\ell=1}^{L} \} ,\,\tilde{\theta}=\{(\tilde A^{(\ell)},\tilde b^{(\ell)})_{\ell=1}^{L} \}\in\Theta$ satisfying $\|\theta\|_{\ell^\infty},
    \|\tilde{\theta}\|_{\ell^\infty}\leq R$. Then for the DNN realizations $v,\tilde v:\Omega\rightarrow \mathbb{R}$ of $\theta,\tilde \theta$, the following estimates hold
    \begin{align*}
        \|v-\tilde{v}\|_{L^\infty(\Omega)}\leq & 2LR^{L-1}W^{L}\|\theta-\tilde\theta\|_{\ell^\infty},\\
        \|\nabla (v-\tilde{v})\|_{L^\infty(\Omega; \mathbb{R}^d)}\leq &  \sqrt{d}L^2R^{2L-2}W^{2L-2}\|\theta-\tilde\theta\|_{\ell^\infty}.
    \end{align*}
\end{lemma}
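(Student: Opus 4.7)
The plan is to prove both estimates by induction on the layer index $\ell$, exploiting the recursive definition \eqref{eqn:NN_realization} of DNN realizations together with the global bounds on $\rho\equiv\tanh$ and its derivatives already collected in \eqref{eqn:NN_act_bound}. Throughout I write $z^{(\ell)}=A^{(\ell)}v^{(\ell-1)}+b^{(\ell)}$ and $\tilde z^{(\ell)}=\tilde A^{(\ell)}\tilde v^{(\ell-1)}+\tilde b^{(\ell)}$, and I will repeatedly use the elementary bounds $\|A^{(\ell)}\|_{\infty\to\infty}\leq RW$, $\|A^{(\ell)}-\tilde A^{(\ell)}\|_{\infty\to\infty}\leq W\|\theta-\tilde\theta\|_{\ell^\infty}$, and $\|b^{(\ell)}-\tilde b^{(\ell)}\|_\infty\leq\|\theta-\tilde\theta\|_{\ell^\infty}$.

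For the first estimate, I would let $\delta^{(\ell)}:=\|v^{(\ell)}-\tilde v^{(\ell)}\|_{L^\infty(\Omega)}$ with $\delta^{(0)}=0$, and use that $\rho$ is $1$-Lipschitz and that $\|\tilde v^{(\ell-1)}\|_{L^\infty(\Omega)}\leq 1$ for $\ell\geq 2$ (tanh range), together with the splitting
\begin{equation*}
z^{(\ell)}-\tilde z^{(\ell)}=A^{(\ell)}(v^{(\ell-1)}-\tilde v^{(\ell-1)})+(A^{(\ell)}-\tilde A^{(\ell)})\tilde v^{(\ell-1)}+(b^{(\ell)}-\tilde b^{(\ell)}),
\end{equation*}
to derive the recursion $\delta^{(\ell)}\leq RW\delta^{(\ell-1)}+(W+1)\|\theta-\tilde\theta\|_{\ell^\infty}$. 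Unrolling over $\ell=1,\dots,L$ (with a slightly different bound at the input and output layers, which have no activation but also contribute only a geometric-sum term), I obtain $\delta^{(L)}\leq(W+1)\|\theta-\tilde\theta\|_{\ell^\infty}\sum_{k=0}^{L-1}(RW)^k$, which under the hypothesis $RW>2$ is crudely bounded by $2LR^{L-1}W^L\|\theta-\tilde\theta\|_{\ell^\infty}$, giving the first inequality.

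For the gradient bound, the plan is to iterate the chain-rule identity $\partial_{x_m}v^{(\ell)}_i=\rho'(z^{(\ell)}_i)\big(A^{(\ell)}\partial_{x_m}v^{(\ell-1)}\big)_i$, and apply the three-term add-and-subtract decomposition
\begin{equation*}
\rho'(z^{(\ell)}_i)A^{(\ell)}\partial_{x_m}v^{(\ell-1)}-\rho'(\tilde z^{(\ell)}_i)\tilde A^{(\ell)}\partial_{x_m}\tilde v^{(\ell-1)}=\mathrm{T}_1+\mathrm{T}_2+\mathrm{T}_3,
\end{equation*}
where $\mathrm{T}_1$ keeps the activation and weight but isolates the derivative difference, $\mathrm{T}_2$ isolates the weight difference $A^{(\ell)}-\tilde A^{(\ell)}$, and $\mathrm{T}_3$ isolates $\rho'(z^{(\ell)}_i)-\rho'(\tilde z^{(\ell)}_i)$ and is bounded by $\|\rho''\|_{L^\infty(\mathbb{R})}\,|z^{(\ell)}_i-\tilde z^{(\ell)}_i|$. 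I control the pre-activation difference by the first estimate, the derivative $\|\partial_{x_m}\tilde v^{(\ell-1)}\|_\infty$ by Lemma \ref{lem:NN_bounds}, and the bound $|\mathrm{T}_1|$ by $RW\|\partial_{x_m}(v^{(\ell-1)}-\tilde v^{(\ell-1)})\|_\infty$. Combining these leads to a recursion of the form $\eta^{(\ell)}\leq RW\,\eta^{(\ell-1)}+c\,R^{2\ell-2}W^{2\ell-2}\|\theta-\tilde\theta\|_{\ell^\infty}$ for $\eta^{(\ell)}:=\|\partial_{x_m}(v^{(\ell)}-\tilde v^{(\ell)})\|_\infty$, whose unrolling yields $\eta^{(L)}\leq L R^{2L-2}W^{2L-2}\|\theta-\tilde\theta\|_{\ell^\infty}$. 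Finally summing over $m=1,\dots,d$ and taking the square root produces the factor $\sqrt{d}\,L^2 R^{2L-2}W^{2L-2}$ claimed.

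The main obstacle will be the gradient estimate: it requires carefully bookkeeping three distinct error sources at every layer and showing that the dominant amplification is $(RW)^{\ell-1}$ times the a priori derivative bound $R^{\ell-1}W^{\ell-1}$ from Lemma \ref{lem:NN_bounds}, which combine to give $R^{2L-2}W^{2L-2}$ at the last layer. Once this is set up correctly, the contribution of $\mathrm{T}_3$ (the one involving $\rho''$ and the first estimate) has to be absorbed without creating a spurious $(RW)^{2L}$ factor; this is where the hypothesis $RW>2$ and the crude sum of the geometric series $\sum_k(RW)^k\leq 2(RW)^{L-1}$ will be used to keep the final constants as stated.
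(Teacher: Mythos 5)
Your sup-norm estimate is exactly the paper's own proof: the same add-and-subtract splitting of the pre-activations $z^{(\ell)}-\tilde z^{(\ell)}$ into a propagated difference, a weight-difference term against $\tilde v^{(\ell-1)}$ (bounded by $1$ since $\rho=\tanh$), and a bias difference; the same recursion $\delta^{(\ell)}\le RW\delta^{(\ell-1)}+(W+1)\|\theta-\tilde\theta\|_{\ell^\infty}$; and the same unrolling $(1+W)\sum_{k=0}^{L-1}(RW)^k\le 2LR^{L-1}W^L$. For the gradient estimate the paper gives no proof at all --- it defers to \cite[Lemma 3.4 and Remark 3.3]{Jin:2022CDII} --- so your layer-recursive plan with the decomposition $\mathrm{T}_1+\mathrm{T}_2+\mathrm{T}_3$ is the natural reconstruction of that argument, and it is structurally sound.

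There is, however, a bookkeeping slip in your gradient recursion that you should repair. By your own description, $\mathrm{T}_3$ is bounded by $\|\rho''\|_{L^\infty(\mathbb{R})}\,|z_i^{(\ell)}-\tilde z_i^{(\ell)}|\cdot|(\tilde A^{(\ell)}\partial_{x_m}\tilde v^{(\ell-1)})_i|$; using the geometric-sum bound under $RW>2$ one has $|z_i^{(\ell)}-\tilde z_i^{(\ell)}|\le cR^{\ell-1}W^{\ell}\|\theta-\tilde\theta\|_{\ell^\infty}$, while Lemma \ref{lem:NN_bounds} gives $|(\tilde A^{(\ell)}\partial_{x_m}\tilde v^{(\ell-1)})_i|\le RW\cdot R^{\ell-1}W^{\ell-2}=R^{\ell}W^{\ell-1}$, so the per-layer source term is of order $(RW)^{2\ell-1}\|\theta-\tilde\theta\|_{\ell^\infty}$, not the $R^{2\ell-2}W^{2\ell-2}\|\theta-\tilde\theta\|_{\ell^\infty}$ you wrote --- your recursion as stated is off by a factor $RW$ and fails for large $RW$. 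The conclusion still survives: propagating the layer-$\ell$ source through the remaining layers multiplies it by $(RW)^{L-\ell}$, giving $(RW)^{L+\ell-1}\le(RW)^{2L-2}$ for $\ell\le L-1$ (the output layer carries no activation, hence no $\mathrm{T}_3$ there), and summing the layerwise contributions costs at most a factor $O(L)$ or, with the cruder factor $\ell$ kept in each source, $\sum_{\ell}O(\ell)\le L^2$ --- which is exactly where the $L^2$ in the statement comes from. Your write-up instead jumps from the unrolled bound $LR^{2L-2}W^{2L-2}$ to the final $L^2R^{2L-2}W^{2L-2}$ at the step of "summing over $m=1,\dots,d$", but that step only contributes the factor $\sqrt{d}$ (it is $\sqrt d$ times a maximum over components), not an extra $L$; the extra $L$ must be accounted for in the unrolling itself.
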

\begin{proof}
    By the DNN realization \eqref{eqn:NN_realization}, the structure of the network, the bounds of $\rho=\mathrm{tanh}$  in  \eqref{eqn:NN_act_bound} and Lemma \ref{lem:NN_bounds}, we derive that for every layer $\ell=1,\dots,L$ and each component $i=1,\cdots,d_\ell$,
    \begin{align*}
        \|v_i^{(\ell)}-\tilde{v}_i^{(\ell)}\|_{L^\infty(\Omega)}
        \le  & \left\| \left(A^{(\ell)} v^{(\ell-1)}+b^{(\ell)}\right) - \left(\tilde{A}^{(\ell)} \tilde{v}^{(\ell-1)}+\tilde{b}^{(\ell)}\right)  \right\|_{L^\infty(\Omega)}\\
        \le & |b_i^{(\ell)}- \tilde{b}_i^{(\ell)} |+ \sum_{j=1}^{d_{\ell-1}} | A_{ij}^{(\ell)}- \tilde{A}_{ij}^{(\ell)}| \|v_j^{(\ell-1)}\|_{L^\infty(\Omega)}\\
        &+\sum_{j=1}^{d_{\ell-1}} |  \tilde{A}_{ij}^{(\ell)}| \|v_j^{(\ell-1)}-\tilde{v}_j^{(\ell-1)}\|_{L^\infty(\Omega)}    \\
        \le & \|\theta-\tilde\theta\|_{\ell^\infty}+W\|\theta-\tilde\theta\|_{\ell^\infty}\\
         &+RW \max_{1\le j\le d_{\ell-1}}\|v_j^{(\ell-1)}-\tilde{v}_j^{(\ell-1)}\|_{L^\infty(\Omega)}.
    \end{align*}
    Note also the trivial estimate
    \begin{align*}
        \|v_i^{(1)}-\tilde{v}_i^{(1)}\|_{L^\infty(\Omega)}
        \le& \left\| \sum_{j=1}^{d} A_{ij}^{(1)} x_j +b_i^{(1)}-\sum_{j=1}^{d} \tilde{A}_{ij}^{(1)} x_j -\tilde{b}_i^{(1)} \right\|_{L^\infty(\Omega)}\\
        \le& (W+1)\|\theta-\tilde\theta\|_{\ell^\infty}.
    \end{align*}
By applying the inequality recursively, we arrive at
    \begin{align*}
        \|v_i^{(\ell)}-\tilde{v}_i^{(\ell)}\|_{L^\infty(\Omega)}\le (1+W)\sum_{k=0}^{\ell-1} (RW)^k \|\theta-\tilde\theta\|_{\ell^\infty}\le 2\ell W^{(\ell)}R^{(\ell-1)}\|\theta-\tilde\theta\|_{\ell^\infty}.
    \end{align*}
    This completes the proof of the first assertion. The estimate on $\|\nabla(v-\tilde v)\|_{L^\infty(\Omega;\mathbb{R}^d)}$ follows similarly and can be found in \cite[Lemma 3.4 and Remark 3.3]{Jin:2022CDII}.
\end{proof}

\subsection{Mixed least-squares approach}\label{ssec:MLSNN}
Now we describe a DNN approach for diffusion coefficient identification \eqref{eqn:inv_cond_gov} based on a mixed reformulation of the governing equation \cite{Jin:2024conductivity}, following Kohn and Lowe \cite{Kohn:1988} (cf. Remark \ref{rmk:FEM_otherformulation}), and provide an error estimate of the DNN approximations that is explicit in terms of the noise level, penalty parameters, and DNN architectural parameters (depth, width, and parameter bounds). The discussion follows the recent work \cite{Jin:2024conductivity}, and the approach has been extend to recovering anisotropic conductivity by Cen et al \cite{Cen:2025}.

By setting $\sigma= a\nabla u$, we can recast problem \eqref{eqn:inv_cond_gov} into the following first-order system
\begin{equation}\label{eqn:MLSNN_gov}
    \left\{
        \begin{aligned}
            \sigma=&a\nabla u,&&\mbox{ in }\Omega,\\
            -\nabla\cdot \sigma=&f,&&\mbox{ in }\Omega,\\
            u=&0,&&\mbox{ on }\partial\Omega.
        \end{aligned}
    \right.
\end{equation}
The measurement $z^\delta$ is noisy with an accuracy
\begin{equation}\label{eqn:MLSNN_noise}
    \delta:=\|u(a^\dagger)-z^\delta\|_{H^{\frac{3}{2}}(\Omega)}.
\end{equation}
Note that we have imposed a mild regularity condition on $z^\delta$. The mixed least-squares formulation is based on the system \eqref{eqn:MLSNN_gov} with the state $u$ replaced by $z^\delta$. We employ two DNNs to approximate the diffusion coefficient $a$ and the flux $\sigma$. For the diffusion coefficient $a$, we use a DNN function class $\mathfrak{P}_{p,\epsilon_a}$ (with $p\ge 2$ and tolerance $\epsilon_a$)  of depth $L_a$, width $W_a$ and maximum bound $R_a$, and for $\sigma:\Omega\rightarrow \mathbb{R}^d$, we employ $d$ identical DNN function classes $\mathfrak{P}_{2,\epsilon_\sigma}$  of depth $L_\sigma$, width $W_\sigma$ and maximum bound $R_\sigma$ and stack them into one DNN, cf. Lemma \ref{lem:NN_paral}. Throughout,  $\theta$ and $\kappa$ denote the parameters of DNN approximations to $a$ and $\sigma$, respectively.

The least-squares formulation \eqref{eqn:MLSNN_gov} leads to the following loss
\begin{align}\label{eqn:MLSNN_loss}
        J_\gamma(\theta,\kappa)=&\|\sigma_\kappa-P_{\!\!\mathcal{A}}(a_\theta)\nabla z^\delta\|_{L^2(\Omega)}^2+\gamma_\sigma\|\nabla\cdot \sigma_\kappa + f\|_{L^2(\Omega)}^2\\
        &+\gamma_a \|\nabla a_\theta\|_{L^2(\Omega)}^2+ \gamma_b\|\sigma_\kappa-a^\dagger \nabla z^\delta \|_{L^2(\partial\Omega)}^2. \nonumber
\end{align}
where $\gamma=(\gamma_\sigma,\gamma_a,\gamma_b)\in \mathbb{R}^3_+$ denotes the vector of penalty parameters, and $P_{\!\!\mathcal{A}}$ is the projection operator, cf.  \eqref{eqn:hybrid_proj}.  The $H^1(\Omega )$ penalty on $a_\theta$  is to stabilize the minimization process. The boundary penalty $\gamma_b\|\sigma_\kappa-a^\dagger \nabla z^\delta \|_{L^2(\partial\Omega)}^2$ incorporates the \textit{a priori} information of the flux $\sigma$  on the boundary
$\partial \Omega $. The assumption on the knowledge $a^\dagger|_{\partial\Omega}$ is often made in existing studies \cite{Richter:1981,Alessandrini:1986}.

Note that the boundary condition is enforced through an additional penalty term in the loss. So the approximation only approximately satisfies the boundary conditions. This is generally referred to as having soft boundary conditions. In practice, for special domain geometries, one may construct the ansatz spaces such that the boundary conditions are satisfied exactly, which leads to the so-called hard boundary conditions \cite{LuJohnson:2021,SukumarSrivastava:2022}.

Then the DNN reconstruction scheme reads
\begin{equation}\label{eqn:MLSNN_scheme}
\min_{(\theta,\kappa)\in(\mathfrak{P}_{p,\epsilon_a},\mathfrak{P}_{2,\epsilon_\sigma}^{\otimes d}) } J_\gamma(\theta,\kappa).
\end{equation}

These loss functions $J_\gamma$ are formulated using integrals in terms of neural networks. Very often it is not feasible to evaluate these integrals exactly, as in the Galerkin FEM. Instead, one can approximate the integral with a sum using suitable quadrature rules. The choice of the quadrature rule depends crucially on the dimension $d$. In the low-dimensional case, one can use the tensor form of the composite mid-point rule, trapezoidal rule, and Gauss quadrature etc. Then the convergence rate depends on the regularity of the underlying integrand. However, these grid based quadrature rules are unsuitable for domains in high dimensions $(d\geq4)$. For moderately high-dimensions (i.e., $4\le d\leq 20$), one can use low-discrepancy sequences, e.g., Sobol and Halton sequences, as the quadrature points. So long as the integrand is of bounded Hardy-Krause variation, the quadrature error decays at a rate $n^{-1}$ (up to logarithmic factors), with $n$ being the number of quadrature points. For problems in high-dimensions ($d\geq20$), Monte-Carlo quadrature is the method of choice for numerical integration. Then the quadrature points are randomly chosen, independent and identically distributed (i.i.d.) with respect to a uniform distribution.

Next we describe the standard Monte Carlo method to discretize the integrals. Let $\mathcal{U}(\Omega)$ and $\mathcal{U}(\partial\Omega)$ be the uniform distributions
over $\Omega$ and  $\partial\Omega$, respectively, and
$(a_\theta,\sigma_\kappa)$ the DNN realization of $(\theta,\kappa)$.
The notation $\mathbb{E}_{\mathcal{U}(\Omega)}[\cdot]$ denotes taking the expectation with respect to $\mathcal{U}(\Omega)$ and likewise $\mathbb{E}_{\mathcal{U}(\partial\Omega)}$. Then we
can rewrite the loss $J_{\gamma}$ as
\begin{align*}
    J_{\gamma}(\theta,\kappa)
    &=|\Omega|\mathbb{E}_{X\sim\mathcal{U}(\Omega)}\Big[|\sigma_{\kappa}(X)- P_{\!\!\mathcal{A}}(a_\theta(X))\nabla z^\delta(X)|^2\Big] \\ &\quad+\gamma_\sigma|\Omega|\mathbb{E}_{X\sim\mathcal{U}(\Omega)}\Big[\big(\nabla\cdot\sigma_\kappa(X)+f(X)\big)^2\Big]\\
    &\quad+\gamma_a|\Omega|\mathbb{E}_{X\sim\mathcal{U}(\Omega)}\Big[|\nabla a_\theta(X)|^2 \Big]\\
    &\quad+\gamma_b|\partial\Omega|\mathbb{E}_{Y\sim\mathcal{U}(\partial\Omega)}\Big[|\sigma_\kappa(Y) - a^\dag(Y) \nabla z^\delta(Y) |^2\Big]   \\
    &=: \mathcal{E}_d(\sigma_{\kappa},a_\theta) + \gamma_\sigma \mathcal{E}_\sigma(\sigma_\kappa)+\gamma_a\mathcal{E}_a(a_\theta)+\gamma_b \mathcal{E}_b(\sigma_\kappa),
\end{align*}
where $|\Omega|$ and $|\partial\Omega|$ denote the Lebesgue measure of $\Omega$ and $\partial\Omega$, respectively, and $|\cdot|$ denotes the Euclidean norm on $\mathbb{R}^d$. Thus, $J_{\gamma}(\theta,\kappa)$ is also known as the population loss. Let
$X=(X_j)_{j=1}^{n_d}$ and $Y=(Y_j)_{j=1}^{n_b}$ be i.i.d. samples drawn from $\mathcal{U}
(\Omega)$ and $\mathcal{U}(\partial\Omega)$, with $n_d,n_b\in\mathbb{N}$. Then the empirical loss $\widehat{J}_{\gamma}(\theta,\kappa)$ is given by
\begin{equation}\label{eqn:MLSNN_loss_quad}
	\widehat{J}_{\gamma}(\theta,\kappa)=:\mathcal{\widehat{E}}_d(\sigma_{\kappa},a_\theta)      +\gamma_\sigma \mathcal{\widehat{E}}_\sigma(\sigma_\kappa) +\gamma_a\mathcal{\widehat{E}}_a(a_\theta)+\gamma_b\mathcal{\widehat{E}}_b(\sigma_\kappa),
\end{equation}
where $\mathcal{\widehat{E}}_d(\sigma_{\kappa},a_\theta)$, $\mathcal{\widehat{E}}_\sigma(\sigma_\kappa)$, $\mathcal{\widehat{E}}_a(a_\theta)$
 and $\mathcal{\widehat{E}}_b(\sigma_{\kappa})$ are Monte Carlo approximations of $\mathcal{E}_d(\sigma_{\kappa},a_\theta)$, $\mathcal{E}_\sigma(\sigma_\kappa)$, $\mathcal{E}_a(a_\theta)$ and $\mathcal{E}_b(\sigma_{\kappa})$, obtained by replacing the expectation with a sample mean:
\begin{align*}
    \mathcal{\widehat{E}}_d(\sigma_{\kappa},a_\theta)&:=n_d^{-1}|\Omega|\sum_{j=1}^{n_d} |\sigma_{\kappa}(X_{j})
    -P_{\!\!\mathcal{A}}(a_\theta(X_{j}))\nabla z^\delta(X_{j})|^2,\\
    \mathcal{\widehat{E}}_\sigma(\sigma_\kappa)&:=n_d^{-1}|\Omega|\sum_{j=1}^{n_d}\big(\nabla\cdot\sigma_\kappa(X_j)+f(X_j)\big)^2, \\
    \mathcal{\widehat{E}}_a(a_\theta)&:=n_d^{-1}|\Omega|\sum_{j=1}^{n_d}|\nabla a_\theta(X_j)|^2.\\
    \mathcal{\widehat{E}}_b(\sigma_\kappa)&:=n_b^{-1}|\partial\Omega|\sum_{j=1}^{n_b}|\sigma_\kappa(Y_j) - a^\dag(Y_j)\nabla z^\delta(Y_j)|^2.
\end{align*}
The practical reconstruction scheme reads
\begin{equation}\label{eqn:MLSNN_scheme_quad}
    \min_{(\theta,\kappa)\in (\mathfrak{P}_{p,\epsilon_a},\mathfrak{P}_{2,\epsilon_\sigma}^{\otimes d}) } \widehat{J}_\gamma(\theta,\kappa).
\end{equation}
The well-posedness of problem \eqref{eqn:MLSNN_scheme_quad} follows by a standard argument in calculus of variation. Indeed, the compactness of the parameterizations $\mathfrak{P}_{p,\epsilon_a}$  and $\mathfrak{P}_{2,\epsilon_\sigma}$ holds due to the
uniform boundedness of the parameter vectors and the finite-dimensionality of the spaces.
The smoothness of the $\tanh$ activation implies the continuity of the loss $\widehat{J}_\gamma$ in the DNN parameters $(\theta,\kappa)$. These two properties ensure the existence of a global minimizer $(\widehat{\theta}^*, \widehat{\kappa}^*)$.

In practice, the empirical loss $\widehat{J}_{\bm \gamma}$ is minimized using gradient type methods, e.g., gradient descent, stochastic gradient descent or its variants. In the context of neural PDE solvers, the most popular methods include  Adam \cite{Kingma:2017} and L-BFGS \cite{ByrdLu:1995}. These algorithms have been implemented in standard machine learning software platforms, and require computing the gradient of the loss with respect to the DNN parameters, which can be computed using automatic differentiation \cite{Baydin:2018}. Theoretically, due to the nonlinearity of DNN functions, the empirical loss $\widehat{J}_\gamma$ is potentially highly nonconvex, and thus generally there is no theoretical guarantee that the optimization algorithms can achieve a global minimum. Nonetheless, standard optimization algorithms often work reasonably well. So far it remains very challenging to minimize the loss down to machine precision even in the over-parameterized regime. Indeed, most existing works on neural PDE solvers for direct problems only report $O(10^{-3})$ -- $O(10^{-2})$ accuracy for the relative error. It is widely believed that the presence of optimization errors has precluded numerically verifying the error estimate. Nonetheless, due to the inevitable presence of data noise, this does not seem to pose serious issues for inverse problems.

Below we estimate the  error $\|a^\dagger-P_{\!\!\mathcal{A}}(\widehat{a}_\theta^*)\|_{L^2(\Omega)}$of the approximation $P_{\!\!\mathcal{A}}(\widehat{a}_\theta^*)$, where $\widehat{a}_\theta^*$ denotes the realization of the DNN parameterization $\widehat{\theta}^*$. We need the following assumption on the data regularity.
\begin{assumption}\label{assum:MLSNN_reg}
$a^\dagger  \in  W^{2,p}(\Omega ) \cap  \mathcal{A}$, $p >\max(2, d  )$, $f \in  H^1(\Omega ) \cap  L^\infty (\Omega )$ and $\nabla z^\delta\in L^{\infty}(\Omega)\cap L^\infty(\partial\Omega)$.
\end{assumption}
The $L^\infty$ bound is needed in order to apply the standard Rademacher complexity argument to bound the quadrature errors arising from the Monte Carlo method. Under Assumption \ref{assum:MLSNN_reg}, the elliptic regularity theory implies $u^\dag:=u(a^\dag)\in H^3(\Omega)\cap W^{2,p}(\Omega)\cap H_0^1(\Omega)$ and the flux $\sigma^\dag:=a^\dag \nabla u^\dag\in (H^2(\Omega))^d$.

The next result gives an \textit{a priori} estimate on the population loss $J_{\gamma}(\theta^*,\kappa^*)$.
\begin{lemma}\label{lem:MLSNN_min}
Let Assumption \ref{assum:MLSNN_reg} hold. Fix small $\epsilon_a$, $\epsilon_\sigma>0$, and let
$(\theta^*,\kappa^*)\in (\mathfrak{P}_{p,\epsilon_a}, \mathfrak{P}_{2,\epsilon_\sigma
}^{\otimes d})$ be a minimizer of the loss $J_{\gamma}(\theta,\kappa)$ in \eqref{eqn:MLSNN_loss}. Then there holds
    \begin{equation*}
    	J_{\gamma}(\theta^*,\kappa^*)\leq c\big(\epsilon_a^2+(1+\gamma_\sigma+\gamma_b)\epsilon_\sigma^2+(1+\gamma_\sigma)\delta^2+\gamma_a\big).
    \end{equation*}
\end{lemma}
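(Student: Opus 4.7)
The plan is to exploit the minimizing property of $(\theta^*,\kappa^*)$ over $(\mathfrak{P}_{p,\epsilon_a},\mathfrak{P}_{2,\epsilon_\sigma}^{\otimes d})$ by inserting a well-chosen reference pair $(\tilde\theta,\tilde\kappa)$ that approximates the exact coefficient $a^\dag$ and the exact flux $\sigma^\dag:=a^\dag\nabla u^\dag$, and then to bound the four constituent terms of $J_\gamma(\tilde\theta,\tilde\kappa)$ one by one. Concretely, I would first invoke Assumption \ref{assum:MLSNN_reg} together with the elliptic regularity result $u^\dag\in H^3(\Omega)\cap W^{2,p}(\Omega)\cap H_0^1(\Omega)$ and $\sigma^\dag\in (H^2(\Omega))^d$ stated just after the assumption. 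Applying Lemma \ref{lem:NN_approx_tanh} (with $k=2$, $s=1$) to each component of $\sigma^\dag$ and then to $a^\dag$, and parallelizing via Lemma \ref{lem:NN_paral}, I obtain parameters $\tilde\kappa\in \mathfrak{P}_{2,\epsilon_\sigma}^{\otimes d}$ and $\tilde\theta\in\mathfrak{P}_{p,\epsilon_a}$ with
\begin{equation*}
 \|\sigma^\dag-\sigma_{\tilde\kappa}\|_{H^1(\Omega;\mathbb{R}^d)}\leq c\epsilon_\sigma, \quad
 \|a^\dag-a_{\tilde\theta}\|_{W^{1,p}(\Omega)}\leq c\epsilon_a.
\end{equation*}
By the Sobolev embedding $W^{1,p}(\Omega)\hookrightarrow L^\infty(\Omega)$ ($p>d$) and stability of $P_{\!\!\mathcal{A}}$ (cf.\ \eqref{eqn:hybrid_proj_stb}--\eqref{eqn:hybrid_proj_approx}), we also get $\|a^\dag-P_{\!\!\mathcal{A}}(a_{\tilde\theta})\|_{L^\infty(\Omega)}\leq c\epsilon_a$.

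Next I would estimate each of the four terms of $J_\gamma(\tilde\theta,\tilde\kappa)$. For the data fidelity $\mathcal{E}_d$, I decompose
\begin{equation*}
 \sigma_{\tilde\kappa}-P_{\!\!\mathcal{A}}(a_{\tilde\theta})\nabla z^\delta = (\sigma_{\tilde\kappa}-\sigma^\dag) + (a^\dag-P_{\!\!\mathcal{A}}(a_{\tilde\theta}))\nabla u^\dag + P_{\!\!\mathcal{A}}(a_{\tilde\theta})\nabla(u^\dag - z^\delta),
\end{equation*}
and use $\nabla u^\dag\in L^\infty(\Omega)$, the box constraint on $P_{\!\!\mathcal{A}}(a_{\tilde\theta})$, and the noise level \eqref{eqn:MLSNN_noise} together with the continuous embedding $H^{3/2}(\Omega)\hookrightarrow H^1(\Omega)$ to obtain a bound of order $\epsilon_\sigma^2+\epsilon_a^2+\delta^2$. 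For the equation residual $\mathcal{E}_\sigma$, I use $-\nabla\!\cdot\!\sigma^\dag=f$ to rewrite $\nabla\!\cdot\!\sigma_{\tilde\kappa}+f=\nabla\!\cdot\!(\sigma_{\tilde\kappa}-\sigma^\dag)$, whose $L^2(\Omega)$ norm is bounded by $c\epsilon_\sigma$. For $\mathcal{E}_a$, the triangle inequality and $a^\dag\in W^{1,p}(\Omega)\subset H^1(\Omega)$ yield $\|\nabla a_{\tilde\theta}\|_{L^2(\Omega)}\leq c(1+\epsilon_a)\leq c$.

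The boundary term $\mathcal{E}_b$ requires a bit more care, and is the main obstacle. Writing
\begin{equation*}
 \sigma_{\tilde\kappa}-a^\dag\nabla z^\delta = (\sigma_{\tilde\kappa}-\sigma^\dag) + a^\dag\nabla(u^\dag-z^\delta),
\end{equation*}
the first summand is controlled in $L^2(\partial\Omega)$ by the trace inequality applied to $\sigma_{\tilde\kappa}-\sigma^\dag\in H^1(\Omega;\mathbb{R}^d)$, giving $c\epsilon_\sigma$. For the second summand, the noise is measured in $H^{3/2}(\Omega)$, so $\nabla(u^\dag-z^\delta)\in H^{1/2}(\Omega;\mathbb{R}^d)$ has a well-defined $L^2(\partial\Omega)$ trace bounded by $c\delta$; combined with $a^\dag\in L^\infty(\Omega)\cap L^\infty(\partial\Omega)$ (which is why the $H^{3/2}$ norm is used in \eqref{eqn:MLSNN_noise} rather than just $L^2$), this yields an $O(\delta)$ contribution. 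Putting everything together,
\begin{equation*}
 J_\gamma(\tilde\theta,\tilde\kappa) \leq c\bigl(\epsilon_a^2+\epsilon_\sigma^2+\delta^2\bigr) + c\gamma_\sigma(\epsilon_\sigma^2+\delta^2) + c\gamma_a + c\gamma_b(\epsilon_\sigma^2+\delta^2),
\end{equation*}
and the minimizing property $J_\gamma(\theta^*,\kappa^*)\leq J_\gamma(\tilde\theta,\tilde\kappa)$ delivers the stated bound. The key technical subtlety lies in justifying the $L^2(\partial\Omega)$ control of $\nabla(u^\dag-z^\delta)$ from the $H^{3/2}(\Omega)$ noise measurement via the trace theorem, which motivates the specific choice of the noise norm in \eqref{eqn:MLSNN_noise}.
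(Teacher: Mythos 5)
Your proposal is correct and follows essentially the same route as the paper's proof: you construct the reference pair via Lemma \ref{lem:NN_approx_tanh} (parallelized by Lemma \ref{lem:NN_paral}), invoke the minimizing property, and bound the four terms with exactly the paper's decompositions, including the same $P_{\!\!\mathcal{A}}$-stability, Sobolev embedding and trace-theorem steps for the boundary term. The only discrepancy is in bookkeeping: your final bound carries $(1+\gamma_\sigma+\gamma_b)\delta^2$ where the statement has $(1+\gamma_\sigma)\delta^2$, but the paper's own proof in fact yields $(1+\gamma_b)\delta^2$, so this mismatch is inherited from the paper rather than introduced by you.
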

\begin{proof}
    By the regularity of $u^\dag$ and $\sigma^\dag$ and Lemma \ref{lem:NN_approx_tanh}, we deduce that there exists at least one $(\theta_\epsilon,\kappa_\epsilon)\in
    (\mathfrak{P}_{p,\epsilon_a}, \mathfrak{P}_{2,\epsilon_\sigma
    }^{\otimes d})$ such that its DNN realization $(a_{\theta_\epsilon},\sigma_{\kappa_\epsilon})$ satisfies
    \begin{equation}\label{eqn:MLSNN_NNapprox-1}
        \|a^\dag-a_{\theta_\epsilon}\|_{W^{1,p}(\Omega)}\leq\epsilon_a\quad \mbox{and}\quad  \|\sigma^\dag-\sigma_{\kappa_\epsilon}\|_{H^1(\Omega)}\leq \epsilon_\sigma.
    \end{equation}
    The Sobolev embedding $W^{1,p}(\Omega)\hookrightarrow L^\infty(\Omega)$ \cite[Theorem 4.12, p. 85]{AdamsFournier:2003} implies
    \begin{equation}\label{eqn:MLSNN_NNapprox-2}
        \|a^\dag-a_{\theta_\epsilon}\|_{L^\infty(\Omega)}\leq c\epsilon_a.
    \end{equation}
By the minimizing property of $(\theta^*,\kappa^*)$ and the triangle inequality, we have
    \begin{align*}
    	& J_{\gamma}(\theta^*,\kappa^*)\leq J_{\gamma}(\theta_\epsilon,\kappa_\epsilon) \\
    	=& \|\sigma_{\kappa_\epsilon}-P_{\!\!\mathcal{A}}(a_{\theta_\epsilon})\nabla z^\delta\|_{L^2(\Omega)}^2+\gamma_\sigma\|\nabla\cdot\sigma_{\kappa_\epsilon}+f\|_{L^2(\Omega)}^2\\
        &+\gamma_a\|\nabla a_{\theta_\epsilon}\|_{L^2(\Omega)}^2 +\gamma_b\|\sigma_{\kappa_\epsilon}-a^\dag\nabla z^\delta\|^2_{L^2(\partial\Omega)}  \\
        \leq& c\big(\|\sigma_{\kappa_\epsilon}-\sigma^\dag\|^2_{L^2(\Omega)}+ \|(a^\dag-P_{\!\!\mathcal{A}}(a_{\theta_\epsilon}))\nabla u^\dag\|_{L^2(\Omega)}^2\\
        &+ \|P_{\!\!\mathcal{A}}(a_{\theta_\epsilon})\nabla (u^\dag-z^\delta)\|_{L^2(\Omega)}^2 \\
        &+\gamma_\sigma\|\nabla\cdot(\sigma_{\kappa_\epsilon}- \sigma^\dag)\|_{L^2(\Omega)}^2+\gamma_a\|\nabla a_{\theta_\epsilon}\|_{L^2(\Omega)}^2\\
        &+\gamma_b\|\sigma_{\kappa_\epsilon}-\sigma^\dag\|^2_{H^1(\Omega)} +\gamma_b\|a^\dag\nabla(u^\dag-z^\delta)\|^2_{L^2(\partial\Omega)}\big).
    \end{align*}
    The stability estimates \eqref{eqn:hybrid_proj_stb}-\eqref{eqn:hybrid_proj_approx} of $P_{\!\!\mathcal{A}}$ lead to
    \begin{equation*}
    \begin{split}
      \|(a^\dag-P_{\!\!\mathcal{A}}(a_{\theta_\epsilon}))\nabla u^\dag\|_{L^2(\Omega)} & \leq \|a^\dag-P_{\!\!\mathcal{A}}(a_{\theta_\epsilon})\|_{L^\infty(\Omega)}\|\nabla u^\dag\|_{L^2(\Omega)} \\
       &\leq \|a^\dag-a_{\theta_\epsilon}\|_{L^\infty(\Omega)}\|\nabla u^\dag\|_{L^2(\Omega)},\\
      \|P_{\!\!\mathcal{A}}(a_{\theta_\epsilon})\nabla (u^\dag-z^\delta)\|_{L^2(\Omega)} & \leq \|P_{\!\!\mathcal{A}}(a_{\theta_\epsilon})\|_{L^{\infty}(\Omega)}\|\nabla(u^\dag-z^\delta)\|_{L^2(\Omega)}\\
       &\le \overline{c}_a \| \nabla(u^\dag-z^\delta)\|_{L^2(\Omega)}.
    \end{split}
    \end{equation*}
    The regularity of $a^\dag$ and the error estimate \eqref{eqn:MLSNN_NNapprox-1} imply $\|\nabla a_{\theta_\epsilon}\|_{L^2(\Omega)}\le c$.
    The \textit{a priori} estimates \eqref{eqn:MLSNN_NNapprox-1}-\eqref{eqn:MLSNN_NNapprox-2} and the trace theorem \cite[Theorem 5.36, p. 164]{AdamsFournier:2003} yield
    \begin{align*}
        J_{\gamma}(\theta^*,\kappa^*)&\leq c\big[\epsilon_\sigma^2 + \|a^\dag-a_{\theta_\epsilon}\|^2_{L^\infty(\Omega)} +  \|\nabla (u^\dag-z^\delta)\|^{2}_{L^2(\Omega)}\\
         &\quad + (\gamma_\sigma+\gamma_b)\|\sigma^\dag-\sigma_{\kappa_\epsilon}\|^2_{H^1(\Omega)}+\gamma_a  \| \nabla a_{\theta_\epsilon} \|^2_{L^2(\Omega)}\\ &\quad +\gamma_b\|u^\dag-z^\delta\|_{H^{\frac{3}{2}}(\Omega) } \big]\\
        &\leq c\big(\epsilon_q^2+(1+\gamma_\sigma+\gamma_b)\epsilon_\sigma^2+(1+\gamma_b)\delta^2+\gamma_a\big).
    \end{align*}
    This completes the proof of the lemma.
\end{proof}
Next we investigate the errors arising from the Monte Carlo approximation:
\begin{equation*}
    \sup_{a_\theta\in\mathcal{N}_{\theta},\sigma_\kappa\in\mathcal{N}_{\kappa}} \big|J_{\gamma}(a_\theta,\sigma_\kappa)-\widehat{J}_{\gamma}
    (a_\theta,\sigma_\kappa)\big|\leq \Delta\mathcal{E}_d + \gamma_\sigma \Delta\mathcal{E}_\sigma + \gamma_a \Delta\mathcal{E}_a + \gamma_b\Delta\mathcal{E}_b,
\end{equation*}
where $\mathcal{N}_{\theta}$ and $\mathcal{N}_{\kappa}$ are DNN function classes (for the sets $\mathfrak{P}_{p,\epsilon_q}$ and $\mathfrak{P}_{2,\epsilon_\sigma}^{\otimes d}$, respectively), and the error components are given by
\begin{align*}
    &\Delta\mathcal{E}_d: = \sup_{\sigma_{\kappa}\in\mathcal{N}_{\kappa},a_\theta\in\mathcal{N}_{\theta}}\big| \mathcal{E}_d(\sigma_{\kappa},a_\theta)-\mathcal{\widehat{E}}_d(\sigma_{\kappa},a_\theta)\big|,\\
    &\Delta\mathcal{E}_\sigma:= \sup_{\sigma_{\kappa}\in\mathcal{N}_{\kappa}}\big|\mathcal{E}_\sigma(\sigma_\kappa)-\mathcal{\widehat{E}}_\sigma(\sigma_\kappa)\big|, \\ &\Delta\mathcal{E}_b:=\sup_{\sigma_{\kappa}\in\mathcal{N}_{\kappa}}\big|\mathcal{E}_b (\sigma_\kappa)-\mathcal{\widehat{E}}_b(\sigma_\kappa)\big|, \\ &\Delta\mathcal{E}_a:=\sup_{a_\theta\in\mathcal{N}_{\theta}}\big|\mathcal{E}_a(a_\theta)-\mathcal{\widehat{E}}_a(a_\theta)\big|.
\end{align*}
These errors are known as statistical errors in statistical learning theory \cite{Anthony:1999,Shalev:20214}.

 The next result provides bounds on the statistical errors in terms of the number of sampling points and DNN architecture parameters. The proof uses Rademacher complexity \cite[Definition 2]{BartlettMendelson:2002}, which measures the complexity of a collection of functions by the correlation between function values with Rademacher random variables.
\begin{definition}
Let $\xi=(\xi_j)_{j=1}^n$ be i.i.d. samples from the distribution $\mathcal{U}(\Omega)$, and $\omega=( \omega_j)_{j=1}^n$ be i.i.d. Rademacher random variables with probability $P(\omega_j=1)=P(\omega_j=-1)=\frac12$. The Rademacher complexity $\mathfrak{R}_n(\mathcal{H}_d)$ of the class $\mathcal{H}_d$ is given by
	\begin{equation*}
        \mathfrak{R}_n(\mathcal{H}_d)= \mathbb{E}_{\xi,\omega}\bigg{[}\sup_{h\in\mathcal{H}_d}\ n^{-1}\bigg{\lvert}\ \sum_{j=1}^{n}\omega_j h(\xi_j)\ \bigg{\rvert} \bigg{]}.
	\end{equation*}
\end{definition}

\begin{theorem}\label{thm:MLSNN_err_stat}
Let Assumption \ref{assum:MLSNN_reg} hold, and $a_\theta\in \mathcal{N}(L_a,N_a,R_a)$ and $\sigma_\kappa \in \mathcal{N}(L_\sigma,N_\sigma,R_\sigma)$. Fix $\tau\in(0,\frac18)$, and define the following bounds
\begin{align*}
    e_d&:=c\frac{R_\sigma^{2}N_\sigma^{2}(N_\sigma+N_a)^{\frac12}(\log^\frac12 R_\sigma +\log^\frac12 N_\sigma + \log^\frac12 R_a+\log^\frac12 N_a+\log^\frac12 n_d)}{\sqrt{n_d}}\\
       &\quad + \tilde c R^2_\sigma N_\sigma^2\sqrt{\frac{\log\frac{1}{\tau}}{n_d}},\\
    e_\sigma&:=c\frac{R_\sigma^{2L_\sigma}N_\sigma^{2L_\sigma-\frac32}
    \big(\log^\frac12R_\sigma+\log^\frac12N_\sigma+\log^\frac12n_d\big)}{\sqrt{n_d}}+ \tilde cR_\sigma^{2L_\sigma}N_\sigma^{2L_\sigma-2}\sqrt{\frac{\log\frac{1}{\tau}}{n_d}},\\
    e_{b}&:=c\frac{R_\sigma^2N_\sigma^{\frac52}\big(\log^\frac12 R_\sigma+\log^\frac12  N_\sigma+\log^\frac12 n_b\big)}{\sqrt{n_b}}+\tilde cR_\sigma^{2}N_\sigma^{2}\sqrt{\frac{\log\frac{1}{\tau}}{n_b}},\\
    e_q&:=c\frac{R_a^{2L_a}N_a^{2L_a-\frac32}
    \big(\log^\frac12R_a+\log^\frac12N_a+\log^\frac12n_d\big)}{\sqrt{n_d}}+\tilde cR_a^{2L_a}N_a^{2L_a-2}\sqrt{\frac{\log\frac{1}{\tau}}{n_d}},
\end{align*}
where $c$ and $\tilde c$ depend on  $|\Omega|$, $|\partial\Omega|$, $d$,  $\|f\|_{L^\infty(\Omega)}$, $\|z^\delta\|_{W^{1,\infty}(\Omega)}$ and $\|z^\delta\|_{W^{1,\infty}(\partial\Omega)}$ at most polynomially. Then, with probability
at least $1-\tau$, each of the following statements holds
\begin{equation*}
    \Delta \mathcal{E}_i \leq e_{i}, \quad i\in\{d,\sigma,b,q\}.
\end{equation*}
\end{theorem}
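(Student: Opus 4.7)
The plan is to treat the four statistical errors $\Delta\mathcal{E}_d,\Delta\mathcal{E}_\sigma,\Delta\mathcal{E}_b,\Delta\mathcal{E}_a$ in parallel. For each component I would follow the standard three-step template in statistical learning theory: (i) verify uniform boundedness of the integrands over the relevant DNN class; (ii) apply McDiarmid's bounded difference inequality to reduce the supremum to its expectation plus a concentration term in $\sqrt{\log(1/\tau)/n}$; and (iii) symmetrize and bound the expected supremum by a Rademacher complexity, which I then control through a Dudley-type entropy integral together with a covering number bound on the DNN class obtained from the Lipschitz estimate in Lemma \ref{lem:NN_Lip}. The concentration term produces the $\tilde c$-summands in the statement, while the symmetrization/covering step produces the leading $c$-summands.

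First I would bound the integrands. For $\Delta\mathcal{E}_d$, using the box constraint of $\mathcal{A}$ on $P_{\!\!\mathcal{A}}(a_\theta)$, the $L^\infty$-bound $\|\sigma_\kappa\|_{L^\infty(\Omega)}\le cR_\sigma W_\sigma$ from Lemma \ref{lem:NN_bounds}, and $\nabla z^\delta\in L^\infty(\Omega)$ from Assumption \ref{assum:MLSNN_reg}, the integrand is uniformly bounded by $cR_\sigma^2W_\sigma^2$. For $\Delta\mathcal{E}_\sigma$ the integrand involves $\nabla\cdot\sigma_\kappa$ and $f\in L^\infty(\Omega)$; Lemma \ref{lem:NN_bounds} yields the bound $c R_\sigma^{2L_\sigma}W_\sigma^{2L_\sigma-2}$. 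For $\Delta\mathcal{E}_b$ boundary values of $\sigma_\kappa$ are controlled again at the $L^\infty$ level, so the bound is $cR_\sigma^2W_\sigma^2$. For $\Delta\mathcal{E}_a$ (the one called $\Delta\mathcal{E}_q$ in the statement) the integrand is $|\nabla a_\theta|^2$, bounded by $cR_a^{2L_a}W_a^{2L_a-2}$. McDiarmid's inequality then gives with probability at least $1-\tau$
\begin{equation*}
\Delta\mathcal{E}_i\le \mathbb{E}[\Delta\mathcal{E}_i] + B_i\sqrt{\tfrac{\log(1/\tau)}{n}},
\end{equation*}
with $B_i$ the corresponding uniform bound; this is exactly the $\tilde c$ term in each $e_i$.

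Next I would control $\mathbb{E}[\Delta\mathcal{E}_i]$ by symmetrization, reducing it to $2\mathfrak{R}_n(\mathcal{F}_i)$, where $\mathcal{F}_i$ is the function class obtained by composing the square with the relevant DNN expressions. Applying Dudley's entropy integral, $\mathfrak{R}_n(\mathcal{F}_i)\le \inf_{\varepsilon>0}\bigl(4\varepsilon + \tfrac{12}{\sqrt n}\int_\varepsilon^{B_i}\sqrt{\log\mathcal{N}(s,\mathcal{F}_i,\|\cdot\|_\infty)}\,\mathrm{d}s\bigr)$. The covering number of $\mathcal{F}_i$ can be bounded by the covering number of the DNN parameter set $\Theta$ in the $\ell^\infty$-metric, via the parameter-to-function Lipschitz estimate from Lemma \ref{lem:NN_Lip}, where the relevant Lipschitz constant involves $R^{L-1}W^L$ (for the value itself) or $R^{2L-2}W^{2L-2}$ (for the gradient). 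Since $\Theta\subset\mathbb{R}^{N}$ with $\|\theta\|_{\ell^\infty}\le R$, we have $\log\mathcal{N}(t,\Theta,\|\cdot\|_{\ell^\infty})\le N\log(2R/t)$, so $\log\mathcal{N}(s,\mathcal{F}_i,\|\cdot\|_\infty)\le N\log(cR\cdot\mathrm{Lip}_i/s)$, and the entropy integral is of order $B_i\sqrt{N/n}\,(\log^{1/2}R+\log^{1/2}N+\log^{1/2}n)$. Tracking the Lipschitz-constant dependence carefully yields the precise powers of $R_\sigma W_\sigma$ and $R_a W_a$ appearing in $e_d,e_\sigma,e_b,e_a$; in particular $\Delta\mathcal{E}_d$ picks up $(N_\sigma+N_a)^{1/2}$ because both DNNs enter its integrand, while the others only see one of the two networks.

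The main obstacle is simply bookkeeping: keeping track of which derivative order of which network appears in each of the four terms, and then propagating the corresponding $R^{kL}W^{kL-m}$ factors from Lemmas \ref{lem:NN_bounds} and \ref{lem:NN_Lip} through the boundedness step, the Lipschitz step, and the Dudley integral. In particular, the integrand of $\Delta\mathcal{E}_\sigma$ involves $\nabla\cdot\sigma_\kappa$ and that of $\Delta\mathcal{E}_a$ involves $\nabla a_\theta$, which is why only $e_\sigma$ and $e_q$ exhibit the exponent $2L$ in $R$ and $W$, whereas $e_d$ and $e_b$ depend only on zeroth-order values of $\sigma_\kappa$ and hence carry $R_\sigma^2W_\sigma^2$. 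Once these dependencies are organized, the four estimates follow from one unified argument applied to each component, and a union bound (absorbed into the definition of $\tau$) delivers the final probabilistic statement.
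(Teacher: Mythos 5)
Your proposal follows essentially the same route as the paper's proof: uniform boundedness of the induced function classes via Lemma \ref{lem:NN_bounds}, concentration via McDiarmid/symmetrization (which the paper invokes as the packaged PAC-type generalization bound of Mohri et al.), and control of the expected supremum through Dudley's entropy integral with covering numbers of the parameter cube obtained from the parameter-to-function Lipschitz estimates of Lemma \ref{lem:NN_Lip}, with exactly the bookkeeping you describe (e.g., the factor $(N_\sigma+N_a)^{1/2}$ in $e_d$ because both networks enter that integrand, and the exponents $2L_\sigma$, $2L_a$ only in $e_\sigma$, $e_q$ from the first-order derivatives). The only cosmetic discrepancy is your final union bound: the theorem asserts each bound separately with probability $1-\tau$, and the union over the four events is deferred to Theorem \ref{thm:MLSNN_err_quad}, where it yields the probability $1-4\tau$.
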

\begin{proof}
The proof employs the Rademacher complexity of the function classes $\mathcal{N}_\theta:=\mathcal{N}(L_a,N_a,R_a)$ and $\mathcal{N}_\kappa:=\mathcal{N}(L_\sigma,N_\sigma,R_\sigma)$. To estimate $e_d$, we define an induced function class
    \begin{align*}
      \mathcal{H}_d  =& \{h: \Omega\to \mathbb{R}|\,\, h(x) = \|\sigma_\kappa(x)-P_{\!\!\mathcal{A}}(a_\theta(x))\nabla z^\delta(x)\|_{\ell^2}^2,\\ &\quad \sigma_\kappa\in \mathcal{N}_\kappa, a_\theta\in \mathcal{N}_\theta\}.
\end{align*}
The Rademacher complexity $\mathfrak{R}_n(\mathcal{H}_d)$ can be bounded by Dudley's formula (cf. \cite[Theorem 9]{Lu:2021priori} and \cite[Theorem 1.19]{Wolf:2018}):
    \begin{equation*}
    \mathfrak{R}_n(\mathcal{H}_d)\leq\inf_{0<s< M_{\mathcal{H}_d}}\bigg(4s\ +\ 12n^{-\frac12}\int^{M_{\mathcal{H}_d}}_{s}\big(\log\mathcal{C}(\mathcal{H}_d,\|\cdot\|_{L^{\infty}(\Omega)},\epsilon)\big)^{\frac12}\ {\rm d}\epsilon\bigg),
    \end{equation*}
    where $\mathcal{M}_{\mathcal{H}_d}:=\sup_{h\in \mathcal{H}_d} \|h\|_{L^{\infty}(\Omega)}$, and $\mathcal{C}(\mathcal{H}_d,\|\cdot\|_{L^{\infty}(\Omega)},\epsilon)$ is the covering number of the set $\mathcal{H}_d$ \cite[Definition 5.4]{Jiao:2021DRM}. The supremum $\|h\|_{L^{\infty}(\Omega)}$  and the covering number can be  estimated by Lemmas \ref{lem:NN_bounds} and \ref{lem:NN_Lip}:
    \begin{align*}
      \|h\|_{L^\infty(\Omega)} &\leq
         2(dR_\sigma^{2}(W_\sigma+1)^{2}+c_1^2c_z^2), \quad h\in \mathcal{H}_d,\\
      \|h-\tilde h\|_{L^\infty(\Omega)} & \le  \Lambda_d (\|\theta -\tilde\theta\|_{\ell^\infty} + \|\kappa-\tilde\kappa\|_{\ell^\infty}),\quad \forall h,\tilde h\in \mathcal{H}_d,
    \end{align*}
    with $c_z=\|\nabla z^\delta\|_{L^\infty(\Omega)}$ and
    \begin{align*}
      \Lambda_d =2\big(\sqrt{d}R_\sigma(W_\sigma+1)+c_1c_z\big)\max(2\sqrt{d}L_\sigma R_\sigma^{L_\sigma-1}W_\sigma^{L_\sigma},c_zL_aR_a^{L_a-1}W_a^{L_a}).
    \end{align*}
    The Lipschitz continuity of DNN functions with respect to the DNN parameters and \cite[Proposition 5]{CuckerSmale:2002} yield the bound
    \begin{align*}
         \log\mathcal{C}(\mathcal{H}_d,\|\cdot\|_{L^{\infty}(\Omega)},\epsilon)&\leq \log \mathcal{C}(\Theta_\sigma\otimes \Theta_a,\|\cdot\|_{\ell^\infty},\Lambda_d^{-1}\epsilon)\\
        &\leq (N_\sigma+N_a)\log(4\max(R_a,R_\sigma)\Lambda_d\epsilon^{-1}),
    \end{align*}
with $\Theta_\sigma$ and $\Theta_a$ being the DNN parameter sets for $\sigma$ and $a$, respectively. Finally, we employ the following PAC-type generalization bound \cite[Theorem 3.1]{Mohri:2018} to bound the statistical error:  for any $\tau\in(0,1)$, with probability at least $1-\tau$
    \begin{equation*}
      \sup_{h\in \mathcal{H}_d}\bigg|n^{-1}\sum_{j=1}^n h(X_j)-\mathbb{E}[h(X)]\bigg| \leq 2\mathfrak{R}_n(\mathcal{H}_d) + 2M_{\mathcal{H}_d}\sqrt{\frac{\log\frac{1}{\tau}}{2n}}.
    \end{equation*}
    This completes the proof of the theorem.
\end{proof}

Now we can state an error estimate on the approximation $\widehat{a}_\theta^*$ constructed via the empirical loss $\widehat{J}_{\gamma}(\theta,\kappa)$ in \eqref{eqn:MLSNN_loss_quad}.
\begin{theorem}\label{thm:MLSNN_err_quad}
    Let Assumption \ref{assum:MLSNN_reg} hold. Fix small $\epsilon_a$, $\epsilon_\sigma>0$, and let the tuple $(\widehat{\theta}^*,\widehat{\kappa}^*)\in (\mathfrak{P}_{p,\epsilon_a}, \mathfrak{P}_{2,\epsilon_\sigma}^{\otimes d})$ be a minimizer of the loss $\widehat J_{\gamma}(\theta,\kappa)$ in \eqref{eqn:MLSNN_loss_quad} and $\widehat{a}_\theta^*$ the
    DNN realization of $\widehat{\theta}^*$. Fix $\tau\in(0,\frac{1}{8})$, let the quantities $e_d$, $e_\sigma$, $e_{b}$ and $e_a$ be defined in Theorem \ref{thm:MLSNN_err_stat}, and let
    $$\eta:=e_d+\gamma_\sigma e_\sigma+\gamma_b e_{b}+\gamma_a e_a
    +\epsilon_a^2+ (1+\gamma_\sigma+\gamma_b)\epsilon_\sigma^2+(1+\gamma_b) \delta^2+\gamma_a.$$
    Then with probability at least $1-4\tau$, there holds
    \begin{align*}
        &\int_{\Omega}\Big(\frac{a^\dag-P_{\!\!\mathcal{A}}(\widehat{a}_\theta^*)}{a^\dag}\Big)^2\big(a^\dag |\nabla u^\dag|^2+fu^\dag\big)\ \d x\\
        \leq& c\big((e_d+\eta)^{\frac{1}{2}}+(e_\sigma+\gamma_\sigma^{-1}\eta)^{\frac{1}{2}}+\delta\big)\big(1+ e_a+\gamma_a^{-1}\eta\big)^{\frac{1}{2}},
    \end{align*}
    where $c>0$ depends on  $|\Omega|$, $|\partial\Omega|$, $d$,  $\|z^\delta\|_{W^{1,\infty}(\partial\Omega)}$, $\|f\|_{L^\infty(\Omega)}$ and $\|a^\dagger\|_{L^\infty(\partial\Omega)}$ at most polynomially.
    Moreover, if condition \eqref{eqn:Bonito_cond} holds, then with probability at least $1-4\tau$,
    \begin{equation*}
        \|a^\dagger-P_{\!\!\mathcal{A}}(\widehat{a}_\theta^*)\|_{L^2(\Omega)}\leq c\big(\big((e_d+\eta)^{\frac{1}{2}}+(e_\sigma+\gamma_\sigma^{-1}\eta)^{\frac{1}{2}}+\delta\big)\big(1+ e_a+\gamma_a^{-1}\eta\big)^{\frac{1}{2}}\big)^{\frac{1}{2(\beta+1)}}.
    \end{equation*}
\end{theorem}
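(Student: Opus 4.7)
The plan is to follow the Lax paradigm by combining the conditional stability of Theorem \ref{thm:Bonito_stab} with consistency bounds on the minimizing pair $(\widehat{\theta}^*,\widehat{\kappa}^*)$ obtained from the four statistical estimates in Theorem \ref{thm:MLSNN_err_stat} and the population approximation estimate in Lemma \ref{lem:MLSNN_min}. First I would convert the minimality of $(\widehat{\theta}^*,\widehat{\kappa}^*)$ for $\widehat{J}_\gamma$ into componentwise bounds on the population loss. Writing $\mathcal{E}_i=\widehat{\mathcal{E}}_i+(\mathcal{E}_i-\widehat{\mathcal{E}}_i)$ for $i\in\{d,\sigma,b,a\}$, applying $\widehat{J}_\gamma(\widehat{\theta}^*,\widehat{\kappa}^*)\le\widehat{J}_\gamma(\theta^*,\kappa^*)$ with the population minimizer $(\theta^*,\kappa^*)$ from Lemma \ref{lem:MLSNN_min}, and taking a union bound over the four concentration inequalities of Theorem \ref{thm:MLSNN_err_stat}, I obtain, with probability at least $1-4\tau$,
\begin{align*}
\|\sigma_{\widehat{\kappa}^*}-P_{\!\!\mathcal{A}}(\widehat{a}_\theta^*)\nabla z^\delta\|_{L^2(\Omega)}^2 &\le c(\eta+e_d),\\
\gamma_\sigma\|\nabla\!\cdot\!\sigma_{\widehat{\kappa}^*}+f\|_{L^2(\Omega)}^2 &\le c(\eta+\gamma_\sigma e_\sigma),\\
\gamma_a\|\nabla \widehat{a}_\theta^*\|_{L^2(\Omega)}^2 &\le c(\eta+\gamma_a e_a),\\
\gamma_b\|\sigma_{\widehat{\kappa}^*}-a^\dagger\nabla z^\delta\|_{L^2(\partial\Omega)}^2 &\le c(\eta+\gamma_b e_b).
\end{align*}

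Next, mimicking the argument of Theorem \ref{thm:Bonito_stab}, I would take the test function $\varphi=\frac{a^\dagger-P_{\!\!\mathcal{A}}(\widehat{a}_\theta^*)}{a^\dagger}u^\dagger$. Since $u^\dagger|_{\partial\Omega}=0$, one has $\varphi\in H_0^1(\Omega)$, and the stability of $P_{\!\!\mathcal{A}}$ together with the bound on $\|\nabla\widehat{a}_\theta^*\|_{L^2(\Omega)}$ above gives $\|\nabla\varphi\|_{L^2(\Omega)}\le c(1+\|\nabla P_{\!\!\mathcal{A}}(\widehat{a}_\theta^*)\|_{L^2(\Omega)})\le c(1+e_a+\gamma_a^{-1}\eta)^{\frac12}$. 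I would then decompose
\begin{equation*}
(a^\dagger-P_{\!\!\mathcal{A}}(\widehat{a}_\theta^*))\nabla u^\dagger = (\sigma^\dagger-\sigma_{\widehat{\kappa}^*}) + (\sigma_{\widehat{\kappa}^*}-P_{\!\!\mathcal{A}}(\widehat{a}_\theta^*)\nabla z^\delta) + P_{\!\!\mathcal{A}}(\widehat{a}_\theta^*)\nabla(z^\delta-u^\dagger),
\end{equation*}
test against $\nabla\varphi$, and integrate the first term by parts. Because $\varphi\in H_0^1(\Omega)$ the boundary contribution vanishes and $\nabla\!\cdot\!\sigma^\dagger=-f$ yields $(\sigma^\dagger-\sigma_{\widehat{\kappa}^*},\nabla\varphi)=(f+\nabla\!\cdot\!\sigma_{\widehat{\kappa}^*},\varphi)$. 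Bounding this via Cauchy--Schwarz and the PDE-residual estimate, the data-fidelity term via Cauchy--Schwarz and the data-matching estimate, and the last term via the noise definition \eqref{eqn:MLSNN_noise} and $P_{\!\!\mathcal{A}}(\widehat{a}_\theta^*)\le \overline{c}_a$, I get
\begin{equation*}
\big((a^\dagger-P_{\!\!\mathcal{A}}(\widehat{a}_\theta^*))\nabla u^\dagger,\nabla\varphi\big)\le c\big((e_d+\eta)^{\frac12}+(e_\sigma+\gamma_\sigma^{-1}\eta)^{\frac12}+\delta\big)\big(1+e_a+\gamma_a^{-1}\eta\big)^{\frac12}.
\end{equation*}

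Finally, the algebraic identity established in the proof of Theorem \ref{thm:Bonito_stab},
\begin{equation*}
\big((a^\dagger-P_{\!\!\mathcal{A}}(\widehat{a}_\theta^*))\nabla u^\dagger,\nabla\varphi\big)=\tfrac12\int_\Omega\Big(\tfrac{a^\dagger-P_{\!\!\mathcal{A}}(\widehat{a}_\theta^*)}{a^\dagger}\Big)^2\big(a^\dagger|\nabla u^\dagger|^2+fu^\dagger\big)\,\d x,
\end{equation*}
delivers the first (weighted $L^2$) assertion. To pass to the second assertion under the positivity condition \eqref{eqn:Bonito_cond}, I would repeat verbatim the splitting $\Omega=\Omega_\rho\cup\Omega_\rho^c$ of Theorem \ref{thm:Bonito_stab} with the optimal choice of $\rho$, which converts the weighted estimate into the claimed $L^2(\Omega)$ H\"older bound with exponent $\frac{1}{2(\beta+1)}$.

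The main obstacle is purely bookkeeping: every one of the four loss components has to be controlled simultaneously through both a deterministic approximation inequality (Lemma \ref{lem:MLSNN_min}) and a probabilistic concentration inequality (Theorem \ref{thm:MLSNN_err_stat}), producing the joint probability $1-4\tau$. A subtlety worth flagging is that the boundary penalty $\gamma_b\mathcal{E}_b$ does not appear explicitly in the final bound, since $\varphi\in H_0^1(\Omega)$ annihilates the boundary integral in the integration by parts; nonetheless it is indispensable for controlling $\sigma_{\widehat{\kappa}^*}$ in the loss, and hence indirectly for the sizes of $\eta$ and the DNN parameters that drive $e_d,e_\sigma,e_a$.
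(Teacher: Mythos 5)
Your proposal is correct and follows essentially the same route as the paper's proof: the same transfer from empirical minimality to componentwise population residual bounds via Lemma \ref{lem:MLSNN_min} and Theorem \ref{thm:MLSNN_err_stat} with a union bound giving probability $1-4\tau$, the same test function $\varphi=\frac{a^\dagger-P_{\!\!\mathcal{A}}(\widehat{a}_\theta^*)}{a^\dagger}u^\dagger$ with the identical three-term decomposition (divergence residual, flux-fit residual, and noise term) estimated by Cauchy--Schwarz, and the same Bonito-type identity plus domain splitting for the final H\"older bound. Your closing observation that the boundary penalty enters the estimate only through $\eta$ (the boundary integral being annihilated since $\varphi\in H_0^1(\Omega)$) is also consistent with the paper's argument.
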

 \begin{proof}
Let $(\theta^*,\kappa^*)\in (\mathfrak{P}_{p,\epsilon_a}, \mathfrak{P}_{2,\epsilon_\sigma}^{\otimes d})$ be a minimizer of the loss $J_\gamma(\theta,\kappa)$. The minimizing property of $(\widehat{\theta}^*,\widehat{\kappa}^*)$ to the empirical loss $\widehat J_{\gamma}(\theta,\kappa)$ implies
    \begin{align*}
      \widehat J_{\gamma}(\widehat\theta^*,\widehat\kappa^*)
       & \leq [\widehat J_{\gamma}(\theta^*,\kappa^*) - J_{\gamma}(\theta^*,\kappa^*)] + J_{\gamma}(\theta^*,\kappa^*)\\
       &\leq | \widehat J_{\gamma}(\theta^*,\kappa^*) - J_{\gamma}(\theta^*,\kappa^*)| +  J_{\gamma}(\theta^*,\kappa^*).
    \end{align*}
    Consequently,
    \begin{equation*}
       \widehat{J}_{\gamma}(\widehat\theta^*,\widehat\kappa^*)  \leq J_{\gamma}(\theta^*,\kappa^*) + \sup_{(\theta,\kappa)\in (\mathfrak{P}_{p,\epsilon_a}, \mathfrak{P}_{2,\epsilon_\sigma}^{\otimes d})} |J_{\gamma}(\theta,\kappa) - \widehat J_{\gamma}(\theta,\kappa)|.
    \end{equation*}
The term $J_{\gamma}(\theta^*,\kappa^*)$ can be bounded by the approximation error, cf. Lemma \ref{lem:MLSNN_min}, and the term $|J_{\gamma}(\theta,\kappa) - \widehat J_{\gamma}(\theta,\kappa)|$ represents the statistical error, which has been estimated in Theorem \ref{thm:MLSNN_err_stat}. Therefore,
    with probability at least $1-4\tau$, there holds
    $$\widehat{J}_{\gamma}(\widehat{\theta}^*,\widehat{\kappa}^*)\leq c\eta.$$
    This and the triangle inequality imply that the following estimates hold simultaneously with probability at least $1-4\tau$,
   % \begin{equation*}
    %    \|\widehat{\sigma}_\kappa^*-P_{\mathcal{A}}(\widehat{a}_\theta^*)\nabla z^\delta\|_{L^2(\Omega)}^2\leq \big|\mathcal{E}_d(\widehat{\sigma}^*_{\kappa},\widehat{a}^*_\theta) -\mathcal{\widehat{E}}_d(\widehat{\sigma}^*_{\kappa},\widehat{a}^*_\theta)\big|+  \mathcal{\widehat{E}}_d(\widehat{\sigma}^*_{\kappa},\widehat{a}^*_\theta)\leq c(e_d+\eta).
    %\end{equation*}
   % Similarly, the following estimates hold simultaneously with a probability at least $1-4\tau$,
        \begin{align}
            \|\widehat{\sigma}_\kappa^*-P_{\!\!\mathcal{A}}(\widehat{a}_\theta^*)\nabla z^\delta\|_{L^2(\Omega)}^2\leq & \big|\mathcal{E}_d(\widehat{\sigma}^*_{\kappa},\widehat{a}^*_\theta) -\mathcal{\widehat{E}}_d(\widehat{\sigma}^*_{\kappa},\widehat{a}^*_\theta)\big|+  \mathcal{\widehat{E}}_d(\widehat{\sigma}^*_{\kappa},\widehat{a}^*_\theta)\label{eqn:MLSNN_priori_err}\\
            \leq& c(e_d+\eta),\nonumber\\
        \|\nabla\cdot\widehat{\sigma}_\kappa^*+f\|_{L^2(\Omega)}^2\leq& \big|\mathcal{E}_\sigma(\widehat{\sigma}^*_{\kappa}) -\mathcal{\widehat{E}}_\sigma(\widehat{\sigma}^*_{\kappa})\big|+  \mathcal{\widehat{E}}_\sigma(\widehat{\sigma}^*_{\kappa})\nonumber \\ \leq& c (e_\sigma+\gamma_\sigma^{-1}\eta),\nonumber\\
        \|\nabla \widehat{a}_\theta^*\|^2_{L^2(\Omega)}\leq& \big|\mathcal{E}_a( \widehat{a}^*_\theta) -\mathcal{\widehat{E}}_a( \widehat{a}^*_\theta)\big|+  \mathcal{\widehat{E}}_a( \widehat{a}^*_\theta)\nonumber\\ \leq& c(e_a+\gamma_a^{-1}\eta),\nonumber\\
        \|\widehat{\sigma}_{\kappa}^*-a^\dag \nabla z^\delta\|_{L^2(\partial\Omega)}^2\leq& \big|\mathcal{E}_b(\widehat{\sigma}^*_{\kappa}) -\mathcal{\widehat{E}}_b(\widehat{\sigma}^*_{\kappa})\big|+  \mathcal{\widehat{E}}_b(\widehat{\sigma}^*_{\kappa})\nonumber\\ \leq& c(e_b+\gamma_b^{-1}\eta).\nonumber
\end{align}
Now we follow the strategy of the conditional stability in Theorem \ref{thm:Bonito_stab}.  For any $\varphi\in H_0^1(\Omega)$, by integration by parts, we have
\begin{align*}
    &\big((a^\dag-P_{\!\!\mathcal{A}}(\widehat{a}_\theta^*))\nabla u^\dag,\nabla\varphi\big) \\
    =&\big (\sigma^\dag-\widehat{\sigma}_\kappa^*,\nabla\varphi\big)+\big(P_{\!\!\mathcal{A}}(\widehat{a}_\theta^*)\nabla(z^\delta-u^\dag),\nabla\varphi\big)+\big(\widehat{\sigma}_\kappa^*-P_{\!\!\mathcal{A}}(\widehat{a}_\theta^*)\nabla z^\delta,\nabla\varphi\big)  \\
    		=& -\big (\nabla\cdot(\sigma^\dag-\widehat{\sigma}_\kappa^*),\varphi\big)+\big(P_{\!\!\mathcal{A}}(\widehat{a}_\theta^*)\nabla(z^\delta-u^\dag), \nabla\varphi\big)
           +\big(\widehat{\sigma}_\kappa^*-P_{\!\!\mathcal{A}}(\widehat{a}_\theta^*)\nabla z^\delta,\nabla\varphi\big)\\
           =&:\mathrm{I}+\mathrm{II}+\mathrm{III}
    \end{align*}
    Let $\varphi\equiv\frac{a^\dag-P_{\!\!\mathcal{A}}(\widehat{a}_\theta^*)}{a^\dag}u^\dag$. Then by direct computation, we have
    \begin{equation*}
    	\nabla\varphi = \frac{\nabla(a^\dag-P_{\!\!\mathcal{A}}(\widehat{a}_\theta^*))}{a^\dag}u^\dag+\frac{q^\dag-P_{\mathcal{A}}(\widehat{a}_\theta^*)}{a^\dag}\nabla u^\dag- \frac{a^\dag-P_{\!\!\mathcal{A}}(\widehat{a}_\theta^*)}{(a^\dag)^2}(\nabla a^\dag) u^\dag.
    \end{equation*}
    Using Assumption \ref{assum:MLSNN_reg}, the box constraint on $a^\dagger$ and $P_{\!\!\mathcal{A}}(\widehat{a}_\theta^*)$, and the stability estimate \eqref{eqn:hybrid_proj_stb} of the  operator $P_{\!\!\mathcal{A}}$, we arrive at
    \begin{align*}
        &\Big\|\frac{\nabla(a^\dag-P_{\!\!\mathcal{A}}(\widehat{a}_\theta^*))}{a^\dag}u^\dag\Big\|_{L^2(\Omega)}
        \leq c\big(1+\|\nabla \widehat{a}_\theta^*\|_{L^2(\Omega)}\big)
        \leq  c(1+e_a+\gamma_a^{-1}\eta)^{\frac{1}{2}},\\
        &\Big\|\frac{a^\dag-P_{\!\!\mathcal{A}}(\widehat{a}_\theta^*)}{a^\dag}\nabla u^\dag\Big\|_{L^2(\Omega)}+\Big\|\frac{a^\dag-P_{\!\!\mathcal{A}}(\widehat{a}_\theta^*)}{(a^\dag)^2}(\nabla a^\dag) u^\dag\Big\|_{L^2(\Omega)}\leq c .
    \end{align*}
    This implies $\varphi\in H_0^1(\Omega)$ with the following \textit{a priori} bounds
    \begin{equation*}
    	\|\varphi\|_{L^2(\Omega)} \leq c \quad \text{and}\quad  \|\nabla\varphi\|_{L^2(\Omega)}\leq c(1+e_a+\gamma_a^{-1}\eta)^{\frac{1}{2}}.
    \end{equation*}
    By the estimate \eqref{eqn:MLSNN_priori_err} and the Cauchy-Schwarz inequality, we have
    \begin{align*}
        |\mathrm{I}|&\leq  \|\nabla\cdot(\sigma^\dagger-\widehat{\sigma}_\kappa^*)\|_{L^2(\Omega)}\|\varphi\|_{L^2(\Omega)}
        \leq c(e_\sigma+\gamma_\sigma^{-1}\eta)^{\frac{1}{2}}.
    \end{align*}
    Similarly, we deduce
    \begin{equation*}
        \begin{aligned}
    		|\mathrm{III}|&\leq  \|\widehat{\sigma}_{\kappa}^*-P_{\!\!\mathcal{A}}(\widehat{a}_\theta^*)\nabla z^\delta\|_{L^2(\Omega)}\|\nabla\varphi\|_{L^2(\Omega)}\\
            &\leq  c (1+e_a+\gamma_a^{-1}\eta)^{\frac{1}{2}}(e_d+\eta)^{\frac{1}{2}}.
        \end{aligned}
    \end{equation*}
    Meanwhile, by the Cauchy--Schwarz inequality,
    \begin{align*}
     |\mathrm{II}|
        \leq &c\|P_{\!\!\mathcal{A}}(\widehat{a}_\theta^*)\|_{L^{\infty}(\Omega)}\|\nabla(z^\delta-u^\dagger)\|_{L^2(\Omega)}\|\nabla\varphi\|_{L^2(\Omega)} \\
        \leq &c(1+e_a+\gamma_a^{-1}\eta)^{\frac{1}{2}}\delta.
    \end{align*}
    Upon repeating the argument in Theorem \ref{thm:Bonito_stab}, we obtain
    \begin{equation*}
    	\big((a^\dag-P_{\!\!\mathcal{A}}(\widehat{a}_\theta^*))\nabla u^\dag,\nabla\varphi\big) = \frac12\int_{\Omega}\Big(\frac{a^\dag-P_{\!\!\mathcal{A}}(\widehat{a}_\theta^*)}{a^\dag}\Big)^2\big(a^\dag|\nabla u^\dagger|^2+fu^\dagger\big)\ \mathrm{d}x.
    \end{equation*}
This yields the desired weighted $L^2(\Omega)$ estimate. The proof of the second assertion is identical with that of Theorems \ref{thm:Bonito_stab} and \ref{thm:FEM_error}.
 \end{proof}

\subsection{PINN formulation}\label{subsec:NN_PINN}
In recent years, physics informed neural networks (PINNs) have garnered much attention for solving PDE forward and inverse problem. The idea behind PINNs is very simple: since DNNs are universal approximators in many function spaces, based on the principle of PDE residual minimization, one can naturally represent the strong form of PDE residual with the ansatz space of neural networks and minimize the residual via (stochastic) gradient descent to obtain a neural network that approximates the PDE solution. This framework was already considered in the 1990s in several works \cite{Dissanayake:1994,lagaris:1998artificial,Lagaris:2000}. The modern version of PINNs was introduced and named as such in \cite{Raissi:2019}. Since then there has been an explosive growth in the literature on PINNs. We discuss the PINN formulation for diffusion coefficient identification \eqref{eqn:inv_cond_gov} and several other related problems, including inverse source problems, unique continuation and elliptic optimal control.

\subsubsection{Diffusion coefficient identification}
First we describe the PINN formulation for diffusion coefficient identification. In the discussion, the accuracy of the noisy data  $z^\delta$ is given by the noise level:
\begin{equation}\label{eqn:PINN_noise}
    \delta:=\|u(a^\dag)-z^\delta\|_{H^1(\Omega)}.
\end{equation}
We employ two DNNs $a_\theta $ and $u_\kappa $ to represent the diffusion coefficient $a$ and the state $u$, respectively. Based on the stability result in Theorem \ref{thm:Bonito_stab}, we formulate the following population PINN loss
\begin{align}\label{eqn:inv_cond_loss_PINN}
     J_{\bm\gamma}(\theta,\kappa)=  \mathcal{E}_u+\gamma_d\mathcal{E}_d +\gamma_b\mathcal{E}_b +\gamma_a\mathcal{E}_a,
\end{align}
with the tuning parameters $\bm\gamma=(\gamma_d,\gamma_b,\gamma_a)\in \mathbb{R}^3_+$. The fitting term $\mathcal{E}_u$ to the observational data $z^\delta$, the PDE residual $\mathcal{E}_d$ and the boundary condition residual $\mathcal{E}_b$ are given  by
\begin{align*}
\mathcal{E}_u &= \| u_\kappa-z^\delta\|_{H^1(\Omega)}^2,\quad
\mathcal{E}_d = \|\nabla\cdot (P_{\!\!\mathcal{A}}(a_\theta) \nabla u_\kappa)+f\|^2_{L^2(\Omega)},\\
\mathcal{E}_b &= \|u_\kappa\|_{ H^1(\partial\Omega)}^2,\quad ~~~~~~ \mathcal{E}_a = \|\nabla P_{\!\!\mathcal{A}}(a_\theta)\|^2_{L^2(\Omega)}.
\end{align*}
In the loss, we impose the Dirichlet boundary condition in $H^1(\partial\Omega)$ instead of $L^2(\partial\Omega)$. This is to ensure that we can apply the standard elliptic regularity estimate.
When compared with the mixed least-squares approach, the
direct application of the PINN scheme to the governing PDE imposes higher regularity on the DNN function $u_\kappa$.
Let $X=(X_j)_{j=1}^{n_d}$ and $Y= (Y_j)_{j=1}^{n_b}$ be  i.i.d. sample points from $\mathcal{U}(\Omega)$ and $\mathcal{U}(\partial \Omega)$, respectively. Then we define Monte Carlo approximations of  $\mathcal{E}_i$, $i\in\{u,d,b,a\}$, by
\begin{align*}
    \widehat{\mathcal{E}}_u & =n_d^{-1}|\Omega| \sum_{j=1}^{n_d}\big[(u_\kappa(X_j)-z^\delta(X_j))^2 + \|\nabla (u_\kappa(X_j)-z^\delta(X_j))\|_{\ell^2}^2\big], \\
    \widehat{\mathcal{E}}_d & = n_d^{-1}|\Omega| \sum_{j=1}^{n_d} (\nabla\cdot (P_{\!\!\mathcal{A}}(a_\theta) \nabla u_\kappa)(X_j)+f(X_j))^2, \\
    \widehat{\mathcal{E}}_b & = n_b^{-1}|\partial\Omega| \sum_{j=1}^{n_b} \big[|u_\kappa(Y_j)|^2 + |\nabla u_\kappa(Y_j)|^2\big], \\
    \widehat{\mathcal{E}}_a & = n_d^{-1}|\Omega|\sum_{j=1}^{n_d}\|\nabla P_{\!\!\mathcal{A}}(a_\theta)(X_j)\|_{\ell^2}^2.
\end{align*}
Then the empirical loss $\widehat{J}_{\bm\gamma}(\theta, \kappa)$ reads
\begin{equation*}
    \widehat{J}_{\bm\gamma}(\theta,\kappa) = \widehat{\mathcal{E}}_u + \gamma_d\widehat{\mathcal{E}}_d + \gamma_b\widehat{\mathcal{E}}_b + \gamma_a\widehat{\mathcal{E}}_a.
\end{equation*}
Now we investigate the error $\|P_{\!\!\mathcal{A}}(\widehat{a}_{\theta}^\ast) -a^\dagger\|_{L^2(\Omega)}$ of the reconstruction $P_{\!\!\mathcal{A}}(\widehat{a}_{\theta}^\ast)$. To this end, we split the state approximation $\widehat{u}_{\kappa}^\ast$ into two parts:  $\widehat{u}_{\kappa}^\ast=\widehat{u}_{\kappa,1}^\ast+\widehat{u}_{\kappa,2}^\ast$, with the functions $\widehat{u}_{\kappa,i}^\ast$, $i=1,2$, satisfying
\begin{align*}
       & \left\{
            \begin{aligned}
                -\nabla \!\cdot\!( P_{\!\!\mathcal{A}}(\widehat{a}_{\theta}^\ast )\nabla \widehat{u}_{\kappa,1}^\ast)&=f ,\quad \mbox{in }\Omega ,\\
                \widehat{u}_{\kappa,1}^\ast&=0,\quad \mbox{on }\partial\Omega,
            \end{aligned}
        \right.\\
        &\left\{
            \begin{aligned}
                -\nabla \!\cdot\!( P_{\!\!\mathcal{A}}(\widehat{a}_{\theta}^\ast) \nabla \widehat{u}_{\kappa,2}^\ast)&=-\nabla \!\cdot\!(P_{\!\!\mathcal{A}} (\widehat{a}_{\theta}^\ast) \nabla \widehat{u}_{\kappa}^\ast) - f,\quad \mbox{in }\Omega,\\
            \widehat{u}_{\kappa,2}^\ast&=\widehat{u}_{\kappa}^\ast,\quad \mbox{on }\partial\Omega.
            \end{aligned}
        \right.
\end{align*}
By Theorem \ref{thm:Bonito_stab} (for $u^\dag=u(a^\dag)$ and $\widehat{u}_{\kappa,1}^\ast$), we deduce
\begin{align}\label{eqn:inv_cond_err_PINN}
     & \int_{\Omega}\Big|\frac{P_{\!\!\mathcal{A}}(\widehat{a}_{\theta}^\ast)-a^\dag}{a^\dag}\Big|^2\Big(fu^\dag \! + \!a^\dag|\nabla u^\dag|^2\Big)\ {\rm d}x  \\ \leq& \!  c\max\Big(\|\nabla a^\dag\|_{L^2(\Omega)}\!,\! \|\nabla P_{\!\!\mathcal{A}}(\widehat{a}_{\theta}^\ast)\|_{L^2(\Omega)}\Big)\|  \widehat{u}_{\kappa,1}^\ast\!-\!u^\dag \|_{H^1(\Omega)}^\frac{1}{2}.\nonumber
\end{align}
In view of the identity $ \widehat{u}_{\kappa,1}^\ast= (\widehat{u}_{\kappa}^\ast- z^\delta) + (z^\delta-\widehat{u}_{\kappa,2}^\ast)$ and the triangle inequality, we deduce
\begin{align*}
 \int_{\Omega}\Big|\frac{P_{\!\!\mathcal{A}}(\widehat{a}_{\theta}^\ast)-a^\dag}{a^\dag}&\Big|^2\Big(fu^\dag \! + \!a^\dag|\nabla u^\dag|^2\Big){\rm d}x  \leq  c\max\Big(\|\nabla a^\dag\|_{L^2(\Omega)}\!,\! \|\nabla P_{\!\!\mathcal{A}}(\widehat{a}_{\theta}^\ast)\|_{L^2(\Omega)})\Big) \\& \quad \times\left(\|\widehat{u}_{\kappa,2}^\ast \|_{H^1(\Omega)}  \!
 +\! \|\widehat{u}_{\kappa}^\ast\!- \!z^\delta \|_{H^1(\Omega)} \!+\! \|  z^\delta-u^\dag \|_{H^1(\Omega)}  \right)^\frac{1}{2}.
\end{align*}
\begin{comment}
\begin{align}\label{eqn:inv_cond_err_PINN}
    &\|P_{\!\!\mathcal{A}}(\widehat{a}_{\theta})^\ast-a^\dag \|_{L^2(\Omega)}\le  c\|  \widehat{u}_{\kappa,1}^\ast-u^\dag \|_{H^1(\Omega)}^\frac{1}{2}\notag\\
    \le & c\left(\|\widehat{u}_{\kappa,2}^\ast \|_{H^1(\Omega)}
 + \|\widehat{u}_{\kappa}^\ast- z^\delta \|_{H^1(\Omega)} + \|  z^\delta-u^\dag \|_{H^1(\Omega)}  \right)^\frac{1}{2}.
\end{align}
\end{comment}
Meanwhile, the standard elliptic regularity theory \cite{Gilbarg:1977} implies that there exists $c>0$ depending on the lower and upper bounds on $P_{\!\!\mathcal{A}}(\widehat{a}_\theta^\ast)$ such that
\begin{equation}\label{eqn:inv_cond_err_PINNres}
    \|\widehat{u}_{\kappa,2}^\ast\|_{H^1(\Omega)}\le c\Big(\|\nabla \!\cdot\!( P_{\!\!\mathcal{A}}(\widehat{a}_{\theta}^\ast) \nabla \widehat{u}_{\kappa}^\ast) + f\|_{H^{-1}(\Omega)}+\|\widehat{u}_{\kappa}^\ast\|_{H^{\frac{1}{2}}(\partial\Omega)} \Big).
\end{equation}
This implies the estimate below
\begin{align*}  &\|\widehat{u}_{\kappa,2}^\ast\|_{H^1(\Omega)}  \!
 +\! \|\widehat{u}_{\kappa}^\ast\!- \!z^\delta \|_{H^1(\Omega)} \leq cJ_{\bm\gamma}(\widehat{\theta}^*,\widehat{\kappa}^*)\\ = &c\Big([J_{\bm\gamma}(\widehat{\theta}^*,\widehat{\kappa}^*) - \widehat{J}_{\bm\gamma}(\widehat{\theta}^*,\widehat{\kappa}^*)] + \widehat{J}_{\bm\gamma}(\widehat{\theta}^*,\widehat{\kappa}^*)\Big) &  \\ \leq & c\Big(\sup_{(\theta,\kappa)\in\mathfrak{P}_{\infty,\epsilon_a}\times \mathfrak{P}_{2,\epsilon_u}}|J_{\bm\gamma}(\theta,\kappa) - \widehat{J}_{\bm\gamma}(\theta,\kappa)| + \widehat{J}_{\bm\gamma}(\widehat{\theta}^*,\widehat{\kappa}^*)\Big).
\end{align*}
Similarly, the following estimate holds for the $L^2(\Omega)$ bound of $\nabla P_{\!\!\mathcal{A}}(\widehat{a}_{\theta}^\ast)$
\begin{align*}
&\|\nabla P_{\!\!\mathcal{A}}(\widehat{a}_{\theta}^\ast)\|_{L^2(\Omega)} \leq \gamma_a^{-1}J_{\bm\gamma}(\widehat{\theta}^*,\widehat{\kappa}^*)\\ \leq &\gamma_a^{-1}\sup_{(\theta,\kappa)\in\mathfrak{P}_{\infty,\epsilon_a}\times \mathfrak{P}_{2,\epsilon_u}}|J_{\bm\gamma}(\theta,\kappa) - \widehat{J}_{\bm\gamma}(\theta,\kappa)| + \gamma_a^{-1}\widehat{J}_{\bm\gamma}(\widehat{\theta}^*,\widehat{\kappa}^*).
\end{align*}
By combining the last two estimates, we arrive at
\begin{align*}
&\int_{\Omega}\Big|\frac{P_{\!\!\mathcal{A}}(\widehat{a}_{\theta}^\ast)-a^\dag}{a^\dag}\Big|^2\Big(fu^\dag \! + \!a^\dag|\nabla u^\dag|^2\Big){\rm d}x
\\ \leq &  c\Big(1+\gamma_a^{-1}\sup_{(\theta,\kappa)\in\mathfrak{P}_{\infty,\epsilon_a}\times \mathfrak{P}_{2,\epsilon_u}}|J_{\bm\gamma}(\theta,\kappa) - \widehat{J}_{\bm\gamma}(\theta,\kappa)| + \gamma_a^{-1}\widehat{J}_{\bm\gamma}(\widehat{\theta}^*,\widehat{\kappa}^*)\Big)\\&\times
    \Big( \sup_{(\theta,\kappa)\in\mathfrak{P}_{\infty,\epsilon_a}\times \mathfrak{P}_{2,\epsilon_u}}|J_{\bm\gamma}(\theta,\kappa) - \widehat{J}_{\bm\gamma}(\theta,\kappa)| + \widehat{J}_{\bm\gamma}(\widehat{\theta}^*,\widehat{\kappa}^*) + \|  z^\delta-u^\dag \|^2_{H^1(\Omega)} \Big)^\frac{1}{2}.
\end{align*}
By the definition of the noise level $\delta:=\|z^\delta - u^\dag\|_{H^1(\Omega)}$, it suffices to derive an \textit{a priori} bound on $\widehat{J}_{\bm\gamma}(\widehat\theta^*, \widehat\kappa^*)$. This can be achieved by decomposing the error into the statistical error and approximation error: Let $(\theta^*,\kappa^*) $ be a minimizer of the loss $J_{\bm\gamma}$. Then there holds
\begin{align*}
\widehat{J}_{\bm\gamma}(\widehat\theta^*, \widehat\kappa^*)\leq \widehat{J}_{\bm\gamma}(\theta^*,\kappa^*)\le \sup_{(\theta,\kappa)\in\mathfrak{P}_{\infty,\epsilon_a}\times \mathfrak{P}_{2,\epsilon_u}}|\widehat{J}_{\bm\gamma}(\theta,\kappa)-J_{\bm\gamma}(\theta,\kappa)|+J_{\bm\gamma}(\theta^*,\kappa^*).
\end{align*}

Next we provide an \textit{a priori} bound on the approximation error under suitable regularity assumptions on the problem data.
\begin{theorem}
If $a^\dag\in W^{2,p}(\Omega)$, $p>\max(2,d)$ and $f\in H^1(\Omega)\cap L^\infty(\Omega)$, then there holds
    \begin{equation*}
        J_{\bm\gamma}(\theta^*,\kappa^*)\le c(\epsilon^2+\delta^2).
    \end{equation*}
\end{theorem}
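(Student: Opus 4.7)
The plan is to proceed in direct analogy to Lemma \ref{lem:MLSNN_min}: construct a comparison pair $(\theta_\epsilon,\kappa_\epsilon)$ whose DNN realizations $(a_{\theta_\epsilon},u_{\kappa_\epsilon})$ closely approximate $a^\dag$ and $u^\dag:=u(a^\dag)$ in strong Sobolev norms, then invoke the minimizing property $J_{\bm\gamma}(\theta^*,\kappa^*)\le J_{\bm\gamma}(\theta_\epsilon,\kappa_\epsilon)$ to transfer the approximation bounds to each of the four residual terms $\mathcal{E}_u,\mathcal{E}_d,\mathcal{E}_b,\mathcal{E}_a$.

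First I would establish the enhanced regularity of $u^\dag$. Since $a^\dag\in W^{2,p}(\Omega)$ with $p>\max(2,d)$, Sobolev embedding gives $a^\dag\in W^{1,\infty}(\Omega)\cap\mathcal{A}$, and combined with $f\in H^1(\Omega)\cap L^\infty(\Omega)$, standard elliptic regularity theory \cite{Gilbarg:1977} implies $u^\dag\in H^3(\Omega)\cap H_0^1(\Omega)\cap W^{2,\infty}(\Omega)$. Applying Lemma \ref{lem:NN_approx_tanh} with appropriately chosen $(k,s)$ then yields $(\theta_\epsilon,\kappa_\epsilon)\in\mathfrak{P}_{\infty,\epsilon_a}\times\mathfrak{P}_{2,\epsilon_u}$ whose DNN realizations satisfy
\begin{equation*}
    \|a^\dag-a_{\theta_\epsilon}\|_{W^{1,\infty}(\Omega)}\le c\epsilon\quad\text{and}\quad \|u^\dag-u_{\kappa_\epsilon}\|_{H^2(\Omega)}\le c\epsilon,
\end{equation*}
where $\epsilon:=\max(\epsilon_a,\epsilon_u)$. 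The stability and approximation properties \eqref{eqn:hybrid_proj_stb}--\eqref{eqn:hybrid_proj_approx} of the cutoff $P_{\!\!\mathcal{A}}$ ensure that the same bound (up to constants) persists for $P_{\!\!\mathcal{A}}(a_{\theta_\epsilon})$, since $a^\dag\in\mathcal{A}$ is preserved by $P_{\!\!\mathcal{A}}$.

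The three terms $\mathcal{E}_u$, $\mathcal{E}_b$, $\mathcal{E}_a$ are routine. For the data fidelity, the triangle inequality and the noise bound \eqref{eqn:PINN_noise} give $\mathcal{E}_u\le c(\|u_{\kappa_\epsilon}-u^\dag\|_{H^1(\Omega)}^2+\delta^2)\le c(\epsilon^2+\delta^2)$. For the boundary residual, since $u^\dag|_{\partial\Omega}=0$, the trace theorem yields $\mathcal{E}_b=\|u_{\kappa_\epsilon}-u^\dag\|_{H^1(\partial\Omega)}^2\le c\|u_{\kappa_\epsilon}-u^\dag\|_{H^2(\Omega)}^2\le c\epsilon^2$. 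The coefficient seminorm $\mathcal{E}_a$ is uniformly bounded by $c(\|\nabla a^\dag\|_{L^2(\Omega)}^2+\epsilon^2)\le c$, which together with $\gamma_a$ contributes at most a fixed multiple of $\gamma_a$ that is absorbed into the final constant.

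The main obstacle is the strong-form PDE residual $\mathcal{E}_d$, as it involves second-order derivatives of $u_{\kappa_\epsilon}$ and first-order derivatives of $P_{\!\!\mathcal{A}}(a_{\theta_\epsilon})$. Using $-\nabla\cdot(a^\dag\nabla u^\dag)=f$, I would rewrite it as
\begin{equation*}
    \mathcal{E}_d=\|\nabla\cdot\bigl(P_{\!\!\mathcal{A}}(a_{\theta_\epsilon})\nabla u_{\kappa_\epsilon}-a^\dag\nabla u^\dag\bigr)\|_{L^2(\Omega)}^2,
\end{equation*}
expand the divergence by the product rule, and add and subtract cross terms to obtain four pieces that pair differences in one factor with regularity of the other: namely $(P_{\!\!\mathcal{A}}(a_{\theta_\epsilon})-a^\dag)\Delta u^\dag$, $\nabla(P_{\!\!\mathcal{A}}(a_{\theta_\epsilon})-a^\dag)\cdot\nabla u^\dag$, $P_{\!\!\mathcal{A}}(a_{\theta_\epsilon})\Delta(u_{\kappa_\epsilon}-u^\dag)$, and $\nabla P_{\!\!\mathcal{A}}(a_{\theta_\epsilon})\cdot\nabla(u_{\kappa_\epsilon}-u^\dag)$. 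By Hölder's inequality, each is controlled by the product of a DNN approximation error ($\le c\epsilon$) and a uniform regularity bound on $u^\dag$ or $a^\dag$ ($\le c$), giving $\mathcal{E}_d\le c\epsilon^2$. Summing the four residual estimates and absorbing $\gamma_d,\gamma_b,\gamma_a$ into the final constant yields the stated bound $J_{\bm\gamma}(\theta^*,\kappa^*)\le c(\epsilon^2+\delta^2)$.
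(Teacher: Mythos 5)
Your overall strategy coincides with the paper's: build a comparison pair $(\theta_\epsilon,\kappa_\epsilon)$ from Lemma \ref{lem:NN_approx_tanh}, invoke the minimizing property of $(\theta^*,\kappa^*)$, and control the strong-form residual $\mathcal{E}_d$ by a product-rule splitting into terms that pair a DNN approximation error with a regularity bound. However, there is one genuine gap in your execution. You claim $\|a^\dag-a_{\theta_\epsilon}\|_{W^{1,\infty}(\Omega)}\le c\epsilon$, but Lemma \ref{lem:NN_approx_tanh} with $p=\infty$, $s=1$ requires $a^\dag\in W^{2,\infty}(\Omega)$, whereas the hypothesis only provides $a^\dag\in W^{2,p}(\Omega)$ with $p>\max(2,d)$ finite; similarly, your assertion $u^\dag\in W^{2,\infty}(\Omega)$ overshoots what elliptic regularity gives here (the paper obtains $u^\dag\in H^3(\Omega)\cap W^{2,p}(\Omega)\cap H_0^1(\Omega)$, hence $\nabla u^\dag\in L^\infty(\Omega)$ by the embedding $W^{2,p}\hookrightarrow W^{1,\infty}$ for $p>d$, but not $\Delta u^\dag\in L^\infty$). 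The place where this actually bites is your fourth term $\nabla P_{\!\!\mathcal{A}}(a_{\theta_\epsilon})\cdot\nabla(u_{\kappa_\epsilon}-u^\dag)$: without the unjustified $W^{1,\infty}$ approximation there is no uniform bound on $\|\nabla a_{\theta_\epsilon}\|_{L^\infty(\Omega)}$ — by Lemma \ref{lem:NN_bounds} the Lipschitz constant of the approximating network scales like $R^{L}W^{L-1}$ and blows up as $\epsilon\to0^+$ — so the advertised ``$c\epsilon$ times uniform regularity'' pairing fails for this term as stated.

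The repair stays within your framework and is what the paper's proof effectively does: place the $L^\infty$ norms only on the \emph{exact} quantities $\nabla u^\dag$ and $\nabla a^\dag$ (both legitimately bounded since $a^\dag, u^\dag\in W^{2,p}$ with $p>d$), keep the DNN factors in $L^2$ or $L^p$, i.e., split the first-order piece as $\nabla(P_{\!\!\mathcal{A}}(a_{\theta_\epsilon})-a^\dag)\cdot\nabla u^\dag+\nabla a^\dag\cdot\nabla(u_{\kappa_\epsilon}-u^\dag)$ plus the quadratic remainder $\nabla(P_{\!\!\mathcal{A}}(a_{\theta_\epsilon})-a^\dag)\cdot\nabla(u_{\kappa_\epsilon}-u^\dag)$, and bound the remainder in $L^2(\Omega)$ by H\"older's inequality with exponents $(p,\tfrac{2p}{p-2})$ together with $\|\nabla(u_{\kappa_\epsilon}-u^\dag)\|_{L^{2p/(p-2)}(\Omega)}\le c\|u_{\kappa_\epsilon}-u^\dag\|_{H^2(\Omega)}\le c\epsilon$, which is valid since $p>d$ implies $\tfrac{2p}{p-2}\le\tfrac{2d}{d-2}$ (for $d\geq 3$; the cases $d=1,2$ are immediate). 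Two smaller remarks: your handling of $\mathcal{E}_u$, $\mathcal{E}_b$ (using $u^\dag|_{\partial\Omega}=0$ and the trace theorem) matches the paper; and your treatment of $\gamma_a\mathcal{E}_a$ — ``a fixed multiple of $\gamma_a$ absorbed into the constant'' — is not literally compatible with the stated bound $c(\epsilon^2+\delta^2)$ unless $\gamma_a\lesssim\epsilon^2+\delta^2$; the honest conclusion is $J_{\bm\gamma}(\theta^*,\kappa^*)\le c(\epsilon^2+\delta^2+\gamma_a)$, which reduces to the statement under the standing choice $\gamma_a\sim\delta^2$. You were at least more careful than the paper here, whose displayed proof silently drops the $\gamma_a\mathcal{E}_a$ term altogether.
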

\begin{proof}
By the standard elliptic regularity theory, we have $u^\dag:=u(a^\dag)\in H^3(\Omega)\cap  W^{2,p}(\Omega)\cap H_0^1(\Omega)$. By Lemma \ref{lem:NN_approx_tanh}, there exists $(\theta_\epsilon,\kappa_\epsilon)$ such that
    \begin{equation*}
        \|a^\dag-a_{\theta_\epsilon}\|_{W^{1,p}(\Omega)}\leq\epsilon \quad \mbox{and}\quad  \|u^\dag-u_{\kappa_\epsilon}\|_{H^2(\Omega)}\leq \epsilon .
    \end{equation*}
By the minimizing property of $(\theta^*,\kappa^*)$ and the triangle inequality, we have
\begin{align*}
& J_{\bm\gamma}(\theta^*,\kappa^*)\leq J_{\bm\gamma}(\theta_\epsilon,\kappa_\epsilon) \\
    	=& \| u_{\kappa_\epsilon}-z^\delta\|_{H^1(\Omega)}^2 +\|\nabla\cdot(P_{\!\!\mathcal{A}}(a_{\theta_\epsilon})\nabla u_{\kappa_\epsilon})+f \|_{L^2(\Omega)}^2+\| u_{\kappa_\epsilon}\|^2_{H^1(\partial\Omega)}  \\
        \le& c\Big( \| u_{\kappa_\epsilon}-u^\dag\|_{H^1(\Omega)}^2 +\| u^\dag-z^\delta\|_{H^1(\Omega)}^2
        +\|\nabla P_{\!\!\mathcal{A}}(a_{\theta_\epsilon})  \cdot \nabla u_{\kappa_\epsilon}   - \nabla a^\dag  \cdot \nabla u^\dag \|_{L^2(\Omega)}^2 \\& +\| P_{\!\!\mathcal{A}}(a_{\theta_\epsilon})\Delta u_{\kappa_\epsilon} - a^\dag  \Delta u^\dag \|_{L^2(\Omega)}^2+\|u_{\kappa_\epsilon}-u^\dag \|^2_{L^2(\partial\Omega)}\Big).
    \end{align*}
    By the triangle inequality, we have
    \begin{align*}
       & \|\nabla P_{\!\!\mathcal{A}}(a_{\theta_\epsilon})  \cdot \nabla u_{\kappa_\epsilon}   - \nabla a^\dag  \cdot \nabla u^\dag \|_{L^2(\Omega)}\\
        \le & \|\nabla (P_{\!\!\mathcal{A}}(a_{\theta_\epsilon}) -a^\dag)   \|_{L^2(\Omega)}\|\nabla u^\dag\|_{L^\infty(\Omega)}\\
        &+\|\nabla a^\dag   \|_{L^\infty(\Omega)}\|\nabla (u_{\kappa_\epsilon}- u^\dag)\|_{L^2(\Omega)}\le c \epsilon.
    \end{align*}
    Likewise we have
    \begin{align*}
        \|P_{\!\!\mathcal{A}}(a_{\theta_\epsilon})\Delta u_{\kappa_\epsilon} - a^\dag  \Delta u^\dag  \|_{L^2(\Omega)}
        \le & \|P_{\!\!\mathcal{A}}(a_{\theta_\epsilon})    \|_{L^\infty(\Omega)}\|\Delta (u_{\kappa_\epsilon}-u^\dag)\|_{L^2(\Omega)}\\
        &+\|  a^\dag-a_{\theta_\epsilon}   \|_{L^\infty(\Omega)}\|\nabla  u^\dag\|_{L^2(\Omega)}\le c \epsilon.
    \end{align*}
    By the trace theorem, we obtain $J_{\bm\gamma}(\theta^*,\kappa^*)\le c(\epsilon^2+\delta^2)$.
\end{proof}

\begin{remark}\label{rmk:PINN_const}
The estimate \eqref{eqn:inv_cond_err_PINN} is direct from Theorem \ref{thm:Bonito_stab}, where $c>0$ depends on $\Omega$, $d$, $\underline{c}_a$, $\overline{c}_a$, $\|f\|_{L^\infty(\Omega)}$, $\|\nabla a^\dag\|_{L^2(\Omega)}$ and $\|\nabla P_{\!\!\mathcal{A}}(\widehat{a}_{\theta}^\ast)\|_{L^2(\Omega)}$; see Remark \ref{rmk:Bonito_const_dep}. %By Lemma \ref{lem:NN_bounds}, we have $\|\nabla P_{\!\!\mathcal{A}}(\widehat{a}_{\theta}^\ast)\|_{L^2(\Omega)}\le cR^L W^{L-1}$. Thus, $c$ depends on the DNN architecture. In order to derive an estimate independent of the DNN architecture, it is necessary to include an additional $H^1(\Omega)$-seminorm penalty on the DNN function $\widehat{a}_{\theta}^\ast$.
The estimate \eqref{eqn:inv_cond_err_PINNres} is direct from the elliptic regularity. The equation for $\widehat{u}_{\kappa,2}^\ast$ involves the recovered diffusion coefficient $\widehat{a}_{\theta}^\ast$. Thus, $c$ in \eqref{eqn:inv_cond_err_PINNres} depends on the lower and upper bounds of $\widehat{a}_{\theta}^\ast$.
\end{remark}
\begin{remark}\label{rmk:PINN_comments}
    Compared with the mixed least-squares DNN approach in Section \ref{ssec:MLSNN}, the PINN formulation allows deriving an error bound from the conditional stability result directly. %However, the constants in the error bound depend on the DNN architecture parameters, which requires additional treatments, e.g., cutoff and penalization. In addition, for the statistical error, the PINN formulation \eqref{eqn:inv_cond_loss_PINN} involves the second-order derivatives of $u_\kappa$, resulting in stronger regularity on the DNN architecture.
    However, the PINN formulation involves the second-order derivatives of $u_\kappa$, resulting in stronger regularity on the DNN architecture.
    Numerically, the training of PINN often takes longer than the mixed least-squares DNN, due to the computation of second-order derivatives.
    %Comparing to the FEM approach, the neural network approximations satisfy the PDEs in strong form up to the residuals. Therefore, the conditional stability results could be directly applied to derive the approximation errors. Besides, the design of PINN loss functions leads to the estimate of residuals in terms of training errors and quadrature errors. However, as we highlight in Theorem \ref{prop:ucp_error} and Remark \ref{rmk:PINN_const}, the constants in reconstruction errors depend on the NN architecture parameters, which leads to additional treatments. From computational perspective, the PINN takes longer time for training, primarily due to the storage and computation of second-order spatial derivatives.
\end{remark}

\subsubsection{Inverse source problem}

Consider the following parabolic equation
\begin{equation}\label{eqn:ISP_eq}
	\left\{\begin{aligned}
		 \partial_t u - \Delta u &= f(x)g(t), \ &\mbox{in}&\ \Omega\times(0,T), \\
		u&=0, \ &\mbox{on}&\ \partial\Omega\times(0,T), \\
		u(0)&=0, \ &\mbox{in}&\ \Omega.
	\end{aligned}
	\right.
\end{equation}
The inverse problem aims to recover the spatial component $f(x)$ of the source $f(x)g(t)$ from the terminal measurement $u(x,T)$, given the temporal component $g(t)$. The following stability result holds \cite{Isakov:1991,PrilepkoKostin:1992,PrilepkoVasin:2000}. Note that the stability result in Theorem \ref{thm:inv_source_stab} only requires positive lower and upper bounds on $g$.  It is unconditional in the sense of Definition \ref{def:cond_stab}, since no additional condition on $f$ is imposed.

\begin{theorem}\label{thm:inv_source_stab}
Let the function $g\in L^\infty(0,T)$ with $0<\underline{c}_g\le g\le \bar{c}_g$. Then there exists a unique solution $f\in L^2(\Omega)$ and $c=c(\underline{c}_g,\Omega,T)>0$ such that
\begin{equation*}
    \|f\|_{L^2(\Omega)}\le c\|\Delta u(\cdot,T)\|_{L^2(\Omega)}.
\end{equation*}
\end{theorem}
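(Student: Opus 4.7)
My plan is to establish both uniqueness and the stability bound by a direct spectral argument, exploiting the fact that $-\Delta$ with zero Dirichlet boundary condition on $\Omega$ admits an orthonormal basis of eigenfunctions $\{\phi_k\}_{k\ge 1}$ in $L^2(\Omega)$ with strictly positive eigenvalues $0<\lambda_1\le\lambda_2\le\cdots\to\infty$. Writing $f=\sum_k f_k\phi_k$ and $u(\cdot,t)=\sum_k u_k(t)\phi_k$, separation of variables reduces the PDE to the family of scalar ODEs
\begin{equation*}
    u_k'(t)+\lambda_k u_k(t) = f_k g(t),\qquad u_k(0)=0,
\end{equation*}
whose Duhamel representation gives $u_k(T) = f_k G_k$ with $G_k := \int_0^T e^{-\lambda_k(T-s)}g(s)\,{\rm d}s$.

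The crux of the argument is a lower bound on $\lambda_k G_k$ that is uniform in $k$. Using only the positivity assumption $g(s)\ge \underline{c}_g>0$, I would estimate
\begin{equation*}
    \lambda_k G_k \ge \underline{c}_g\,\lambda_k \int_0^T e^{-\lambda_k(T-s)}\,{\rm d}s = \underline{c}_g(1-e^{-\lambda_k T})\ge \underline{c}_g(1-e^{-\lambda_1 T}) =: c_0>0,
\end{equation*}
which depends only on $\underline{c}_g$, $T$, and the first Dirichlet eigenvalue $\lambda_1=\lambda_1(\Omega)$. Parseval's identity then yields
\begin{equation*}
    \|\Delta u(\cdot,T)\|_{L^2(\Omega)}^2 = \sum_{k=1}^\infty \lambda_k^2 u_k(T)^2 = \sum_{k=1}^\infty f_k^2 (\lambda_k G_k)^2 \ge c_0^2\,\|f\|_{L^2(\Omega)}^2,
\end{equation*}
which is the desired stability estimate with $c = c_0^{-1}$.

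For existence and uniqueness of $f\in L^2(\Omega)$, given terminal data $\psi:=u(\cdot,T)$ satisfying $\Delta\psi\in L^2(\Omega)$, I would set $f_k := (\psi,\phi_k)_{L^2(\Omega)}/G_k$; since $G_k\neq 0$ this recipe is unambiguous, and the same lower bound on $\lambda_k G_k$ certifies $\sum_k f_k^2 \le c_0^{-2}\|\Delta\psi\|_{L^2(\Omega)}^2 < \infty$, so $f$ is well defined in $L^2(\Omega)$ and unique.

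There is no genuine obstacle here: the entire analytic content is compressed into the elementary uniform positivity of $\lambda_k G_k$, and the remaining manipulations are purely computational. I remark that the upper bound $\bar{c}_g$ on $g$ plays no role in the stability estimate itself; it would enter only if one wished to verify, under assumptions directly on $f$, the terminal regularity $u(\cdot,T)\in H^2(\Omega)\cap H^1_0(\Omega)$ via standard maximal regularity for the heat equation.
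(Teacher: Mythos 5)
Your proof is correct, and it is worth noting that the paper itself gives no proof of this statement: it is quoted from the classical literature \cite{Isakov:1991,PrilepkoKostin:1992,PrilepkoVasin:2000}, where the result is established for general (variable-coefficient) parabolic operators, typically by Duhamel/semigroup representations combined with positivity and Fredholm-alternative arguments. Your route instead exploits the special structure of the model problem \eqref{eqn:ISP_eq} --- self-adjoint Dirichlet Laplacian, source in separated form with $g=g(t)$, zero initial data --- to diagonalize, reducing everything to the uniform lower bound $\lambda_k G_k\ge \underline{c}_g(1-e^{-\lambda_1 T})$, which is valid since $\lambda\mapsto 1-e^{-\lambda T}$ is increasing. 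This buys a fully explicit constant $c=\big(\underline{c}_g(1-e^{-\lambda_1 T})\big)^{-1}$ and makes transparent why the estimate is \emph{unconditional} in the sense of Definition \ref{def:cond_stab} (no a priori bound on $f$ enters), exactly as the paper remarks; the price is that the argument does not extend to non-self-adjoint or time-dependent operators, or to $g=g(x,t)$, which the cited references can handle. Two small points of hygiene: in the existence step, the identity $\sum_k \lambda_k^2(\psi,\phi_k)_{L^2(\Omega)}^2=\|\Delta\psi\|_{L^2(\Omega)}^2$ requires $\psi$ to lie in the domain of the Dirichlet Laplacian, i.e.\ $\psi\in H^2(\Omega)\cap H_0^1(\Omega)$, not merely $\Delta\psi\in L^2(\Omega)$ distributionally --- so the data assumption should be phrased accordingly (this is automatic when $\psi$ is a genuine terminal state, as your own bound $\lambda_k G_k\le \bar{c}_g(1-e^{-\lambda_k T})\le \bar{c}_g$ shows, which is also where $\bar{c}_g$ enters, confirming your closing remark); and uniqueness deserves the one-line observation that $G_k>0$ forces $f_k=0$ whenever the terminal data vanish, which your construction contains implicitly.
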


Inspired by the condition stability in Theorem \ref{thm:inv_source_stab}, Liu et al. \cite{ZhangLiLiu:2023} used PINN to solve the inverse source problem with the measured data $z^\delta$ in $H^2(\Omega)$, with a noise level $\delta = \|z^\delta - u(f^\dag)(T)\|_{H^2(\Omega)}$. The approach utilizes two separate DNNs to discretize the unknown source $f(x)$ and the state $u(x,t)$. The residuals of the underlying PDE system are defined, respectively, by
\begin{align*}
    &\mathcal{E}_d=\|\partial_t u_\kappa-\Delta u_\kappa-f_\theta  g\|_{H^1(0,T;L^2(\Omega))}^2, \quad  \mathcal{E}_i=  \|u_\kappa(0)\|_{H^2(\Omega)}^2,\\
    &\mathcal{E}_u=\|\Delta(u_\kappa(T)-z^\delta)\|_{L^2(\Omega)}^2,  \quad \mathcal{E}_b= \|u_\kappa\|_{H^2(0,T;L^2(\partial\Omega))}^2 .
\end{align*}
Note the formulation requires the strong regularity of the data $z^\delta$.
The stability result in Theorem \ref{thm:inv_source_stab} motivates the PINN loss
\begin{align}\label{eqn:inv_source_loss}
     J_\gamma(\kappa,\theta)=& \mathcal{E}_u + \gamma_d\mathcal{E}_d + \gamma_b\mathcal{E}_b+ \gamma_i\mathcal{E}_i,
\end{align}
with $\gamma = (\gamma_d,\gamma_b,\gamma_i)\in\mathbb{R}_+^3$ being the vector of penalty strengths. The loss $J_\gamma(\kappa,\theta)$ involves the derivatives of the residuals, which requires suitable regularity of the data $z^\delta$ and the state approximation  $u_\kappa$.  Like before, we denote by $\widehat{J}_\gamma$ the empirical loss, and by $(\widehat{\kappa}^\ast,\widehat{\theta}^\ast)$ a minimizer of $\widehat{J}_\gamma$ and $(\widehat{u}_{\kappa}^\ast$, $\widehat{f}_{\theta}^\ast)$ the DNN realizations. Let $\widetilde{u}=\widehat{u}_{\kappa}^\ast-u^\dagger$ be the state error. Then $\widetilde{u}$ satisfies
\begin{equation*}
        \left\{
            \begin{aligned}
                \partial_t \widetilde{u}-\Delta \widetilde{u}&=\partial_t \widehat{u}_{\kappa}^\ast - \Delta \widehat{u}_{\kappa}^\ast-\widehat{f}_{\theta}^\ast g  +(\widehat{f}_{\theta}^\ast-f^\dagger)g ,\quad &&\mbox{in }\Omega\times (0,T],\\
                \widetilde{u}&=\widehat{u}_{\kappa}^\ast,\quad &&\mbox{on }\partial\Omega\times (0,T],\\
                \widetilde{u}(0)&=\widehat{u}_{\kappa}^\ast(0),\quad &&\mbox{in }\Omega.
            \end{aligned}
        \right.
\end{equation*}
Now we decompose $\widetilde{u}$ into $\widetilde{u}=u_1+u_2$, with $u_1$ and $u_2$ respectively satisfying
\begin{align*}
        &\left\{
            \begin{aligned}
                \partial_t u_1-\Delta u_1&=(\widehat{f}_{\theta}^\ast-f^\dagger)g,\quad &&\mbox{ in }\Omega\times (0,T],\\
                u_1&=0,\quad &&\mbox{ on }\partial\Omega\times (0,T],\\
                u_1(0)&=0,\quad &&\mbox{ in }\Omega,\\
            \end{aligned}
        \right.\\
&\left\{\begin{aligned}        \partial_t u_2-\Delta u_2&=\partial_t \widehat{u}_{\kappa}^\ast - \Delta \widehat{u}_{\kappa}^\ast-\widehat{f}_{\theta}^\ast g,\quad &&\mbox{ in }\Omega\times (0,T],\\
                u_2&=\widehat{u}_{\kappa}^\ast,\quad &&\mbox{ on }\partial\Omega\times (0,T],\\
                u_2(0)&=\widehat{u}_{\kappa}^\ast(0),\quad &&\mbox{ in }\Omega.
            \end{aligned}
        \right.
\end{align*}
By Theorem \ref{thm:inv_source_stab}, we have
\begin{align*}
    \|\widehat{f}_{\theta}^\ast-f^\dagger \|_{L^2(\Omega)}\le  c\|\Delta u_1(T)\|_{L^2(\Omega)} .
\end{align*}
The splitting $u_1(T)=(\widehat{u}_\kappa^*-z^\delta)+(z^\delta-u^\dagger(T))+(-u_2(T))$ and the triangle inequality imply
\begin{align} \label{eqn:inv_source_err}
   \|\Delta u_1(T)\|_{L^2(\Omega)}
    \le&  \|\Delta (\widehat{u}_\kappa^*-z^\delta) \|_{L^2(\Omega)}+\|\Delta (z^\delta-u^\dagger(T)) \|_{L^2(\Omega)}\\
    &+\| \Delta u_2(T) \|_{L^2(\Omega)}.\nonumber
\end{align}
The three terms in the estimate \eqref{eqn:inv_source_err} denote the data residual, the data discrepancy, and {\textit{a priori}} bound on $u_2$ (which is determined by the problem data for $u_2$, i.e., the residuals $\partial_t \widehat{u}_{\kappa}^\ast - \Delta \widehat{u}_{\kappa}^\ast-\widehat{f}_{\theta}^\ast g$, $\widehat{u}_{\kappa}^\ast|_{\partial\Omega\times(0,T]}$ and $\widehat{u}_{\kappa}^\ast(0)$). In turn, these factors can be bounded by the training errors and quadrature errors. In this way, we obtain an error bound on the DNN approximation $\widehat{f}_{\theta}^\ast$. Due to the unconditional stability in Theorem \ref{thm:inv_source_stab},  the constant $c$ in \eqref{eqn:inv_source_err} is independent of the DNNs. In the analysis, the stability estimate in Theorem \ref{thm:inv_source_stab} plays a vital role.

Zhang and Liu \cite{ZhangLiu:2023} extend this approach to the inverse source problem for an elliptic system with incomplete internal measurement data. The authors derive the error bound in terms of the number of sampling points, noise level and regularization parameters, if the training error is small and the unknown source is analytic. See also \cite{Zhang:2024source} for an inverse dynamic source problem that arises in time-domain fluorescence diffuse optical tomography and \cite{Zhang:2023potential,Cao:2025Potential} for inverse potential problems in the parabolic equation.

\subsubsection{Point source identification}
Hu et al \cite{hu2024pointsourceidentificationusing} developed a PINN scheme for point source identification for the Poisson equation. The problem formulation is as follows. Consider the following Poisson equation
\begin{align}\label{eqn:Poisson}
    \left\{\begin{aligned}
    -\Delta u  &= F, \quad \mbox{in }\Omega, \\
    u & = f, \quad \mbox{on }\partial\Omega,
  \end{aligned}\right.
\end{align}
with the source $F$ given by
\begin{align*}
F = \sum_{j=1}^Mc_j\delta_{x_j},  \quad\mbox{with}\ c_j\in\mathbb{R},\ x_j\in\Omega,\ j = 1,\dots,M
\end{align*}
Given the number $M$ of point sources, the inverse problem is to determine the unknown intensity $\mathbf c = (c_j)_{j=1}^M$ and locations $\mathbf x = (x_j)_{j=1}^M\subset\Omega$ from the given Cauchy data $(f,g):=(u|_{\partial\Omega},\partial_\nu u|_{\partial\Omega})$. By Holmgren's uniqueness theorem, the Cauchy data uniquely determines the number, locations and strengths of the point sources. To state the stability result, we need several notation. Let $\alpha=\min_{j=1,\ldots,M}\mathrm{dist}(x_j,\partial\Omega)$, $\Omega_\alpha = \{x\in \Omega: \mathrm{dist}(x,\partial\Omega)\geq \alpha\}$, $\beta= \mathrm{diam}(\Omega)-\alpha,$ with $\mathrm{diam}(\Omega)$ being the diameter of $\Omega$. For any point configuration $\mathbf X=(\mathbf{x}_1, \cdots, \mathbf{x}_M)$, with $\mathbf{x}_j=(x_{1,j},x_{2,j},\cdots,x_{d,j})^T$, we denote by $P_{k}(\mathbf{x}_j)=x_{k,j}+{\rm i}x_{k+1,j}$, $k=1, \ldots, d-1$, its projection onto the $x_kOx_{k+1}$-complex plane and let $\mathbf{P}_{k}(\mathbf X)=(P_{k}(\mathbf{x}_1),P_{k}(\mathbf{x}_2),\cdots,P_{k}(\mathbf{x}_M))$. Then we define the separability coefficient $\rho_k(\mathbf X)$ of the projected sources $\mathbf{P}_k(\mathbf X)$ and the minimal distance $\rho(\mathbf X)$ by
\begin{equation}\label{def:sepa}
    \rho_k(\mathbf X)\!=\!\min_{1\leq i<j\leq M}\!{\rm dist}(P_{k}(\mathbf{x}_i),P_{k}(\mathbf{x}_j))\ \mbox{and}\ \rho(\mathbf X)\!=\!\min_{1\leq k<d}\rho_k(\mathbf X).
\end{equation}
For two point configurations $\widehat{\mathbf X}^*=(\widehat{\mathbf{x}}_1^*, \widehat{\mathbf{x}}_2^*,\cdots, \widehat{\mathbf{x}}_{M}^*)$ and $\mathbf X^*=\left(\mathbf{x}_1^*, \mathbf{x}_2^*,\cdots, \mathbf{x}_{M}^*\right)$, we make the following separability assumption in the analysis below.
\begin{assumption}\label{assump:separation}
There exists $\rho_*>0$ such that $\min\{\rho(\widehat{\mathbf X}^*), \rho(\mathbf X^*)\}\geq\rho_*$.
\end{assumption}

Moreover, under suitable conditions, we have the following stability result in the three-dimensional case (i.e., $d=3$) (see \cite[Theorem 3.2]{ElBadiaElHajj:2013} or \cite{ElBadiaElHajj:2012}). The stability is proved using specialized test functions.
\begin{theorem}\label{thm:stability-point-source}
Let $u^\ell$ for $\ell=1,2$, be the solutions of problem \eqref{eqn:Poisson} for the sources $F^\ell = \sum_{j=1}^M c_j^\ell \delta_{x_j,\ell}$, characterized by the configurations $(c_j^\ell,x_j^\ell)_{j=1}^M$, with the two point configurations $\{x_j^\ell\}_{j=1}^M$, $\ell=1,2$, satisfying Assumption \ref{assump:separation} and $c_j^\ell\neq 0$.
Then, there exists a permutation
$\pi$ of the set $\{1,\ldots,m\}$ such that the following estimate holds:
\begin{equation*}
    \max_{1\leq j\leq m}\|S_j^2-S^1_{\pi(j)}\|_2 \leq c\frac{\beta^2}{\rho}\left[\frac{\rho}{\beta}\|\partial_\nu u^2-\partial_\nu u^1\|_{L^2(\partial\Omega)} + \|u^2-u^1\|_{L^2(\partial\Omega)} \right]^{1/M}.
\end{equation*}
\end{theorem}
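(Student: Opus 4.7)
The plan is to combine the reciprocity gap method with a Prony-type algebraic identification of the point sources, following the strategy of El Badia and El Hajj. First I would introduce the reciprocity gap functional: for any function $v$ harmonic in a neighborhood of $\overline\Omega$, Green's formula and the point-source structure of $F^\ell = \sum_j c_j^\ell \delta_{x_j^\ell}$ give
\begin{equation*}
    R(F^\ell;v) := \int_{\partial\Omega}\bigl(\partial_\nu u^\ell\, v - u^\ell\, \partial_\nu v\bigr)\, ds = \sum_{j=1}^{M} c_j^\ell\, v(x_j^\ell), \quad \ell=1,2.
\end{equation*}
The right-hand side is computable from the Cauchy data, so the difference $R(F^2;v)-R(F^1;v)$ is controlled in terms of $\|u^2-u^1\|_{L^2(\partial\Omega)}$ and $\|\partial_\nu u^2-\partial_\nu u^1\|_{L^2(\partial\Omega)}$ by Cauchy--Schwarz together with boundary estimates on $v$ and $\partial_\nu v$.

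Second, I would plug in the family of harmonic test functions $v_{k,n}(x) = P_k(x)^n$ for $k=1,\dots,d-1$ and $n=0,1,\dots,2M-1$. Each $v_{k,n}$ is harmonic because $P_k(x)=x_k+{\rm i}x_{k+1}$ is holomorphic in the two relevant variables, so $\partial_{x_k}^2 P_k^n + \partial_{x_{k+1}}^2 P_k^n = n(n-1)P_k^{n-2}(1+{\rm i}^2)=0$. This converts the reciprocity gap identity into the generalized power sums
\begin{equation*}
    m_{k,n}^\ell := \sum_{j=1}^{M} c_j^\ell\, P_k(x_j^\ell)^n, \qquad \ell=1,2,
\end{equation*}
whose discrepancies are bounded, using $|v_{k,n}|\le \beta^n$ and $|\nabla v_{k,n}|\le n\beta^{n-1}$ on $\partial\Omega$, by
\begin{equation*}
    |m_{k,n}^2-m_{k,n}^1| \le c\,\beta^n\bigl(\|\partial_\nu u^2-\partial_\nu u^1\|_{L^2(\partial\Omega)} + n\beta^{-1}\|u^2-u^1\|_{L^2(\partial\Omega)}\bigr).
\end{equation*}

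Third, the recovery of the projected sources $\{(c_j^\ell, P_k(x_j^\ell))\}_{j=1}^M$ from these $2M$ moments is precisely a Prony problem: the nodes $P_k(x_j^\ell)$ are roots of a degree-$M$ polynomial whose coefficients solve a Hankel linear system built from $\{m_{k,n}^\ell\}$, and the weights $c_j^\ell$ are then obtained from a Vandermonde system in these nodes. Under Assumption~\ref{assump:separation}, the minimal separation $\rho_k(\mathbf X^\ell)\ge \rho_*$ yields quantitative control of the Vandermonde condition number, and perturbation bounds for polynomial roots (of Ostrowski type) translate the $O(\beta^n)$-weighted moment error into a root error of H\"older type with exponent $1/M$, giving
\begin{equation*}
    \max_{1\le j\le M}\!\bigl|P_k(x_j^2) - P_k(x_{\pi_k(j)}^1)\bigr| \le c\,\frac{\beta}{\rho}\Bigl[\frac{\rho}{\beta}\|\partial_\nu u^2\!-\!\partial_\nu u^1\|_{L^2(\partial\Omega)} + \|u^2\!-\!u^1\|_{L^2(\partial\Omega)}\Bigr]^{1/M}
\end{equation*}
for some permutation $\pi_k$. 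I would then show that the same permutation $\pi$ works across all projection planes $k=1,\dots,d-1$: the separation on every projected plane prevents ambiguous matchings, so the partial pairings can be glued to a unique global permutation of $\{1,\dots,M\}$, reconstructing the points $x_j^\ell\in\mathbb R^d$ from their projections. Finally, plugging the recovered locations back into a Vandermonde system yields an estimate on $|c_j^2-c_{\pi(j)}^1|$ of the same order, producing the combined $\|S_j^2 - S_{\pi(j)}^1\|_2$ bound.

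The main obstacle is the Prony step, which is where the $1/M$ exponent and the $\beta^2/\rho$ prefactor originate. Making the dependence on $(M,\rho,\beta)$ explicit requires careful singular value bounds on rectangular Vandermonde and confluent Hankel matrices and a quantitative root perturbation theorem, since naive application of the implicit function theorem would lose the algebraic structure. A secondary difficulty is coordinating the planewise matchings $\pi_k$ into a single permutation $\pi$ while respecting the stated separability hypothesis; this uses that on each plane $k$ the images $\{P_k(x_j^\ell)\}$ remain $\rho_*$-separated, so the pairing induced by proximity in any one plane must agree with the pairings from the others.
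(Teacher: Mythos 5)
Your proposal follows essentially the same route as the proof the paper relies on: the paper does not prove Theorem \ref{thm:stability-point-source} in-text but cites El Badia and El Hajj, whose ``specialized test functions'' are exactly the harmonic powers $(x_k+\mathrm{i}\,x_{k+1})^n$ fed into the reciprocity gap functional, followed by a quantitative Prony-type recovery of the projected nodes in which Assumption \ref{assump:separation} controls the Vandermonde/Hankel conditioning and polynomial root perturbation yields the $1/M$ H\"older exponent. Apart from a harmless sign convention in your reciprocity identity (with $-\Delta u=F$ in \eqref{eqn:Poisson}, Green's formula gives $\int_{\partial\Omega}\bigl(u\,\partial_\nu v - v\,\partial_\nu u\bigr)\,ds = \sum_{j=1}^{M}c_j\,v(x_j)$), your sketch, including the planewise matching and gluing of the permutations $\pi_k$, matches the cited argument.
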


Numerically, one  challenge lies in the limited regularity of the state $u$ (due to the presence of point sources), which renders standard neural PDE solvers not directly applicable. To overcome the challenge, Hu et al. \cite{hu2024pointsourceidentificationusing} employed a singularity enrichment strategy \cite{HuJinZhou:2023,HuJinZhou:2024}. Let $\Phi(x)$ be the fundamental solution of the Laplacian. Then the state $u$ can be decomposed into singular and regular parts as
\begin{equation*}
    u=\sum_{j=1}^Mc_j\Phi_{x_j}+v : = w(\mathbf{c},\mathbf x) + v,
\end{equation*}
where the regular part $v$ satisfies
\begin{equation}\label{eqn:reg-part-equation}
    \left\{
    \begin{aligned}
    -\Delta v&=0,&& \mbox{in }\Omega,\\
    v&=f-w(\mathbf c, \mathbf x),&& \mbox{on }\partial\Omega,\\
    \partial_\nu v&=g-\partial_\nu w(\mathbf c, \mathbf x),&& \mbox{on }\partial\Omega.
    \end{aligned}
    \right.
\end{equation}
In practice, only noisy Cauchy data $(f^\delta,g^\delta) \approx (f,g)$ is available.
By minimizing the residuals and approximating the regular part $v$ using a DNN and discretizing the integrals using the Monte Carlo method, we obtain the following empirical loss
\begin{align}\label{eqn:loss-emp}
\widehat{J}(v_\theta,\mathbf{c},\mathbf x)=&\dfrac{|\Omega|}{n_r}\sum_{i=1}^{n_r}|\Delta v_\theta(X_{i})|^2\\& +\gamma_d\dfrac{|\partial\Omega|}{n_{b}}\sum_{i=1}^{n_b}\big(v_\theta(Y_{i})-f^\delta(Y_{i})+w(\mathbf{c},\mathbf x)(Y_{i})\big)^2\nonumber\\
&+\gamma_b\dfrac{|\partial\Omega|}{n_{b}}\sum_{i=1}^{n_b}\big(\partial_\nu v_\theta(Y_{i})-g^\delta(Y_{i})+\partial_\nu w(\mathbf{c},\mathbf x)(Y_{i})\big)^2, \nonumber
\end{align}
with $(X_i)_{i=1}^{n_r}\subset\Omega$ and $(Y_i)_{i=1}^{n_b}\subset\partial\Omega$ being i.i.d. sampling points.
The resulting training problem reads
\begin{equation}\label{eqn:min-emp}
    (\widehat{\theta}^*,\widehat{\mathbf{c}}^*,\widehat{\mathbf x}^*) = \mathop{{\rm argmin}}\limits_{\theta,\mathbf{c},X}\widehat{J}(v_\theta,\mathbf{c},\mathbf x),
\end{equation}
with $v_{\widehat{\theta}^*}$ being the DNN approximation of the regular part $v$.

First, we give some assumptions on the problem data below.
\begin{assumption}\label{hu2024:reg}
    $f\in H^{\frac52}(\partial\Omega)$, $g\in H^{\frac32}(\partial\Omega)$ and $f^\delta,g^\delta\in L^\infty(\partial\Omega)$ hold.
\end{assumption}

The next result gives an \textit{a priori} bound on the recovered locations, intensities, regular part and state \cite[Theorem 3.7]{hu2024pointsourceidentificationusing}. The proof relies crucially on a version of the conditional stability result similar to Theorem \ref{thm:stability-point-source}; see \cite[Theorem 3.1]{hu2024pointsourceidentificationusing} for the precise statement.
\begin{theorem}\label{thm:main-pointsource}
Let $p<\frac{d}{d-1}$, and Assumptions \ref{hu2024:reg} and \ref{assump:separation} hold.
For any small $\epsilon,\mu,\tau>0$, let the numbers $n_r$ and $n_b$ of sampling points satisfy
$n_r,n_b\sim\mathcal{O}(\epsilon^{-\frac{4}{1-\mu}}\log^{\frac{1}{1-\mu}}\tfrac{1}{\tau})$.
Then there exist a DNN function
$v_\theta \in \mathcal{N}(c, c\epsilon^{-\frac{d}{1-\mu}},c\epsilon^{-2-\frac{9d/2+4+2\mu}{1-\mu}})$,
and a permutation $\pi$ of the set $\{1,\ldots,M\}$ such that the following estimate holds with probability at least $1- 3 \tau$,
\begin{align*}	    	
&\max_{1\leq j\leq M}{\rm dist}(\widehat{\mathbf{x}}_{j}^*,\mathbf{x}_{\pi(j)}^*)+
\max_{1\leq j\leq M}\!\!|\widehat{c}_j^*-c_{\pi(j)}^*|\\&\quad +
\|v^*-v_{\widehat{\theta}^*}\|_{L^2(\Omega)}+\|u^*-u_{\widehat{\theta}^*}\|_{L^p(\Omega)}\leq c(\epsilon+\delta)^{\tfrac{1}{M}}.
\end{align*}
\end{theorem}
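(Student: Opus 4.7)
The overall strategy follows the classical three-step error decomposition paradigm for DNN-based schemes (cf. Lemma \ref{lem:error-decomp}): I would first derive an \textit{a priori} bound on the population loss $J(v_{\widehat{\theta}^*},\widehat{\mathbf{c}}^*,\widehat{\mathbf{x}}^*)$ in terms of $\epsilon$ and $\delta$, and then convert this loss bound into reconstruction errors using the conditional stability estimate underlying Theorem \ref{thm:stability-point-source}. The ultimate $(\epsilon+\delta)^{1/M}$ rate will emerge from the H\"older $1/M$ exponent in the stability result.

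First, I would bound the approximation error. Under Assumption \ref{hu2024:reg} and the singularity enrichment decomposition, the exact regular part $v^*$ satisfies problem \eqref{eqn:reg-part-equation} with sufficiently smooth Cauchy data (after subtracting the explicit harmonic function $w(\mathbf{c}^*,\mathbf{x}^*)$ away from the source points), so elliptic regularity yields $v^*\in H^3(\Omega)$ on any subdomain compactly contained in $\Omega$. Applying Lemma \ref{lem:NN_approx_tanh} with $k=3,s=2$ produces a DNN $v_{\theta_\epsilon}$ in the prescribed class $\mathcal{N}(c,c\epsilon^{-d/(1-\mu)},c\epsilon^{-2-(9d/2+4+2\mu)/(1-\mu)})$ satisfying $\|v^*-v_{\theta_\epsilon}\|_{H^{2}(\Omega)}\le c\epsilon$. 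Plugging the triple $(v_{\theta_\epsilon},\mathbf{c}^*,\mathbf{x}^*)$ into the population loss $J$ and using the trace inequality together with the data noise bound $\|f-f^\delta\|_{L^2(\partial\Omega)}+\|g-g^\delta\|_{L^2(\partial\Omega)}\le c\delta$ yield $J(v_{\theta_\epsilon},\mathbf{c}^*,\mathbf{x}^*)\le c(\epsilon^2+\delta^2)$.

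Next I would control the statistical error by a Rademacher complexity / covering number argument in the spirit of Theorem \ref{thm:MLSNN_err_stat}. The three induced function classes (the interior residual $|\Delta v_\theta|^2$ and the two boundary residuals, the latter depending on the finite-dimensional parameters $(\mathbf{c},\mathbf{x})$ through the smooth fundamental solutions $\Phi_{x_j}$ evaluated away from the source points, which are bounded and Lipschitz under Assumption \ref{assump:separation}) are Lipschitz in the DNN parameters $\theta$ as well as in $(\mathbf{c},\mathbf{x})$. Dudley's entropy bound combined with the PAC-type concentration inequality of \cite{Mohri:2018} gives, with probability at least $1-3\tau$,
\begin{equation*}
    \sup_{(\theta,\mathbf{c},\mathbf{x})}\bigl|J(v_\theta,\mathbf{c},\mathbf{x})-\widehat{J}(v_\theta,\mathbf{c},\mathbf{x})\bigr|\le c\,\mathrm{poly}(L,W,R)\Bigl(n_r^{-1/2}+n_b^{-1/2}\Bigr)\sqrt{\log(1/\tau)}\le c\,\epsilon^2,
\end{equation*}
where the final inequality uses the prescribed sample sizes $n_r,n_b\sim\epsilon^{-4/(1-\mu)}\log^{1/(1-\mu)}(1/\tau)$, chosen precisely to match the approximation error. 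Combining this with the minimizing property of $(\widehat{\theta}^*,\widehat{\mathbf{c}}^*,\widehat{\mathbf{x}}^*)$ via the standard comparison $\widehat{J}(\widehat{\theta}^*,\widehat{\mathbf{c}}^*,\widehat{\mathbf{x}}^*)\le\widehat{J}(\theta_\epsilon,\mathbf{c}^*,\mathbf{x}^*)$ yields $J(v_{\widehat{\theta}^*},\widehat{\mathbf{c}}^*,\widehat{\mathbf{x}}^*)\le c(\epsilon^2+\delta^2)$.

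Finally, I would invoke the conditional stability. Set $u_{\widehat{\theta}^*}:=w(\widehat{\mathbf{c}}^*,\widehat{\mathbf{x}}^*)+v_{\widehat{\theta}^*}$ and let $\widetilde{v}$ be the harmonic lifting whose Cauchy data on $\partial\Omega$ coincide with those of $v_{\widehat{\theta}^*}$; the Laplacian residual $\|\Delta v_{\widehat{\theta}^*}\|_{L^2(\Omega)}$ bounded by the loss controls $\|v_{\widehat{\theta}^*}-\widetilde{v}\|_{H^1(\Omega)}$ via elliptic regularity, producing
\begin{equation*}
    \|u^*-\widetilde{u}\|_{L^2(\partial\Omega)}+\|\partial_\nu(u^*-\widetilde{u})\|_{L^2(\partial\Omega)}\le c(\epsilon+\delta),
\end{equation*}
where $\widetilde{u}:=w(\widehat{\mathbf{c}}^*,\widehat{\mathbf{x}}^*)+\widetilde{v}$ is exactly harmonic in $\Omega\setminus\{\widehat{\mathbf{x}}_j^*\}$ and represents the Poisson solution with source configuration $(\widehat{\mathbf{c}}^*,\widehat{\mathbf{x}}^*)$. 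Theorem \ref{thm:stability-point-source} (with $\rho_*$ supplied by Assumption \ref{assump:separation}) then produces a permutation $\pi$ with $\max_{j}\mathrm{dist}(\widehat{\mathbf{x}}_j^*,\mathbf{x}_{\pi(j)}^*)+\max_j|\widehat{c}_j^*-c_{\pi(j)}^*|\le c(\epsilon+\delta)^{1/M}$. The $L^2$ error on $v$ follows from Poincar\'e applied to $v^*-v_{\widehat{\theta}^*}$ combined with the Dirichlet boundary residual, while the $L^p$ error on $u$ with $p<d/(d-1)$ follows from the $L^p$-integrability of $\Phi_{x_j}$ together with Lipschitz continuity of $x\mapsto\Phi_x$ in $L^p$ on the separated configuration. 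The main technical obstacle is the nonlinear and singular dependence of $w(\mathbf{c},\mathbf{x})$ on the locations $\mathbf{x}$: to obtain uniform Lipschitz and covering estimates one must work on $\Omega\setminus\bigcup_j B(\mathbf{x}_j,\rho_*/2)$ and carefully propagate the separability condition through every step; this is precisely what forces the H\"older $1/M$ exponent inherited from the stability theorem to appear in the final rate.
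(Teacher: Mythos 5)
Your overall architecture is the same one underlying the proof that the survey defers to \cite[Theorem 3.7]{hu2024pointsourceidentificationusing}: construct a DNN approximant of the regular part $v^*$ via Lemma \ref{lem:NN_approx_tanh} to bound the population loss at the true configuration by $c(\epsilon^2+\delta^2)$, control the statistical error by a covering-number/Rademacher argument with a union bound over the three residual terms (which is where the probability $1-3\tau$ comes from), transfer these bounds to the minimizer by the standard comparison, and then convert the loss bound into reconstruction error through the conditional stability of the point-source problem, whose H\"older exponent $1/M$ produces the final rate; your treatment of the $L^p$ error via the $L^p$-Lipschitz continuity of $x\mapsto\Phi_x$ for $p<\frac{d}{d-1}$ is also the right mechanism.

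There is, however, one genuine flaw in your stability step. The ``harmonic lifting $\widetilde v$ whose Cauchy data on $\partial\Omega$ coincide with those of $v_{\widehat\theta^*}$'' does not exist in general: the Cauchy problem for the Laplacian is overdetermined, and the Dirichlet and Neumann traces of a harmonic function are coupled through the Dirichlet-to-Neumann map, so you cannot match both traces of the non-harmonic $v_{\widehat\theta^*}$. The natural repair is to match only the Dirichlet trace; then $e:=v_{\widehat\theta^*}-\widetilde v\in H_0^1(\Omega)$ satisfies $\Delta e=\Delta v_{\widehat\theta^*}$ and $\|e\|_{H^1(\Omega)}\le c\|\Delta v_{\widehat\theta^*}\|_{L^2(\Omega)}$, but the Neumann mismatch $\partial_\nu e$ is then controlled only in $H^{-1/2}(\partial\Omega)$ (via the normal-trace estimate for $H^1$ functions with $L^2$ Laplacian), not in $L^2(\partial\Omega)$ as Theorem \ref{thm:stability-point-source} requires verbatim. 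This is precisely why the survey emphasizes that the proof relies on ``a version of'' the stability result, stated in \cite[Theorem 3.1]{hu2024pointsourceidentificationusing}, which is formulated directly for approximately harmonic functions and tolerates the interior residual and weaker boundary norms; without that strengthened statement your passage from the loss bound to the Cauchy-data estimate breaks. A secondary, easily fixed point: your hedge that $v^*\in H^3$ only ``on any subdomain compactly contained in $\Omega$'' would actually be fatal, since the loss contains boundary residuals and the DNN approximation must hold up to $\partial\Omega$; in fact, because the sources lie at distance at least $\alpha$ from $\partial\Omega$ and $f\in H^{\frac52}(\partial\Omega)$ by Assumption \ref{hu2024:reg}, elliptic regularity for the Dirichlet problem in the smooth domain gives $v^*\in H^3(\Omega)$ globally, so Lemma \ref{lem:NN_approx_tanh} applies on all of $\Omega$ as needed.
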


\subsubsection{Unique continuation problem}

Now consider the unique continuation problem for Poisson's equation: given an open and nonempty subdomain $\Omega'\subset \Omega$, the observational data $u_d\in L^2(\Omega')$ and the source $f\in L^2(\Omega)$, find $u\in H^1(\Omega)$ such that
\begin{equation}\label{eqn:FEM_ucp_obs}
    u=u_d\quad \mbox{in }\Omega',
\end{equation}
and $u\in H^1(\Omega)$ is a weak solution to
\begin{equation}\label{eqn:FEM_ucp_eq}
    -\Delta u=f\quad \mbox{in }\Omega.
\end{equation}
Note that we do not impose any boundary condition on the function $u$.  Hence the problem is ill-posed. The unique continuation problem can achieve H\"older or logarithm stability \cite{Alessandrini:2009}. We follow the description in \cite{BurmanOksanen:2018,Burman:2020convectionI}.
\begin{theorem}\cite[Theorem 7.1]{BurmanOksanen:2018}\label{thm:ucp_stab}
Let $f\in H^{-1}(\Omega)$ and $u\in H^1(\Omega)$ satisfy  \eqref{eqn:FEM_ucp_obs}-\eqref{eqn:FEM_ucp_eq} with $u_d\in L^2(\Omega^\prime)$. Then for any open simply connected set $E\subset \Omega$ such that $\dist(E,\partial \Omega)>0$, there holds
    \begin{equation*}
        \|u\|_{L^2(E)}\le c\|u\|_{L^2(\Omega)}^{1-\tau } \left(\|f\|_{H^{-1}(\Omega)} + \|u_d\|_{L^2(\Omega^\prime)}\right)^\tau,
    \end{equation*}
    where the constant $c>0$ and the exponent $\tau\in (0,1)$ depend on the set $E$. Moreover, the following global logarithmic stability holds
    \begin{equation*}
        \|u\|_{L^2(\Omega)}\le c\|u\|_{H^1(\Omega)}^{1-\tau } \left|\log\left(\|f\|_{H^{-1}(\Omega)} + \|u_d\|_{L^2(\Omega^\prime)}\right)\right|^\tau.
    \end{equation*}
\end{theorem}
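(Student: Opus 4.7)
\medskip

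\noindent\textbf{Proof proposal.}  The plan is to derive the local H\"older estimate first by combining a Carleman estimate with a propagation-of-smallness argument, and then to bootstrap it into the global logarithmic estimate by a boundary-interpolation trick. Throughout, write $\eta := \|f\|_{H^{-1}(\Omega)} + \|u_d\|_{L^2(\Omega')}$ for the right-hand side driver.

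The first step is to establish a local three-ball inequality. For any triple of concentric balls $B_{r_1}\subset B_{r_2}\subset B_{r_3}$ with $\overline{B_{r_3}}\subset\Omega$, I would use a standard Carleman estimate of the form
\begin{equation*}
s^3\!\!\int e^{2s\varphi}\varphi^3 v^2\,dx + s\!\!\int e^{2s\varphi}\varphi|\nabla v|^2\,dx \le c\!\!\int e^{2s\varphi}|\Delta v|^2\,dx,
\end{equation*}
valid for $v\in C_0^\infty(B_{r_3}\setminus\overline{B_{r_1/2}})$ and a suitable radial weight $\varphi$ (e.g.\ $\varphi(x)=-\log|x|$ after a cutoff adjustment). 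Inserting a radial cutoff $\chi$ equal to $1$ on an annulus around $B_{r_2}$ and vanishing near $\partial B_{r_1}\cup\partial B_{r_3}$, applying the Carleman inequality to $v=\chi u$, and exploiting $\Delta u = -f$ to absorb the commutator terms, yields after optimization in $s$ the three-ball estimate
\begin{equation*}
\|u\|_{L^2(B_{r_2})} \le c\bigl(\|u\|_{L^2(B_{r_1})}+\eta\bigr)^{\alpha}\bigl(\|u\|_{L^2(B_{r_3})}+\eta\bigr)^{1-\alpha},\qquad \alpha\in(0,1).
\end{equation*}

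The second step is propagation of smallness along a chain of balls. Since $\Omega$ is connected and $E\Subset\Omega$ with $\Omega'\subset\Omega$ open, I can cover a compact neighborhood of $\overline{E}\cup\overline{\Omega'_0}$ (for some ball $\Omega'_0\subset\Omega'$) by finitely many balls $B_i=B(x_i,r)$, $i=0,\dots,N$, such that each $B_{3r}(x_i)\Subset\Omega$ and consecutive balls overlap with $B_i\subset B_{2r}(x_{i+1})$. Applying the three-ball inequality at each link with $\Omega'_0$ providing the initial smallness (via $\|u\|_{L^2(\Omega'_0)}\le\|u_d\|_{L^2(\Omega')}$) and $\|u\|_{L^2(B_{3r}(x_i))}\le\|u\|_{L^2(\Omega)}$ at every stage, a straightforward induction on $N$ yields
\begin{equation*}
\|u\|_{L^2(E)} \le c\,\bigl(\|u\|_{L^2(\Omega)}+\eta\bigr)^{1-\tau}\eta^{\tau},\qquad \tau=\alpha^{N}\in(0,1),
\end{equation*}
from which the first assertion follows by noting that the $\eta$ inside the first factor can be absorbed into $\eta^\tau$ (splitting into the cases $\eta\le\|u\|_{L^2(\Omega)}$ and $\eta>\|u\|_{L^2(\Omega)}$).

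For the global logarithmic bound I would localize to the tubular neighborhood $\Omega_\rho^c=\{x\in\Omega:\dist(x,\partial\Omega)<\rho\}$. On the interior set $\Omega_\rho=\Omega\setminus\Omega_\rho^c$ I apply the already-proven local estimate with $E=\Omega_\rho$, which yields a constant $c(\rho)$ and an exponent $\tau(\rho)\in(0,1)$ that I can track explicitly (they deteriorate polynomially in $\rho$ through the chain length $N\sim|\log\rho|$). On $\Omega_\rho^c$ a standard Hardy/Poincar\'e-type estimate gives $\|u\|_{L^2(\Omega_\rho^c)}\le c\rho^{1/2}\|u\|_{H^1(\Omega)}$ (using the vanishing-trace extension after a cutoff or, more cleanly, the fact that $\Omega_\rho^c$ is a thin collar). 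Adding the two pieces and optimizing $\rho$ so that the interior H\"older term and the collar term are balanced—namely $\rho\sim|\log\eta|^{-\kappa}$ for an appropriate $\kappa>0$—produces the logarithmic bound.

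The main obstacle is the first step: constructing the Carleman weight and carrying out the commutator bookkeeping so as to obtain clean constants depending only on the geometry of the three balls, and keeping careful track of how $\|f\|_{H^{-1}(\Omega)}$ (rather than $\|f\|_{L^2}$) enters the estimate, which requires a duality argument to handle $\int e^{2s\varphi}|\Delta u|^2\,dx = \int e^{2s\varphi}|f|^2\,dx$. The remaining work—the chain of balls and the boundary interpolation—is then essentially geometric bookkeeping.
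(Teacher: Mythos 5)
You should first note that the paper does not prove this theorem at all: it is imported verbatim, with citation, from \cite[Theorem 7.1]{BurmanOksanen:2018}, so the only meaningful comparison is with the standard proof in the unique-continuation literature (cf.\ \cite{Alessandrini:2009}). Your outline --- Carleman-based three-ball inequality, propagation of smallness along a chain of balls, then a collar estimate plus optimization in $\rho$ for the global logarithmic bound --- is indeed that standard route. However, there is one genuine gap, and you have put it in the wrong place: you treat the $H^{-1}$ regularity of $f$ as a bookkeeping nuisance inside the Carleman step, whereas as written that step fails outright. The displayed Carleman inequality bounds weighted norms of $v=\chi u$ by $\int e^{2s\varphi}|\Delta(\chi u)|^2\,dx$, and $\Delta(\chi u)$ contains $\chi f$, which for $f\in H^{-1}(\Omega)$ is not a function; the expression $\int e^{2s\varphi}|f|^2\,dx$ you defer to a ``duality argument'' is simply undefined, and Carleman estimates with $H^{-1}$ right-hand sides (shifted-norm versions) are a delicate piece of machinery in their own right, not a footnote. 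The clean and standard repair avoids Carleman estimates for the inhomogeneous problem entirely: solve the auxiliary problem $-\Delta w=f$ with $w\in H_0^1(\Omega)$, so that $\|w\|_{H^1(\Omega)}\le c\|f\|_{H^{-1}(\Omega)}$, and set $v=u-w$. Then $v$ is harmonic in $\Omega$, classical three-sphere inequalities for harmonic functions (frequency function or Hadamard, no commutator bookkeeping) drive your chain argument, the data term is controlled by $\|v\|_{L^2(\Omega')}\le\|u_d\|_{L^2(\Omega')}+c\|f\|_{H^{-1}(\Omega)}\le c\eta$, and undoing the lifting at the end reintroduces only another $c\eta$.

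The remaining steps of your proposal are sound and would survive this substitution: the absorption of $\eta$ into $\eta^\tau$ via the two-case split is correct (in the case $\eta>\|u\|_{L^2(\Omega)}$ one bounds $\|u\|_{L^2(E)}\le\|u\|_{L^2(\Omega)}^{1-\tau}\eta^{\tau}$ directly); the collar estimate $\|u\|_{L^2(\Omega_\rho^c)}\le c\rho^{1/2}\|u\|_{H^1(\Omega)}$ holds without boundary conditions by integrating along normals from $\partial\Omega_\rho$ and using the trace theorem; and since your optimal $\rho$ is only logarithmically small in $\eta$, even a constant $c(\rho)$ growing polynomially in $1/\rho$ (which you assert but should verify through the chain) is harmless against $\eta^{\tau(\rho)}=e^{-\rho^{\beta}|\log\eta|}$. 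Two final remarks on the target statement itself: your optimization yields a bound of the form $c\|u\|_{H^1(\Omega)}\,|\log\eta|^{-\kappa}$, i.e.\ a \emph{negative} power of the logarithm, which is the meaningful form --- the positive exponent $\tau$ in the paper's second display is evidently a sign typo inherited from transcription --- and the argument of the logarithm should be nondimensionalized (e.g.\ $\eta/\|u\|_{H^1(\Omega)}$), which your homogeneity normalization handles implicitly but should be stated.
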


\begin{remark}\label{rmk:UCP_Cauchy}
It can also be viewed as the Cauchy problem for Poisson's equation, i.e., find $u\in H^1(\Omega)$ such that
    \begin{equation*}
	\left\{
        \begin{aligned}
		  - \Delta u &= f, \ &\mbox{in}&\ \Omega , \\
		  u&=g_1, \ &\mbox{on}&\ \Gamma, \\
            \partial_{\nu} u&=g_2, \ &\mbox{on}&\ \Gamma, \\
	   \end{aligned}
	\right.
    \end{equation*}
    with the partial boundary $\Gamma\subset\partial\Omega$. The unique continuation problem has also been studied for heat equation \cite{Imanuilov:1995}, wave equation \cite{Burman:2020wave} and Stokes system \cite{Lin:2010} etc.  These data assimilation problems admit similar conditional stability results as Theorem \ref{thm:ucp_stab}, i.e.,  H\"older stability in the interior of the domain and logarithmic stability in the whole domain $\Omega$. See the review \cite{Alessandrini:2009} for various stability results for elliptic Cauchy problems.
\end{remark}
Mishra and Molinaro \cite{Mishra:2021inverse} proposed an abstract framework for PINNs. The study focuses on the unique continuation problem of Poisson's equation \eqref{eqn:FEM_ucp_obs}-\eqref{eqn:FEM_ucp_eq}, and approximates the state $u$ by a DNN $u_\theta$. They derived bounds on the generalization error using conditional stability directly.
 The stability result in Theorem \ref{thm:ucp_stab} motivates the following PINN loss
\begin{equation}\label{eqn:PINN_ucp_loss}
    J_\gamma(\theta)= \|u_d-u_{\theta}\|_{L^2(\Omega^\prime)}^2 + \gamma\|\Delta u_{\theta}+f\|_{L^2(\Omega)}^2,
\end{equation}
where $\gamma>0$ is the penalty parameter. In practice, one needs to approximate the integrals using quadrature rules (e.g., Monte Carlo method). Then the empirical loss $\widehat{J}_\gamma(\theta)$ is given by
\begin{equation}\label{eqn:PINN_ucp_loss_dis}
    \widehat{J}_\gamma(\theta)=\sum_{j=1}^{n_i} w_j^i|(u_d-u_{\theta})(X_j)|^2+ \gamma\sum_{j=1}^{n_d} w_j^d|(\Delta u_{\theta}+f)(Y_j)|^2,
\end{equation}
with $( X_j)_{j=1}^{n_i}\subset\Omega'$ and $( Y_j)_{j=1}^{n_d}\subset\Omega$ being quadrature points. The relevant quadrature weights $( w_j^i)_{j=1}^{n_i}$ and $(w_j^d)_{j=1}^{n_d}$ satisfy the following estimates
\begin{equation}\label{eqn:ucp_quad_err}
    \begin{aligned}
       \left|\sum_{j=1}^{n_i} w_j^i|(u_d-u_{\theta})(X_j)|^2- \|u_d-u_{\theta}\|_{L^2(\Omega')}^2\right|\le & cn_i^{-\frac{\alpha_i}{2}},\\
         \left|\sum_{j=1}^{n_d} w_j^d|(\Delta u_{\theta}+f)(Y_j)|^2- \|\Delta u_{\theta}+f\|_{L^2(\Omega)}^2\right|\le& cn_d^{-\frac{\alpha_d}{2}},
    \end{aligned}
\end{equation}
for some exponents $\alpha_i,\alpha_d>0$.
The constant $c$ in the estimate depends on the \textit{a priori} regularity of $u_\theta$, $u_d$ and $f$ (and thus on the DNN architecture). Let  $\widehat{u}^*_\theta$ be a minimizer to the functional $J_\gamma$, and $\widehat{u}^*_\theta$ its DNN realization.
We define the following training error:
\begin{equation*}
\begin{split}
    \widehat{\mathcal{E}}_i &= \left(\sum_{j=1}^{n_i} w_j^i|(u_d-\widehat{u}^*_\theta)(X_j)|^2\right)^{\frac{1}{2}} \quad\mbox{and}\quad    \widehat{\mathcal{E}}_d = \left(\sum_{j=1}^{n_d} w_j^d|(\Delta \widehat{u}^*_\theta+f)(Y_j)|^2\right)^{\frac{1}{2}}.
\end{split}
\end{equation*}
Now we derive an error bound using conditional stability in Theorem \ref{thm:ucp_stab}, which depends explicitly on the training error and the number of sampling points.
\begin{proposition}\label{prop:ucp_error}
Fix $f \in C(\overline{\Omega})$ and $u_d \in C^2(\overline{\Omega'})$. Let $u^\dagger \in H^1(\Omega)$ be the solution to problem \eqref{eqn:FEM_ucp_obs}-\eqref{eqn:FEM_ucp_eq}, and let $\widehat{\theta}^\ast$ be a minimizer of the loss $\widehat{J}_\gamma(\theta)$ defined in \eqref{eqn:PINN_ucp_loss_dis}, and $\widehat{u}_{\theta}^{\ast} \in C^2(\overline{\Omega})$ its DNN realization. Then for any $\Omega'\subset E \subset \Omega$, with $E$ being open, simply connected and such that $dist(E,\partial \Omega) > 0$, there holds
    \begin{equation}\label{eqn:ucp_error}
        \|u^\dagger-\widehat{u}_{\theta}^{\ast}\|_{L^2(E)} \leq c\Big(\widehat{\mathcal{E}}_i + \widehat{\mathcal{E}}_d + n_i^{-\frac{\alpha_i}{2}} + n_d^{-\frac{\alpha_d}{2}}\Big)^\tau,
    \end{equation}
    for some $\tau \in (0,1)$ and  $c=c(\|u^\dagger\|_{L^2(\Omega)}, \|\widehat{u}_{\theta}^{\ast}\|_{L^2(\Omega)})$.
\end{proposition}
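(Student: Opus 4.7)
The plan is to apply the interior H\"older stability estimate from Theorem \ref{thm:ucp_stab} to the error function $w := u^\dagger - \widehat{u}_\theta^\ast$, and then translate the PDE residual and data fitting residual on the right-hand side into quantities controlled by the empirical training errors $\widehat{\mathcal{E}}_d$, $\widehat{\mathcal{E}}_i$ and the quadrature errors.

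First I would observe that $w \in H^1(\Omega)$ satisfies the Poisson equation
\begin{equation*}
    -\Delta w = f + \Delta \widehat{u}_\theta^\ast \quad\mbox{in } \Omega, \qquad w = u_d - \widehat{u}_\theta^\ast \quad\mbox{in } \Omega',
\end{equation*}
so the source is exactly the PDE residual of $\widehat{u}_\theta^\ast$ and the ``observation'' is exactly the data mismatch. Since $f+\Delta \widehat{u}_\theta^\ast \in C(\overline{\Omega})$ under the hypotheses, Theorem \ref{thm:ucp_stab} applied to $w$ on the interior subdomain $E$ yields
\begin{equation*}
    \|w\|_{L^2(E)} \leq c \|w\|_{L^2(\Omega)}^{1-\tau}\bigl(\|f+\Delta \widehat{u}_\theta^\ast\|_{H^{-1}(\Omega)} + \|u_d-\widehat{u}_\theta^\ast\|_{L^2(\Omega')}\bigr)^\tau,
\end{equation*}
for some $\tau\in(0,1)$ depending on $E$. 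The factor $\|w\|_{L^2(\Omega)}^{1-\tau}$ is bounded by $(\|u^\dagger\|_{L^2(\Omega)} + \|\widehat{u}_\theta^\ast\|_{L^2(\Omega)})^{1-\tau}$ via the triangle inequality, which accounts for the dependence of the constant $c$ on $\|u^\dagger\|_{L^2(\Omega)}$ and $\|\widehat{u}_\theta^\ast\|_{L^2(\Omega)}$ in the claimed estimate. The continuous embedding $L^2(\Omega)\hookrightarrow H^{-1}(\Omega)$ then allows me to replace the $H^{-1}(\Omega)$-norm of the residual by its $L^2(\Omega)$-norm.

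Next I would relate the two continuous residual norms to the training errors using the quadrature estimates \eqref{eqn:ucp_quad_err}. Writing
\begin{equation*}
\|f+\Delta \widehat{u}_\theta^\ast\|_{L^2(\Omega)}^2 = \widehat{\mathcal{E}}_d^2 + \Bigl(\|f+\Delta \widehat{u}_\theta^\ast\|_{L^2(\Omega)}^2 - \sum_{j=1}^{n_d} w_j^d|(f+\Delta \widehat{u}_\theta^\ast)(Y_j)|^2\Bigr),
\end{equation*}
and applying \eqref{eqn:ucp_quad_err} gives $\|f+\Delta\widehat{u}_\theta^\ast\|_{L^2(\Omega)} \leq c(\widehat{\mathcal{E}}_d + n_d^{-\alpha_d/2})$; the analogous argument for the interior data term yields $\|u_d - \widehat{u}_\theta^\ast\|_{L^2(\Omega')} \leq c(\widehat{\mathcal{E}}_i + n_i^{-\alpha_i/2})$. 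Substituting these two bounds back into the H\"older stability inequality and absorbing the (finite) prefactor $\|w\|_{L^2(\Omega)}^{1-\tau}$ into the constant $c$ produces the claimed estimate \eqref{eqn:ucp_error}.

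The only mildly delicate step is the application of \eqref{eqn:ucp_quad_err}: the constants hidden in those quadrature bounds depend on the regularity (and hence the architectural parameters) of $\widehat{u}_\theta^\ast$, so strictly speaking the constant $c$ in \eqref{eqn:ucp_error} also absorbs such dependence; this is consistent with the statement, since $c$ is permitted to depend on $\|\widehat{u}_\theta^\ast\|_{L^2(\Omega)}$ (and, implicitly, on smoother norms controlled by the DNN class). No further conceptual obstacle arises, because the conditional stability result does the heavy lifting and the rest is a routine combination of the PDE residual identity, Sobolev embedding, and the quadrature error bounds.
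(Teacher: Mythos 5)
Your proposal is correct and follows essentially the same route as the paper's proof: you write the residual equations for $w=u^\dagger-\widehat{u}_\theta^\ast$, apply the conditional stability of Theorem \ref{thm:ucp_stab} on $E$, and convert the continuous residual norms into the training errors $\widehat{\mathcal{E}}_d$, $\widehat{\mathcal{E}}_i$ plus quadrature terms via \eqref{eqn:ucp_quad_err}. Your additional remarks (the $H^{-1}(\Omega)$-to-$L^2(\Omega)$ replacement, absorbing $\|w\|_{L^2(\Omega)}^{1-\tau}$ into the constant, and the dependence of the quadrature constants on the DNN class) merely make explicit what the paper leaves implicit.
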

\begin{proof}
Let $\widetilde{u}=u^\dagger-\widehat{u}_{\theta}^{\ast}\in H^1(\Omega)$. Then $\widetilde{u}$ satisfies
    \begin{equation*}
        \left\{
            \begin{aligned}
                -\Delta \widetilde{u}&=f+\Delta \widehat{u}_{\theta}^{\ast},\quad \mbox{ in }\Omega,\\
                  \widetilde{u}&=u_d - \widehat{u}_{\theta}^{\ast},\quad \mbox{ in }\Omega^\prime.
            \end{aligned}
        \right.
    \end{equation*}
    By Theorem \ref{thm:ucp_stab}, we obtain
    \begin{equation}\label{eqn:ucp_err_res}
        \|\widetilde{u}\|_{L^2(E)}\le c ( \|f+\Delta \widehat{u}_{\theta}^{\ast}\|_{L^2(\Omega)} + \|u_d - \widehat{u}_{\theta}^{\ast}\|_{L^2(\Omega^\prime)} )^\tau,
    \end{equation}
for some $\tau\in (0,1)$ and $c=c(\|\widetilde{u}\|_{L^2(\Omega)})$. By the estimates in \eqref{eqn:ucp_quad_err}, we obtain
\begin{align*}
\|f+\Delta \widehat{u}_{\theta}^{\ast}\|_{L^2(\Omega)}&\le \widehat{\mathcal{E}}_{d}+c n_d^{-\frac{\alpha_d}{2}}, \\ \|u_d - \widehat{u}_{\theta}^{\ast}\|_{L^2(\Omega^\prime)}&\le \widehat{\mathcal{E}}_{i}+cn_i^{-\frac{\alpha_i}{2}}.
\end{align*}
Combining these estimates with \eqref{eqn:ucp_err_res} yields the desired assertion.
\end{proof}

\subsubsection{Elliptic optimal control}
PDE constrained optimal control is a very important class of applied problems and has found many applications. This class of problems are formally similar to variational regularization for PDE inverse problems. Numerically solving the optimal control problems using neural networks have received much attention; see \cite{Borovykh:2022,Meng:2022,Nakamura：2021,Zhao:2024} for an incomplete list and the references therein. Lai et al. \cite{Lai:2025Hard} developed a PINN scheme to solve optimal control problems governed by PDEs with interfaces and control constraints.

Dai et al \cite{dai2024solvingellipticoptimalcontrol} developed a PINN scheme to solve elliptic optimal control problems. Let $\Omega \subset(-1,1)^d \subset \mathbb{R}^d$ $(d \geq 1)$ be a bounded domain with a smooth boundary $\partial \Omega$. Given the target function $u_d \in L^2(\Omega)$, consider the following distributed optimal control problem
\begin{equation}\label{eqn:OCP-obj}
\min J(u, z):=\frac{1}{2}\left\|u-u_d\right\|_{L^2(\Omega)}^2+\frac{\lambda}{2}\|z\|_{L^2(\Omega)}^2,
\end{equation}
where the state-control pair $(u, z)$ satisfies
\begin{equation}\label{eqn:OCP-gov}
    \left\{\begin{aligned}-\Delta u &= f+z,\quad \mbox{in}\ \Omega,\\
    u &= 0, \quad \mbox{on}\ \partial\Omega,
    \end{aligned}\right.
\end{equation}
with the given source $f \in L^2(\Omega)$, and $\lambda>0$. Problem \eqref{eqn:OCP-obj}-\eqref{eqn:OCP-gov} admits a unique solution $(\bar{u}, \bar{z})$. The objective is to construct a DNN approximation scheme for the minimizing pair $(\bar{u}, \bar{z})$ and to derive an \textit{a priori} $L^2(\Omega)$ error bound on the DNN approximation. Dai et al employed an optimize-then-discretize strategy. The coupled first-order optimality system of $(\bar{u}, \bar{p}, \bar{z})$ is given by \cite[Chapter 2]{troltzsch2010optimal}
\begin{equation}\label{eqn:OCP-opt-system}
    \left\{\begin{aligned}
    -\Delta u &= f+z,\quad \mbox{in}\ \Omega,\\
    u &= 0, \quad \mbox{on}\ \partial\Omega, \\
    -\Delta p &= u-u_d,\quad \mbox{in}\ \Omega,\\
    p &= 0, \quad \mbox{on}\ \partial\Omega,\\
    z &= -\lambda^{-1} p, \quad \mbox{in}\ \Omega.
    \end{aligned}\right.
\end{equation}
By eliminating the control $z$ in the first line of \eqref{eqn:OCP-opt-system} using the relation $z=-\lambda^{-1}p$ and applying the PINN strategy, we obtain the following population loss
\begin{equation*}
\begin{split}
 \mathcal{L}(u, p)&=\left\|\Delta u+f-\lambda^{-1} p\right\|_{L^2(\Omega)}^2+\gamma_i\left\|\Delta p+u-u_d\right\|_{L^2(\Omega)}^2\\&\quad +\gamma_b^u\|u\|_{L^2(\partial \Omega)}^2+\gamma_b^p\|p\|_{L^2(\partial \Omega)}^2,
\end{split}
\end{equation*}
where $\gamma=(\gamma_i, \gamma_b^u, \gamma_b^p)\in\mathbb{R}_+^3$ contains the penalty parameters to approximately enforce the PDE residual (of $p$) and zero Dirichlet boundary conditions of $u$ and $p$. The authors discretize the state-costate pair $(u,p)$ with two separate DNNs $(u_\theta,p_\kappa)\in\mathcal{Y}\times\mathcal{P}$ and  approximate the integrals using the Monte Carlo method.
Let $(X_i)_{i=1}^{n_d}$ and $(Y_i)_{i=1}^{n_b}$ be i.i.d. sampling points from $\mathcal{U}(\Omega)$ and $\mathcal{U}(\partial\Omega)$, respectively. Then the empirical loss $\widehat{\mathcal{L}}\left(u_\theta, p_\kappa\right)$ is given by
\begin{equation*}
\begin{aligned}
\min_{\mathcal{Y}\times\mathcal{P}} \widehat{\mathcal{L}}\left(u_\theta, p_\kappa\right)& =  \frac{|\Omega|}{n_d} \sum_{i=1}^{n_d}\left(\Delta u_\theta\left(X_i\right)+f\left(X_i\right)-\lambda^{-1} p_\kappa\left(X_i\right)\right)^2\\&\quad +\gamma_i \frac{|\Omega|}{n_d} \sum_{i=1}^{n_d}\left(\Delta p_\kappa\left(X_i\right)+u_\theta\left(X_i\right)-u_d\left(X_i\right)\right)^2 \\
& \quad +\gamma_b^u \frac{|\partial \Omega|}{n_b} \sum_{i=1}^{n_b}\left(u_\theta\left(Y_i\right)\right)^2+\gamma_b^p \frac{|\partial \Omega|}{n_b} \sum_{i=1}^{n_b}\left(p_\kappa\left(Y_i\right)\right)^2 .
\end{aligned}
\end{equation*}
The next result states an error bound on the DNN approximation.
\begin{theorem}
    Let the tuple $(\bar{u}, \bar{p}, \bar{z})$ solve the optimality system such that $\bar{u} \in H^s(\Omega) \cap$ $H_0^1(\Omega)$ and $\bar{p} \in H^s(\Omega) \cap H_0^1(\Omega)$ with $s \geq 3$, and $\left(u_{\theta^*}, p_{\kappa^*}, z_{\kappa^*}\right)$ be the DNN approximation. Fix the tolerance $\epsilon>0$, and take
    \begin{equation*}
      \mathcal{Y}=\mathcal{P}=\mathcal{N} \left(c \log (d+s), c(d, s) \epsilon^{-\frac{d}{-2-\mu}}, c(d, s) \epsilon^{-\frac{9+4 s}{2(s-2-\mu)}}\right).
    \end{equation*}
   Then by choosing $n_d=c(d, s) \epsilon^{-\frac{c(d+s) \log (d+s)}{s-2-\mu}}$ and $n_b=c(d, s) \epsilon^{-\frac{4(s d+s)}{s-2-\mu}-2}$ sampling points in $\Omega$ and on $\partial \Omega$, there holds
   \begin{equation*}
       \mathbb{E}_{\!X,\! Y\!}\left[\left\|\bar{u}-u_{\theta^*}\right\|_{L^2(\Omega)}^2+\left\|\bar{p}-p_{\kappa^*}\right\|_{L^2(\Omega)}^2+\left\|\bar{z}-z_{\kappa^*}\right\|_{L^2(\Omega)}^2\right] \leq c \epsilon^2,
   \end{equation*}
where $c$ depends on $\gamma, \lambda, d, s,\|f\|_{L^{\infty}(\Omega)},\left\|u_d\right\|_{L^{\infty}(\Omega)},\|\bar{u}\|_{H^s(\Omega)}$, and $\|\bar{p}\|_{H^s(\Omega)}$.
\end{theorem}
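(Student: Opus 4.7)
My plan is to follow the tripartite error decomposition of Lemma \ref{lem:error-decomp} (approximation error, statistical error, optimization error) combined with a coercivity estimate for the KKT system. Since $(u_{\theta^*},p_{\kappa^*})$ is assumed to be a global minimizer of $\widehat{\mathcal L}$, the optimization error is zero, and the analysis reduces to controlling the approximation and statistical terms and then converting a bound on $\mathcal L(u_{\theta^*},p_{\kappa^*})$ into a bound on the $L^2$ errors.

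\textbf{Step 1 (Coercivity of $\mathcal L$).} I would first show that for any sufficiently regular $(u,p)$ one has
\begin{equation*}
\|u-\bar u\|_{L^2(\Omega)}^2+\|p-\bar p\|_{L^2(\Omega)}^2 \le c\,\mathcal L(u,p).
\end{equation*}
Setting $e_u=u-\bar u$, $e_p=p-\bar p$ and using the optimality system \eqref{eqn:OCP-opt-system}, one verifies that $\Delta e_u-\lambda^{-1}e_p=r_1$ and $\Delta e_p+e_u=r_2$ in $\Omega$ (where $r_1,r_2$ are the two PDE residuals in $\mathcal L$), while $e_u|_{\partial\Omega}=u|_{\partial\Omega}$ and $e_p|_{\partial\Omega}=p|_{\partial\Omega}$. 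Testing the first identity by $e_p$, the second by $e_u$, subtracting and integrating by parts yields
\begin{equation*}
\|e_u\|_{L^2(\Omega)}^2+\lambda^{-1}\|e_p\|_{L^2(\Omega)}^2 \le |(r_2,e_u)-(r_1,e_p)|+\Big|\int_{\partial\Omega}(e_u\partial_\nu e_p - e_p\partial_\nu e_u)\,\d s\Big|.
\end{equation*}
The interior terms are absorbed via Cauchy--Schwarz/Young, while the boundary terms are controlled by trace inequalities in the form $\|\partial_\nu e_u\|_{H^{-1/2}(\partial\Omega)}\le c\|e_u\|_{H^1(\Omega)}$ together with elliptic regularity applied to the auxiliary problems for $e_u,e_p$, after which $\|u|_{\partial\Omega}\|_{L^2(\partial\Omega)}^2+\|p|_{\partial\Omega}\|_{L^2(\partial\Omega)}^2$ furnishes the necessary boundary control. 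The bound on $\|\bar z - z_{\kappa^*}\|_{L^2(\Omega)}$ then follows from $z_{\kappa^*}=-\lambda^{-1}p_{\kappa^*}$ and the $L^2$ bound on $e_p$.

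\textbf{Step 2 (Approximation error).} Since $\bar u,\bar p\in H^s(\Omega)\cap H_0^1(\Omega)$ with $s\ge 3$, Lemma \ref{lem:NN_approx_tanh} with $k=s$ and $s=2$ yields DNN parameters $(\theta_\epsilon,\kappa_\epsilon)$ in the prescribed architecture class such that
\begin{equation*}
\|\bar u - u_{\theta_\epsilon}\|_{H^2(\Omega)}+\|\bar p - p_{\kappa_\epsilon}\|_{H^2(\Omega)}\le c\epsilon.
\end{equation*}
Because the four terms of $\mathcal L$ (the two PDE residuals and the two $L^2(\partial\Omega)$ boundary residuals) are each dominated by $H^2(\Omega)$ differences via the triangle inequality and the trace theorem, this gives the approximation bound $\mathcal L(u_{\theta_\epsilon},p_{\kappa_\epsilon})\le c\epsilon^2$.

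\textbf{Step 3 (Statistical error and combination).} Here I would adapt the Rademacher-complexity machinery of Theorem \ref{thm:MLSNN_err_stat}, using Lemma \ref{lem:NN_bounds} to bound the $L^\infty$ norms of $u_\theta,\Delta u_\theta,p_\kappa,\Delta p_\kappa$ in terms of $L,W,R$ and Lemma \ref{lem:NN_Lip} to bound covering numbers of the induced function classes. Taking expectations (rather than tail bounds) then yields
\begin{equation*}
\mathbb E_{X,Y}\Big[\sup_{(\theta,\kappa)\in\mathcal Y\times\mathcal P}|\mathcal L(u_\theta,p_\kappa)-\widehat{\mathcal L}(u_\theta,p_\kappa)|\Big]\le c(n_d^{-1/2}+n_b^{-1/2})\,\text{poly}(R,W,L),
\end{equation*}
and the stated sample sizes $n_d,n_b$ make the right-hand side $O(\epsilon^2)$. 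Finally, the minimizing property of $(\theta^*,\kappa^*)$ gives
\begin{equation*}
\mathbb E_{X,Y}[\mathcal L(u_{\theta^*},p_{\kappa^*})]\le 2\,\mathbb E[\text{statistical error}]+\mathcal L(u_{\theta_\epsilon},p_{\kappa_\epsilon})\le c\epsilon^2,
\end{equation*}
which combined with Step 1 yields the claim.

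\textbf{Main obstacle.} The most delicate part will be the boundary treatment in Step 1: the normal-derivative terms $\int_{\partial\Omega}e_{p}\partial_\nu e_u$ are not directly controlled by the $L^2(\partial\Omega)$ boundary penalties appearing in $\mathcal L$, and bridging this gap requires either an elliptic-regularity bootstrap on $e_u,e_p$ (leveraging the $H^s$ regularity of $\bar u,\bar p$ and the $L^\infty$ bounds on $u_{\theta^*},p_{\kappa^*}$ from Lemma \ref{lem:NN_bounds}) or an interpolation argument between $L^2(\partial\Omega)$ and $H^{3/2}(\partial\Omega)$ traces. Getting the resulting constants to depend only on the quantities stated in the theorem, and not on the DNN architecture parameters $L,W,R$, is the subtle point of this step.
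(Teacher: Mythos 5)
Your overall architecture (coercivity of the population loss, plus the approximation/statistical/optimization split of Lemma \ref{lem:error-decomp}, with zero optimization error for a global minimizer) matches the paper's two-part strategy, and your Step 2 is exactly the paper's approximation bound via Lemma \ref{lem:NN_approx_tanh}. The genuine gap is in Step 3. The theorem's stated sample sizes are only attainable with a \emph{fast} statistical rate of order $n^{-1}$, which the paper obtains by replacing the uniform-deviation bound $\sup_{(\theta,\kappa)}|\mathcal{L}-\widehat{\mathcal{L}}|$ with the \emph{offset Rademacher complexity} of \cite{Liang:2015,Duan:2023}, exploiting the quadratic structure of the loss (this is stated explicitly in the paper's proof sketch and carried out in \cite{dai2024solvingellipticoptimalcontrol}). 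Your standard Rademacher/covering-number argument, as in Theorem \ref{thm:MLSNN_err_stat}, yields only $c\,\mathrm{poly}(R,W,L)\,(n_d^{-1/2}+n_b^{-1/2})$. To force this below $\epsilon^2$ you would need $n\sim \mathrm{poly}^2\,\epsilon^{-4}$, whereas the theorem fixes $n_b=c(d,s)\,\epsilon^{-\frac{4(sd+s)}{s-2-\mu}-2}$: the additive $-2$ in that exponent is precisely the signature of the fast rate ($n_b=\mathrm{poly}_b\cdot\epsilon^{-2}$), and a slow-rate argument would require the trailing exponent $-4$ with the polynomial factor squared. (For $n_d$ the unspecified constant in the exponent might absorb the difference, but for $n_b$ it cannot.) So with the argument you propose, the conclusion does not follow for the sample sizes stated in the theorem.

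The obstacle you flag in Step 1 is also real and your proposed fixes do not close it: after Green's second identity the term $\int_{\partial\Omega} e_u\,\partial_\nu e_p\,\mathrm{d}s$ pairs $\partial_\nu e_p\in H^{-1/2}(\partial\Omega)$ against a trace of $e_u$ that the loss controls only in $L^2(\partial\Omega)$, not $H^{1/2}(\partial\Omega)$, and interpolation up to $H^{1/2}$ would require boundary regularity the penalties do not supply with architecture-independent constants. The standard repair — and the one consistent with how this survey handles boundary penalties elsewhere (cf.\ the splitting $\widehat{u}_{\kappa}^\ast=\widehat{u}_{\kappa,1}^\ast+\widehat{u}_{\kappa,2}^\ast$ in Section \ref{subsec:NN_PINN}) — is to decompose $e_u=e_u^0+w_u$ and $e_p=e_p^0+w_p$, where $w_u,w_p$ carry the boundary data $u|_{\partial\Omega},p|_{\partial\Omega}$ and are estimated in $L^2(\Omega)$ directly by $\|u\|_{L^2(\partial\Omega)}$, $\|p\|_{L^2(\partial\Omega)}$ via very weak (transposition) solutions, leaving $e_u^0,e_p^0\in H_0^1(\Omega)$ so the energy/duality argument proceeds with no boundary terms at all. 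This yields the coercivity constant $c(\bm{\gamma},\lambda)$ independent of $L,W,R$, which your trace-inequality route cannot guarantee.
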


The error analysis involves two parts. First, let $(\bar{u}, \bar{p})$ be the solution tuple to the system $(2.4)$ with $\bar{z}=-\lambda^{-1} \bar{p}$. Then for any $\left(u_\theta, p_\kappa\right) \in$ $\mathcal{Y} \times \mathcal{P}$, with $z_\kappa=-\lambda^{-1} p_\kappa$, the following coercivity estimate holds
\begin{align*}
\left\|\bar{u}-u_\theta\right\|_{L^2(\Omega)}^2+\left\|\bar{p}-p_\kappa\right\|_{L^2(\Omega)}^2+\left\|\bar{z}-z_\kappa\right\|_{L^2(\Omega)}^2
\leq c(\boldsymbol{\gamma}, \lambda) \mathcal{L}\left(u_\theta, p_\kappa\right).
\end{align*}
Second, for any minimizing pair $(\hat{u}, \hat{p}) \in \mathcal{Y} \times \mathcal{P}$ of the empirical loss $\widehat{\mathcal{L}}(u, p)$, one can decompose the error of the loss $\mathbb{E}_{\mathrm{X}, \mathrm{Y}}[\mathcal{L}(\hat{u}, \hat{p})]$ by the approximation error $\inf _{(u, p) \in \mathcal{Y} \times \mathcal{P}} \mathcal{L}(u, p)$ and the quadrature error for the Monte Carlo approximation of the integrals. Dai et al employed the so-called offset Rademacher complexity \cite{Liang:2015,Duan:2023} to obtain a faster convergence rate of the statistical error.

\section{Summary and future directions}\label{sec:concl}

In this review, we have systematically discussed unsupervised learning approaches to solve (nonlinear) PDE  inverse problems, and have reviewed  three different approaches (including the FEM approach, the hybrid DNN-FEM approach and the DNN approach) on the model problem, i.e., diffusion coefficient identification. The focus is on numerical schemes with rigorous error analysis aligned with conditional stability.

First, the FEM discretization is typically based on reformulating the reconstruction task as regularized output least-squares fitting. The analysis relies on an energy argument using suitable test functions (e.g., exponential type test function), bounds of the objective  and positivity conditions on the weight. The analysis can deliver error bounds that are explicit in terms of the mesh size $h$, the noise level $\delta$ and the regularization parameter $\gamma$, which provide guidelines for choosing the algorithmic parameters. The obtained results are consistent with conditional stability results, under various assumptions on the problem data. Conditional stability directly motivates the error analysis.

Second, the hybrid DNN-FEM scheme approximates the unknown coefficient by the DNN and discretizes the state by the Galerkin FEM. The approach inherits the excellent expressivity of the DNN and  rigorous mathematical foundation of the FEM. However, the global support of DNN functions poses challenge in imposing the box constraint, and necessitates the use of quadrature schemes. The error analysis is similar to the FEM case. The convergence rate agrees with the stability result, if the quadrature error is made sufficiently small.

Third, we discuss two purely DNN-based approaches: the mixed least-squares DNN and PINNs. The error analysis takes into account both the approximation error of the DNN function class and the statistical error arising from the Monte Carlo approximation of the integrals. The approximation error is bounded using results from approximation theory (e.g., \cite{Guhring:2021}). The statistical error is analyzed using a PAC-type argument, using the covering number to bound the Rademacher complexity of the DNN function class. Together, the approximation error and the statistical error provide an \textit{a priori} estimate for the loss. The error of the reconstructed coefficient is further estimated using a weighted energy argument, inspired by the conditional stability result. For the PINN framework, the error analysis directly employs conditional stability results at the continuous level. However, the associated constants depend on the DNN architecture and require separate analysis. For different inverse problems, it is crucial to design the DNN loss function carefully according to the available stability estimates, and bound the error from a numerical perspective.

In sum, the use of DNNs to solve forward and inverse problems for PDEs has been explored only recently, and there has been important progress towards establishing theory-grounded numerical schemes. Despite the impressive  progress, there are still many interesting theoretical and practical issues that deserve further investigation.

First, the error analysis so far focuses on parameters with Sobolev regularity as in the FEM setting, and the error bounds suffers from the notorious curse of dimensionality.
One notable potential is to solve PDE inverse problems in the high-dimensional setting, for which it is imperative to derive realistic error bounds. Also it is of interest to investigate the optimality of the error bounds. The latter requires revisiting the approximation theory of DNNs, e.g., precisely characterizing the regime in which DNNs are superior to classical polynomial approximations.
%Even very shallow and small neural networks exhibit excellent expressivity. Future work will focus on exploring the reasons behind this discrepancy and improving the error estimates.

%Second, the obtained DNN approximation error cannot overcome the curse of dimensionality, which is one of the main motivations using DNNs for solving PDEs. Indeed, the proof of the approximation error relies on constructing an approximated partition of unity and localized polynomials. These two main components extend the approximation theory of traditional numerical methods, but remain closely related to the dimension of the space. Hence, the presented result still suffer the curse of dimensionality. Despite this theoretical limitation, the numerical implementation of the DNN approach performs well for high-dimensional problems. Indeed, in practice, the unknown coefficient can often be assumed to lie on a low dimensional manifold, e.g., images.

Second, the error analysis focuses on the approximation error and statistical error in the context of the DNN based approaches, and ignores the optimization error. Due to the high nonlinearity of DNNs and non-convexity of the (empirical) loss, in practice one cannot guarantee obtaining a global minimizer. In practice, standard gradient type methods (e.g., Adam \cite{Kingma:2017}) work reasonably well but often fails to achieve very high precision. The presence of optimization errors has greatly complicated numerically verifying the error estimates as for conventional numerical schemes. It is of great importance to analyze the optimization error and its impact on the error bound, the connection with the empirically observed robustness with respect to data noise. The current convergence theory often builds on neural tangent kernel in an over-parameterized regime \cite{Jacot:2018,WangYuPerdikaris:2022}.

Third, the key of the error analysis is conditional stability of PDE inverse problems, e.g., Theorem \ref{thm:Bonito_stab} for diffusion coefficient identification. So far, the stability result utilizes the weak formulations of PDEs and a weighted energy argument. The error analysis mimics stability analysis and the convergence rate aligns with condition stability. Thus it is crucial to establish the stability at the continuous level that can then be applied to numerical analysis. This is a critical issue since often conditional stability results in theoretical PDE inverse problems require high regularity on the state or parameters \cite{Isakov:2006}. Indeed, Carleman estimates are often based on an exponential type weighted function and integration by parts, which is challenging to carry out at the discrete level  \cite{Klibanov:1991,Boyer:2010}, and discrete Carleman estimates often require restrictive conditions for the mesh size $h$.

Fourth, it is crucial to investigate inverse problems with poor stability, e.g. logarithmic stability or local H\"{o}lder stability. In practice, the measured data are often available only on a subdomain or on the boundary, which greatly deteriorates the stability of the inverse problem. Thus, it is of interest to establish  conditional stability of severely ill-posed problems in the low regularity regime and to derive error bounds.

Fifth and last, the discussions barely touch practical aspects of unsupervised learning approaches. Currently one notable challenge is the low computational efficiency: DNN-based methods often take much longer time to reach satisfactory reconstruction results, compared with the Galerkin FEM. This issue is especially pronounced for high-order PDEs. The training dynamics depends crucially on both the initialization scheme (e.g.,  Glorot (or Xavier) Initialization and He Initialization)  and the training algorithm. From a practical point of view, reducing the training error represents one major bottleneck in using DNNs to solve PDE inverse problems. So far the accuracy of DNN approximations is not very high (although more robust than the Galerkin FEM). The latter is often attributed to the presence of significant optimization errors: the optimizer fails to find a global minimizer. So there is an imperative need to develop more efficient and accurate implementations of the neural solvers for PDE inverse problems.

\section*{Acknowledgements}

The work of B. Jin is supported by Hong Kong RGC General Research Fund
(Project 14306423 and Project 14306824), and a
 start-up fund from The Chinese
University of Hong Kong.
The work of Z. Zhou is supported by by National Natural Science Foundation of China (Project 12422117) and Hong Kong Research Grants Council (15302323), and an internal grant of The Hong Kong Polytechnic University (Project ID: P0038888, Work Programme: ZVX3).

\section*{Appendix: Sobolev spaces}\label{sec:app}
Let $d\in\mathbb{N}$, $k\in\mathbb{N}_0:=\mathbb{N}\cup\{0\}$ and $1\leq p\leq \infty$. Let $\Omega\subset\mathbb{R}^d$ be an open bounded domain with a boundary $\partial\Omega$. Given a function $v:\Omega \to \mathbb{R}$ and a multi-index $\alpha\in\mathbb{N}_0^d$ (with $|\alpha|=\sum_{i=1}^d\alpha_i$), we denote by
\begin{equation*}
    D^\alpha v = \frac{\partial^{|\alpha|}v}{\partial x_1^{\alpha_1}\cdots\partial x_d^{\alpha_d}}
\end{equation*}
the distributional (weak) derivative of $v$. We denote by $L^p(\Omega)$ the usual Lebesgue space, and for $k\geq 1$, we define the Sobolev space $W^{k,p}(\Omega)$ by
\begin{equation*}
    W^{k,p}(\Omega) = \{v\in L^p(\Omega): D^\alpha v\in L^p(\Omega), \forall \alpha\in \mathbb{N}_0^d\mbox{ with }
|\alpha|\leq k\}.
\end{equation*}
On the space $W^{p,p}(\Omega)$, we define the following semi-norms for $m=0,\ldots,k$,
\begin{equation*}
    |v|_{W^{m,p}(\Omega)}=\left\{\begin{aligned}
 &\left(\sum_{|\alpha|=m}\|D^\alpha v\|_{L^p(\Omega)}^p\right)^{1/p},\quad &&1\leq p<\infty,\\
 &\max_{|\alpha|=m}\|D^\alpha v\|_{L^\infty(\Omega)}, \quad &&p=\infty,
 \end{aligned}\right.
\end{equation*}
and the following norm
\begin{equation*}
    \|v\|_{W^{k,p}(\Omega)} =\left\{\begin{aligned}
    &\left(\sum_{m=0}^k|v|_{W^{m,p}(\Omega)}^p\right)^{1/p},\quad &&1\leq p<\infty,\\
   &\max_{0\leq m\leq k}|f|_{W^{m,\infty}(\Omega)}, \quad &&p=\infty.
    \end{aligned}\right.
\end{equation*}
The space $W^{k,p}(\Omega)$ equipped with the norm $\|\cdot\|_{W^{k,p}(\Omega)}$ is a Banach space, and when $p=2$, it is a Hilbert space.

We denote by $C^k(\overline{\Omega})$ the space of functions that are continuously differentiable (up to the boundary $\partial\Omega$), and equip the space with the norm $\|v\|_{C^k(\overline\Omega)}=\|v\|_{W^{k,p}(\Omega)}$.

\bibliographystyle{abbrv}

\begin{thebibliography}{100}

\bibitem{AdamsFournier:2003}
R.~A. Adams and J.~J.~F. Fournier.
\newblock {\em {Sobolev Spaces}}.
\newblock Elsevier, Amsterdam, second edition, 2003.

\bibitem{ACZ:2025}
G.~S. Alberti, S.~Cen, and Z.~Zhou.
\newblock Finite element approximation for quantitative photoacoustic
  tomography in a diffusive regime.
\newblock Preprint, arXiv:2505.05361, 2025.

\bibitem{Alessandrini:1986}
G.~Alessandrini.
\newblock An identification problem for an elliptic equation in two variables.
\newblock {\em Ann. Mat. Pura Appl. (4)}, 145:265--295, 1986.

\bibitem{Alessandrini:2017}
G.~Alessandrini, M.~Di~Cristo, E.~Francini, and S.~Vessella.
\newblock Stability for quantitative photoacoustic tomography with well-chosen
  illuminations.
\newblock {\em Ann. Mat. Pura Appl. (4)}, 196(2):395--406, 2017.

\bibitem{alessandrini:2024stable}
G.~Alessandrini, A.~Morassi, E.~Rosset, E.~Sincich, and S.~Vessella.
\newblock Stable determination of the {W}inkler subgrade coefficient in a
  nanoplate.
\newblock {\em Proc. A.}, 481(2312):20240647, 15 pp., 2025.

\bibitem{Alessandrini:2009}
G.~Alessandrini, L.~Rondi, E.~Rosset, and S.~Vessella.
\newblock The stability for the {C}auchy problem for elliptic equations.
\newblock {\em Inverse Problems}, 25(12):123004, 47 pp., 2009.

\bibitem{Anthony:1999}
M.~Anthony and P.~L. Bartlett.
\newblock {\em {Neural Network Learning: Theoretical Foundations}}.
\newblock Cambridge University Press, Cambridge, 1999.

\bibitem{Badia:2024compatible}
S.~Badia, W.~Li, and A.~F. Mart\'in.
\newblock Compatible finite element interpolated neural networks.
\newblock {\em Comput. Methods Appl. Mech. Engrg.}, 439:117889, 23 pp., 2025.

\bibitem{Badia:2024}
S.~Badia, W.~Li, and A.~F. Martín.
\newblock Finite element interpolated neural networks for solving forward and
  inverse problems.
\newblock {\em Comput. Methods Appl. Mech. Engrg.}, 418:116505, 21 pp., 2024.

\bibitem{BarSochen:2021}
L.~Bar and N.~Sochen.
\newblock Strong solutions for {PDE}-based tomography by unsupervised learning.
\newblock {\em SIAM J. Imaging Sci.}, 14(1):128--155, 2021.

\bibitem{Barron:1994}
A.~R. Barron.
\newblock Approximation and estimation bounds for artificial neural networks.
\newblock {\em Mach. Learn.}, 14(1):115--133, 1994.

\bibitem{BartlettMendelson:2002}
P.~L. Bartlett and S.~Mendelson.
\newblock Rademacher and {G}aussian complexities: risk bounds and structural
  results.
\newblock {\em J. Mach. Learn. Res.}, 3:463--482, 2002.

\bibitem{Baydin:2018}
A.~G. Baydin, B.~A. Pearlmutter, A.~A. Radul, and J.~M. Siskind.
\newblock Automatic differentiation in machine learning: a survey.
\newblock {\em J. Mach. Learn. Res.}, 18:153, 1--43, 2018.

\bibitem{Berg:2021}
J.~Berg and K.~Nyström.
\newblock Neural networks as smooth priors for inverse problems for {PDE}s.
\newblock {\em J. Comput. Math. Data Sci.}, 1:100008, 27 pp., 2021.

\bibitem{BotHofmann:2016}
R.~I. Bo\c{t} and B.~Hofmann.
\newblock Conditional stability versus ill-posedness for operator equations
  with monotone operators in {H}ilbert space.
\newblock {\em Inverse Problems}, 32(12):125003, 23 pp., 2016.

\bibitem{Bonito:2017}
A.~Bonito, A.~Cohen, R.~DeVore, G.~Petrova, and G.~Welper.
\newblock Diffusion coefficients estimation for elliptic partial differential
  equations.
\newblock {\em SIAM J. Math. Anal.}, 49(2):1570--1592, 2017.

\bibitem{Borovykh:2022}
A.~Borovykh, D.~Kalise, A.~Laignelet, and P.~Parpas.
\newblock Data-driven initialization of deep learning solvers for
  {H}amilton-{J}acobi-{B}ellman {PDE}s.
\newblock {\em IFAC-PapersOnLine}, 55(30):168--173, 2022.
\newblock 25th International Symposium on Mathematical Theory of Networks and
  Systems MTNS 2022.

\bibitem{Boyer:2010}
F.~Boyer, F.~Hubert, and J.~Le~Rousseau.
\newblock Discrete {C}arleman estimates for elliptic operators and uniform
  controllability of semi-discretized parabolic equations.
\newblock {\em J. Math. Pures Appl. (9)}, 93(3):240--276, 2010.

\bibitem{Brenner:2002}
S.~C. Brenner and L.~R. Scott.
\newblock {\em {The Mathematical Theory of Finite Element Methods}}.
\newblock Springer-Verlag, New York, second edition, 2002.

\bibitem{Brezis:2018}
H.~Brezis and P.~Mironescu.
\newblock Gagliardo-{N}irenberg inequalities and non-inequalities: the full
  story.
\newblock {\em Ann. Inst. H. Poincar\'e{} C Anal. Non Lin\'eaire},
  35(5):1355--1376, 2018.

\bibitem{Burman:2013}
E.~Burman.
\newblock Stabilized finite element methods for nonsymmetric, noncoercive, and
  ill-posed problems. {P}art {I}: {E}lliptic equations.
\newblock {\em SIAM J. Sci. Comput.}, 35(6):A2752--A2780, 2013.

\bibitem{Burman:2020wave}
E.~Burman, A.~Feizmohammadi, and L.~Oksanen.
\newblock A finite element data assimilation method for the wave equation.
\newblock {\em Math. Comp.}, 89(324):1681--1709, 2020.

\bibitem{Burman:2018control}
E.~Burman, P.~Hansbo, and M.~G. Larson.
\newblock Solving ill-posed control problems by stabilized finite element
  methods: an alternative to {T}ikhonov regularization.
\newblock {\em Inverse Problems}, 34(3):035004, 36 pp., 2018.

\bibitem{Burman:2020convectionI}
E.~Burman, M.~Nechita, and L.~Oksanen.
\newblock A stabilized finite element method for inverse problems subject to
  the convection-diffusion equation. {I}: diffusion-dominated regime.
\newblock {\em Numer. Math.}, 144(3):451--477, 2020.

\bibitem{Burman:2022convectionII}
E.~Burman, M.~Nechita, and L.~Oksanen.
\newblock A stabilized finite element method for inverse problems subject to
  the convection-diffusion equation. {II}: convection-dominated regime.
\newblock {\em Numer. Math.}, 150(3):769--801, 2022.

\bibitem{Burman:2018heat}
E.~Burman and L.~Oksanen.
\newblock Data assimilation for the heat equation using stabilized finite
  element methods.
\newblock {\em Numer. Math.}, 139(3):505--528, 2018.

\bibitem{BurmanOksanen:2018}
E.~Burman and L.~Oksanen.
\newblock Weakly consistent regularisation methods for ill-posed problems.
\newblock In D.~A. {Di Pietro}, A.~Ern, and L.~Formaggia, editors, {\em
  {Numerical Methods for {PDE}s}}, pages 171--202. Springer, Cham, 2018.

\bibitem{ByrdLu:1995}
R.~H. Byrd, P.~Lu, J.~Nocedal, and C.~Y. Zhu.
\newblock A limited memory algorithm for bound constrained optimization.
\newblock {\em SIAM J. Sci. Comput.}, 16(5):1190--1208, 1995.

\bibitem{CaiChoiLiu:2024}
Z.~Cai, J.~Choi, and M.~Liu.
\newblock Least-squares neural network ({LSNN}) method for linear
  advection-reaction equation: discontinuity interface.
\newblock {\em SIAM J. Sci. Comput.}, 46(4):C448--C478, 2024.

\bibitem{Cao:2025Potential}
K.~Cao and F.~Yan.
\newblock Potential coefficient identification problem in parabolic equation
  with deep neural networks.
\newblock {\em Inverse Problems}, 41(5):055012, 46 pp., 2025.

\bibitem{Cen:2025}
S.~Cen, B.~Jin, X.~Li, and Z.~Zhou.
\newblock Imaging anisotropic conductivity from internal measurements with
  mixed least-squares deep neural networks.
\newblock {\em J. Comput. Phys.}, 523:113648, 25 pp., 2025.

\bibitem{Cen:2024}
S.~Cen, B.~Jin, Q.~Quan, and Z.~Zhou.
\newblock Hybrid neural-network {FEM} approximation of diffusion coefficient in
  elliptic and parabolic problems.
\newblock {\em IMA J. Numer. Anal.}, 44(5):3059--3093, 2024.

\bibitem{CenShinZhou:2024}
S.~Cen, K.~Shin, and Z.~Zhou.
\newblock Determining a time-varying potential in time-fractional diffusion
  from observation at a single point.
\newblock {\em Numer. Methods Partial Differential Equations}, 40(6):e23136, 25
  pp., 2024.

\bibitem{Cen:2024multi}
S.~Cen and Z.~Zhou.
\newblock Numerical reconstruction of diffusion and potential coefficients from
  two observations: decoupled recovery and error estimates.
\newblock {\em SIAM J. Numer. Anal.}, 62(5):2276--2307, 2024.

\bibitem{ChenGu:2021}
C.-T. Chen and G.~X. Gu.
\newblock Learning hidden elasticity with deep neural networks.
\newblock {\em Proc. Nat. Acad. Sci.}, 118(31):e2102721118, 2021.

\bibitem{ChengYamamoto:2000}
J.~Cheng and M.~Yamamoto.
\newblock One new strategy for a priori choice of regularizing parameters in
  {T}ikhonov's regularization.
\newblock {\em Inverse Problems}, 16(4):L31--L38, 2000.

\bibitem{Cialet:2002}
P.~G. Ciarlet.
\newblock {\em {The Finite Element Method for Elliptic Problems}}.
\newblock SIAM, Philadelphia, PA, 2002.

\bibitem{CuckerSmale:2002}
F.~Cucker and S.~Smale.
\newblock On the mathematical foundations of learning.
\newblock {\em Bull. Amer. Math. Soc. (N.S.)}, 39(1):1--49, 2002.

\bibitem{Cybenko:1989}
G.~Cybenko.
\newblock Approximation by superpositions of a sigmoidal function.
\newblock {\em Math. Control Signals Syst.}, 2(4):303--314, 1989.

\bibitem{dai2024solvingellipticoptimalcontrol}
Y.~Dai, B.~Jin, R.~C. Sau, and Z.~Zhou.
\newblock Solving elliptic optimal control problems via neural networks and
  optimality system.
\newblock {\em Adv. Comput. Math.}, 51(4):31, 2025.

\bibitem{deHoopQiu:2012}
M.~V. de~Hoop, L.~Qiu, and O.~Scherzer.
\newblock Local analysis of inverse problems: {H}\"older stability and
  iterative reconstruction.
\newblock {\em Inverse Problems}, 28(4):045001, 16 pp., 2012.

\bibitem{QiuScherzer:2015}
M.~V. de~Hoop, L.~Qiu, and O.~Scherzer.
\newblock An analysis of a multi-level projected steepest descent iteration for
  nonlinear inverse problems in {B}anach spaces subject to stability
  constraints.
\newblock {\em Numer. Math.}, 129(1):127--148, 2015.

\bibitem{de:2024numerical}
T.~De~Ryck and S.~Mishra.
\newblock Numerical analysis of physics-informed neural networks and related
  models in physics-informed machine learning.
\newblock {\em Acta Numer.}, 33:633--713, 2024.

\bibitem{Fazio:2017}
G.~Di~Fazio, E.~Francini, F.~Raciti, and S.~Vessella.
\newblock Stable determination of a {L}am\'e{} coefficient by one internal
  measurement of displacement.
\newblock {\em J. Math. Anal. Appl.}, 452(1):388--400, 2017.

\bibitem{Dissanayake:1994}
M.~W. M.~G. Dissanayake and N.~Phan-Thien.
\newblock Neural-network-based approximations for solving partial differential
  equations.
\newblock {\em Commun. Numer. Methods Engrg.}, 10:195--201, 1994.

\bibitem{DuGu:2022}
Q.~Du, Y.~Gu, H.~Yang, and C.~Zhou.
\newblock The discovery of dynamics via linear multistep methods and deep
  learning: error estimation.
\newblock {\em SIAM J. Numer. Anal.}, 60(4):2014--2045, 2022.

\bibitem{Duan:2023}
C.~Duan, Y.~Jiao, L.~Kang, X.~Lu, and J.~Z. Yang.
\newblock Fast excess risk rates via offset rademacher complexity.
\newblock In A.~Krause, E.~Brunskill, K.~Cho, B.~Engelhardt, S.~Sabato, and
  J.~Scarlett, editors, {\em Proceedings of the 40th International Conference
  on Machine Learning}, volume 202 of {\em Proceedings of Machine Learning
  Research}, pages 8697--8716. PMLR, 23--29 Jul 2023.

\bibitem{EYu:2018}
W.~E and B.~Yu.
\newblock The deep {R}itz method: a deep learning-based numerical algorithm for
  solving variational problems.
\newblock {\em Commun. Math. Stat.}, 6(1):1--12, 2018.

\bibitem{EggerHofmann:2018}
H.~Egger and B.~Hofmann.
\newblock Tikhonov regularization in {H}ilbert scales under conditional
  stability assumptions.
\newblock {\em Inverse Problems}, 34(11):115015, 17 pp., 2018.

\bibitem{ElBadiaElHajj:2012}
A.~El~Badia and A.~El~Hajj.
\newblock H\"older stability estimates for some inverse pointwise source
  problems.
\newblock {\em C. R. Math. Acad. Sci. Paris}, 350(23-24):1031--1035, 2012.

\bibitem{ElBadiaElHajj:2013}
A.~El~Badia and A.~El~Hajj.
\newblock Stability estimates for an inverse source problem of {H}elmholtz's
  equation from single {C}auchy data at a fixed frequency.
\newblock {\em Inverse Problems}, 29(12):125008, 20, 2013.

\bibitem{EnglHankeNeubauer:1996}
H.~W. Engl, M.~Hanke, and A.~Neubauer.
\newblock {\em Regularization of {I}nverse {P}roblems}.
\newblock Kluwer, Dordrecht, 1996.

\bibitem{EnglKunischNeubauer:1989}
H.~W. Engl, K.~Kunisch, and A.~Neubauer.
\newblock Convergence rates for {T}ikhonov regularisation of nonlinear
  ill-posed problems.
\newblock {\em Inverse Problems}, 5(4):523--540, 1989.

\bibitem{Falk:1983}
R.~S. Falk.
\newblock Error estimates for the numerical identification of a variable
  coefficient.
\newblock {\em Math. Comp.}, 40(162):537--546, 1983.

\bibitem{Gilbarg:1977}
D.~Gilbarg and N.~S. Trudinger.
\newblock {\em {Elliptic Partial Differential Equations of Second Order}}.
\newblock Springer-Verlag, Berlin-New York, 1977.

\bibitem{GruterWidman:1982}
M.~Gr{\"u}ter and K.-O. Widman.
\newblock The {G}reen function for uniformly elliptic equations.
\newblock {\em Manuscripta Math.}, 37(3):303--342, 1982.

\bibitem{Gutman:1990}
S.~Gutman.
\newblock Identification of discontinuous parameters in flow equations.
\newblock {\em SIAM J. Control Optim.}, 28(5):1049--1060, 1990.

\bibitem{Guhring:2021}
I.~Gühring and M.~Raslan.
\newblock Approximation rates for neural networks with encodable weights in
  smoothness spaces.
\newblock {\em Neural Networks}, 134:107--130, 2021.

\bibitem{Hairer:1993SolvingI}
E.~Hairer, S.~P. N\o~rsett, and G.~Wanner.
\newblock {\em {Solving Ordinary Differential Equations. {I}}}.
\newblock Springer-Verlag, Berlin, second edition, 1993.
\newblock Nonstiff problems.

\bibitem{Hairer:1993SolvingII}
E.~Hairer and G.~Wanner.
\newblock {\em {Solving Ordinary Differential Equations. {II}}}.
\newblock Springer-Verlag, Berlin, second edition, 1996.
\newblock Stiff and differential-algebraic problems.

\bibitem{Henrici:1962}
P.~Henrici.
\newblock {\em {Discrete Variable Methods in Ordinary Differential Equations}}.
\newblock John Wiley \& Sons, Inc., New York-London, 1962.

\bibitem{HinzeKaltenbacher:2018}
M.~Hinze, B.~Kaltenbacher, and T.~N.~T. Quyen.
\newblock Identifying conductivity in electrical impedance tomography with
  total variation regularization.
\newblock {\em Numer. Math.}, 138(3):723--765, 2018.

\bibitem{Hornik:1991}
K.~Hornik.
\newblock Approximation capabilities of multilayer feedforward networks.
\newblock {\em Neural Networks}, 4(2):251--257, 1991.

\bibitem{HuJinZhou:2023}
T.~Hu, B.~Jin, and Z.~Zhou.
\newblock Solving elliptic problems with singular sources using singularity
  splitting deep {R}itz method.
\newblock {\em SIAM J. Sci. Comput.}, 45(4):A2043--A2074, 2023.

\bibitem{hu2024pointsourceidentificationusing}
T.~Hu, B.~Jin, and Z.~Zhou.
\newblock Point source identification using singularity enriched neural
  networks.
\newblock Preprint, arXiv:2408.09143, 2024.

\bibitem{HuJinZhou:2024}
T.~Hu, B.~Jin, and Z.~Zhou.
\newblock Solving {P}oisson problems in polygonal domains with singularity
  enriched physics informed neural networks.
\newblock {\em SIAM J. Sci. Comput.}, 46(4):C369--C398, 2024.

\bibitem{Huang:2020}
D.~Z. Huang, K.~Xu, C.~Farhat, and E.~Darve.
\newblock Learning constitutive relations from indirect observations using deep
  neural networks.
\newblock {\em J. Comput. Phys.}, 416:109491, 28 pp., 2020.

\bibitem{Imanuilov:1995}
O.~Y. Imanuilov.
\newblock Controllability of parabolic equations.
\newblock {\em Math. Sb.}, 186(6):879--900, 1995.

\bibitem{Isakov:1991}
V.~Isakov.
\newblock Inverse parabolic problems with the final overdetermination.
\newblock {\em Comm. Pure Appl. Math.}, 44(2):185--209, 1991.

\bibitem{Isakov:2006}
V.~Isakov.
\newblock {\em {Inverse Problems for Partial Differential Equations}}.
\newblock Springer, New York, second edition, 2006.

\bibitem{ItoJin:2015}
K.~Ito and B.~Jin.
\newblock {\em Inverse {P}roblems: Tikhonov {T}heory and {A}lgorithms}.
\newblock World Scientific Publishing Co. Pte. Ltd., Hackensack, NJ, 2015.

\bibitem{Jacot:2018}
A.~Jacot, F.~Gabriel, and C.~Hongler.
\newblock Neural tangent kernel: Convergence and generalization in neural
  networks.
\newblock In S.~Bengio, H.~Wallach, H.~Larochelle, K.~Grauman, N.~Cesa-Bianchi,
  and R.~Garnett, editors, {\em Advances in Neural Information Processing
  Systems}, volume~31. Curran Associates, Inc., 2018.

\bibitem{Jiao:2021DRM}
Y.~Jiao, Y.~Lai, Y.~Lo, Y.~Wang, and Y.~Yang.
\newblock Error analysis of deep {R}itz methods for elliptic equations.
\newblock {\em Anal. Appl. (Singap.)}, 22(1):57--87, 2024.

\bibitem{jin2023:Inverse}
B.~Jin, Y.~Kian, and Z.~Zhou.
\newblock Inverse problems for subdiffusion from observation at an unknown
  terminal time.
\newblock {\em SIAM J. Appl. Math.}, 83(4):1496--1517, 2023.

\bibitem{Jin:2022CDII}
B.~Jin, X.~Li, and X.~Lu.
\newblock Imaging conductivity from current density magnitude using neural
  networks.
\newblock {\em Inverse Problems}, 38(7):075003, 36 pp., 2022.

\bibitem{Jin:2024conductivity}
B.~Jin, X.~Li, Q.~Quan, and Z.~Zhou.
\newblock Conductivity imaging from internal measurements with mixed
  least-squares deep neural networks.
\newblock {\em SIAM J. Imaging Sci.}, 17(1):147--187, 2024.

\bibitem{Jin:2023potential}
B.~Jin, X.~Lu, Q.~Quan, and Z.~Zhou.
\newblock Convergence rate analysis of {G}alerkin approximation of inverse
  potential problem.
\newblock {\em Inverse Problems}, 39(1):015008, 26 pp., 2023.

\bibitem{Jin:2024numerical}
B.~Jin, X.~Lu, Q.~Quan, and Z.~Zhou.
\newblock Numerical recovery of the diffusion coefficient in diffusion
  equations from terminal measurement.
\newblock Preprint, arXiv:2405.10708, 2024.

\bibitem{jin:2024stochastic}
B.~Jin, Q.~Quan, and W.~Zhang.
\newblock Stochastic convergence analysis of inverse potential problem.
\newblock {\em ASA/SIAM J. Uncert. Quantif.}, pages in press, arXiv:2410.14106,
  2025.

\bibitem{JinShinZhou:2024}
B.~Jin, K.~Shin, and Z.~Zhou.
\newblock Numerical recovery of a time-dependent potential in subdiffusion.
\newblock {\em Inverse Problems}, 40(2):025008, 34, 2024.

\bibitem{Jin:2021Error}
B.~Jin and Z.~Zhou.
\newblock Error analysis of finite element approximations of diffusion
  coefficient identification for elliptic and parabolic problems.
\newblock {\em SIAM J. Numer. Anal.}, 59(1):119--142, 2021.

\bibitem{Jin:2021Numerical}
B.~Jin and Z.~Zhou.
\newblock Numerical estimation of a diffusion coefficient in subdiffusion.
\newblock {\em SIAM J. Control Optim.}, 59(2):1466--1496, 2021.

\bibitem{Jin:2023recovery}
B.~Jin and Z.~Zhou.
\newblock Recovery of a space-time-dependent diffusion coefficient in
  subdiffusion: stability, approximation and error analysis.
\newblock {\em IMA J. Numer. Anal.}, 43(4):2496--2531, 2023.

\bibitem{jin:2017deep}
K.~H. Jin, M.~T. McCann, E.~Froustey, and M.~Unser.
\newblock Deep convolutional neural network for inverse problems in imaging.
\newblock {\em IEEE Trans. Image Proc.}, 26(9):4509--4522, 2017.

\bibitem{DuKeller:2021}
R.~T. Keller and Q.~Du.
\newblock Discovery of dynamics using linear multistep methods.
\newblock {\em SIAM J. Numer. Anal.}, 59(1):429--455, 2021.

\bibitem{Keung:1998}
Y.~L. Keung and J.~Zou.
\newblock Numerical identifications of parameters in parabolic systems.
\newblock {\em Inverse Problems}, 14(1):83--100, 1998.

\bibitem{Kingma:2017}
D.~P. Kingma and J.~Ba.
\newblock Adam: A method for stochastic optimization.
\newblock In {\em 3rd Inter- national Conference for Learning Representations},
  San Diego, 2015.

\bibitem{Klibanov:1991}
M.~V. Klibanov and F.~Santosa.
\newblock A computational quasi-reversibility method for {C}auchy problems for
  {L}aplace's equation.
\newblock {\em SIAM J. Appl. Math.}, 51(6):1653--1675, 1991.

\bibitem{KlibanovTimonov:2004}
M.~V. Klibanov and A.~Timonov.
\newblock {\em {Carleman Estimates for Coefficient Inverse Problems and
  Numerical Applications}}.
\newblock VSP, Utrecht, 2004.

\bibitem{Kohn:1988}
R.~V. Kohn and B.~D. Lowe.
\newblock A variational method for parameter identification.
\newblock {\em RAIRO Mod\'el. Math. Anal. Num\'er.}, 22(1):119--158, 1988.

\bibitem{KohnStrang:1986}
R.~V. Kohn and G.~Strang.
\newblock Optimal design and relaxation of variational problems. {III}.
\newblock {\em Comm. Pure Appl. Math.}, 39(3):353--377, 1986.

\bibitem{lagaris:1998artificial}
I.~E. Lagaris, A.~Likas, and D.~I. Fotiadis.
\newblock Artificial neural networks for solving ordinary and partial
  differential equations.
\newblock {\em IEEE Trans. Neural Netw.}, 9(5):987--1000, 1998.

\bibitem{Lagaris:2000}
I.~E. Lagaris, A.~Likas, and G.~D. Papageorgiou.
\newblock Neural-network methods for boundary value problems with irregular
  boundaries.
\newblock {\em IEEE Trans. Neural Netw.}, 11:1041--1049, 2000.

\bibitem{Lai:2025Hard}
M.-C. Lai, Y.~Song, X.~Yuan, H.~Yue, and T.~Zeng.
\newblock The hard-constraint {PINNs} for interface optimal control problems.
\newblock {\em SIAM J. Sci. Comput.}, 47(3):C601--C629, 2025.

\bibitem{LiSun:2017}
B.~Li and W.~Sun.
\newblock Maximal {$L^p$} analysis of finite element solutions for parabolic
  equations with nonsmooth coefficients in convex polyhedra.
\newblock {\em Math. Comp.}, 86(305):1071--1102, 2017.

\bibitem{Liang:2015}
T.~Liang, A.~Rakhlin, and K.~Sridharan.
\newblock Learning with square loss: Localization through offset rademacher
  complexity.
\newblock In P.~Grünwald, E.~Hazan, and S.~Kale, editors, {\em Proceedings of
  The 28th Conference on Learning Theory}, volume~40 of {\em Proceedings of
  Machine Learning Research}, pages 1260--1285, Paris, France, 03--06 Jul 2015.
  PMLR.

\bibitem{Lin:2010}
C.-L. Lin, G.~Uhlmann, and J.-N. Wang.
\newblock Optimal three-ball inequalities and quantitative uniqueness for the
  {S}tokes system.
\newblock {\em Discrete Contin. Dyn. Syst.}, 28(3):1273--1290, 2010.

\bibitem{Lu:2021approx}
J.~Lu, Z.~Shen, H.~Yang, and S.~Zhang.
\newblock Deep network approximation for smooth functions.
\newblock {\em SIAM J. Math. Anal.}, 53(5):5465--5506, 2021.

\bibitem{LuJohnson:2021}
L.~Lu, R.~Pestourie, W.~Yao, Z.~Wang, F.~Verdugo, and S.~G. Johnson.
\newblock Physics-informed neural networks with hard constraints for inverse
  design.
\newblock {\em SIAM J. Sci. Comput.}, 43(6):B1105--B1132, 2021.

\bibitem{Lu:2021priori}
Y.~Lu, J.~Lu, and M.~Wang.
\newblock A priori generalization analysis of the deep {R}itz method for
  solving high dimensional elliptic partial differential equations.
\newblock In M.~Belkin and S.~Kpotufe, editors, {\em Proceedings of Thirty
  Fourth Conference on Learning Theory}, pages 3196--3241. PMLR, 2021.

\bibitem{Meng:2022}
T.~Meng, Z.~Zhang, J.~Darbon, and G.~E. Karniadakis.
\newblock Symp{OC}net: solving optimal control problems with applications to
  high-dimensional multiagent path planning problems.
\newblock {\em SIAM J. Sci. Comput.}, 44(6):B1341--B1368, 2022.

\bibitem{Mhaskar:1996}
H.~N. Mhaskar.
\newblock Neural networks for optimal approximation of smooth and analytic
  functions.
\newblock {\em Neural Computat.}, 8(1):164--177, 1996.

\bibitem{Mishra:2021inverse}
S.~Mishra and R.~Molinaro.
\newblock Estimates on the generalization error of physics-informed neural
  networks for approximating a class of inverse problems for {PDE}s.
\newblock {\em IMA J. Numer. Anal.}, 42(2):981--1022, 2021.

\bibitem{Mitusch:2021}
S.~K. Mitusch, S.~W. Funke, and M.~Kuchta.
\newblock Hybrid {FEM-NN} models: Combining artificial neural networks with the
  finite element method.
\newblock {\em J. Comput. Phys.}, 446:110651, 21 pp., 2021.

\bibitem{Mohri:2018}
M.~Mohri, A.~Rostamizadeh, and A.~Talwalkar.
\newblock {\em {Foundations of Machine Learning}}.
\newblock MIT Press, Cambridge, MA, 2018.

\bibitem{muller:2022error}
J.~M{\"u}ller and M.~Zeinhofer.
\newblock Error estimates for the deep {R}itz method with boundary penalty.
\newblock In {\em Mathematical and Scientific Machine Learning}, pages
  215--230. PMLR, 2022.

\bibitem{Nakamura：2021}
T.~Nakamura-Zimmerer, Q.~Gong, and W.~Kang.
\newblock Adaptive deep learning for high-dimensional
  {H}amilton-{J}acobi-{B}ellman equations.
\newblock {\em SIAM J. Sci. Comput.}, 43(2):A1221--A1247, 2021.

\bibitem{Ong:1994}
M.~E.~G. Ong.
\newblock Uniform refinement of a tetrahedron.
\newblock {\em SIAM J. Sci. Comput.}, 15(5):1134--1144, 1994.

\bibitem{PrilepkoKostin:1992}
A.~I. Prilepko and A.~B. Kostin.
\newblock Some inverse problems for parabolic equations with final and integral
  observation.
\newblock {\em Mat. Sb.}, 183(4):49--68, 1992.

\bibitem{PrilepkoVasin:2000}
A.~I. Prilepko, D.~G. Orlovsky, and I.~A. Vasin.
\newblock {\em {Methods for Solving Inverse Problems in Mathematical Physics}}.
\newblock Marcel Dekker, Inc., New York, 2000.

\bibitem{Raissi:2019}
M.~Raissi, P.~Perdikaris, and G.~Karniadakis.
\newblock Physics-informed neural networks: A deep learning framework for
  solving forward and inverse problems involving nonlinear partial differential
  equations.
\newblock {\em J. Comput. Phys.}, 378:686--707, 2019.

\bibitem{Raissi:2018}
M.~Raissi, P.~Perdikaris, and G.~E. Karniadakis.
\newblock Multistep neural networks for data-driven discovery of nonlinear
  dynamical systems.
\newblock Preprint, arXiv:1801.01236, 2018.

\bibitem{Richter:1981}
G.~R. Richter.
\newblock An inverse problem for the steady state diffusion equation.
\newblock {\em SIAM J. Appl. Math.}, 41(2):210--221, 1981.

\bibitem{Richter:1981numer}
G.~R. Richter.
\newblock Numerical identification of a spatially varying diffusion
  coefficient.
\newblock {\em Math. Comp.}, 36(154):375--386, 1981.

\bibitem{ScherzerGrasmair:2009}
O.~Scherzer, M.~Grasmair, H.~Grossauer, M.~Haltmeier, and F.~Lenzen.
\newblock {\em Variational {M}ethods in {I}maging}.
\newblock Springer, New York, 2009.

\bibitem{SchusterKaltenbacher:2012}
T.~Schuster, B.~Kaltenbacher, B.~Hofmann, and K.~S. Kazimierski.
\newblock {\em {Regularization Methods in {B}anach Spaces}}.
\newblock Walter de Gruyter GmbH \& Co. KG, Berlin, 2012.

\bibitem{Shalev:20214}
S.~Shalev-Shwartz and S.~Ben-David.
\newblock {\em {Understanding Machine Learning: From Theory to Algorithms}}.
\newblock Cambridge University Press, Cambridge, 2014.

\bibitem{Shen:2020}
Z.~Shen, H.~Yang, and S.~Zhang.
\newblock Deep network approximation characterized by number of neurons.
\newblock {\em Commun. Comput. Phys.}, 28(5):1768--1811, 2020.

\bibitem{SirignanoSpiliopoulos:2018}
J.~Sirignano and K.~Spiliopoulos.
\newblock D{GM}: a deep learning algorithm for solving partial differential
  equations.
\newblock {\em J. Comput. Phys.}, 375:1339--1364, 2018.

\bibitem{SukumarSrivastava:2022}
N.~Sukumar and A.~Srivastava.
\newblock Exact imposition of boundary conditions with distance functions in
  physics-informed deep neural networks.
\newblock {\em Comput. Methods Appl. Mech. Engrg.}, 389:114333, 50 pp., 2022.

\bibitem{Mayers:2003}
E.~S\"uli and D.~F. Mayers.
\newblock {\em {An Introduction to Numerical Analysis}}.
\newblock Cambridge University Press, Cambridge, 2003.

\bibitem{Zhang:2024source}
C.~Sun, M.~Zhang, and Z.~Zhang.
\newblock Conditional well-posedness and data-driven method for identifying the
  dynamic source in a coupled diffusion system from one single boundary
  measurement.
\newblock {\em SIAM J. Math. Anal.}, page in press (arXiv:2405.07616), 2025.

\bibitem{Tanyu:2023}
D.~N. Tanyu, J.~Ning, T.~Freudenberg, N.~Heilenk\"otter, A.~Rademacher,
  U.~Iben, and P.~Maass.
\newblock Deep learning methods for partial differential equations and related
  parameter identification problems.
\newblock {\em Inverse Problems}, 39(10):103001, 75 pp., 2023.

\bibitem{Thomee:2006}
V.~Thom\'{e}e.
\newblock {\em Galerkin {F}inite {E}lement {M}ethods for {P}arabolic
  {P}roblems}.
\newblock Springer-Verlag, Berlin, second edition, 2006.

\bibitem{TikhonovArsenin:1977}
A.~N. Tikhonov and V.~Y. Arsenin.
\newblock {\em {Solutions of Ill-Posed Problems}}.
\newblock John Wiley \& Sons, New York-Toronto-London, 1977.

\bibitem{Tipireddy:2019}
R.~Tipireddy, P.~Perdikaris, P.~Stinis, and A.~Tartakovsky.
\newblock A comparative study of physics-informed neural network models for
  learning unknown dynamics and constitutive relations.
\newblock Preprint, arXiv:1904.04058, 2019.

\bibitem{troltzsch2010optimal}
F.~Tr\"oltzsch.
\newblock {\em {Optimal Control of Partial Differential Equations}}, volume 112
  of {\em Graduate Studies in Mathematics}.
\newblock AMS, Providence, RI, 2010.

\bibitem{Wang:2010}
L.~Wang and J.~Zou.
\newblock Error estimates of finite element methods for parameter
  identifications in elliptic and parabolic systems.
\newblock {\em Discrete Contin. Dyn. Syst. Ser. B}, 14(4):1641--1670, 2010.

\bibitem{WangYuPerdikaris:2022}
S.~Wang, X.~Yu, and P.~Perdikaris.
\newblock When and why {PINN}s fail to train: a neural tangent kernel
  perspective.
\newblock {\em J. Comput. Phys.}, 449:110768, 28 pp., 2022.

\bibitem{WernerHofmann:2020}
F.~Werner and B.~Hofmann.
\newblock Convergence analysis of (statistical) inverse problems under
  conditional stability estimates.
\newblock {\em Inverse Problems}, 36(1):015004, 23 pp., 2020.

\bibitem{Wolf:2018}
M.~M. Wolf.
\newblock Mathematical foundations of supervised learning.
\newblock Lecture notes, available at
  \url{https://www-m5.ma.tum.de/foswiki/pub/M5/Allgemeines/MA4801_2021S/ML.pdf}
  (accessed on Feb. 1, 2023), 2021.

\bibitem{WuYangZhou:2025}
X.~Wu, J.~Yang, and Z.~Zhou.
\newblock Numerical reconstruction and analysis of backward semilinear
  subdiffusion problems.
\newblock {\em Math. Comp., in press, arXiv:2505.03625}, 2025.

\bibitem{Yamamoto:2009ip}
M.~Yamamoto.
\newblock Carleman estimates for parabolic equations and applications.
\newblock {\em Inverse Problems}, 25(12):123013, 75 pp., 2009.

\bibitem{Yarotsky:2017}
D.~Yarotsky.
\newblock Error bounds for approximations with deep relu networks.
\newblock {\em Neural Networks}, 94:103--114, 2017.

\bibitem{yeh1986review}
W.~W.-G. Yeh.
\newblock Review of parameter identification procedures in groundwater
  hydrology: The inverse problem.
\newblock {\em Water Resour. Res.}, 22(2):95--108, 1986.

\bibitem{ZangBaoYe:2020}
Y.~Zang, G.~Bao, X.~Ye, and H.~Zhou.
\newblock Weak adversarial networks for high-dimensional partial differential
  equations.
\newblock {\em J. Comput. Phys.}, 411:109409, 14, 2020.

\bibitem{ZhangLiu:2023}
H.~Zhang and J.~Liu.
\newblock Solving an inverse source problem by deep neural network method with
  convergence and error analysis.
\newblock {\em Inverse Problems}, 39(7):075013, 29 pp., 2023.

\bibitem{ZhangLiLiu:2023}
M.~Zhang, Q.~Li, and J.~Liu.
\newblock On stability and regularization for data-driven solution of parabolic
  inverse source problems.
\newblock {\em J. Comput. Phys.}, 474:111769, 20 pp., 2023.

\bibitem{Zhang:2023potential}
M.~Zhang and Z.~Zhang.
\newblock Solving the inverse potential problem in the parabolic equation by
  the deep neural networks method.
\newblock {\em CSIAM Trans. Appl. Math.}, 5(4):852--883, 2024.

\bibitem{LiuZhang:2024}
Q.~Zhang and J.~Liu.
\newblock On the reconstruction of medium conductivity by integral equation
  method based on the {L}evi function.
\newblock {\em Inverse Probl. Imaging}, 18(5):1096--1120, 2024.

\bibitem{Zhang:2022potential}
Z.~Zhang, Z.~Zhang, and Z.~Zhou.
\newblock Identification of potential in diffusion equations from terminal
  observation: analysis and discrete approximation.
\newblock {\em SIAM J. Numer. Anal.}, 60(5):2834--2865, 2022.

\bibitem{ZhangZhou:2020IP}
Z.~Zhang and Z.~Zhou.
\newblock Numerical analysis of backward subdiffusion problems.
\newblock {\em Inverse Problems}, 36(10):105006, 27 pp., 2020.

\bibitem{ZhangZhou:2022SISC}
Z.~Zhang and Z.~Zhou.
\newblock Backward diffusion-wave problem: stability, regularization, and
  approximation.
\newblock {\em SIAM J. Sci. Comput.}, 44(5):A3183--A3216, 2022.

\bibitem{ZhangZhou:2023IP}
Z.~Zhang and Z.~Zhou.
\newblock Stability and numerical analysis of backward problem for subdiffusion
  with time-dependent coefficients.
\newblock {\em Inverse Problems}, 39(3):034001, 31 pp., 2023.

\bibitem{Zhao:2024}
Y.~Zhao and J.~Han.
\newblock Offline supervised learning v.s. online direct policy optimization: a
  comparative study and a unified training paradigm for neural network-based
  optimal feedback control.
\newblock {\em Phys. D}, 462:134130, 13 pp., 2024.

\bibitem{ZhuJinZhuTang:2022}
A.~Zhu, P.~Jin, B.~Zhu, and Y.~Tang.
\newblock On numerical integration in neural ordinary differential equations.
\newblock In K.~Chaudhuri, S.~Jegelka, L.~Song, C.~Szepesvari, G.~Niu, and
  S.~Sabato, editors, {\em Proceedings of the 39th International Conference on
  Machine Learning}, pages 27527--27547. PMLR, 2022.

\bibitem{ZhuWuTang:2024}
A.~Zhu, S.~Wu, and Y.~Tang.
\newblock Error analysis based on inverse modified differential equations for
  discovery of dynamics using linear multistep methods and deep learning.
\newblock {\em SIAM J. Numer. Anal.}, 62(5):2087--2120, 2024.

\bibitem{Ziemer:1989}
W.~P. Ziemer.
\newblock {\em {Weakly Differentiable Functions}}.
\newblock Springer-Verlag, New York, 1989.

\bibitem{Zou:1998}
J.~Zou.
\newblock Numerical methods for elliptic inverse problems.
\newblock {\em Int. J. Comput. Math.}, 70(2):211--232, 1998.

\end{thebibliography}

\end{document}